\numberwithin{equation}{section}
\theoremstyle{plain}
\newtheorem{theorem}{Theorem}[section]
\newtheorem{proposition}[theorem]{Proposition}
\newtheorem{lemma}[theorem]{Lemma}
\newtheorem{claim}{Claim}
\theoremstyle{definition}
\newtheorem{definition}[theorem]{Definition}
\newtheorem{remark}[theorem]{Remark}
\def \bnped{B_+^N(0,2\delta)}
\def \dze{D(0,\delta)}
\def \bnpe{B_+^N(0,\delta)}
\def \RpN{\mathbb{R}_+^N}
\def \Ygs{\mathcal{Y}^\gamma_{\mathbb{S}^n}}
\def \ve{\varepsilon}
\def \Rn{\mathbb{R}^n}
\def \crexp{\frac{2n}{n-2\gamma}}
\def \iRN{\int_{\RpN}}
\def \p{\partial}
\def \l{\lambda}
\newcommand{\N}{\mathbb{N}}
\title{Uniformization Theorems: Between Yamabe and Paneitz}
\author{Cheikh Birahim Ndiaye}
\address{Department of Mathematics of Howard University, 217 Annex 3 Graduate School 4th St NW \& College St NW, Washington, DC 20059k, USA.}
\email{cheikh.ndiaye@howard.edu}
\author{Yannick Sire}
\address{Department of Mathematics, Johns Hopkins University, 3400 N. Charles, st.,
 Baltimore, MD 21218, USA.}
 \email{sire@math.jhu.edu}
\author{Liming Sun}
\address{Department of Mathematics, Johns Hopkins University, 3400 N. Charles st., 
 Baltimore, MD 21218, USA.}
\email{lsun@math.jhu.edu}
\date{\today}
\subjclass[2010]{Primary 53C21; Secondary 35R11, 53A30}
\keywords{Fractional Laplacian, Fraction GJMS operator, Poincar\'e-Einstein manifold, Algebraic topological argument, Barycenter technique.}
\begin{document}

\maketitle

\begin{abstract}
This paper is devoted to several existence results for a generalized version of the Yamabe problem. First, we prove the remaining global cases for the range of powers $\gamma\in (0,1)$ for the generalized Yamabe problem introduced by Gonzalez and Qing. Second, building on a new approach by Case and Chang for this problem, we prove that this Yamabe problem is solvable in the Poincar\'{e}-Einstein case for $\gamma\in (1,\min\{2,n/2\})$ provided the associated fractional GJMS operator satisfies the strong maximum principle.

\end{abstract}

\tableofcontents
\setlength{\parskip}{4pt}
\section{Introduction}

The resolution of the Yamabe conjecture, i.e.\,the problem of finding a constant scalar curvature metric in a given conformal class on closed manifolds, has been a landmark in Geometric Analysis after the works of \citep{yamabe1960deformation,trudinger1968remarks,aubinsome,schoen1984conformal}. Several generalizations to different ambient manifolds appeared after this series of works (e.g. \cite{gamara2001cr,gamara2001lcf,ahmedou,GonzalezNogueras2013}).

\vspace{5pt}
We consider here some rather recent development whose foundation can be found in a seminal paper by Graham and Zworksi \cite{Graham2003} about a new and fruitful approach to the realization of the GJMS operators. Suppose that $(X^{n+1},g_+)$ is a Poincar\'e-Einstein (P-E) manifold with dimension $n\geq 2$, that is, a conformally compact Riemannian manifold with $Ric(g_+)=-ng_+$. Assume that $(X^{n+1},g_+)$ has a conformal infinity $(M^n,[h])$, where $M$ is a compact manifold.
%Suppose $M$ is a compact manifold with dimension $n\geq 3$. Assume $M$ is the conformal infinity of some asymptotically hyperbolic $(X^{n+1},g_+)$. 
There is a family of conformally covariant operator $P_{{ h }}^\gamma$ ($\gamma\in (0,\frac{n}{2})$) discovered by \cite{Graham2003} which satisfies
\[P^\gamma_{{ h }}(uf)=u^{\frac{n+2\gamma}{n-2\gamma}}P^\gamma_{\tilde h}(f)\]
where $\tilde h=u^\frac{4}{n-2\gamma}{ h }$. Then one can define the so called $Q^\gamma$-curvature as $Q^\gamma_{{ h }}=P^{\gamma}_{{ h }}(1)$. These operators $P_h^\gamma$ appear to be the
higher-order generalizations (for $\gamma >1$) of the conformal Laplacian (including the Paneitz operator for $\gamma=2$). They coincide with
the GJMS operators \cite{graham1992conformally} for suitable integer values of $\gamma$. Specially, $Q_h^\gamma$ is just the scalar curvature for $\gamma=1$, and the $Q$-curvature for $\gamma=2$. This new notion of curvature has been investigated in \citep{qing2006compactness,chang2011fractional,del2012singular,GonzalezNogueras2013,Kim2018}. When $\gamma=\frac12$, $Q_h^\gamma$ is just the mean curvature of $(M,h)$ in $(X,g)$.

Keeping in mind the purpose of the Yamabe conjecture, one aims at finding a conformal metric $h\in [{ h }]$ such that $Q^\gamma_{h}$ is constant. Since the parameter $\gamma$ ranges from $0$ to $\frac{n}{2}$, this provides a $1$-parameter family of metrics and sheds some new light on classical constant curvature prescription problems. Following \cite{GonzalezNogueras2013}, solving the problem is equivalent to find a critical point of the following Euler-Lagrange functional
\begin{align}
    \mathcal{E} _ { { h } } ^ { \gamma } [ u ] = \frac { \oint _ { M } u P _ { { h } } ^ { \gamma } u d \mu_ { { h } } } { \left( \oint _ { M }  u  ^ { \frac { 2 n } { n - 2 \gamma } } d\sigma_ { { h } } \right) ^ { \frac { n - 2 \gamma } { n } } } \quad \text { for } u \in W _+^ { \gamma,2 } ( M,h ) \backslash \{ 0 \},
\end{align}
where $W^{\gamma,2}(M,h)$ denotes the usual fractional Sobolev space on $M$ with respect to Riemannian metric $h$, and $W^{\gamma,2}_+(M,h)=W^{\gamma,2}(M,h)\cap \{u\geq 0\}$. The infimum is called the $\gamma$-Yamabe constant
\[\mathcal{Y}^\gamma(M,[{ h }])=\inf_{W ^ { \gamma,2 } ( M,h ) \backslash \{ 0 \}}\mathcal{E} _ { { h } } ^ { \gamma } [ u ].\]
The critical points of $\mathcal{E} _ { { h } } ^ { \gamma }$  satisfy
\begin{align}\label{eq:goal}
    P _ { { h } } ^ { \gamma } u=cu^{\frac{n+2\gamma}{n-2\gamma}}, \quad u\geq 0
\end{align}
for some constant $c$. If $P _ { { h } } ^ { \gamma }$ satifies the strong maximum principle, or its Green's function is positive, then $u$ is strictly positive and  satisfy the above equality. Hence, $u^{\frac{4}{n-2\gamma}}h$ is a conformal metric whose fractional curvature is constant. \citet{GonzalezNogueras2013} prove that $P_{{ h }}^\gamma$ has a strong maximum principle when $\gamma\in (0,1)$. For higher $\gamma$, in the setting of Poincar\'{e}-Einstein $(X^{n+1},g_+)$ with conformal infinity $(M^n,[{ h }])$, \citet{CaC} proved that if $(M,[{ h }])$ has scalar curvature $R_h\geq 0$ and $Q_h^\gamma\geq 0$ and $Q_h^\gamma\not\equiv 0$ for $1<\gamma<\min\{2,n/2\}$, then $P_{{ h }}^\gamma$ has a strong maximum principle.

The present paper is two-fold. First, we complete the work \cite{GonzalezNogueras2013,Gonzalez2018, Kim2018,Mayer2017} providing existence results in some range of dimensions depending on $\gamma \in (0,1)$. Our arguments also apply to the general asymptotically hyperbolic (AH) manifolds. Second, for the higher order $1<\gamma< \min\{2,n/2\}$, when $X$ is a Poincar\'e-Einstein manifold, we completely solve the fractional Yamabe problem under the assumption of the strong maximum principle.  

In the present contribution, we consider two types of situations, denoted below \textbf{Type I} and \textbf{Type II}.

First, we consider \textbf{Type I}; that is $\gamma\in(0,1)$. Assume that $(X^{n+1}, g_+)$ is a P-E manifold with conformal infinity $(M,[h])$. \citet{Kim2018} and \citet{kim2017conformal} showed that if $n\geq 4+2\gamma$ and $M$ is non-locally conformally flat then $\gamma$-Yamabe problem is solvable. \citet{Mayer2017} proved the solvability for $M$ being locally conformally flat. Hence, the remaining case of Type I in P-E setting is the low dimensional case
\begin{enumerate}[labelsep=0ex,align=left]
    \item[Case (I-1)]: $(X^{n+1}, g_+)$ is P-E with $(M,[h])$ and  $n< 2\gamma+4$.
\end{enumerate}
that is, $n=3,4$ when $\gamma\in(0,1)$ and $n=5$ when $\gamma\in (0,\frac12)$. If $(X^{n+1},g_+)$ is just AH, the second fundamental form of $(M,h)$ will come into play. One needs consider whether $(M,h)$ is umbilic or not, which induce many different cases. Readers are directed to \citet{Kim2018} with additional assumption. Nevertheless, our method also apply to the  lower dimensional case in AH setting
\begin{enumerate}[labelsep=0ex,align=left]
\item[Case (I-2)]: $(X^{n+1}, g_+)$ is AH with $(M,[h])$ and $n<2+2\gamma$.
%\item[Case (I-3)]: $n=3$ and $\gamma\in (0,\frac12)$. %We are still missing this case . This should belong to the local geometry case.
\end{enumerate}

Second, we consider \textbf{Type II}; that is $\gamma\in(1,\min\{2,n/2\})$.
The main contribution of the present paper is to deal with the higher order fractional Yamabe type problems. Assume that $(X^{n+1},g_+)$ is a P-E manifold with conformal infinity $(M,[h])$ for the following cases
\begin{enumerate}[labelsep=0ex,align=left]
    \item[Case (II-1)]: Low dimension, $n<2\gamma+4$,
    \item[Case (II-2)]: $(M,[h])$ is locally conformally flat,
    \item[Case (II-3)]: $n>2\gamma+4$  and $(M,[h])$ is non-locally conformally flat,
    \item[Case (II-4)]: $n=2\gamma+4$ and $(M,[h])$ is non-locally conformally flat.
\end{enumerate}
%\textcolor{blue}{Explain why it is global and local. wait for Cheikh to insert his taylor expansion definition}\\

%\textcolor{red}{A definition of Local an Global cases}\\
%.................................................\\\\
To attack the above cases, we need to notice the distinctive nature of them. (I-1), (I-2), (II-1), (II-2) are ``Global" cases and (II-3) and (II-4) are ``Local" cases.  Let us recall what is commonly called Local and Global cases in the Geometric Analysis community. 
%Since the latter notions are independent of the fractional parameter, we will focus our discussion on the case  $\gamma=1$  which corresponds to the classical Yamabe problem. 
Take the classical Yamabe problem for example, that is, $\gamma=1$.
%\noindent
With this agreement in mind and recalling that the functional is  $\mathcal{E}_h^1$  and the the standard bubble is  $U_{a, \varepsilon}$ (see \eqref{U:I-1} and \eqref{U:II-3}, we omit  $\delta$  for simplicity)), then by a standard Taylor expansion, and using the explicit form (decay) of  $U_{a, \varepsilon}$, one has the following formula
\begin{equation}\label{defloc-glo}
\mathcal{E}_h^1(U_{a, \varepsilon})=\mathcal{Y}^1_{\mathbb{S}^n}-\sum_{i=1}^{n-3}\mathcal{L}_i(a)\varepsilon^i-\mathcal{L}_{n-2}(a)\varepsilon^{n-2}\ln \varepsilon- \mathcal{M}_{n-2}(a)\varepsilon^{n-2}+o(\varepsilon^{n-2}).
\end{equation}
The case is called  {\bf Local}  if  $\exists \; a\in M,  i\in \{1,\cdots, n-2\}:  \mathcal{L}_i(a)\neq 0$ and it is referred to {\bf Global}  if    $
\forall\; a\in M$,  $  \forall\, i\in\{1, \cdots, n-2\}:   \mathcal{L}_i(a)=0$. The coefficient $\mathcal{M}_{n-2}$ is associate to the ``mass" at $a$. The Global case means the terms higher than mass should all vanish. When $\gamma\neq 1$, the mass term should have order $\ve^{n-2\gamma}$ in \eqref{defloc-glo}. Roughly speaking, in P-E setting, since {the first term in the above expansion is $\ve^4$ with coefficient the norm of the Weyl tensor (up to a non-zero factor), and that when the Weyl tensor is identically zero automatically all the coefficients in the above expansion until the logarithmic term vanish, then one can see how the property of being locally conformally flat and the competition between $\ve^{n-2\gamma}$ and $\ve^4$ describe fully the Local and Global cases. However, in AH setting, on top of the latter considerations one has additional terms starting at $\ve^2$ with coefficient the norm of the trace free part of the second fundamental form of $(M,h)\subset (X,g)$ up to a non-zero factor. If $M$ is umbilical, then the expansion is the same as in the case of P-E. Hence, in AH, the umbilicity, the locally conformally flatness, the size $\ve^2,\ve^4$, and $\ve^{n-2\gamma}$ describe the Global and Local cases.
}
%Weyl tensor has order $\ve^4$, the competition of $\ve^{n-2\gamma}$ and $\ve^4$ distinguish the previous cases as Local or Global ones. In AH setting, one should compare $\ve^{n-2\gamma}$ and $\ve^2$ which is the order of the second fundamental form $(M,h)\subset(\bar X,g)$.

To solve the Local cases, it is enough, in most of the arguments, to use the local $U_{a,\ve}$ (see \eqref{U:II-3}). For the Global cases, besides the work of Schoen \cite{schoen1984conformal}, there is also an indirect method through Algebraic Topological arguments by \citet{Bahri1988} (also called Barycenter Technique). Later \citet{bahri1989critical} developed the theory of critical points at infinity for Yamabe problems on Euclidean domains. We refer the reader to its applications in the locally conformally flat case in \cite{ bahri1993another} and in the low dimensional case in \cite{bahri1996non}. Adapting the Barycenter Technique to the fractional Yamabe problem, we achieve the following theorem

\begin{theorem}\label{gamma01}
Suppose that $\gamma\in (0,1)$ and $n\geq 2$. Assume $(X^{n+1},g_+)$ is a Poincar\'{e}-Einstein manifold with  conformal infinity $(M^n,[{ h }])$ with $\lambda_1(-\Delta_{g_+})>\frac14n^2-\gamma^2$. If $\mathcal{Y}^\gamma(M,[h])>0$, then there exists $h\in [{ h }]$ such that $Q_{h}^\gamma$ is a constant. 
\end{theorem}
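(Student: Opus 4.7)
\smallskip

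\noindent\textbf{Proof plan.} The theorem is new only for Case (I-1), the low-dimensional regime $n<2\gamma+4$ (i.e.\ $n=3,4$ for $\gamma\in(0,1)$ and $n=5$ for $\gamma\in(0,\tfrac12)$); the remaining subranges were handled in \cite{Kim2018,kim2017conformal,Mayer2017}. In this regime direct concentration-compactness fails, and I would adapt the algebraic-topological scheme of Bahri--Coron \cite{Bahri1988,bahri1996non} to the fractional setting via the Chang--Gonz\'alez degenerate-elliptic extension. The spectral hypothesis $\lambda_1(-\Delta_{g_+})>\tfrac14 n^2-\gamma^2$ makes this extension well-posed, and together with $\mathcal{Y}^\gamma(M,[h])>0$ it forces $\mathcal{E}_h^\gamma$ to be coercive on $W^{\gamma,2}(M,h)$; the strong maximum principle of Gonz\'alez--Qing then ensures that any critical point is strictly positive.

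A standard concentration analysis on the extension side shows that Palais--Smale sequences of $\mathcal{E}_h^\gamma$ either converge to a critical point or peel off finitely many standard fractional bubbles $U_{a,\varepsilon}$, with obstruction levels being the quantised energies $c_p=p^{\,2\gamma/n}\,\Ygs$ for $p\in\N$. The algebraic-topological input is then the following. For each $p\geq 1$, let $B_p(M)$ denote the space of formal barycenters $\sigma=\sum_{i=1}^{p} t_i\,\delta_{a_i}$ (with $t_i\geq 0$, $\sum t_i=1$) and define
\[
f_{p,\varepsilon}(\sigma)\;=\;\sum_{i=1}^{p} t_i\,U_{a_i,\varepsilon}\;\in\;W^{\gamma,2}_+(M,h)\setminus\{0\}.
\]
The key estimate is that there exists $\delta_p>0$ such that, for all sufficiently small $\varepsilon$,
\[
\mathcal{E}_h^\gamma\bigl(f_{p,\varepsilon}(\sigma)\bigr)\;\leq\;c_p-\delta_p\qquad\text{uniformly in }\sigma\in B_p(M).
\]
Once this holds, if $\mathcal{E}_h^\gamma$ had no critical point, a pseudo-gradient deformation would retract the sublevel set $\{\mathcal{E}_h^\gamma\leq c_p-\delta_p\}$ onto $\{\mathcal{E}_h^\gamma\leq c_{p-1}+\delta_p\}$, and iterating would produce a contradiction with the non-triviality of the mod-$2$ (co)homology of $B_p(M)$ detected by $f_{p,\varepsilon}$, exactly as in Bahri--Coron.

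The heart of the argument, and the main obstacle, is the sharp energy expansion underlying the uniform inequality above. In Case (I-1) all the local coefficients $\mathcal{L}_i(a)$ in \eqref{defloc-glo} (and its fractional analogue) vanish identically, so the leading correction in the one-bubble expansion is the global mass term of order $\varepsilon^{n-2\gamma}$. For the barycenter scheme to succeed one therefore needs a positive mass theorem for the Green's function $G_a^\gamma$ of $P_h^\gamma$ at every $a\in M$: the constant term in the expansion of $G_a^\gamma$ at its pole must be strictly positive, with a lower bound uniform in $a$. In the P-E setting with $\mathcal{Y}^\gamma(M,[h])>0$, this should follow from a maximum principle on the extension $(X,\bar g)$ combined with the scattering-theoretic formula for $P_h^\gamma$, but extracting the correct sign in the low-dimensional non-LCF range is delicate because ambient curvature invariants enter the local expansion at the same order as the mass. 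A secondary technical point is the careful cutoff of the bubbles $U_{a_i,\varepsilon}$ inside the extension so that the resulting error terms in the $\rho^{1-2\gamma}$-weighted spaces are controlled uniformly in $(a_i,\varepsilon)$; the spectral hypothesis on $\lambda_1(-\Delta_{g_+})$ is precisely what is needed to close those estimates.
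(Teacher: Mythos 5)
There is a genuine gap, and it sits exactly at the step you yourself flag as ``the heart of the argument''. Your scheme requires the uniform strict barrier $\mathcal{E}_h^\gamma\bigl(f_{p,\varepsilon}(\sigma)\bigr)\leq c_p-\delta_p$ for \emph{every} $p\geq 1$, which at $p=1$ is precisely a fractional positive mass theorem: the one-bubble energy would have to drop strictly below $\Ygs$ by an amount of order $\ve^{n-2\gamma}$ with a sign coming from the regular part of $G^\gamma_a$. No such positive mass theorem is known in this low-dimensional Poincar\'e--Einstein range, and the paper states explicitly that none is available; its Proposition \ref{prop:I-low} only gives the upper bound $\mathcal{E}_h^\gamma[U_{a,\ve,\delta}]\leq \Ygs+\ve^{n-2\gamma}\mathcal{C}_1+o(\ve^{n-2\gamma})$ with $\mathcal{C}_1>0$, i.e.\ the test function is allowed to lie slightly \emph{above} the critical level. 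So the inequality on which your deformation/homology contradiction rests is neither proved in the paper nor obtainable by the route you sketch ("should follow from a maximum principle on the extension") -- deferring it is deferring the whole problem, since with that estimate in hand at $p=1$ one could conclude directly by minimization without any barycenter machinery.

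The paper's actual argument is organized to avoid the mass entirely and to use the \emph{interaction} between bubbles as the global invariant instead. The self-action is only controlled from above (Lemma \ref{lem:self-interaction}), while the pairwise interactions $e_{i,j}\sim \epsilon_{i,j}\sim \ve_{i,j}$ (Lemma \ref{lem:interaction}) produce, for configurations of $p$ well-separated bubbles with equal parameters $\ve_i=\ve$, the expansion $\mathcal{E}^\gamma_h[f_p(\ve)(\sigma)]\leq p^{2\gamma/n}\Ygs\bigl(1+\mathcal{C}_6\ve^{\frac{n-2\gamma}{2}}-\mathcal{C}_7(p-1)\ve^{\frac{n-2\gamma}{2}}\bigr)$ of Proposition \ref{eq:baryest}. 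The strict drop below the quantized level therefore occurs only once the number of bubbles exceeds $p^*=[1+\mathcal{C}_6/\mathcal{C}_7]+1$ (Lemma \ref{eq:largep}); for smaller $p$ one only knows $f_p(\ve)$ maps $B_p(M)$ into $L_p$, and the topological classes $(f_p(\ve))_*(w_p)$ are propagated recursively through the pairs $(L_p,L_{p-1})$ via the deformation lemma and the selection maps (Lemmas \ref{deformationlemma}, \ref{eq:nontrivialf1}, \ref{lem:nontrivialrecursive}) until the collapse at $p^*$ yields the contradiction. In short, your topological frame is the right one, but replacing the ``multiple variational barrier'' built from interactions by a per-level barrier built from a positive mass assumption changes the proof into one that cannot presently be closed.
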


The previous theorem solves completely the Yamabe problem for the $Q^\gamma$-curvature, complementing the works \cite{GonzalezNogueras2013,Gonzalez2018,Kim2018,Mayer2017} in the Poincar\'e-Einstein setting. We will provide an additional result on the more general framework of AH manifolds in the last section. 
\begin{theorem}\label{gamma12}
Suppose that $1<\gamma< \min\{2,n/2\}$ and $n\geq 3$. Assume $(X^{n+1},g_+)$ is a Poincar\'{e}-Einstein manifold with conformal infinity $(M^n,[{ h }])$ with $\lambda_1(-\Delta_{g_+})>\frac14n^2-(2-\gamma)^2$. If $R_h\geq 0$ and $Q_h^\gamma\geq 0$ and $Q_h^\gamma\not\equiv 0$ for some $h\in[h]$, then there exists some $\tilde h\in [{ h }]$ such that $Q_{\tilde h}^\gamma$ is a constant. 
\end{theorem}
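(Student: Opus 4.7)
The plan is to minimize $\mathcal{E}_h^\gamma$ over $W^{\gamma,2}_+(M,h)\setminus\{0\}$; under the standing hypotheses $R_h\geq 0$, $Q_h^\gamma\geq 0$, $Q_h^\gamma\not\equiv 0$, the Case--Chang strong maximum principle recalled above guarantees that any nonnegative minimizer is strictly positive, so that $\tilde h=u^{4/(n-2\gamma)}h$ is a genuine conformal metric with constant $Q^\gamma_{\tilde h}$. Since the general inequality $\mathcal{Y}^\gamma(M,[h])\leq \Ygs$ holds, a minimizer exists by a standard concentration-compactness scheme as soon as we can establish the strict inequality $\mathcal{Y}^\gamma(M,[h])<\Ygs$; this strict inequality is verified differently in the Local subcases (II-3)--(II-4) and the Global subcases (II-1)--(II-2), so the proof splits accordingly.

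\textbf{Local cases (II-3), (II-4).} Here $n\geq 2\gamma+4$ and $(M,[h])$ is not locally conformally flat. Following the Aubin--Schoen template we build test functions $U_{a,\ve}$ modeled on the extremals of the fractional Sobolev inequality on $\mathbb{S}^n$, cut off inside a geodesic normal ball around $a\in M$ and lifted to $X$ via the Case--Chang fourth-order extension for $P_h^\gamma$. A careful Taylor expansion of the extension's Dirichlet-to-Neumann bilinear form gives
\[
\mathcal{E}_h^\gamma(U_{a,\ve}) = \Ygs - C_{n,\gamma}|W_h(a)|^2\ve^4 + o(\ve^4)
\]
when $n>2\gamma+4$, and the same leading contribution with an additional logarithmic factor when $n=2\gamma+4$. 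Choosing $a$ with $W_h(a)\neq 0$, provided by the non-LCF assumption, and $\ve$ small gives $\mathcal{E}_h^\gamma(U_{a,\ve})<\Ygs$, hence existence.

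\textbf{Global cases (II-1), (II-2).} In low dimension or in the LCF case every coefficient $\mathcal{L}_i$ in the expansion \eqref{defloc-glo} vanishes identically, so a single bubble cannot break the sphere's constant. We adapt the Bahri--Coron barycenter technique. For each $p\geq 1$ let $B_p(M)$ denote the space of formal barycenters $\sum_{i=1}^p\alpha_i\delta_{x_i}$ on $M$; embed $B_p(M)$ into a high sublevel set of $\mathcal{E}_h^\gamma$ via $(\alpha_i,x_i)\mapsto \sum_{i=1}^p\alpha_i U_{x_i,\ve}$ with $\ve=\ve(p)\to 0$ and the $x_i$ kept well-separated. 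Controlling $\mathcal{E}_h^\gamma$ on this family relies on sharp bubble-to-bubble interaction estimates derived from the Case--Chang extension and the Green's function of $P_h^\gamma$. The nontrivial relative homology of $(B_{p+1}(M),B_p(M))$, combined with a Struwe--Bahri decomposition of Palais--Smale sequences and a classification of the critical points at infinity, forces the presence of a genuine critical point of $\mathcal{E}_h^\gamma$.

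\textbf{Main obstacle.} The hardest part is executing the barycenter/critical-points-at-infinity analysis in the global subcases (II-1) and (II-2). Three ingredients demand particular care: (i) a Struwe-type bubble decomposition for bounded Palais--Smale sequences of the nonlocal fourth-order functional, which requires precise Green's function and Sobolev asymptotics for $P_h^\gamma$ with $\gamma\in(1,\min\{2,n/2\})$ not directly available in the second-order Yamabe literature; (ii) sharp bubble interaction expansions, performed through the Case--Chang extension because $\gamma>1$ introduces higher-order boundary terms; and (iii) ruling out critical points at infinity for a linearized fourth-order extension whose null space is larger than in the $\gamma=1$ theory. In case (II-2) one further exploits the developing map to reduce to explicit $\Rn$ model computations, as in Bahri's LCF treatment for $\gamma=1$. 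With these inputs, the algebraic-topological argument closes the proof exactly as in the Bahri--Coron scheme.
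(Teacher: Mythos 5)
Your plan follows the paper's route: Case--Chang's strong maximum principle for positivity, an Aubin--Schoen type test-function expansion producing the Weyl term $-C\|W[{ h }]\|^2\ve^4$ (with the logarithmic factor when $n=2\gamma+4$) plus a Palais--Smale/profile-decomposition argument for the Local cases (II-3)--(II-4), and the Bahri--Coron barycenter scheme, built on self-action and bubble-interaction estimates through the extension problem, for the Global cases (II-1)--(II-2).

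The one substantive flaw is in your opening paragraph, where you assert that the strict inequality $\mathcal{Y}^\gamma(M,[h])<\Ygs$ is ``verified'' in the Global subcases and that a minimizer is then produced by concentration--compactness. In (II-1) and (II-2) no such variational barrier is available: the expansion at a single glued bubble only yields $\mathcal{E}^\gamma_{{ h }}[v_{a,\ve,\delta}]\leq \Ygs+\mathcal{C}\,\delta^{2\gamma-n}\ve^{n-2\gamma}$ with no sign gain, because the coefficient playing the role of the mass is not controlled by any positive-mass-type theorem in this setting. That is precisely why the barycenter argument is used: it yields a critical point (not a priori a minimizer, and not via a sub-$\Ygs$ level) by piling up $p$ bubbles until the mutual interaction term $-\mathcal{C}_7(p-1)\ve^{\frac{n-2\gamma}{2}}$ in Proposition \ref{eq:baryest} forces $f_{p^*}(\ve)[B_{p^*}(M)]\subset L_{p^*-1}$, contradicting the recursive nontriviality $(f_p(\ve))_*(w_p)\neq 0$ if $\mathcal{E}^\gamma_{{ h }}$ had no critical point. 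Your detailed Global paragraph is consistent with this, so the inconsistency lies in the framing rather than the machinery; drop the claim of strict inequality and of a minimizer in (II-1)--(II-2). Two smaller points: the map $f_p(\ve)$ must be defined on all of $B_p(M)$, including colliding configurations, so keeping the points ``well-separated'' is neither possible nor needed (the strong-interaction regime is handled separately in part (1) of Proposition \ref{eq:baryest}); and no classification or nondegeneracy analysis of critical points at infinity is required --- the deformation lemma coming from the profile decomposition, the selection maps, and the interaction estimates suffice.
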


%\textcolor{blue}{Here it is better to explain the critical points at infinity approach more. How to use algebraic topological method. Wait for Cheikh to write more about topological argument.}\\
%\textcolor{red}{A proposal for Description to how the algebraic topological argument works}\\
%..................\\
%\noindent
To prove our results in the Local cases we employ  \citet{aubinsome}-\citet{schoen1984conformal}'s Minimizing Technique. In the global cases, we use the Algebraic Topological argument of Bahri-Coron \citep{Bahri1988}. Since most of this work is concerned with Global cases, and moreover to find excellent exposition of the Aubin-Schoen's Minimizing Technique seems not to be difficult (see for example \citet{lee1987yamabe}), then we decide to discuss just how the Barycenter Technique of Bahri-Coron works in finding a critical point. We just point out that in our application of \citet{aubinsome}-\citet{schoen1984conformal}'s Minimizing Technique, we took a short-cut by bringing into play the Eckeland Variational Principle. {We chose this approach not only to shorten the exposition, but to also emphasize the common point between the Aubin-Schoen minimizing technique and Algebraic Topological argument to Bahri-Coron.}

%\noindent
The Algebraic Topological argument of Bahri-Coron \citep{Bahri1988} is based on two fundamental facts: the quantization of $(\mathcal{E}_h^\gamma)^{\frac{n}{2\gamma}}$ (see Lemma \ref{lem:profile-decomp}) and the strong interaction phenomenon (see Lemma \ref{lem:interaction}). Readers can find a detailed explanation of Barycenter Technique in \citet{mayer2017barycenter}. Here we just sketch the main idea behind it.

%For simplicity we make the discussion on the functional  $(\mathcal{E}_h^1)^{\frac{n}{2\gamma}}$  and still keep the notation\ 
%$\mathcal{E}_h^1$. Moreover, in this context, we will still use  $\mathcal{Y}^1_{S^n}$   to mean  $ (\mathcal{Y}^1_{S^n})^{\frac{n}{2\gamma}}$.

On one hand, the argument needs a starting point, which is the existence of a topological class\ $X_1$  which is non-zero in the   $\mathbb{Z}_2$-homology of some lower sub-level set   $L_c:=\{u
:  (\mathcal{E}_h^\gamma[u])^{\frac{n}{2\gamma}}\leq c\}$. Here one starts with  $c=(\mathcal{Y}^\gamma_{\mathbb{S}^n})^{\frac{n}{2\gamma}}+\varepsilon_1$  for some $\varepsilon_1>0$, and the existence of  $X_1$  is ensured by  $H_n(M, \mathbb{Z}_2)\neq 0$  and bubbling (See Lemma \ref{eq:nontrivialf1}).

%\noindent
Then, the next step is to start \textit{piling up masses}     $v_{a,\varepsilon,\delta} $ (see its definition \eqref{def:V_i}) over   $X_1$, thereby moving from the level    $(\mathcal{Y}^\gamma_{\mathbb{S}^n})^{\frac{n}{2\gamma}}+\varepsilon_1$    to the level     $2(\mathcal{Y}^\gamma_{\mathbb{S}^n})^{\frac{n}{2\gamma}}+\varepsilon_1$, from 
the level   $2(\mathcal{Y}^\gamma_{\mathbb{S}^n})^{\frac{n}{2\gamma}}+\varepsilon_1$    to  the level    $3(\mathcal{Y}^\gamma_{\mathbb{S}^n})^{\frac{n}{2\gamma}}+\varepsilon_1$,   $ \cdots$,    from the level     $p(\mathcal{Y}^\gamma_{\mathbb{S}^n})^{\frac{n}{2\gamma}}+\varepsilon_1$    to the level    $(p+1)(\mathcal{Y}^\gamma_{\mathbb{S}^n})^{\frac{n}{2\gamma}}+\varepsilon_1$,     so on.
At each step, as one moves from  the level   $p(\mathcal{Y}^\gamma_{\mathbb{S}^n})^{\frac{n}{2\gamma}}+\varepsilon_1$   to the level    $(p+1)(\mathcal{Y}^\gamma_{\mathbb{S}^n})^{\frac{n}{2\gamma}}+\varepsilon_1$, one constructs a non-zero topological class    $X_{p+1}$    which reads    $(1-t)u + tv_{a,\varepsilon},     u\in X_p,     t\in  [0,  1]$ (see Lemma \ref{lem:nontrivialrecursive}).

%\noindent
However, because of the strong interaction phenomenon, for  $p_0$  large, we are passing from the level    $p_0(\mathcal{Y}^\gamma_{\mathbb{S}^n})^{\frac{n}{2\gamma}}+\varepsilon_1$   to the level    $(p_0+1)(\mathcal{Y}^\gamma_{\mathbb{S}^n})^{\frac{n}{2\gamma}}-\bar\varepsilon_1$   for some     $\bar\varepsilon_1>0$.
Then, assuming that there is no solution, we reach a contradiction since, as a result of the quantization phenomenon,  $(PS)_c$  holds    $\forall \,c$    such that    $p_0(\mathcal{Y}^\gamma_{\mathbb{S}^n})^{\frac{n}{2\gamma}}+\ve_1\leq  c\leq (p_0+1)(\mathcal{Y}^\gamma_{\mathbb{S}^n})^{\frac{n}{2\gamma}}-\bar\varepsilon_1$.

%\textcolor{blue}{I wrote this paragraph. Feel free to modify. The point is to say we have some conjecture.}\\
%\textcolor{red}{May be delete the paragraph below between .............               .........}\\
%................\\
%In the proof of Theorem \ref{gamma12}, we exploited the strong interaction between ``bubbles". When the number of ``bubble" accumulates, the topology of the level set of $\gamma$-Yamabe functional will change, regardless of the loss of PMT. Hence, a critical points must exist at ``infinity". This approach is a powerful tool in non-compact variational analysis. \\
%...................\\ 

{We were assuming $R_h\geq 0$ and $Q_h^\gamma\geq 0$ and $Q_h^\gamma\not\equiv 0$ in the Theorem \ref{gamma12}, because we need that $P_{{ h }}^\gamma$ satisfies the strong maximum principle, which is proved by \citet{CaC} under these assumptions.} We conjecture that our results hold for all $\gamma\in (0,\frac{n}{2})$ provided $P_{{ h }}^\gamma$ satisfies the strong maximum principle.

This article is organized as follows. In Section 2, we recall some basic notions of smooth metric measure space and the fractional GJMS operators, which are contained in \cite{Case2017}. We define the standard bubbles for $\gamma\in (0,1)$ and $\gamma\in(1,\min\{2,n/2\})$ respectively and list their properties need for the remaining sections. In section 3 and 4, we define some test function $U_{a,\ve,\delta}$ and calculate their energy $\overline{\mathcal{E}}_h^\gamma[U_{a,\ve,\delta}]$ for different cases respectively. In Section 5, we stated the profile decomposition for the Palais-Smale sequences of $\mathcal{E}_h^\gamma$ and proved all the Local cases. The crucial interaction estimate between bubbles are also established in this section. In Section 6, the algebraic topological argument is applied to all Global cases. Section 7 illustrate the adaption to asymptotically hyperbolic case. Some necessary estimates are established in the Appendix at the end.

\section{Preliminaries}
In this section, we shall first describe the notions of smooth metric measure spaces and the fractional GJMS operators. After that we will define the standard bubbles and state their properties.
\subsection{Smooth metric measure spaces and fractional GJMS operators}\hspace*{\fill} 

%\textcolor{blue}{define Poincar\'e-Einstein space more rigorously}
%Let $M^n$ be a compact manifold of dimension $n$ with a metric $h$. Let $\bar{x}^{n+1}$ be a compact manifold of dimension $n + 1$ with boundary $M$, and denote by $X$ the interior of $\bar{x}$ . A function $\rho$ is a
%defining function of $\partial X$ in $X$ if
%\[\rho>0 \quad \text { in } X, \quad \rho=0 \quad \text { on } \partial X, \quad d \rho \neq 0 \quad \text { on } \partial X\]
%We say that $g_+$ is a conformally compact (c.c.) metric on $X$ with conformal infinity $(M, [h])$ if there exists a defining function $\rho$ such that the manifold $(\bar{x}, g)$ is compact for $g = \rho^2g_+$, and $g|_{M}\in[h]$. If, in addition $(X^{n+1},g_+)$ is a conformally compact manifold and $Ric[g_+]=-ng_+$, then we call $(X^{n+1},g_+)$ a conformally compact Poincar\'e-Einstein manifold.
A triple $(X^{n+1},M^n, g_+)$ is a \textit{Poincar\'e-Einstein} manifold if
\begin{enumerate}
\item[(1)] $X^{n+1}$ is (diffeomorphic to) the interior of a compact manifold $\bar X^{n+1}$ with boundary $\partial X=M^n$,
\item[(2)] $(X^{n+1},g_+)$ is complete with $Ric(g_+)=-ng_+$, 
\item[(3)] there exists a nonnegative $\rho\in C^\infty (X)$ such that $\rho^{-1}(0)=M^n$, $d\rho\neq 0$ along $M$, and the metric $g:=\rho^2g_+$ extends to a smooth metric on $\bar X^{n+1}$.
\end{enumerate}
A function $\rho$ satisfying these properties is called \textit{defining function}. Since $\rho$ is only determined up to multiplication by a positive smooth function on $\bar X$, it is clear that only the conformal class $[h]:=[g|_{TM}]$ on $M$ is
well-defined for a Poincar\'e-Einstein manifold. We call the pair $(M^n,[h])$ the conformal boundary of the Poincar\'e-Einstein manifold $(X^{n+1},M^n, g_+)$, and we call a metric $h\in [h]$ a representative of the conformal boundary. To each such representative there is a defining function $\rho$, unique in a neighborhood of $M$ and called the  \textit{geodesic defining function}. Moreover, $g_+$ has normal form $g_+=\rho^{-2}(d\rho^2+h_\rho)$ near $M$, where $h_\rho$ is a one-parameter family of metrics on $M$ satisfying $h_0=h$. $h_\rho$ has an asymptotic expansion which contains only even powers of $\rho$, at least up to degree $n$. For a more intrinsic discussion of these topics, we refer the reader to \cite{Graham2003}.
%\begin{align}
%\begin{array}{l}{\text { (1) } X^{n+1} \text { is (diffeomorphic to) the interior of a compact manifold } \bar{X}^{n+1} \text { with }} \\ {\text { boundary } \partial \bar{X}=M^{n} \text { , }} \\ {\text { (2) }\left(X^{n+1}, g_{+}\right) \text {is complete with Ric(g_{+} ) }=-n g_{+}, \text {and }} \\ {\text { (3) there exists a nonnegative } \rho \in C^{\infty}(X) \text { such that } \rho^{-1}(0)=M^{n}, d \rho \neq 0} \\ {\text { along } M, \text { and the metric } g:=\rho^{2} g+\text { extends to a smooth metric on } \bar{X}^{n+1} \text { . }}\end{array}
%\end{align}

A smooth metric measure space (SMMS) is a four-tuple $(\bar X^{n+1},g,\rho,m)$ formed from a smooth manifold $\bar X^{n+1}$ with (possibly empty) boundary $M^n=\partial \bar X$, a Riemannian metric $g$ on $\bar X$, a nonnegative function $\rho\in C^\infty(\bar X)$ with $\rho^{-1}(0)=M$, and a dimensional constant $m\in (1-n,\infty)$. Formally, the interior of $\bar X$, denoted as $X$,  represents the base of a warped product
\begin{align}\label{eq:warp-prod}
(X^{n+1}\times \mathbb{S}^m, g\oplus \rho^2d\theta^2)
\end{align}
where $(\mathbb{S}^m,d\theta^2)$ the $m$-sphere with the metric of constant sectional curvature one. The geometric invariants defined on a SMMS are obtained by considering their Riemannian counterparts on \eqref{eq:warp-prod} while restricting to the base $X$, and then extend the definition to general $m\in (1-n,\infty)$ by treating $m$ as a formal variable. The weighted Laplacian $\Delta^m_{\rho}:C^\infty(X)\to C^ \infty(X)$ is defined as
\[\Delta^m_\rho U:=\Delta_gU+m\rho^{-1}\langle\nabla \rho, \nabla U\rangle_g,\quad U\in C^\infty(X)\]
which is a formally self-adjoint operator with respect to the measure $\rho^md\mu_g$. Here $d\mu_g$ is the volume element of $g$. The weighted Schouten scalar $J^m_\rho$ and weighted Schouten tensor $P^m_\rho$ are
\begin{align*}
J_{\rho}^{m} :=&\frac{1}{2(m+n)}\left(R-2 m \rho^{-1} \Delta \rho-m(m-1) \rho^{-2}\left(|\nabla \rho|^{2}-1\right)\right)\\
P_{\rho}^{m} :=&\frac{1}{m+n-1}\left(\operatorname{Ric}-m \rho^{-1} \nabla^{2} \rho-J_{\rho}^{m}\right)
\end{align*}

We shall confine ourself to a special type of SMMS,
\begin{definition}
A \textbf{geodesic} SMMS $(\bar X,g:=\rho^2g_+,\rho,m)$ is generated by a Poincar\'e-Einstein manifold $(X^{n+1}, M^n,g_+)$ and a geodesic defining function $\rho$ near $M$, that is $|\nabla \rho|_g=1$ near $M$. 
\end{definition}
For a geodesic SMMS, the weighted Schouten scalar and tensor take simpler forms. By \citet[Lemma 3.2]{CaC}, we have $J^m_\rho=J$ the Schouten scalar of $(\bar X,g)$ and $P^m_\rho=P$ the Schouten tensor of $(\bar X,g)$. On a geodesic SMMS, the weighted conformal Laplacian $L_{2,\rho}^m$ and weighted Paneitz operator $L_{4,\rho}^m$ are defined as 
\begin{align*} 
L_{2, \rho}^{m} U & :=-\Delta_{\rho}^m U+\frac{1}{2}(m+n-1) J \cdot U \\ 
L_{4, \rho}^{m} U & :=\left(-\Delta_{\rho}^m\right)^{2} U+\delta_{\rho}\left(\left(4 P-(m+n-1) J g\right)(\nabla U)\right)+\frac{1}{2}(m+n-3) Q_{\rho}^{m} U 
\end{align*}
where $\delta_{\rho} X=\operatorname{tr}_{g} \nabla X+m \rho^{-1}\langle X, \nabla \rho\rangle$ is the negative of the formal adjoint of the gradient with respect to $\rho^md\mu_g$,
\begin{align*}
Q_{\rho}^{m} :=-\Delta_{\rho}^m J-2\left|P\right|^{2}+\frac{m+n-1}{2}J^{2}
\end{align*}
is the weighted $Q$-curvature. If two SMMS $(\bar X,g,\rho,m)$ and $(\bar X, \hat g, \hat \rho,m)$ are pointwise conformally equivalent, that is $\hat g=e^{2\sigma}g$ and $\hat \rho=e^\sigma \rho$ for some $\sigma$, it holds
\begin{align}
\begin{split}\label{eq:conformal-eqn4}
{\widehat{L_{2, \hat\rho}^{m}}(U)=e^{-\frac{m+n+3}{2} \sigma} L_{2, \rho}^{m}\left(e^{\frac{m+n-1}{2} \sigma} U\right)} \\ {\widehat{L_{4, \hat\rho}^{m}}(U)=e^{-\frac{m+n+5}{2}\sigma} {L_{2, \rho}^{m}}\left(e^{\frac{m+n-3}{2} \sigma} U\right)}.
\end{split}
\end{align}
for all $U\in C^\infty(X)$. 

The point of working with SMMS is that there are weighted GJMS operators defined on it, which incorporate the fractional GJMS operators on $M$ as the Dirichlet-to-Neumann maps.  

Suppose $\gamma\in (0,1)$. Set $m_0=1-2\gamma$.  Denoted by $\mathcal{C}^\gamma$ be the set of all $U\in C^\infty(X)\cap C^0(\bar X)$, asymptotically near $M$,
\begin{align}\label{eq:expan-U}
 U=f+\psi \rho^{2\gamma}+o(\rho^{2\gamma})
\end{align}
for some $f,\psi\in C^\infty(M)$. We shall also use $\mathcal{C}_f^\gamma=\{U\in \mathcal{C}^{\gamma}:U|_M=f\}$. The Sobolev spaces $W^{1,2}(\bar X,\rho^{m_0}d\mu_{ g})$ are completion of $\mathcal{C}^\gamma$ with respect to the norm
\[\|U\|_{W^{1,2}}^{2} :=\int_{X}\left(|\nabla U|^{2}+U^{2}\right) \rho^{m_0} d\mu_{g}.\]
Define
\begin{align}\label{def:Q0}
    \mathcal{Q}_{\gamma}(U,V)=\int_X \left(\langle\nabla U,\nabla V\rangle+\frac{n-2\gamma}{2}J UV\right)\rho^{m_0}d\mu_{{ g } }.
\end{align}
\begin{proposition}[\citet{Case2017}]\label{prop:case-1} 
Suppose that $\gamma\in (0,1)$ and $(\bar X,g,\rho,m_0)$ is a geodesic SMMS. For any $U,V\in \mathcal{C}^\gamma$, 
\begin{align}\label{eq:Q-gamma-0}
    \int_{X} V L_{2, \rho}^{m_0} U\rho^{m_0 }d\mu_{{ g } }+\oint_{M} V (\lim_{\rho\to 0}\rho^{m_0}(-\partial_\rho U)) d\sigma_{{ h }}=\mathcal{Q}_{ \gamma}(U, V).
\end{align}
If $\lambda_1(-\Delta_{g_+})>\frac14n^2-\gamma^2$, then $\mathcal{Q}_{\gamma}(U,U)$ is bounded below in $\mathcal{C}_f^\gamma$. It holds that
\begin{align}\label{eq:gam01-case-ieq}
\kappa_\gamma\mathcal{Q}_{\gamma}(U,U)\geq \oint_M fP^{\gamma}_{{ h }}fd\sigma_{{ h }}
\end{align} 
for all $U\in W^{1,2}(\bar X,\rho^{m_0}d\mu_g)$ with $\text{Tr}\, U=f$. Equality holds if and only if $L_{2,\rho}^{m_0} U=0$.
\end{proposition}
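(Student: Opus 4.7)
The plan is to verify the two assertions following the extension-scattering framework of Case-Chang. For the identity in the first part, this is essentially integration by parts: since $\Delta^{m_0}_\rho$ is the negative formal adjoint of $\nabla$ with respect to the weighted measure $\rho^{m_0}\,d\mu_g$, the divergence theorem gives
\[
\int_X V(-\Delta^{m_0}_\rho U)\,\rho^{m_0}\,d\mu_g = \int_X \langle \nabla U,\nabla V\rangle\,\rho^{m_0}\,d\mu_g + \oint_M V\,\lim_{\rho\to 0}\rho^{m_0}(-\partial_\rho U)\,d\sigma_h.
\]
The boundary integral makes sense by the asymptotic expansion \eqref{eq:expan-U} together with the fact that $\rho$ is a geodesic defining function (so that the outward $g$-unit normal at $M$ is $-\partial_\rho$). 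Combining with the zeroth-order part of $L_{2,\rho}^{m_0}$, and using $m_0+n-1=n-2\gamma$ to match the coefficient of $JUV$ in the definition of $\mathcal{Q}_\gamma$, the identity follows.

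For the second part, the first step is to use the spectral assumption $\lambda_1(-\Delta_{g_+}) > \tfrac{1}{4}n^2-\gamma^2$ to guarantee that the weighted Dirichlet problem
\[
L_{2,\rho}^{m_0} U_f = 0 \text{ in } X,\qquad \mathrm{Tr}\,U_f = f
\]
is well-posed and has a unique solution $U_f$ in $\mathcal{C}^\gamma_f$ (equivalently, the trace-$f$ slice of $W^{1,2}(\bar X,\rho^{m_0}d\mu_g)$). Via the substitution $U=\rho^{(n-2\gamma)/2}v$, the spectral hypothesis translates into coercivity of $\mathcal{Q}_\gamma$ on the subspace with zero trace. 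This coercivity is precisely the lower boundedness of $\mathcal{Q}_\gamma$ over $\mathcal{C}^\gamma_f$, and the direct method then produces the minimizer $U_f$.

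Because $U_f$ is the constrained minimizer, for any $U\in\mathcal{C}_f^\gamma$ we have $\mathcal{Q}_\gamma(U,U)\ge \mathcal{Q}_\gamma(U_f,U_f)$. Applying part (a) to $U_f$ with $V=U_f$ and using $L_{2,\rho}^{m_0}U_f=0$,
\[
\mathcal{Q}_\gamma(U_f,U_f)=\oint_M f\,\lim_{\rho\to 0}\rho^{m_0}(-\partial_\rho U_f)\,d\sigma_h.
\]
By the scattering/extension construction of the fractional GJMS operator reviewed in \cite{Case2017}, this weighted normal data equals $\kappa_\gamma^{-1}P_h^\gamma f$, which yields $\kappa_\gamma \mathcal{Q}_\gamma(U,U)\ge \oint_M f\,P_h^\gamma f\,d\sigma_h$. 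Strict convexity of $\mathcal{Q}_\gamma$ under fixed trace, together with uniqueness of the Dirichlet problem, gives equality if and only if $U=U_f$, i.e.\ $L_{2,\rho}^{m_0}U=0$.

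The main obstacle is the analytic step: matching the spectral gap with coercivity of $\mathcal{Q}_\gamma$, controlling the weighted Sobolev trace map (and density of $\mathcal{C}^\gamma$ in $W^{1,2}(\bar X,\rho^{m_0}d\mu_g)$), and verifying the regularity of $U_f$ near $M$ needed to identify $\lim_{\rho\to 0}\rho^{m_0}(-\partial_\rho U_f)$ with $\kappa_\gamma^{-1}P_h^\gamma f$. These are exactly the ingredients isolated by Case; I would invoke the precise statements from \cite{Case2017} rather than reprove them.
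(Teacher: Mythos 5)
Your proposal is correct, and it follows essentially the same route as the paper: the paper does not prove this proposition but quotes it from \citet{Case2017}, and your outline (the weighted integration by parts using that $\Delta^{m_0}_\rho$ is formally self-adjoint with respect to $\rho^{m_0}d\mu_g$, the coefficient match $m_0+n-1=n-2\gamma$, and then coercivity from the spectral gap plus the extension/scattering identification of $\lim_{\rho\to 0}\rho^{m_0}(-\partial_\rho U_f)$ with $\kappa_\gamma^{-1}P_h^\gamma f$ for the minimizer) is precisely the argument of that reference, with the same analytic ingredients deferred to it. Nothing further is needed.
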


According to \citep[Cor. 4.6]{Mayer2017}, there exists a Green's function $G_g^\gamma(x,\xi)$ of $L_{2,\rho}^{m_0}$ satisfying 
\begin{align}\label{eq:G-0}
\begin{cases}
L_{2,\rho}^{m_0}G_g^\gamma(\cdot, \xi)=0\text{ and for all }\xi\in M,\\
-\kappa_\gamma\lim\limits_{\rho\to 0} \rho^{m_0}\partial_\rho G_g^\gamma(x,\xi)=\delta_{\xi}(x).
\end{cases}
\end{align}
Here $\delta_{\xi}(x)$ is the Dirac function at $\xi$. The following estimates hold for $G_g^\gamma$,
\begin{align}\label{est:G-0}
\begin{split}
|G_g^\gamma(x,\xi)-d_g(x,\xi)^{2\gamma-n}|\leq C\max\{1, d_g(x,\xi)^{2\gamma-n+1}\},\\
|\nabla(G_g^\gamma(x,\xi)-d_g(x,\xi)^{2\gamma-n})|_g\leq Cd_g(x,\xi)^{2\gamma-n}.
\end{split}
\end{align}
{Moreover, if $\mathcal{Y}^\gamma(M,[h])>0$, then $G^\gamma_g>0$ by \citet{GonzalezNogueras2013}.}
%Here if $\gamma=\frac12$ and $n=3$, then the right hand side should be $n=1$ 

Suppose $\gamma\in(1,2)$. Set $m_1=3-2\gamma$. Denoted by $\mathcal{C}^\gamma$ be the set of all $U\in C^\infty(X)\cap C^0(\bar X)$, asymptotically near $M$,
\begin{align}\label{eq:expan-U2}
 U=f+\psi_1 \rho^{2}+\psi_2\rho^{2\gamma}+o(\rho^{2\gamma})
\end{align}
for some $f,\psi_1,\psi_2\in C^\infty(M)$. We shall also use $\mathcal{C}_f^\gamma=\{U\in \mathcal{C}^{\gamma}:U|_M=f\}$. The Sobolev space $W^{2,2}(\bar X,\rho^{m_1}d\mu_{ g})$ is the completion of $\mathcal{C}^\gamma$ with respect to the norm
\[\|U\|_{W^{2,2}}^{2} :=\int_{X}\left(\left|\nabla^{2} U+m_1 \rho^{-1}\left(\partial_{\rho} U\right)^{2} d \rho \otimes d \rho\right|^{2}+|\nabla U|^{2}+U^{2}\right) \rho^{m_1} d\mu_g.\]
Define
\begin{align}\label{def:Q1}
    &\mathcal{Q}_{\gamma}(U,V)\notag\\
    =&\int_X\left[\left(\Delta_{\rho}^{m_1} U\right)\left(\Delta_{\rho}^{m_1} V\right)-\left(4 P-(n-2 \gamma+2) J g\right)(\nabla U, \nabla V)+\frac{n-2 \gamma}{2} Q_{\rho}^{m_1} U V\right]\rho^{m_1}d\mu_{ g}.
\end{align}

\begin{proposition}[\citet{Case2017}]\label{prop:case-2}
Suppose that $\gamma\in (1,2)$ and $(\bar X,g,\rho,m_1)$ is a geodesic SMMS. For any $U,V\in \mathcal{C}^\gamma$, 
\begin{align}\label{eq:Q-gamma-1}
    \int_{X} V L_{4, \rho}^{m_1} U\rho^{m_1}d\mu_{ g}+\oint_{M}V(\lim_{\rho\to 0} \rho^{m_1}\partial_{\rho}\Delta_{\rho}^{m_1}U)d\sigma_{{ h }}=\mathcal{Q}_{\gamma}(U, V).
\end{align}
 If $\lambda_1(-\Delta_{g_+})>\frac14n^2-(2-\gamma)^2$, then $\mathcal{Q}_{\gamma}(U,U)$ is bounded below in $\mathcal{C}_f^\gamma$. It holds that
\begin{align}
\kappa_\gamma\mathcal{Q}_{\gamma}(U,U)\geq \oint_M fP^{\gamma}_{{ h }}fd\sigma_{{ h }}
\end{align} 
for all $U\in W^{2,2}(\bar X,\rho^{m_1}d\mu_g)$ with $Tr\,U=f$. Equality holds if and only if $L_{4,\rho}^{m_1} U=0$.
\end{proposition}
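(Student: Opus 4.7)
The plan is to parallel the proof of Proposition \ref{prop:case-1}, adapted to the fourth-order weighted Paneitz operator $L_{4,\rho}^{m_1}$ and the richer boundary expansion \eqref{eq:expan-U2}. The three parts decompose naturally: (i) the integration-by-parts formula \eqref{eq:Q-gamma-1} follows from a double application of a weighted Green's identity; (ii) the sharp Dirichlet principle is obtained by constructing the unique minimizer $U_f$ of $\mathcal{Q}_\gamma$ in $\mathcal{C}_f^\gamma$ and identifying its boundary flux with $P_h^\gamma f$; (iii) the lower bound on $\mathcal{Q}_\gamma$ is controlled via the spectral-gap hypothesis through the scattering characterization.

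For (i), the basic ingredient is the weighted Green's formula
\[\int_X V(-\Delta_\rho^{m_1})W\,\rho^{m_1}d\mu_g=\int_X\langle\nabla V,\nabla W\rangle\,\rho^{m_1}d\mu_g+\oint_M V\bigl(\lim_{\rho\to 0}\rho^{m_1}\partial_\rho W\bigr)d\sigma_h,\]
which comes from the divergence theorem applied to the vector field $V(\nabla W)\rho^{m_1}$ together with the definition of $\Delta_\rho^{m_1}$. Applying this with $W=-\Delta_\rho^{m_1}U$ produces the boundary term appearing in \eqref{eq:Q-gamma-1}; applying it again with the roles of $V$ and $-\Delta_\rho^{m_1}U$ swapped produces $\int_X(\Delta_\rho^{m_1}U)(\Delta_\rho^{m_1}V)\rho^{m_1}d\mu_g$ with no further boundary contribution, since \eqref{eq:expan-U2} forces $\rho^{m_1}\partial_\rho V=O(\rho^{4-2\gamma})+O(\rho^2)\to 0$ as $\rho\to 0$ for $\gamma\in(1,2)$. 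The divergence term $\delta_\rho\bigl((4P-(n-2\gamma+2)Jg)(\nabla U)\bigr)$ is handled by a single integration by parts whose boundary contribution vanishes for the same asymptotic reason, and the zeroth-order $Q_\rho^{m_1}UV$ contributes directly. Assembling gives \eqref{eq:Q-gamma-1}.

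For (ii), once the lower bound from (iii) is available the functional $U\mapsto\mathcal{Q}_\gamma(U,U)$ admits a unique minimizer $U_f\in\mathcal{C}_f^\gamma$, and \eqref{eq:Q-gamma-1} identifies the Euler--Lagrange equation as $L_{4,\rho}^{m_1}U_f=0$ in $X$. Setting $V=U_f$ in \eqref{eq:Q-gamma-1} reduces $\mathcal{Q}_\gamma(U_f,U_f)$ to the boundary pairing $\oint_M f\bigl(\lim_{\rho\to 0}\rho^{m_1}\partial_\rho\Delta_\rho^{m_1}U_f\bigr)d\sigma_h$. The key identification is that this flux equals $\kappa_\gamma^{-1}P_h^\gamma f$: after the conformal change encoded in \eqref{eq:conformal-eqn4}, $L_{4,\rho}^{m_1}U_f=0$ becomes, on $(X,g_+)$, the scattering Poisson problem for the parameter $s=\tfrac{n}{2}+\gamma$, whose boundary expansion identifies $P_h^\gamma f$ as a universal constant times the $\rho^{2\gamma}$-coefficient by the Graham--Zworski construction. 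For general $U\in\mathcal{C}_f^\gamma$, writing $U=U_f+W$ with $\text{Tr}\,W=0$ and using $L_{4,\rho}^{m_1}U_f=0$ in \eqref{eq:Q-gamma-1} gives $\mathcal{Q}_\gamma(U,U)=\mathcal{Q}_\gamma(U_f,U_f)+\mathcal{Q}_\gamma(W,W)$, so both the sharp inequality and its equality case reduce to the nonnegativity $\mathcal{Q}_\gamma(W,W)\geq 0$ on traceless $W$.

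The main obstacle is precisely this nonnegativity, equivalently the lower bound in (iii), since the cross term $-(4P-(n-2\gamma+2)Jg)(\nabla W,\nabla W)$ and the zeroth-order $Q_\rho^{m_1}W^2$ piece are not a priori sign-definite. Here the spectral-gap hypothesis $\lambda_1(-\Delta_{g_+})>\tfrac{n^2}{4}-(2-\gamma)^2$ enters decisively: via \eqref{eq:conformal-eqn4}, $L_{4,\rho}^{m_1}$ on $(\bar X,g)$ is equivalent to a Paneitz-type operator on $(X,g_+)$ that factors, modulo absorbable lower-order curvature, as a product of two second-order scattering operators $(-\Delta_{g_+}-\mu_j)$ with $\mu_1=\tfrac{n^2}{4}-\gamma^2$ and $\mu_2=\tfrac{n^2}{4}-(2-\gamma)^2$, and the stated bound is exactly the requirement that both factors be coercive on $(X,g_+)$, the binding constraint for $\gamma\in(1,2)$ being the second one. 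Transferring this coercivity back through the conformal weight, combined with a weighted Hardy trace inequality bounding $\|\text{Tr}\,U\|_{H^\gamma(M)}$ by the $W^{2,2}(\bar X,\rho^{m_1}d\mu_g)$ norm, gives the lower bound on $\mathcal{Q}_\gamma$. Making the factorization quantitative, with clean absorption of the curvature terms $P$, $J$, $Q_\rho^{m_1}$ into the leading coercive piece, is the technically delicate step; everything else is essentially bookkeeping.
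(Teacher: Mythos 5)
A preliminary remark: the paper does not prove this proposition at all --- it is quoted from \citet{Case2017} (see also \citep{CaC}), so there is no internal proof to compare with; your outline has to be measured against the argument in that cited source. At the level of strategy you are on the same track as Case: a twice-iterated weighted Green identity for \eqref{eq:Q-gamma-1}, a Dirichlet principle in which the boundary flux $\lim_{\rho\to 0}\rho^{m_1}\partial_\rho\Delta_\rho^{m_1}U_f$ is identified with $\kappa_\gamma^{-1}P_h^\gamma f$ through the Graham--Zworski scattering construction, and coercivity coming from splitting the weighted Paneitz operator into two second-order pieces whose scattering thresholds are $\tfrac{n^2}{4}-\gamma^2$ and $\tfrac{n^2}{4}-(2-\gamma)^2$, the latter being the binding one for $\gamma\in(1,2)$. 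All of that is the correct skeleton.

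As a proof, however, the proposal has gaps precisely at the points it labels bookkeeping. First, in the integration by parts the vanishing of the extra boundary terms does not follow from \eqref{eq:expan-U2} alone: the term $\delta_\rho\bigl((4P-(n-2\gamma+2)Jg)(\nabla U)\bigr)$ produces a boundary contribution containing $\rho^{m_1}P(\partial_\rho,\nabla U)\,V$, whose tangential part is of size $\rho^{3-2\gamma}|P_{\rho i}||\nabla_{\bar x}f|$ and blows up for $\gamma\geq\tfrac32$ unless one uses that the SMMS is geodesic (normal form $g=d\rho^2+h_\rho$ with $h_\rho$ even in $\rho$), which gives $P_{\rho i}=O(\rho)$; this structural input is needed and is absent from your argument. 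Second, the decisive step (iii) is presented as an approximate factorization ``modulo absorbable lower-order curvature,'' but the curvature terms in $\mathcal{Q}_\gamma$ are not small in any norm and cannot be absorbed naively; what makes the argument work in \citet{Case2017} and \citep[Thm 3.1]{CaC} is the \emph{exact} algebraic factorization $L_{4,\rho}^{m_1}=L_{2,\rho}^{m_1+2}\circ L_{2,\rho}^{m_0}$, valid exactly because the SMMS is geodesic, after which the second-order energy inequality (the analogue of Proposition \ref{prop:case-1} at the shifted weight $m_1+2$, threshold $\tfrac{n^2}{4}-(2-\gamma)^2$) is applied to each factor. Relatedly, the adapted function $U_f$ is not produced by direct minimization --- that presupposes the coercivity you are trying to establish --- but is constructed from the Poisson operators of $-\Delta_{g_+}$ at the two scattering parameters, and it is this construction that simultaneously yields existence, the energy decomposition $\mathcal{Q}_\gamma(U,U)=\mathcal{Q}_\gamma(U_f,U_f)+\mathcal{Q}_\gamma(W,W)$, and the identification of the flux with $P_h^\gamma f$. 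So: right route, consistent with the cited source, but the two technically decisive steps are asserted rather than proved, and the absorption-based version of the coercivity argument would not go through as stated.
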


Similarly, for $\gamma\in (1,2)$, one can mimic the approach in \cite[Cor. 4.6]{Mayer2017} to get a Green's function of $L_{4,\rho}^{m_1}$ satisfying
\begin{align}\label{eq:G-1}
\begin{cases}
L_{4,\rho}^{m_1}G_{g}^\gamma(\cdot,\xi)=0&\text{ in }X, \text{ for all }\xi\in M,\\
\lim\limits_{\rho\to 0} \rho^{m_1}\partial_\rho G^\gamma_{g}(\cdot,\xi)=0&\text{ on }M\backslash\{\xi\},\\
\kappa_\gamma\lim\limits_{\rho\to 0}\rho^{m_1}\Delta_{\rho}^{m_1}G^\gamma_{g}(x,\xi)=\delta_{\xi}(x)&\text{ on }M.
\end{cases}
\end{align}
The Green's function has the following estimates
\begin{align}\label{est:G-1}
\begin{split}
|G_g^\gamma(x,\xi)-d_g(x,\xi)^{2\gamma-n}|&\leq C\max\{1,d_g(x,\xi)^{2\gamma-n+1}\},\\
|\nabla(G_g^\gamma(x,\xi)-d_g(x,\xi)^{2\gamma-n})|_g&\leq Cd_g(x,\xi)^{2\gamma-n},\\
|\nabla^2(G_g^\gamma(x,\xi)-d_g(x,\xi)^{2\gamma-n})|_g&\leq Cd_g(x,\xi)^{2\gamma-n-1},\\
|\nabla^3(G_g^\gamma(x,\xi)-d_g(x,\xi)^{2\gamma-n})|_g&\leq Cd_g(x,\xi)^{2\gamma-n-2}.
\end{split}
\end{align}
Moreover, if $R_h\geq 0$ and $\mathcal{Q}^\gamma_h\geq 0$ and $\mathcal{Q}^\gamma_h\not\equiv 0$ for some $h\in [h]$, then $G_g^\gamma>0$ by \citet{CaC}.
\subsection{Energy and Bubble for Type I}\hspace*{\fill} 

Suppose that $\gamma\in(0,1)$ and $(\bar X^{n+1},g, \rho,m_0)$ is a geodesic SMMS, where $\rho$ is the geodesic defining function for a representative metric ${ h }$. Define a Yamabe energy on $\bar X$ as 
\begin{align}\label{eq:Yam-Q-2}
    \overline { \mathcal{E} } _ { { h } } ^ { \gamma } [ U ] = \frac { \kappa _ { \gamma } \mathcal{Q}_{\gamma}(U,U) } { \left( \oint _ { M } | U | ^ { \frac { 2 n } { n - 2 \gamma } } d \sigma _ { { h } } \right) ^ { \frac { n - 2 \gamma } { n } } }
\end{align}
for any $U\in W^{1,2}(X,\rho^{m_0}d\mu_g)$ such that $U\not\equiv 0$ on $M$. See the precise value of $\kappa_\gamma$ at Notation. Then $\mathcal{E} _ { { h } } ^ { \gamma } [ f ]\leq  \overline { \mathcal{E} } _ { { h } } ^ { \gamma } [ U ]$ for any $U$ has the expansion \eqref{eq:expan-U}.
%It follows from \citep[Thm 1.1]{Case2017} that $\mathcal{E} _ { { h } } ^ { \gamma } [ f ]\leq  \overline { \mathcal{E} } _ { { h } } ^ { \gamma } [ U ]$ for any $U$ has the expansion \begin{align}\label{eq:expan-U-1}
%    U=f+\psi x_N^{2\gamma}+o(x_N^{2\gamma})
%\end{align}
Denote $N=n+1$ and $\RpN=\{x=(\bar x, x_N)| \bar x\in \mathbb{R}^n, x_N>0\}$. Recall the Sobolev trace inequality on $\RpN$  (see \cite{lieb2002sharp,cotsiolis2004best})
\begin{align}
    \left(\int_{\mathbb{R}^{n}}|U(\bar{x}, 0)|^{\frac{2 n}{n-2 \gamma}} d \bar{x}\right)^{\frac{n-2 \gamma}{n}} \leq S_{n, \gamma} \int_{0}^{\infty} \int_{\mathbb{R}^{n}} x_{N}^{1-2 \gamma}\left|\nabla U\left(\bar{x}, x_{N}\right)\right|^{2} d \bar{x} d x_{N}
\end{align}
where $S_{n,\gamma}$ denotes the optimal constant (for instance, see \citep[Cor. 5.3]{GonzalezNogueras2013}). Check our Notations for precise value.
%\[S(n,\gamma)=\frac{1}{2\pi^{\gamma}} \frac{\Gamma(\gamma)}{\Gamma(1-\gamma)}\frac{\Gamma((n-2 \gamma) / 2)}{\Gamma((n+2 \gamma) / 2)}\left(\frac{\Gamma(n)}{\Gamma(n / 2)}\right)^{2 \gamma / n}.\]

 It is known that the above equality is attained by $U=cW_{\varepsilon,\sigma}$ for any $c\in \mathbb{R}$, $\varepsilon>0$ and $\sigma\in \mathbb{R}^n=\partial\RpN$, where $W_{\varepsilon,\sigma}$ are the \textit{bubbles} defined as
\begin{align}\label{def:bubble-exp} 
W _ { \varepsilon , \sigma } \left( \bar{x} , x _ { N } \right) & = p _ { n , \gamma } \int _ { \mathbb { R } ^ { n } } \frac { x _ { N } ^ { 2 \gamma } } { \left( | \bar{x} - \bar{y} | ^ { 2 } + x _ { N } ^ { 2 } \right) ^ { \frac { n + 2 \gamma } { 2 } } } w _ { \varepsilon , \sigma } ( \bar{y} ) d \bar{y} 
%\\ & = g _ { n , \gamma } \int _ { \mathbb { R } ^ { n } } \frac { 1 } { \left( | \bar{x} - \bar{y} | ^ { 2 } + x _ { N } ^ { 2 } \right) ^ { \frac { n - 2 \gamma } { 2 } } } w _ { \varepsilon , \sigma } ^ { \frac { n + 2 \gamma } { n - 2 \gamma } } ( \bar{y} ) d \bar{y} 
\end{align}
with 
\begin{align*}
w _ { \varepsilon , \sigma } ( \bar{x} ) : = \alpha _ { n , \gamma } \left( \frac { \varepsilon } { \varepsilon ^ { 2 } + | \bar{x} - \sigma | ^ { 2 } } \right) ^ { \frac { n - 2 \gamma } { 2 } } = W _ { \varepsilon , \sigma } ( \bar{x} , 0 ).
\end{align*}
Here $p_{n,\gamma}$ is some constant such that 
\[p_{n,\gamma}\int _ { \mathbb { R } ^ { n } } \frac { x _ { N } ^ { 2 \gamma } } { \left( | \bar{x} - \bar{y} | ^ { 2 } + x _ { N } ^ { 2 } \right) ^ { \frac { n + 2 \gamma } { 2 } } } d \bar{y} =1.\] 
We choose $\alpha_{n,\gamma}$  such that the fractional curvature of $w_{\ve,\sigma}^{\frac{4}{n-2\gamma}}|dx|^2$ is 1. The precise value $p_{n,\gamma}$ and $\alpha_{n,\gamma}$ can be found in \eqref{notation-2} in the following.
We know that $W_{\ve,\sigma}$ satisfies
\begin{align}\label{eq:K1-15}
    \left\{ \begin{array} { l l } {  \Delta_{m_0}W_{\ve,\sigma}=0 } & { \text { in } \mathbb { R } _ { + } ^ {N }, } \\  -\kappa _ { \gamma }  \lim \limits_ { x _ { N} \rightarrow 0 + } x _ { N } ^ { 1 - 2 \gamma } { \partial_N W _ { \varepsilon , \sigma } }  = ( - \Delta ) ^ { \gamma } w _ { \varepsilon , \sigma } = w _ { \varepsilon , \sigma } ^ { \frac { n + 2 \gamma } { n - 2 \gamma } }  & { \text { on } \mathbb { R } ^ { n } }. \end{array} \right.
\end{align}
Here $\Delta_{m_0}=\Delta+m_0x_N^{-1}\partial_N$ is the weighted Laplacian on $\RpN$ and $\kappa_\gamma$ is a harmless constant (see \eqref{notation-2}). For simplicity, let us denote $W_\varepsilon=W_{\varepsilon,0}$ and $w_{\varepsilon}=w_{\varepsilon,0}$. Then it is easy to see
\[w_\ve(\ve \bar x)=\ve^{-\frac{n-2\gamma}{2}}w_1( \bar x),\quad W_\ve(\ve \bar x,\ve x_N)=\ve^{-\frac{n-2\gamma}{2}}W_1(\bar x,x_N).\]
Using Lemma \ref{lem:est-bubble} in the appendix, for any nonnegative integer $k\geq 0$, one can calculate 
\begin{align}
    &\int_{B_+^N(0,\delta)}x_N^{1-2\gamma}\left[|x|^{k+2}|\nabla W_\varepsilon|^2+|x|^kW_\varepsilon^2\right] dx\notag\\
    \leq &C_{n,\gamma}\begin{cases}\ve^{k+2}& \text{if }n-2\gamma-k-2>0,\\
    \ve^{k+2}\log(\delta/\ve)&\text{if }n-2\gamma-k-2=0,\\
    \ve^{k+2} (\delta/\ve)^{2\gamma+2+k-n}&\text{if }n-2\gamma-k-2<0,\end{cases}\label{eq:mnabw}
\end{align}
for any $0<2\ve\leq  \delta<1$.

% , we also have
%\begin{align}
%    &\int_{B_+^N(0,\delta)}x_N^{1-2\gamma}|x|^kW_\varepsilon^2 dx\leq C_{n,\gamma}\begin{cases}\ve^{k+2}& \text{if }n-2\gamma-k-2>0,\\
%    \ve^{k+2}\log(\delta/\ve)&\text{if }n-2\gamma-k-2=0,\\
%    \ve^{k+2} (\delta/\ve)^{2\gamma+2+k-n}&\text{if }n-2\gamma-k-2<0.\end{cases}\label{eq:mw}
%\end{align}
\subsection{Energy and Bubble for Type II}\hspace*{\fill} 

Suppose $\gamma\in (1,\min\{2,n/2\})$ and $(\bar X^{n+1},g, \rho,m_1)$ is a geodesic SMMS, where $\rho$ is the geodesic defining function for a representative metric ${ h }$. Define a Yamabe energy on $\bar X$ as 
%Let $\left(X^{n+1}, M^{n}, g_{+}\right)$ be a Poincar\'{e}-Einstein manifold. Let $\rho$ be the geodesic defining function for $(M^n,h)$.
%It follows from \cite{CaC} and \cite{Case2017} that we shall the following energy quantity, which replaces the definition of \eqref{eq:Yam-Q-2} for $\gamma\in(0,1)$,
\begin{align}
\begin{split}\label{eq:Yam-Q-3}
     \overline { \mathcal{E} } _ { { h } } ^ { \gamma } [ U ]= \frac{\kappa_\gamma \mathcal{Q}_{\gamma}(U,U)}{\left( \oint _ { M } | U | ^ { \frac { 2 n } { n - 2 \gamma } } d \sigma _ { { h } } \right) ^ { \frac { n - 2 \gamma } { n } }}.
\end{split}
\end{align}
for any $U\in W^{2,2}(X,\rho^{m_1}d\mu_g)$ such that $U\not\equiv 0$ on $M$. Then $\mathcal{E} _ { { h } } ^ { \gamma } [ f ]\leq  \overline { \mathcal{E} } _ { { h } } ^ { \gamma } [ U ]$ for any $U$ has the expansion \eqref{eq:expan-U2}.

%Here $m_1=3-2\gamma$, $\Delta_{{ \rho } }^{m_1}=\Delta_{{ g } }+m_1x_N^{-1}\partial_N$, $A$ is the Schouten tensor of $(X,{ g } )$, $J=tr_{{ g } }A$ and 
%\begin{align*}
%    A=&\frac{1}{n-1}\left(Ric[{ g } ]-\frac{1}{2n}R[{ g } ]{ g } \right),\\
%    Q=&-\Delta_{{ \rho } }^{m_1}J-2|A|^2+\frac{n+1+m_1}{2}J^2.
%\end{align*}
%We have simplified the notations in \cite[(1-3)]{Case2017}, because in our case $x_N$ is the geodesic defining function for $M$. One can apply \cite[Lemma 3.2]{CaC} to the smooth metric measure space $(X^{n+1},\bar{g},x_N^{m_1}d\mu_{{ g } },m_1-1)$ to see $J_{\phi_1}^{m_1}=J$, $P_{\phi_1}^{m_1}=A$ and $Q_{\phi_1}^{m_1}$ as above. 
%
% According to the energy identity in \cite[Thm 1.2]{Case2017}, we have
% \begin{align*}
%     \mathcal{E} _ { { h } } ^ { \gamma } [ f ]\leq  \overline { \mathcal{E} } _ { { h } } ^ { \gamma } [ U ]
% \end{align*}
% for any $U$ has the following expansion
% \begin{align*}\label{eq:expan-U-2}
%     U=f+\psi_1\rho^2+\psi_2\rho^{2\gamma}+o(\rho^{2\gamma}).
% \end{align*}

We also have the Sobolev trace inequality for $\gamma\in (1,\min\{2,n/2\})$ see \citep{chang2011fractional, Case2017}
\begin{align}
    \left(\int_{\mathbb{R}^{n}}|U(\bar{x}, 0)|^{\frac{2 n}{n-2 \gamma}} d \bar{x}\right)^{\frac{n-2 \gamma}{n}} \leq S_{n, \gamma} \int_{\RpN} x_{N}^{3-2 \gamma}\left|\Delta_{m_1} U\left(\bar{x}, x_{N}\right)\right|^{2} d \bar{x} d x_{N}
\end{align}
where $S_{n,\gamma}$ is the optimal constant. It is also known that the equality is achieved by the \textit{bubbles} \eqref{def:bubble-exp}. In this case, however, $W_{\ve,\sigma}$ satisfies 
\begin{align}\label{eq:W2RN}
    \begin{cases}
    \Delta_{m_1}^2W_{\ve,\sigma}=0&\text{in }\RpN,\\
    W_{\ve,\sigma}=w_{\ve,\sigma}&\text{on }\Rn,\\
    \lim\limits_{x_N\to 0+} x_N^{m_1}\partial_N W_{\ve,\sigma}=0&\text{on }\Rn,\\
    (-\Delta)^\gamma w_{\ve,\sigma}=\kappa_\gamma \lim\limits_{x_N\to 0+} x_N^{m_1}\partial_N \Delta_{m_1}W_{\ve,\sigma}{=w_{\ve,\sigma}^{\frac{n+2\gamma}{n-2\gamma}}}&\text{on }\Rn.
    \end{cases}
\end{align}
 Here $\Delta_{m_1}=\Delta+m_1x_N^{-1}\partial_N$ is the weighted Laplacian on $\RpN$ and $\kappa_\gamma$ can be seen in \eqref{notation-2}. Moreover it also satisfies $\Delta_{m_0}W_{\ve,\sigma}=0$, 
%\begin{align}\label{eq:DW-pNW1}
%    (\Delta+(1-2\gamma)x_N^{-1}\p_N)W_{\ve,\sigma}=0\quad \text{in }\RpN.
%\end{align}
which is
\begin{align}\label{eq:DW-pNW2}
    \Delta_{m_1}W_{\ve,\sigma}=2x_N^{-1}\partial_N W_{\ve,\sigma}\quad \text{in }\RpN.
\end{align}
% The energy identity of \cite[(4.20)]{CaC} is reduced to 
% \begin{align}
%     \kappa_\gamma^{-1}\int_{\Rn} w_{\ve,\sigma}(-\Delta)^\gamma w_{\ve,\sigma}d\bar x=\int_{\RpN} x_N^{m_1}(\Delta_{m_1} W_{\ve,\sigma})^2dx
% \end{align}
% And the inequality \cite[Theorem 6.1]{Case2017} is reduced to
% \begin{align}
%     S_{n,\gamma}\int_{\RpN} x_N^{m_1}(\Delta_{m_1} W_{\ve})^2dx=\left(\int_{\Rn}w_\ve^{\frac{2n}{n-2\gamma}}d\bar x\right)^{\frac{n-2\gamma}{n}}.
% \end{align}
Using Lemma \ref{lem:est-bubble}, for any integer $k\geq 0$ and $0<2\varepsilon\leq \delta<1$, one has
\begin{align}\label{eq:mnabw2}
    &\int_{B_+^N(0,\delta)}x_N^{3-2\gamma}\left[|x|^kW_\varepsilon^2+|x|^{k+2}|\nabla W_\ve|^2+|x|^{k+4}|\partial_{ij}W_\ve|^2\right] dx\notag\\
    \leq& C_{n,\gamma}\begin{cases}\ve^{k+4}& \text{if }n-2\gamma-k-4>0,\\
    \ve^{k+4}\log(\delta/\ve)&\text{if }n-2\gamma-k-4=0,\\
    \ve^{k+4} (\delta/\ve)^{2\gamma+4+k-n}&\text{if }n-2\gamma-k-4<0.\end{cases}
\end{align}
% \begin{align}
%     &\int_{B_+^N(0,\delta)}x_N^{3-2\gamma}|x|^k|\nabla W_\varepsilon|^2 dx=\ve^{k+2}\int_{B_+^N(0,\delta/\ve)}x_N^{3-2\gamma}|x|^k|\nabla W_1|^2 dx\notag\\
%     &\leq C
%     \begin{cases}\ve^{k+2}& \text{if }n-2\gamma-k-2>0,\\
%     \ve^{k+2}\log(\delta/\ve)&\text{if }n-2\gamma-k-2=0,\\
%     \ve^{k+2} (\delta/\ve)^{2\gamma+2+k-n}&\text{if }n-2\gamma-k-2<0.
%     \end{cases}
% \end{align}
% \begin{align}
%     &\int_{B_+^N(0,\delta)}x_N^{3-2\gamma}|x|^k|\p_{ij}^2 W_\varepsilon|^2 dx=\ve^{k}\int_{B_+^N(0,\delta/\ve)}x_N^{3-2\gamma}|x|^k|\p_{ij}^2 W_1|^2 dx\notag\\
%     &\leq C
%     \begin{cases}
%     \ve^{k}& \text{if }n-2\gamma-k>0,\\
%     \ve^{k}\log(\delta/\ve)&\text{if }n-2\gamma-k=0,\\
%     \ve^{k} (\delta/\ve)^{2\gamma+k-n}&\text{if }n-2\gamma-k<0.
%     \end{cases}
% \end{align}
\subsection{Notations}\label{sec:notations}
The following notations are used throughout this paper
\begin{enumerate}
    \item Let $N=n+1$. For $x \in \mathbb { R } _ { + } ^ { N } : = \left\{ \left( x _ { 1 } , \ldots , x _ { n } , x _ { N } \right) \in \mathbb { R } ^ { N } : x _ { N } > 0 \right\}$ ,  we write $\bar{x} = \left( x _ { 1 } , \ldots , x _ { n } , 0 \right) \in\partial \mathbb { R } _ { + } ^ { N } \simeq \mathbb { R } ^ { n }$  and $ r = | \bar{x} |$. $i,j,k$ are indices run from 1 to $n$.
    \item $\bnpe{}$ is an open ball in $\RpN$ and $\dze{}$ is an open ball in $\Rn$.
    \item  $m_0=1-2\gamma$ and  $m_1=3-2\gamma$.
    \item Some positive constants for $0<2\gamma<n$ (see \cite{chang2011fractional})%The following positive constant are used through out this paper
    \begin{align} 
    d_{\gamma}&=2^{2 \gamma} \frac{\Gamma(\gamma)}{\Gamma(-\gamma)}, \quad 
    \kappa _ { \gamma } =\frac{\Gamma(\gamma-\lfloor\gamma\rfloor)}{\Gamma(\gamma+1)} \frac{(-1)^{\lfloor\gamma\rfloor+1}d_\gamma}{2^{2\lfloor\gamma\rfloor+1}(\lfloor\gamma\rfloor)!}>0. 
    \end{align}
    here $\lfloor\gamma\rfloor$ is the greatest integer less than or equal to $\gamma$. One can see that
    \[\kappa_\gamma=-\frac{d_\gamma}{2\gamma} \text{ if }\gamma\in(0,1)\quad \kappa_\gamma=\frac{d_\gamma}{8\gamma(\gamma-1)}\text{ if }\gamma\in(1,\min\{2,\frac{n}{2}\}).\]
    The following positive constant are also used for $0<2\gamma<n$
    \begin{align}
    \begin{split}\label{notation-2}
     S(n,\gamma)=&\kappa_\gamma\frac{\Gamma((n-2 \gamma) / 2)}{\Gamma((n+2 \gamma) / 2)}|vol(\mathbb{S}^n)|^{-\frac{2\gamma}{n}},\quad  p _ { n , \gamma } = \frac { \Gamma \left( \frac { n + 2 \gamma } { 2 } \right) } { \pi ^ { \frac { n } { 2 } } \Gamma ( \gamma ) }\\
    \alpha _ { n , \gamma } =&[S(n,\gamma)^{-1}\kappa_\gamma]^{\frac{n-2\gamma}{4\gamma}} \left({2^{n-1}\pi^{-\frac{n+1}{2}}\Gamma(\frac{n+1}{2})}{}\right)^{\frac{n-2\gamma}{2n}}.
    \end{split}
    \end{align}
        %One can see that if $\gamma\in(0,1)$, then $\kappa_\gamma=-\frac{d_\gamma}{2\gamma}$, if $\gamma\in(1,2)$, then $\kappa_\gamma=\frac{d_\gamma}{8\gamma(\gamma-1)}$, $d_{\gamma}=2^{2 \gamma} \frac{\Gamma(\gamma)}{\Gamma(-\gamma)}$
    %\item $\kappa_\gamma=\frac{(-1)^{m+1}d_\gamma}{2^{m+1}\gamma(\gamma-1)\cdots(\gamma-m)}=2^{2\gamma-\lceil \gamma\rceil}\frac{\Gamma(\gamma)}{\Gamma(\lceil\gamma\rceil-\gamma)}>0$ where $m=\lfloor{\gamma}\rfloor$
    %\item $c_n^\gamma=\int_{\Rn}w^{\frac{2n}{n-2\gamma}}d\sigma$
    \item The fractional Yamabe constant for sphere
    \begin{align}\label{eq:Yamabe-sphere}
        \Ygs= \mathcal{Y} ^ { \gamma } \left( \mathbb{S} ^ { n } , \left[ g _ { c } \right] \right) = S _ { n , \gamma } ^ { - 1 } \kappa _ { \gamma } = \left( \int _ { \mathbb { R } ^ { n } } w _ { \varepsilon , \sigma } ^ { \frac { 2 n } { n - 2 \gamma } } d \bar{x} \right) ^ { \frac { 2 \gamma } { n } }.
    \end{align}
    Equivalently,
    \[\int _ { \mathbb { R } ^ { n } } w _ { \varepsilon , \sigma } ^ { \frac { 2 n } { n - 2 \gamma } } d \bar{x} = (\Ygs)^{\frac{n}{2\gamma}}.\]
    % this also means 
    % \[\int _ { \mathbb { R } ^ { n } } w _ { \varepsilon , \sigma } ^ { \frac { 2 n } { n - 2 \gamma } } d \bar{x}=(S _ { n , \gamma } ^ { - 1 } \kappa _ { \gamma } )^{\frac{n}{2\gamma}}\]
    \item $\chi$ is a cut-off function has support in $\bnped$ and $\chi=1$ in $\bnpe$ and
\begin{align}\label{eq:cut-off}
    \chi_\delta=\chi\left({|x|^2}/{\delta}\right)
\end{align}
    \item Volume element on $X$ is $d\mu_g$ and on $M$ is $d\sigma_h$.
\end{enumerate}
\section{Energy estimates for the Case (I-1)}
In this section, we will derive the energy estimates for (I-1). This type of estimates will be used in Lemma \ref{lem:self-interaction} in the following.
%{Case 1: Umbilic conformal infinity}

Assume that $(\bar X^{n+1}, g,\rho, m_0)$ is a geodesic SMMS, where $\rho$ is the geodesic defining function for a representative metric $h$. 
%\begin{align}\label{eq:K1-19}
%    \left\{ \begin{array} { l } { R \left[ g ^ { + } \right] + n ( n + 1 ) = o \left( \rho ^ { 4 } \right) } \\ { \partial _ { \bar{x} } ^ { m } \left( R \left[ g ^ { + } \right] + n ( n + 1 ) \right) = o \left( \rho ^ { 2 } \right) \quad ( m = 1,2 ) } \\ { \partial _ { \rho } ^ { m } \left( R \left[ g ^ { + } \right] + n ( n + 1 ) \right) = o \left( \rho ^ { 2 } \right) \quad ( m = 1,2 ) } \end{array} \right.
%\end{align}
%as $\rho\to 0$ uniformly on $M$. One can find a representative metric ${ h }$ of the conformal infinity $M$ satisfying \cite[Lemma 2.4 and 3.2]{Kim2018}.
%See the following result in \cite[Corollary 3.6]{Kim2018}.
Given any $a\in M$, there exists a Fermi coordinates $\Psi_a:\mathcal{O}(a)\to \bnped$ on some neighborhood $\mathcal{O}(a)\subset X$.   One can identify $\mathcal{O}(a)$ and $\bnped$ through $\Psi_a=(\bar x,x_N)$. It follows from \citep[Lemma 2.2 and 2.4]{Kim2018} that the following expansion of metric holds near $0$
\begin{align}
\begin{split}\label{eq:expan-metric-low}
    { g } ^{ij}(x)=&\delta_{ij}+\frac13R_{ikjl}[{ h }]x_kx_l+R_{iNjN}[{ g } ]x_N^2+O(|x|^3),\\
    \sqrt { | { g }  | } \left( x\right) =& 1  + O ( \left| x \right| ^ { 3 } ) \quad \text { in } \bnped.
    \end{split}
\end{align}
 Here $R_{ikjl}[{ h }]$ is a component of the Riemannian curvature tensor on $M$, $R_{iNjN}[{ g } ]$ is that of the Riemannian curvature tensor in $X$. Every tensor in the expansions is computed at $a=0$. Here we implicitly use the fact that $(M,{ h })\subset (\bar X,{ g } )$ is totally geodesic. Let $C_0\ve<\delta\leq \delta_0\leq 1$. Denote
 \begin{align}\label{U:I-1}
 U_{a,\ve, \delta}(x)= \chi_{\delta} W _ { \ve } ( \Psi_{a}(x))+(1-\chi_\delta(\Psi_{a}(x)))\ve^{\frac{n-2\gamma}{2}}G_g^\gamma,
 \end{align}
where $\chi_\delta$ is defined in \eqref{eq:cut-off}, $G_g^\gamma$ is the Green's function.
\begin{proposition}\label{prop:I-low}
Suppose that  $\gamma\in(0,1)$ and $n< 4+2\gamma$. For $U_{a,\ve,\delta}$ defined in \eqref{U:I-1}, if $\delta_0$ small enough and $C_0$ large enough, there exists a constant $\mathcal{C}_1>0$ such that 
\begin{align*}
   { \mathcal{E} } _ { { h } } ^ { \gamma } \left[ U_{a,\ve, \delta}\right]\leq \overline { \mathcal{E} } _ { { h } } ^ { \gamma } \left[ U_{a,\ve, \delta} \right] \leq \Ygs + \epsilon ^ { n-2\gamma } \mathcal { C }_1  ( n , \gamma,{ g } ,\delta ) + o \left( \epsilon ^ { n-2\gamma } \right).
\end{align*}
\end{proposition}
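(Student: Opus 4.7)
The plan is to bound $\overline{\mathcal{E}}_h^\gamma[U_{a,\ve,\delta}]$ by expanding its numerator $\kappa_\gamma\mathcal{Q}_\gamma(U_{a,\ve,\delta},U_{a,\ve,\delta})$ and its denominator $\bigl(\oint_M |U_{a,\ve,\delta}|^{2n/(n-2\gamma)}\,d\sigma_h\bigr)^{(n-2\gamma)/n}$ separately in the Fermi chart $\Psi_a$, then dividing. The first inequality in the statement is immediate: setting $f:=U_{a,\ve,\delta}|_M$, Proposition \ref{prop:case-1} gives $\kappa_\gamma\mathcal{Q}_\gamma(U_{a,\ve,\delta},U_{a,\ve,\delta})\geq\oint_M fP_h^\gamma f\,d\sigma_h$, which upon dividing by the common denominator yields $\overline{\mathcal{E}}_h^\gamma[U_{a,\ve,\delta}]\geq\mathcal{E}_h^\gamma[f]$.

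For the numerator I split the integration domain into the inner half-ball $\bnpe$ (on which $\chi_\delta\equiv 1$ and $U_{a,\ve,\delta}=W_\ve$), the gluing annulus $\bnped\setminus\bnpe$, and the exterior region (on which $U_{a,\ve,\delta}=\ve^{(n-2\gamma)/2}G_g^\gamma$). On $\bnpe$ the metric expansion \eqref{eq:expan-metric-low} gives
\begin{align*}
x_N^{1-2\gamma}g^{ij}\p_iW_\ve\p_jW_\ve\sqrt{|g|}=x_N^{1-2\gamma}|\nabla W_\ve|^2+\tfrac13R_{ikjl}[h]x_kx_l\,x_N^{1-2\gamma}\p_iW_\ve\p_jW_\ve+\text{higher order},
\end{align*}
and a similar expansion treats the $\tfrac{n-2\gamma}{2}JW_\ve^2$ term in $\mathcal{Q}_\gamma$. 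Extending the flat-space piece to all of $\RpN$ reproduces the sharp Sobolev trace quotient $\Ygs$ via \eqref{eq:K1-15} and the normalization of $\alpha_{n,\gamma}$; the tail outside $\bnpe$ is of order $\ve^{n-2\gamma}$ by the explicit decay of $W_\ve$. The curvature corrections are estimated by \eqref{eq:mnabw} with $k=0,2$. On the exterior, $L_{2,\rho}^{m_0}G_g^\gamma=0$ reduces the energy integral to boundary contributions that are finite in terms of $\delta$ and $g$, and the overall factor $(\ve^{(n-2\gamma)/2})^2=\ve^{n-2\gamma}$ places this contribution at the target order. In the gluing annulus the estimates \eqref{est:G-0} guarantee that $\ve^{(n-2\gamma)/2}G_g^\gamma$ matches $W_\ve$ to leading order $\ve^{(n-2\gamma)/2}|x|^{2\gamma-n}$ for $|x|\sim\delta$, so cross terms and the $\nabla\chi_\delta$ pieces remain at the same $\ve^{n-2\gamma}$ scale once the cutoff derivative is absorbed.

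For the denominator the trace $\chi_\delta w_\ve+(1-\chi_\delta)\ve^{(n-2\gamma)/2}G_g^\gamma|_M$ has its mass concentrated at $a$ and equal to $\int_{\Rn}w_\ve^{2n/(n-2\gamma)}\,d\bar x=(\Ygs)^{n/(2\gamma)}$ by \eqref{eq:Yamabe-sphere}, while $d\sigma_h=(1+O(|\bar x|^2))\,d\bar x$ and the Green's function tail contribute at most $o(\ve^{n-2\gamma})$ in the low dimensional range $n<2\gamma+4$. Dividing numerator by denominator and expanding $(1+t)^{(n-2\gamma)/n}$ yields the asserted $\Ygs+\mathcal{C}_1\ve^{n-2\gamma}+o(\ve^{n-2\gamma})$. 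The most delicate step is verifying that the curvature contractions on $\bnpe$ contribute only at order $\ve^{n-2\gamma}$, rather than the a priori possible $\ve^2$ that \eqref{eq:mnabw} would produce whenever $n-2\gamma>2$; this cancellation relies on the Poincar\'e--Einstein structure (the even expansion of $h_\rho$ in $\rho$, the vanishing second fundamental form, and the identities for the curvature of $g$ derived from $Ric(g_+)=-ng_+$), combined with the angular symmetry of $W_\ve$ in $\bar x$, which annihilates the $\ve^2$ and $\ve^3$ coefficients. All remaining terms are absorbed into the constant $\mathcal{C}_1(n,\gamma,g,\delta)$; since only an upper bound is claimed, no sign control on $\mathcal{C}_1$ is required at this stage, and the choices $C_0\ve<\delta\leq\delta_0\ll 1$ enter precisely to make the metric expansion \eqref{eq:expan-metric-low} sharp and to keep the Green's function asymptotics \eqref{est:G-0} applicable throughout the gluing region.
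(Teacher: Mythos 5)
Your skeleton coincides with the paper's proof: the same splitting of $\mathcal{Q}_\gamma$ into the inner half-ball $\bnpe$, the gluing annulus and the exterior region, the same use of \eqref{eq:K1-15}, \eqref{eq:mnabw} and \eqref{est:G-0}, and the same lower bound \eqref{eq:E-nm-p4} for the denominator; the first inequality via Proposition \ref{prop:case-1} is also the paper's argument. The problem is the step you yourself single out as the most delicate one: your mechanism for killing the $\ve^2$ and $\ve^3$ coefficients is not correct as stated.

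When $n-2\gamma>2$ the quadratic corrections genuinely produce $\ve^2$-terms, and they are \emph{not} annihilated by the Poincar\'e--Einstein structure together with the angular symmetry of $W_\ve$. In a geodesic Fermi chart for an \emph{arbitrary} representative $h$, rescaling gives in the numerator the contributions $\tfrac{n-2\gamma}{2}J[g](a)\,\ve^2\int_{\RpN}x_N^{1-2\gamma}W_1^2\,dx$ and $\tfrac1n Ric[g]_{NN}(a)\,\ve^2\int_{\RpN}x_N^{3-2\gamma}|\nabla_{\bar x}W_1|^2\,dx$, and in the denominator a term $-\tfrac16 Ric[h]_{ij}(a)\int \bar x_i\bar x_j\,w_1^{\crexp}d\bar x\,\ve^2$ (so your claim that $d\sigma_h=(1+O(|\bar x|^2))d\bar x$ only contributes $o(\ve^{n-2\gamma})$ fails for the same reason). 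For a geodesic compactification of a P--E metric one indeed has $\pi=0$ and evenness of $h_\rho$, but $Ric[g]_{NN}(a)$ and $R[g](a)$ equal positive dimensional multiples of $R[h](a)$ (via $Ric[g]_{NN}=J[h]$ and the Gauss equation), so these coefficients do not vanish and have no favorable sign; parity and the antisymmetry of $R[h]_{ikjl}$ only remove the $R_{ikjl}[h]x_kx_l$ contraction and the odd-order terms. What actually removes the $\ve^2$ and $\ve^3$ coefficients is the \emph{conformal gauge choice}: one must work with the adapted representative $h_a\in[h]$ and its geodesic defining function (conformal Fermi coordinates, as in Lemma \ref{lem:metric-expan1} and \eqref{eq:exp-R[g]}; this normalization is already built into the expansion \eqref{eq:expan-metric-low} quoted from \citet{Kim2018}, whose $\sqrt{|g|}=1+O(|x|^3)$ encodes $Ric[h_a](a)=0$, hence $Ric[g]_{NN}(a)=R[g](a)=R[g]_{iNjN}(a)=0$, and it is also why Section 5 defines $v_{a,\ve,\delta}$ with the conformal factor $(\rho_a/\rho)^{\frac{n-2\gamma}{2}}$). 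Only after this choice does the expansion start at order $\ve^4$ (Weyl), which is $o(\ve^{n-2\gamma})$ precisely because $n<4+2\gamma$. Without it, your bound degenerates to $\Ygs+c\,R[h](a)\,\ve^2+\dots$ with $c>0$, which is weaker than the claim at any point where $R[h](a)>0$. (Note the paper's own displayed estimate \eqref{eq:E-nm-p2} only invokes $n<2\gamma+2$ at this point; covering the full range $n<4+2\gamma$ rests on these conformal normalizations, so your write-up must invoke them explicitly rather than attribute the cancellation to the Einstein condition alone.)
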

\begin{proof}
The first inequality follows from the fact that $U_{a,\ve, \delta}$ has the right expansion \eqref{eq:expan-U}. Therefore we just need to justify the second inequality. Notice the above inequality echos the fact that this is a Global case.

We adopt the notation $\mathcal{Q}(U:\Omega)$ is \eqref{def:Q0} meaning the integration over some set $\Omega\subset X$. Then  
\begin{align*}
    \mathcal{Q}_{\gamma}(U_{a,\ve,\delta})=&\mathcal{Q}_{\gamma}(W_\ve:\bnpe)+\mathcal{Q}_{\gamma}(U_{a,\ve,\delta}:\bnped\backslash \bnpe)\\
    &+\mathcal{Q}_{\gamma}(\ve^{\frac{n-2\gamma}{2}}G^\gamma_g:X\backslash\mathcal{O}_a).
\end{align*}
Using estimates in \eqref{est:G-0}, one obtains
\begin{align*}
&\mathcal{Q}_{\gamma}(\ve^{\frac{n-2\gamma}{2}}G^\gamma_a:X\backslash\mathcal{O}_a)\\
=&\ve^{n-2\gamma}\int_{X\backslash\mathcal{O}_a} \left(\left| \nabla G_a^\gamma\right|_{{ g } } ^ { 2 } +\frac{n-2\gamma}{2} J (G_a^\gamma)^2\right)\rho ^ { m_0 }d \mu_{{ g } }\leq C\ve^{n-2\gamma}\delta^{2\gamma-n}.
\end{align*}
Here $C=C(n,\gamma,{ g } )$. Similarly, by the estimates of $W_\ve$ in Lemma \ref{lem:est-bubble}, we also get
\begin{align*}
\mathcal{Q}_{\gamma}(U_{a,\ve,\delta}:\bnped\backslash \bnpe)\leq C\ve^{n-2\gamma}\delta^{2\gamma-n}.
\end{align*}
For the first term in $\mathcal{Q}_{\gamma}(U)$, applying \eqref{eq:mnabw}
\begin{align}\label{eq:E-nm-p1}
\begin{split}
    \mathcal{Q}_{\gamma}(W_\ve:\bnpe)=& \int _ { \bnpe } x _ { N } ^ { 1 - 2 \gamma }\left( \left| \nabla W _ { \ve } \right|_{{ g } } ^ { 2 } +\frac{n-2\gamma}{2}J W_\ve^2\right)d \mu_{{ g } }\\
        \leq &\int _ { \bnpe } x _ { N } ^ { 1 - 2 \gamma } \left| \nabla W _ { \ve } \right|_{{ g } } ^ { 2 } dx+C\ve^{n-2\gamma}\delta^{2\gamma-n}.
\end{split}
\end{align}
 The first term in the last inequality can be estimated by \eqref{eq:expan-metric-low} and \eqref{eq:mnabw}
\begin{align}\label{eq:E-nm-p2}
\begin{split}
   &\int _ { \bnpe } x _ { N } ^ { 1 - 2 \gamma } \left| \nabla W _ { \ve } \right|_{{ g } } ^ { 2 } d x \\
   =&\int _ { \bnpe } x _ { N } ^ { 1 - 2 \gamma } \left| \nabla W _ { \ve } \right| ^ { 2 } d x + \ve ^ { 2 }  R _ { i N j N } [ { g }  ] \int _ { B _ { + } ^ { N } \left( 0 , \delta/\ve  \right) } x _ { N } ^ { 3 - 2 \gamma } \partial _ { i } W _ { 1 } \partial _ { j } W _ { 1 } d x  + O \left( \ve ^ { 3 }(\delta/\ve)^{2\gamma+3-n} \right)\\
   \leq &\int _ { \bnpe } x _ { N } ^ { 1 - 2 \gamma } \left| \nabla W _ { \epsilon } \right| ^ { 2 } d x+C\delta^{2\gamma+2-n}\ve^{n-2\gamma},
   \end{split}
\end{align}
where $n<2\gamma+2$ is used. 
It follows from \eqref{eq:K1-15} and $x\cdot \nabla W_1\leq 0$ for $x\in \RpN{}$ that
\begin{align*}
    \int _ { \bnpe } x _ { N } ^ { 1 - 2 \gamma } \left| \nabla W _ { \epsilon } \right| ^ { 2 } d x\leq k_\gamma^{-1}\int_{\dze} w_\ve^\crexp d\bar x\leq S_{n,\gamma}^{-1}\left(\int_{\dze} w_\ve^\crexp{}d\bar x\right)^{\frac{n-2\gamma}{n}}
\end{align*}
where the last inequality follows from \eqref{eq:Yamabe-sphere}. 
%Recall \cite[(2-5)]{Kim2018}
%\begin{align*}
%    E \left( x _ { N } \right) = \frac { - ( n - 2 \gamma )} { 4(n-1) } \| \pi \| ^ { 2 } x _ { N } ^ { 1 - 2 \gamma } + O \left( | x | ^ { 2 } x _ { N } ^ { - 2 \gamma } \right).
%\end{align*}
%Consequently
%\begin{align}\label{eq:E-nm-p3}
%    &\int _ { X} E \left( x _ { N } \right) (\chi_{a, \delta} W _ { \epsilon }) ^ { 2 } d v _ { { g }  }\leq \int _ { \bnped } E \left( x _ { N } \right) W _ { \epsilon } ^ { 2 } d v _ { { g }  }\notag\\
%    =&\epsilon ^ { 2 } \frac { -(n-2\gamma) \| \pi \| ^ { 2 } } { 4(n-1) } \int _ { B_+^N{(0,2\delta/\ve)} } x _ { N } ^ { 1 - 2 \gamma } W _ { 1 } ^ { 2 } d x + O \left( \epsilon ^ { 3} (\delta/\ve)^{2\gamma+3-n}\right)\notag\\
%    \leq &C\ve^{n-2\gamma}\delta^{2\gamma+3-n}.
%\end{align}
On the other hand, 
\begin{align}\label{eq:E-nm-p4}
    &\oint_{M}U_{a.\ve,\delta}^\crexp d\sigma_{{ h }}\geq \int_{\dze}w_\ve^\crexp d\sigma_{{ h }}\geq \int_{\dze}w_\ve^\crexp d{\bar x}-C\ve^n\delta^{-n}.
\end{align}
Putting all estimates back to the expression of \eqref{eq:Yam-Q-2}, one could get the conclusion by taking $\ve$ small enough.
\end{proof}

\section{Energy estimates for Type II}
In this section, we will study the energy estimates for $\gamma\in (1,\min\{2,n/2\})$. Again, we need the expansion of metric.
%Suppose that $(X^{n+1},M^n,g_+)$ is a Poincar\'{e}-Einstein manifold,  and a representative $h$ of
%the conformal boundary.
%, there exists a unique defining function $r$, called the geodesic defining function, such that, locally near $M$
%\[g_+=r^{-2}(dr^2+h_r)\]
%for $h_r$ a one parameter family of metric on $M$ with $h_0=h$. $h_r$ has an asymptotic expansion that is even in powers of $r$, at least up to order $n$ (see \cite{Graham}). This implies $M$ has to be umbilic.
%One can get the following lemma in \cite{Kim2018}
\begin{lemma}\label{lem:metric-expan1}
Suppose $(X^{n+1},M^n, g_+)$ is a Poincar\'e-Einstein manifold with conformal infinity $(M,[{ h }])$. For a fixed point $a\in M$, there exist a representative ${ h }={ h }_a$ of the class $[{ h }]$, and the geodesic defining function $\rho_a$ near $M$ such that the metric ${ g } =\rho_a^2 g_+$ in terms of Fermi coordinates around $a$ has the following expansions
\begin{align} \label{eq:det-expan-1}
&\sqrt{|{ g } |}\left(\bar{x}, x_{N}\right)\\
=& 1-\frac{1}{2} Ric[{ g } ]_{N N ; i} x_{N}^{2} x_{i}-\frac{1}{4} Ric[{ g } ]_{N N ; i j} x_{N}^{2} x_{i}x_j\notag-\frac{1}{6} Ric[{ g } ]_{N N ; N i} x_{N}^{3} x_{i}+O\left(|x|^{5}\right)
\end{align}
and
\begin{align}
     &{ g } ^{i j}\left(\bar{x}, x_{N}\right)\notag\\
    =& \delta_{i j}+\frac{1}{3} R[{ h }]_{i k j l} x_{k} x_{l}+\frac{1}{6} R[{ h }]_{i k j l;m} x_{k} x_{l} x_{m}+R[{ g } ]_{i N j N ; k} x_{N}^{2} x_{k}\notag \\ &+\left(\frac{1}{20} R[{ h }]_{i k j l ; m q}+\frac{1}{15} R[{ h }]_{i k s l} R[{ h }]_{j m s q}\right) x_{k} x_{l} x_{m} x_{q} \notag\\
    &+\frac{1}{2} R[{ g } ]_{i N j N ; k l} x_{N}^{2} x_{k} x_{l}+ {\frac{1}{12}R[{ g } ]_{i N j N ; N N} x_{N}^{4}+O\left(|x|^5\right)}\label{eq:det-expan-2}
\end{align}
near $a$. Here all tensors are computed at $a$ and the indices $i,j,k,m,q,s$ run from 1 to $n$. Moreover, one has the following relations of the curvature
\begin{enumerate}[label=(\arabic*)]
    \item $Ric[{ h }]_{i j ; k}(a)+Ric[{ h }]_{j k ; i}(a)+Ric[{ h }]_{k i ; j}(a)=0$
    \item $\pi=0 \text { on } M, \quad\operatorname{Sym}_{i j k l}\left(Ric[{ h }]_{i j ; k l}+\frac{2}{9} R[{ h }]_{m i q j} R[{ h }]_{m k q l}\right)(a)=0$
    \item $ Ric[{ g } ]_{N N ; N}(a)=Ric[{ g } ]_{a N}(y)=Ric[{ g } ]_{NN;NN}(a)=R[{ g } ]_{;NN}(a)=0$
    \item $R[{ g } ]_{i N j N}(a)=Ric[{ g } ]_{i j}(a)=0$
    \item $R[{ g } ]_{ ; i i}(a)=-\frac{n\|W[{ h }]\|^{2}}{6(n-1)}, \quad Ric[{ g } ]_{N N ; ii}(a)=R[{ g } ]_{i N j N ; i j}(a)=-\frac{\|W[{ h }]\|^{2}}{12(n-1)}$.
\end{enumerate}
Here $||W[{ h }]||$ is the norm of the Weyl tensor of $(M,{ h })$ at $a$.
\end{lemma}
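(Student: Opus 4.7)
The plan is to combine three ingredients: a Lee--Parker type conformal normal coordinates choice of the boundary representative $h_a$ at $a$, the fact that the associated geodesic defining function makes $M$ totally geodesic in $(\bar X,g)$, and the Fefferman--Graham asymptotic expansion of the Poincar\'e--Einstein metric in normal form.

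\textbf{Step 1 (Choice of gauge).} Following Lee--Parker (and its boundary adaptation in Marques' work on the Yamabe problem in the non-locally conformally flat case), pick $h_a\in [h]$ so that in $h_a$-normal coordinates on $M$ centered at $a$ one has $\det h_a(\bar x)=1+O(|\bar x|^K)$ for arbitrary large $K$. This gauge annihilates all totally symmetric covariant derivatives of $Ric[h_a]$ at $a$; combined with the contracted second Bianchi identity $\nabla^i Ric_{ij}=\tfrac12\nabla_j R$, it produces the cyclic identity (1), and the second (symmetrized) relation in (2). It also enforces $P[h_a](a)=0$ and the vanishing of $R[h_a](a)$, which will be used repeatedly below.

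\textbf{Step 2 (Fermi coordinates).} Associated to $h_a$ there is, near $M$, a unique geodesic defining function $\rho_a$ with $|d\rho_a|_g=1$; set Fermi coordinates $(\bar x,x_N)$ with $x_N=\rho_a$, so that $g=dx_N^2+h_{x_N}$ for a one-parameter family $h_{x_N}$ with $h_0=h_a$. The level set $M=\{x_N=0\}$ is then totally geodesic in $(\bar X,g)$, giving $\pi=0$ on $M$, the first half of (2).

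\textbf{Step 3 (Fefferman--Graham expansion).} The Einstein condition $Ric(g_+)=-ng_+$ for $g_+=x_N^{-2}(dx_N^2+h_{x_N})$ is equivalent to a recursion for the Taylor coefficients $h^{(k)}$ of $h_{x_N}$ in $x_N$. Fefferman--Graham show that only even powers of $x_N$ occur up to order $n$, with $h^{(2)}=-P[h_a]$ and $h^{(4)}$ an explicit local polynomial in the curvature tensors of $h_a$ (involving the Weyl and Schouten tensors quadratically). Inserting this into the standard Fermi coordinate expansion of $\sqrt{|g|}$ and $g^{ij}$ and using $P[h_a](a)=0$ from Step 1 yields the formulas \eqref{eq:det-expan-1} and \eqref{eq:det-expan-2}.

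\textbf{Step 4 (Curvature identities).} The conformal transformation law between $Ric(g)$ and $Ric(g_+)=-ng_+$ expresses $Ric[g]_{ij}(a)$ and $R[g]_{iNjN}(a)$ as linear combinations of components of $P[h_a]$ at $a$; these vanish in our gauge, giving (4). The parity of $h_{x_N}$ in $x_N$ (only even powers up to order $n$) forces every odd-order $N$-derivative at $x_N=0$ of any $g$-intrinsic quantity, and in particular the scalar curvature of $g$, to vanish, producing (3). For (5) one substitutes the Fefferman--Graham formula for $h^{(4)}$ into the Fermi-coordinate expression for the curvature tensor of $g$, and uses the conformal normal coordinate cancellations from Step 1: the only local conformal invariant scalar of the correct weight built from $h_a$ that survives all these cancellations is $\|W[h_a]\|^2$, so the coefficients must be the stated multiples, with the numerical factors $-n/(6(n-1))$ and $-1/(12(n-1))$ fixed by a direct trace/contraction computation.

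The main obstacle is the bookkeeping in (5): one must carefully reconcile the Fefferman--Graham coefficient formulas, the conformal normal coordinate cancellations, and the second Bianchi identity to extract the precise numerical factors. The calculation itself is routine in spirit but delicate, and is a direct higher-order analogue of those carried out by Marques in the classical Yamabe problem and adapted to the fractional/boundary setting by Kim and collaborators.
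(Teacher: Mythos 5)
Your overall route is the same one the paper relies on: the paper's proof of this lemma is essentially a citation to Marques' conformal Fermi coordinate expansions for the boundary Yamabe problem and their adaptation by Gonz\'alez and Kim, followed by a simplification using the Poincar\'e--Einstein structure (geodesic normal form $g=dx_N^2+h_{x_N}$, evenness of $h_{x_N}$, and the determined Fefferman--Graham coefficients $h^{(2)}=-P[h_a]$, $h^{(4)}$). Your Steps 1--3 (Lee--Parker normalization of $h_a$ at $a$, geodesic defining function and totally geodesic boundary, Fefferman--Graham expansion inserted into the Fermi expansion) reconstruct exactly that argument, and your treatment of (1), (2), (4) and the shape of \eqref{eq:det-expan-1}--\eqref{eq:det-expan-2} is consistent with it.

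There is, however, a genuine gap in Step 4, in the justification of item (3). Parity of $h_{x_N}$ only kills \emph{odd}-order normal derivatives (and mixed components such as $Ric[g]_{iN}$ along $M$, which alternatively follows from Codazzi with $\pi=0$); it gives $Ric[g]_{NN;N}(a)=0$ but says nothing about $Ric[g]_{NN;NN}(a)$ and $R[g]_{;NN}(a)$, which are \emph{even}-order derivatives. These two identities are not formal consequences of evenness: they require the determined Fefferman--Graham data together with the conformal normal coordinate normalization at $a$. For instance, using the geodesic defining function and the Einstein equation one has $R[g]=-2n\,\rho^{-1}\partial_\rho\log\sqrt{\det h_\rho}$, and expanding with $h_\rho=h-P\rho^2+h^{(4)}\rho^4+\cdots$ and the identity $\operatorname{tr}_h h^{(4)}=\tfrac14|P|_h^2$ gives $R[g]=2nJ[h]+n|P|_h^2\,\rho^2+O(\rho^4)$, so $R[g]_{;NN}(a)=2n|P|^2_h(a)$, which vanishes only because $P[h_a](a)=0$ in your gauge; a similar computation (again using $\operatorname{tr}_h h^{(4)}=\tfrac14|P|^2$) is needed for $Ric[g]_{NN;NN}(a)$. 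So the even-order identities in (3) belong to the same family of ``FG coefficients plus conformal normal coordinates'' computations you describe for (5), and your proof should be amended to derive them that way rather than by parity; as written, that step would fail.
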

% \begin{lemma}\label{lem:metric-expan2}
% Suppose $(X^{n+1},g_+)$ is a Poincar\'{e}-Einstein manifold with conformal infinity $(M,[{ h }])$. For a fixed point $a\in M$, there exists a representative ${ h }={ h }_a$ of the class $[{ h }]$, the geodesic boundary defining function $x_N$ near $M$ and the metric ${ g } =x_N^2 g_+$ in terms of Fermi coordinates around $a$,

% \begin{enumerate}[label=(\arabic*)]
%     \item $Ric[{ h }]_{i j ; k}(a)+Ric[{ h }]_{j k ; i}(a)+Ric[{ h }]_{k i ; j}(a)=0$
%     \item $\pi=0 \text { on } M, \quad\operatorname{Sym}_{i j k l}\left(Ric[{ h }]_{i j ; k l}+\frac{2}{9} R[{ h }]_{m i q j} R[{ h }]_{m k q l}\right)(a)=0$
%     \item $ Ric[{ g } ]_{N N ; N}(a)=Ric[{ g } ]_{a N}(y)=Ric[{ g } ]_{NN;NN}(a)=R[{ g } ]_{;NN}(a)=0$
%     \item $R[{ g } ]_{i N j N}(a)=Ric[{ g } ]_{i j}(a)=0$
%     \item $R[{ g } ]_{ ; i i}(a)=-\frac{n\|W[{ h }]\|^{2}}{6(n-1)}, \quad Ric[{ g } ]_{N N ; ii}(a)=R[{ g } ]_{i N j N ; i j}(a)=-\frac{\|W[{ h }]\|^{2}}{12(n-1)}$
% \end{enumerate}
% Here $||W[{ h }]||$ is the norm of the Weyl tensor of $(M,{ h })$ at $a$.
% \end{lemma}
\begin{proof}
The expansion \eqref{eq:det-expan-1} and \eqref{eq:det-expan-2} are firstly found by \citet{Marques2005} in the boundary Yamabe problem. \citet{Gonzalez2018} and \citet{Kim2018} adapted them to the fractional case. Here we are just simplifying their expansion by using the fact that $(X^{n+1},M^n,g_+)$ is a P-E manifold.
\end{proof}

The expansion of Ricci tensor in Fermi coordinates
\begin{lemma}\label{lem:Ric-expan}
%Suppose $(X^{n+1},M^n,g_+)$ is a Poincar\'e-Einstein manifold with conformal infinity $(M,[{ h }])$. For a fixed point $a\in M$, there exist a representative ${ h }={ h }_a$ of the class $[{ h }]$, and the geodesic defining function $\rho_a$ near $M$ such that the Ricci tensor of the metric ${ g } =\rho_a^2 g_+$ in terms of Fermi coordinates around $a$,
Suppose that $(M^n,h)\subset (\bar X^{n+1},g)$ is a totally geodesic. In the Fermi coordinates around $a\in M$, the Ricci tensor $Ric[g]_{ij}$ has the following expansion,
\begin{align*}
    Ric[{ g } ]_{ij}(\bar x,x_N)=&Ric[{ g } ]_{ij}+(Ric[{ h }]_{ij;k}+Rm[{ g } ]_{iNjN;k})x_k+Ric[{ g } ]_{ij;N}x_N\\
    &+Ric[{ g } ]_{ij;Nk}x_kx_N+\left(\frac1 2Ric[{ g } ]_{ij;NN}-2\,\text{Sym}_{ij}(Ric[g]_{jl}Rm[g]_{iNlN})\right)x_N^2\\
    &+\left(\frac12Ric[{ g } ]_{ij;kl}-\frac13\text{Sym}_{ij} (Rm[{ h } ]_{iksl}Rm[{ g } ]_{sNjN})\right)x_kx_l+O(|x|^3)
\end{align*}
where the tensor on the right hand side are all evaluated at $0$ and $1\leq i,j,k,l,s\leq n$. For the other component of $Ric[g]$, we have $Ric[g]_{iN}(\bar x,x_N)=0$ and 
\begin{align*}
    Ric[g]_{NN}(\bar x,x_N)=&Ric[g]_{NN}+Ric[g]_{NN;i}x_i+Ric[g]_{NN;N}x_N+\frac12 Ric[g]_{NN;ij}x_ix_j\\
    &+Ric[g]_{NN;Ni}x_ix_N+\frac12Ric[g]_{NN;NN}x_N^2+O(|x|^3)
\end{align*}
\end{lemma}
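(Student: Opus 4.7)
The plan is to prove the lemma by a direct Taylor expansion of $Ric[g]_{ij}$ (and analogously $Ric[g]_{NN}$) in Fermi coordinates around $a$, converting the ordinary partial derivatives at the origin into covariant derivatives plus curvature correction terms produced by the first derivatives of the Christoffel symbols at $a$. First I set up Fermi coordinates $(\bar{x},x_N)$ around $a$ with $\bar{x}$ chosen to be Riemann normal coordinates on $(M,h)$ at $a$. The Fermi identities $g_{NN}\equiv 1$ and $g_{iN}\equiv 0$, together with the totally geodesic hypothesis (which gives $\partial_N g_{ij}(\bar{x},0)\equiv 0$) and the Riemann normal choice on $M$, ensure that every Christoffel symbol $\Gamma^k_{ij}$, $\Gamma^N_{ij}$, $\Gamma^k_{iN}$, $\Gamma^N_{iN}$, $\Gamma^M_{NN}$ vanishes at $a$. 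Consequently each first covariant derivative of $Ric[g]$ at $a$ equals its partial derivative, while the second partials are rewritten via
\begin{align*}
\partial_k\partial_l T_{ij}(a) = T_{ij;kl}(a) + \partial_l\Gamma^m_{ki}(a)\,T_{mj}(a) + \partial_l\Gamma^m_{kj}(a)\,T_{im}(a),
\end{align*}
and analogous formulas with $k$ or $l$ replaced by $N$.

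The key geometric inputs are the values of $\partial\Gamma(a)$ in three regimes. For purely tangential indices, since the totally geodesic hypothesis forces the ambient tangential Christoffels to coincide with the intrinsic ones along $M$, the standard Riemann normal coordinate expansion on $(M,h)$ gives $\partial_l\Gamma^m_{ki}(a)=-\tfrac{1}{3}(Rm[h]^m{}_{kli}+Rm[h]^m{}_{ilk})$. For the normal flavor I combine $\Gamma^m_{Ni}=\tfrac{1}{2}g^{ml}\partial_N g_{il}$ with the Riccati equation along geodesics orthogonal to $M$; at the totally geodesic base point this reduces to $\partial_N^2 g_{ij}(a)=-2\,Rm[g]_{iNjN}(a)$ and hence $\partial_N\Gamma^m_{Ni}(a)=-Rm[g]_{iNmN}(a)$. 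Mixed derivatives $\partial_k\Gamma^m_{Ni}(a)$ and $\partial_l\Gamma^N_{ki}(a)$ vanish since $\partial_N g_{ij}|_M\equiv 0$ and $\Gamma^N_{ki}|_M\equiv 0$, so their tangential derivatives are zero.

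Substituting these data into the Taylor expansion assembles all the stated terms. The constant and the $x_k$ coefficients come from the Gauss equation $Ric[g]_{ij}|_M = Ric[h]_{ij} + Rm[g]_{iNjN}|_M$ (valid for totally geodesic $M$), differentiated tangentially to yield the combination $(Ric[h]_{ij;k}+Rm[g]_{iNjN;k})$. The $x_N^2$ coefficient acquires the quadratic-in-curvature correction $-2\,\text{Sym}_{ij}(Ric[g]_{jl}\,Rm[g]_{iNlN})$ from $\partial_N\Gamma^m_{Ni}(a)\,Ric[g]_{mj}(a)$, while the $x_kx_N$ coefficient is uncorrected because the relevant $\partial\Gamma$ vanishes. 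The $x_kx_l$ coefficient picks up $-\tfrac{1}{3}\text{Sym}_{ij}(Rm[h]_{iksl}\,Rm[g]_{sNjN})$ from $\partial_l\Gamma^m_{ki}(a)\,Ric[g]_{mj}(a)$ combined with the Gauss equation and the first Bianchi identity for the symmetrization. The identity $Ric[g]_{iN}\equiv 0$ in a neighborhood of $M$ is not a consequence of the totally geodesic hypothesis alone but of the Poincar\'e--Einstein structure: the Einstein equation $Ric(g_+)=-ng_+$ combined with $g=\rho^2g_+$ for a geodesic defining function $\rho$ forces $Ric[g](\nabla\rho,\cdot)|_{TM}\equiv 0$ near $M$, a standard asymptotically hyperbolic identity already implicit in Lemma \ref{lem:metric-expan1}. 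Finally, the expansion of $Ric[g]_{NN}$ proceeds by the same Taylor-plus-Christoffel bookkeeping; being a transverse scalar-type quantity it requires no Gauss-equation rearrangement and is the plain symmetric Taylor polynomial with covariant-derivative labels at $a$.

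The main obstacle I anticipate is the careful sign and symmetrization bookkeeping in the Riccati-driven correction for the $x_N^2$ coefficient, where factors of $2$ are easy to lose between the conventions $\text{Sym}_{ij}X_{ij}=\tfrac{1}{2}(X_{ij}+X_{ji})$ versus $X_{ij}+X_{ji}$, and the conversion between $Rm[h]$ and $Rm[g]$ indices in the tangential quadratic correction via the Gauss equation. A secondary issue is isolating exactly where the Poincar\'e--Einstein hypothesis is genuinely invoked (for $Ric[g]_{iN}\equiv 0$) versus where the totally geodesic hypothesis alone suffices.
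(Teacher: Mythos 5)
Your proposal is correct and, at its core, runs along the same lines as the paper's own argument: both are a Taylor expansion of $Ric[g]$ in Fermi coordinates in which partial derivatives at $a$ are traded for covariant derivatives using the vanishing of all Christoffel symbols at $a$, the normal-coordinate formula for $\partial_l\Gamma^m_{ki}(a)$, the Riccati-type identity $\partial_N^2 g_{ij}(a)=-2\,Rm[g]_{iNjN}(a)$ (hence $\partial_N\Gamma^m_{Ni}(a)=-Rm[g]_{iNmN}(a)$), and the Gauss equation for the totally geodesic boundary. The paper merely organizes the computation differently — expanding in $x_N$ first and then expanding $Ric[h]_{ij}(\bar x,0)$ via Marques' normal-coordinate lemma and $Rm[g]_{iNjN}(\bar x,0)$ tangentially — whereas you do one joint expansion with Christoffel corrections; this is an organizational, not a conceptual, difference. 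Two remarks. First, your conversion of the $x_kx_l$ correction $\partial_l\Gamma^m_{ki}(a)\,Ric[g]_{mj}(a)$ into the stated $-\frac13\,\text{Sym}_{ij}(Rm[h]_{iksl}Rm[g]_{sNjN})$ tacitly needs $Ric[h]_{ij}(a)=0$, so that the Gauss equation gives $Ric[g]_{sj}(a)=Rm[g]_{sNjN}(a)$; this is not a property of arbitrary geodesic normal coordinates but of the conformal representative $h_a$ supplied by Lemma \ref{lem:metric-expan1} (the paper's proof leans on the same fact when it asserts $Ric[h]_{ij}(0)=0$ before quoting Marques), and you should make it explicit. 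Second, your treatment of $Ric[g]_{iN}$ genuinely diverges from the paper's: the paper invokes the contracted Codazzi equation $Ric[g]_{iN}=\pi_{jj;i}-\pi_{ij;j}=0$, which only gives vanishing along $M$, while you derive $Ric[g]_{iN}\equiv 0$ near $M$ from the Poincar\'e--Einstein equation together with the geodesic compactification; your route is what the literal claim $Ric[g]_{iN}(\bar x,x_N)=0$ actually requires and is consistent with item (3) of Lemma \ref{lem:metric-expan1}, so this divergence is a refinement rather than a defect. Finally, your worry about symmetrization factors is well founded: with $\text{Sym}_{ij}X_{ij}=\frac12(X_{ij}+X_{ji})$ the $x_N^2$ Taylor coefficient obtained from $\frac12\partial_{NN}^2Ric[g]_{ij}(a)$ carries $-\text{Sym}_{ij}(Ric[g]_{jl}Rm[g]_{iNlN})$, so the constant in front of that term should be checked against the paper's displayed $-2\,\text{Sym}$ (in the intended application this term vanishes anyway, since $Ric[g]_{ij}(a)=Rm[g]_{iNjN}(a)=0$ by Lemma \ref{lem:metric-expan1}).
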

\begin{proof}
It follows from the Taylor expansion that 
\begin{align*}
    Ric[{ g } ]_{ij}(\bar x,x_N)
    =&Ric[{ g } ]_{ij}(\bar x,0)+\partial_N Ric[{ g } ]_{ij}(\bar x,0)x_N+\frac{1}{2}\partial^2_{NN} Ric[{ g } ]_{ij}(\bar x,0)x_N^2+O(|x|^3)
\end{align*}
For the first term, we have $Ric[{ g } ]_{ij}(\bar x,0)=Ric[{ h }]_{ij}(\bar x,0)+R[{ g } ]_{iNjN}(\bar x,0)$. Since $(\bar x,0)$ is a geodesic normal coordinates of $a$ on $M$, then $Ric[{ h }]_{ij}(0)=0$ and  \cite[Lemma 2.1]{Marques2005}, 
% \begin{align}
%     Ric_{ij}[{ g } ](\bar x,0)=Ric_{ij}[{ h }](\bar x,0)+R_{iNjN}[{ g } ](\bar x,0)
% \end{align}
\begin{align*}
    Ric[{ h }]_{ij}(\bar x,0)=Ric[{ h }]_{ij;k}(0)x_k+\frac{1}{2}Ric[{ h }]_{ij;kl}(0)x_kx_l+O(|\bar x|^3).
\end{align*}
Thanks to the fact that $M$ is totally geodesic 
\begin{align*}
    Rm[{ g } ]_{iNjN}(\bar x,0)=&Rm[{ g } ]_{iNjN}(0)+Rm[{ g } ]_{iNjN;k}(0)x_k\\
    &+\left(\frac12R[{ g } ]_{iNjN;kl}-\frac13\text{Sym}_{ij} Rm[{ h }]_{iksl}Rm[{ g } ]_{sNjN}\right)x_kx_l+O(|\bar x|^3),\\
   \partial_N Ric[{ g } ]_{ij}(\bar x,0)=&Ric[g]_{ij,N}(\bar x ,0)=Ric[{ g } ]_{ij;N}(0)+Ric[{ g } ]_{ij;Nk}(0)x_k+O(|\bar x|^2).
\end{align*}
For the same reason that $M$ is totally geodesic,
\begin{align*}
 \partial_{NN}^2Ric[{ g } ]_{ij}(\bar x,0)=Ric[{ g } ]_{ij;NN}(\bar x,0)-2\text{Sym}_{ij}(Ric[g]_{jk}Rm[g]_{iNkN}(\bar x,0))+O(|\bar x|). \end{align*}%\textcolor{blue}{coefficient of sym}
Collecting all the above expansion, one can get the expansion of $Ric[g]_{ij}$. It follows from Codazzi equation that 
\[Ric[g]_{iN}=\pi_{jj;i}-\pi_{ij;j}=0.\]
For $Ric[g]_{NN}$, one can do the expansion as $Ric[g]_{ij}$.
\end{proof}

\subsection{Case (II-1): Low dimension and Case (II-2): Locally conformally flat}\hspace*{\fill} 

Suppose $C_0\ve\leq \delta<\delta_0\leq 1$. Define
\begin{align}\label{eq:U2-def}
U_{a,\ve, \delta}(x)= \chi_{\delta} W _ { \ve } ( \Psi_{a}(x))+(1-\chi_\delta(\Psi_a(x)))\ve^{\frac{n-2\gamma}{2}}G_a^\gamma
\end{align}
where $\chi_\delta$ is defined in \eqref{eq:cut-off} and $G_a^\gamma=G_{g_a}^\gamma$ is defined in $\eqref{eq:G-1}$.
 \begin{proposition}\label{prop:low-dim-2}
 Suppose $\gamma\in(1,\min\{2,n/2\})$ and $n<4+2\gamma$.  If $\delta_0$ small enough and $C_0$ large enough, then there exist a constant $\mathcal{C}_2>0$ such that 
 \begin{align}
     \mathcal{E}_{{ h }}^\gamma[U_{a,\ve,\delta}]\leq \overline { \mathcal{E} } _ { { h } } ^ { \gamma } \left[U_{a,\ve,\delta}\right] \leq \Ygs + \epsilon ^ { n-2\gamma } \mathcal { C }_2  ( n , \gamma,{ g } ,\delta ) + o \left( \epsilon ^ { n-2\gamma } \right).
 \end{align}
 \end{proposition}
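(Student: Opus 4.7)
The first inequality follows from Proposition~\ref{prop:case-2} once one checks that $U_{a,\ve,\delta}$ is admissible: it has the asymptotic expansion \eqref{eq:expan-U2} because $W_\ve$ satisfies \eqref{eq:W2RN} and $G_a^\gamma$ lies in $\mathcal{C}^\gamma$ by the construction preceding \eqref{eq:G-1}. Hence $\mathcal{E}_h^\gamma[U_{a,\ve,\delta}] \leq \overline{\mathcal{E}}_h^\gamma[U_{a,\ve,\delta}]$. For the second inequality, the plan is to split $X$ into the bubble core $\Psi_a^{-1}(\bnpe)$, the transition annulus $\Psi_a^{-1}(\bnped\setminus\bnpe)$, and the exterior $X\setminus\mathcal{O}_a$, and to estimate the quadratic form \eqref{def:Q1} on each piece, in close parallel with the proof of Proposition~\ref{prop:I-low} but now using the second-order form and the second-derivative bubble bounds \eqref{eq:mnabw2}.

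On the exterior, $U_{a,\ve,\delta} = \ve^{(n-2\gamma)/2} G_a^\gamma$, so the integrand in \eqref{def:Q1} is $\ve^{n-2\gamma}$ times a smooth, pointwise controlled expression in $G_a^\gamma$, and the bounds \eqref{est:G-1} yield a contribution of order $O(\ve^{n-2\gamma}\delta^{2\gamma-n})$. On the annulus, the matching of $W_\ve$ with $\ve^{(n-2\gamma)/2} G_a^\gamma$ up to two derivatives, combined with \eqref{eq:mnabw2} applied between the radii $\delta$ and $2\delta$ and the same Green's-function bounds, gives again $O(\ve^{n-2\gamma}\delta^{2\gamma-n})$.

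The core of the argument is the bubble region. By Lemma~\ref{lem:metric-expan1}, and specifically the Poincar\'e-Einstein vanishing identities $R[g]_{iNjN}(a)=Ric[g]_{ij}(a)=Ric[g]_{NN;N}(a)=0$, both $g^{ij}(x)-\delta^{ij}$ and $\sqrt{|g|}(x)-1$ vanish at $a$ to order $|x|^2$ (with structured higher-order terms), and one may write $\Delta_{m_1}^g = \Delta_{m_1} + \mathcal{R}$ with $\mathcal{R}$ a second-order operator whose coefficients are $O(|x|^2)$. Expanding $\mathcal{Q}_\gamma(W_\ve:\bnpe)$ around its Euclidean counterpart, each metric correction produces weighted integrals of the form $\int x_N^{3-2\gamma}|x|^k(W_\ve^2 + |x|^2|\nabla W_\ve|^2 + |x|^4|\partial_{ij}W_\ve|^2)\,dx$ with $k\geq 2$; by \eqref{eq:mnabw2} all of these are $O(\ve^{n-2\gamma}\delta^{2\gamma+2+k-n})$, and since $n<4+2\gamma$ the power of $\delta$ is positive, so after choosing $\delta_0$ small they can be absorbed into $o(\ve^{n-2\gamma})$. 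The Euclidean principal part is evaluated using \eqref{eq:DW-pNW2}, which gives $\Delta_{m_1}W_\ve = 2x_N^{-1}\partial_N W_\ve$; integration by parts against the boundary condition in \eqref{eq:W2RN} reduces $\int_{\RpN} x_N^{3-2\gamma}(\Delta_{m_1}W_\ve)^2 dx$ to $\kappa_\gamma^{-1}\int_{\Rn} w_\ve^\crexp d\bar x = \kappa_\gamma^{-1}(\Ygs)^{n/(2\gamma)}$, up to a tail $O(\ve^{n-2\gamma}\delta^{2\gamma-n})$. Combining with the lower bound $\oint_M U_{a,\ve,\delta}^\crexp d\sigma_h \geq \int_{\dze} w_\ve^\crexp d\bar x - C\ve^n\delta^{-n} = (\Ygs)^{n/(2\gamma)} - O(\ve^{n-2\gamma})$ for the denominator, and raising to the power $(n-2\gamma)/n$, one gets the claimed bound with an explicit $\mathcal{C}_2(n,\gamma,g,\delta)$.

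The hardest part, distinct from the $\gamma\in(0,1)$ setting of Proposition~\ref{prop:I-low}, is controlling the second-derivative cross terms arising from $(\Delta_{m_1}^g W_\ve)^2$ and from the Schouten-type expression $(4P-(n-2\gamma+2)Jg)(\nabla W_\ve,\nabla W_\ve)$: the metric perturbation introduces contributions involving Hessians of $W_\ve$ that must be handled via the $|x|^{k+4}|\partial_{ij}W_\ve|^2$ weighted bound in \eqref{eq:mnabw2} rather than just the gradient estimates used for Type I. The P-E vanishing identities in Lemma~\ref{lem:metric-expan1} are crucial to eliminate any potential $O(\ve^2)$ or $O(\ve^3)$ contributions, so that every surviving correction is at worst $O(\ve^4)=o(\ve^{n-2\gamma})$ in the low-dimensional regime $n<4+2\gamma$, which is precisely what identifies (II-1) as a Global case.
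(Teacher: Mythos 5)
Your proposal is correct and follows essentially the same route as the paper: admissibility of $U_{a,\ve,\delta}$ plus Proposition~\ref{prop:case-2} for the first inequality, the core/annulus/exterior splitting with the Green's-function bounds \eqref{est:G-1}, the metric expansion with the Poincar\'e--Einstein vanishing identities of Lemma~\ref{lem:metric-expan1} together with \eqref{eq:mnabw2} to control the correction terms, and the integration-by-parts identity coming from \eqref{eq:W2RN} and \eqref{eq:DW-pNW2} reducing the Euclidean leading term to $\kappa_\gamma^{-1}\int w_\ve^{\crexp}d\bar x$, matched against the lower bound \eqref{eq:E-nm-p4} for the denominator. The only cosmetic deviations are that the paper integrates by parts on the half-ball $\bnpe$ and discards the spherical boundary terms using the signs $\partial_\nu W_\ve<0$, $\partial_\nu\Delta_{m_1}W_\ve>0$ (rather than passing to all of $\RpN$ and estimating a tail), and that your error terms of size $C\ve^{n-2\gamma}\delta^{2\gamma+4+k-n}$ are $O(\ve^{n-2\gamma})$ to be absorbed into the $\delta$-dependent constant $\mathcal{C}_2(n,\gamma,g,\delta)$ rather than genuinely $o(\ve^{n-2\gamma})$ (likewise the Hessian contributions come from $(\Delta_{\rho}^{m_1}W_\ve)^2$, not from the Schouten term), neither of which affects the stated bound.
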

 \begin{proof} Suppose $\rho$ is the geodesic defining function for $h$, then 
 \[\lim_{\rho\to 0}\rho^{m_1}\partial_\rho U_{a,\ve,\delta}=0.\]
 Then $U_{a,\ve,\delta}$ satisfies \eqref{eq:expan-U2}. It follows from proposition \ref{prop:case-2} that $\mathcal{E}_{{ h }}^\gamma[U_{a,\ve,\delta}]\leq \overline { \mathcal{E} } _ { { h } } ^ { \gamma } \left[U_{a,\ve,\delta}\right] $. Therefore we just need to prove the second inequality.
 %We will calculate each term in \eqref{eq:Yam-Q-3}. 
Using the estimates of $W_\ve$ in Lemma \ref{lem:est-bubble} and $G_a^\gamma$ in \eqref{est:G-1}, one can get
 \begin{align}
 \begin{split}\label{eq:Q-decomp}
\mathcal{Q}_\gamma(U_{a,\ve,\delta})\leq &\int_{\bnpe}(\Delta_{{ \rho } }^{m_1}W_\ve)^2x_N^{m_1}dx+\frac{n-2\gamma}{2}\int_{\bnpe}Q_{\rho}^{m_1}W_\ve^2x_N^{m_1}dx+\\
&-\int_{\bnpe}4(P-(n-2\gamma+2)Jg)(\nabla W_\ve,\nabla W_\ve)x_N^{m_1}dx+C\ve^{n-2\gamma}\delta^{2\gamma-n}
\end{split}
\end{align}
similar to the argument in proposition \ref{prop:I-low}.
% \begin{align*}
%     &\int_{X}x_N^{3-2\gamma}(\Delta_{{ \rho } }^{m_1}(\chi_{a, \delta} W_\ve))^2dv_{{ g } }=\int_{\bnped}x_N^{3-2\gamma}(\Delta_{{ \rho } }^{m_1}(\chi_{a, \delta} W_\ve))^2d\mu_{{ g } }\\
%     =&\int_{\bnpe}x_N^{3-2\gamma}(\Delta_{{ \rho } }^{m_1}W_\ve)^2dv_{{ g } }+\int_{\bnped\backslash\bnpe}x_N^{3-2\gamma}(\Delta_{{ \rho } }^{m_1}(\chi_{a, \delta} W_\ve))^2dv_{{ g } }\\
%     \leq &\int_{\bnpe}x_N^{3-2\gamma}(\Delta_{{ \rho } }^{m_1}W_\ve)^2dv_{{ g } }+C\ve^{n-2\gamma}\delta^{2\gamma-n}. \\
%     \leq &\int_{\bnpe}x_N^{3-2\gamma}(\Delta_{{ \rho } }^{m_1}W_\ve)^2dx+C\ve^{n-2\gamma}\delta^{2\gamma-n}.
% \end{align*}
Noticing that 
 \begin{align*}
     \Delta_{{ \rho } }^{m_1}W_\ve=\Delta_{{ g } }W_\ve+m_1x_N^{-1}\partial_N W_\ve=\Delta_{m_1}W_\ve+(\Delta_{{ g } }-\Delta_{\mathbb{R}^{n+1}})W_\ve
 \end{align*}
 and it follows from the expansion of metric \eqref{eq:det-expan-2} that %\cite[(2.16)]{Gursky2015} that
% \[{ g } _{ij}=\delta_{ij}+O(|x|^2)\]
% \[\sqrt{|{ g } |}=1+O(|x|^3)\]
\begin{align}\label{eq:diff-Lap}
 (\Delta_{{ g } }-\Delta_{\mathbb{R}^{n+1}})W_\ve=O(|x|^2)|\nabla^2_{\bar x} W_\varepsilon|+O(|x|)|\nabla W_\ve|.
 \end{align}
Since the estimates in Lemma \ref{lem:est-bubble}, Lemma \ref{lem:interation-low-term}, and \eqref{eq:mnabw2} 
 \begin{align*}
     \int_{\bnpe}x_N^{m_1}(\Delta_{{ \rho } }^{m_1}W_\ve)^2dx
     \leq &\int_{\bnpe}x_N^{m_1}[(\Delta_{m_1}W_\ve)^2+C|x|^2|\nabla W_\ve|^2]dx\\
     \leq&\int_{\bnpe}x_N^{m_1}(\Delta_{m_1}W_\ve)^2dx+C \ve^{n-2\gamma}\delta^{2\gamma+4-n} 
 \end{align*}
where $n<2\gamma+4$ is used. It follows from \eqref{eq:W2RN} and integration by parts that
%  \begin{align}
%      \int_{\RpN} &x_N^{m_1}(\Delta_{m_1}^2u)v-\int_{\RpN}x_N^{m_1}(\Delta_{m_1}u)(\Delta_{m_1}v)\\
%      &=-\int_{\Rn}\partial_N^{m_1}(\Delta_{m_1}u)v-\Delta_{m_1}u(\partial_N^{m_1}v)
%  \end{align}
 \begin{align*}
     \int_{\bnpe}x_N^{m_1}(\Delta_{m_1}W_\ve)^2
     =&\int_{\dze}\lim_{x_N\to 0} x_N^{m_1}\left[\partial_N\Delta_{m_1}W_\ve)W_\ve-\Delta_{m_1}W_\ve \partial_N W_\ve\right]\\
     &-\int_{\partial^+\bnpe }x_N^{m_1}(\partial_\nu \Delta_{m_1}W_\ve) W_\ve+\int_{\partial^+\bnpe}x_N^{m_1}\Delta_{m_1}W_\ve\partial_{\nu}W_\ve
 \end{align*}
 where $\nu$ is the outer unit normal of $\partial^+\bnpe=\partial\bnpe\cap\RpN$. One can get from \eqref{def:bubble-exp} that $\partial_\nu W_\ve<0$ and $\partial_\nu\Delta_{m_1}W_\ve>0$. Then the above equality implies
 \begin{align}
    \int_{\bnpe}x_N^{m_1}(\Delta_{m_1}W_\ve)^2dx\leq& \kappa_\gamma^{-1} \int_{\dze}w_\ve^{\frac{n+2\gamma}{n-2\gamma}} d\bar x
    \leq S_{n,\gamma}^{-1} \left(\int_{\dze}w_\ve^{\frac{2n}{n-2\gamma}}d\bar x\right)^{\frac{n-2\gamma}{n}}.
    \label{eq:leading-ineqS_n}
 \end{align}
 The following fact of scalar curvature at 0 can be derived from Lemma \ref{lem:metric-expan1}
 \begin{align}\label{eq:exp-R[g]}
     R[{ g } ]=R[{ g } ]_{;i}=R[{ g }  ]_{;N}=R[{ g }]_{;NN}=0, R[{ g } ]_{;ii}=-\frac{n||W[h]||^2}{6(n-1)}, 
 \end{align}
 %Since $(\bar X, g,\rho,m_1)$ is a geodesic SMMS,  then %$x_N$ is the geodesic distance, then $|\nabla x_N|_{{ g } }=1$. Then \cite[Lemma 3.2]{CaC} is reduced to
% \[J_{}^{m_1}=J[{ g } ]=2nR[{ g } ],\quad P_{\phi_1}^{m_1}=P[{{ g } }]=\frac{1}{n-1}(Ric[{ g } ]-J[{ g } ])\]
%  \begin{align}
%      A[{{ g } }](\nabla W_\ve, \nabla W_\ve)=\frac{1}{n-1}Ric[{ g } ](\nabla W_\ve,\nabla W_\ve)-\frac{1}{n-1}J|\nabla W_\ve|^2
%  \end{align}
%Near 0, we have 
% \begin{align*}
%    Ric[{ g } ](\nabla W_\ve,\nabla W_\ve)=O(|x|^2|\nabla W_\ve|^2)
%  \end{align*}
then
\begin{align}
     \int_{\bnpe}x_N^{m_1} R[{ g } ]|\nabla W_\ve|_g^2dx\leq C\ve^{n-2\gamma}\delta^{2\gamma+6-n}.
 \end{align}
 Using the symmetry of $W_\ve$ and \eqref{eq:exp-R[g]} and $Ric[{ g } ]_{NN;N}(0)=0$, and Lemma \ref{lem:Ric-expan}  
 \begin{align*}
     \int_{\bnpe}x_N^{m_1}Ric[{ g } ](\nabla W_\ve,\nabla W_\ve)dx
     =&\int_{\bnpe}x_N^{m_1}O(|x|^2|\nabla W_\ve|^2)dx
     \leq C\ve^{n-2\gamma} \delta^{2\gamma+4-n}.
 \end{align*}
 Notice $J[{ g } ]=\frac{1}{2n}R[{ g } ]$ and $P[{{ g } }]=\frac{1}{n-1}(Ric[{ g } ]-J[{ g } ])$. We obtain
 \[\int_{\bnpe}4(P-(n-2\gamma+2)Jg)(\nabla W_\ve,\nabla W_\ve)x_N^{m_1}dx\leq C\ve^{n-2\gamma} \delta^{2\gamma+4-n}.\]
It is easy to see that 
 \begin{align*}
     \int_{\bnpe}x_N^{m_1}Q_{\rho}^{m_1}W_\ve^2dx\leq C\int_{\bnpe}x_N^{m_1}W_\ve^2dx\leq C \ve^{n-2\gamma}\delta^{2\gamma+4-n}.
 \end{align*}
 Putting everything back to \eqref{eq:Yam-Q-3} and using \eqref{eq:E-nm-p4}  obtains
 \begin{align*}
   \overline { \mathcal{E} } _ { { h } } ^ { \gamma } \left[ U_{a,\ve,\delta}\right]\leq& \kappa_\gamma S_{n,\gamma}^{-1}-C\ve^{n-2\gamma}\delta^{2\gamma-n}+o(\varepsilon^{n-2\gamma})\notag\\
   =&\Ygs + \epsilon ^ { n-2\gamma } \mathcal { C }_2  ( n , \gamma,{ g } ,\delta ) + o \left( \epsilon ^ { n-2\gamma } \right).
 \end{align*}
 \end{proof}
% \subsection{Case 2: Locally conformally flat case}
% \textcolor{blue}{Say what readers expect to see here}
% 
 Now suppose $(M^n,[{ h }])$ is locally conformally flat. Then pick any point $a\in M$, there exists a neighborhood of $a$ in $M$ that can be identify with a Euclidean ball $D(0,\delta)$, that is ${ h }_{ij}=\delta_{ij}$ in $D(0,\delta)$. Then in a neighborhood of $a$ in $X$, identified with $B_+^N(0,\delta)$, the metric reads (see \cite{Kim2018, Mayer2017})
 \begin{align}\label{eq:lcf-g-exp}
     { g } _{i j}\left(\bar{x}, x_{N}\right)=\delta_{i j}+O\left(x_{N}^{n}\right) \quad \text { and } \quad|{ g } |=1+O\left(x_{N}^{n}\right) \quad \text {for }\left(\bar{x}, x_{N}\right) \in B_+^N(0,\delta).
 \end{align}
 \begin{proposition}\label{prop:II-2}
 Suppose that $(M^n,[h])$ is locally conformally flat, and $\gamma\in(1,\min\{2,\frac{n}{2}\})$. If $\delta_0$ small enough and $C_0$ large enough, then there exists some $\mathcal{C}_3>0$ such that 
 \begin{align*}
     { \mathcal{E} } _ { { h } } ^ { \gamma } \left[ U_{a,\ve,\delta} \right]\leq \overline { \mathcal{E} } _ { { h } } ^ { \gamma } \left[ U_{a,\ve,\delta} \right] \leq \Ygs + \ve^ { n-2\gamma } \mathcal { C }_3  ( n , \gamma,{ g } ,\delta ) + o \left( \ve ^ { n-2\gamma } \right).
 \end{align*}
 where $U_{a,\ve,\delta}$ is defined in \eqref{eq:U2-def} for $0<C_0\ve\leq \delta\leq\delta_0\leq  1$.
 \end{proposition}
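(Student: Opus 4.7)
The plan is to mirror the proof of Proposition \ref{prop:low-dim-2}, but to exploit the much stronger flatness of the metric at a point of a locally conformally flat boundary. The first inequality $\mathcal{E}_h^\gamma[U_{a,\ve,\delta}]\le\overline{\mathcal{E}}_h^\gamma[U_{a,\ve,\delta}]$ follows from Proposition \ref{prop:case-2}, since $U_{a,\ve,\delta}$ has the expansion \eqref{eq:expan-U2} and $\rho^{m_1}\partial_\rho U_{a,\ve,\delta}\to 0$ as $\rho\to 0$. So the substance is the upper bound, and this is a genuinely Global situation: under the LCF hypothesis all ``local'' Weyl-type contributions that appear in \eqref{eq:det-expan-1}--\eqref{eq:det-expan-2} collapse to the much smaller correction \eqref{eq:lcf-g-exp}, so the dominant error should be the mass-type term from the truncation and the Green's function tail.

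First I would split, as in the proof of Proposition \ref{prop:I-low},
\begin{equation*}
\mathcal{Q}_\gamma(U_{a,\ve,\delta})=\mathcal{Q}_\gamma(W_\ve:\bnpe)+\mathcal{Q}_\gamma(U_{a,\ve,\delta}:\bnped\backslash\bnpe)+\mathcal{Q}_\gamma(\ve^{\frac{n-2\gamma}{2}}G_a^\gamma:X\backslash\mathcal{O}_a).
\end{equation*}
Using the Green's function estimates \eqref{est:G-1} on the outer piece, together with Lemma \ref{lem:est-bubble} and \eqref{est:G-1} on the transition annulus (where both $W_\ve$ and $G_a^\gamma$ are comparable and $\chi_\delta$ is smooth), both of these terms contribute $O(\ve^{n-2\gamma}\delta^{2\gamma-n})$, exactly as in Proposition \ref{prop:low-dim-2}.

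For the inner piece I would use \eqref{eq:lcf-g-exp}: $g^{ij}=\delta^{ij}+O(x_N^n)$ and $\sqrt{|g|}=1+O(x_N^n)$. This renders the metric-induced corrections in $\Delta_\rho^{m_1}W_\ve$, and the Schouten/Ricci-type tensors $P$, $J$, and the weighted $Q$-curvature $Q_\rho^{m_1}$, all of pointwise size $O(x_N^n)$ (with the appropriate derivative loss). Plugging $x_N^n\leq |x|^n$ into the weighted bounds \eqref{eq:mnabw2} shows each such correction contributes at most $C\ve^{n-2\gamma}\delta^{n-\alpha}$ for some $\alpha<n-2\gamma$, hence is $o(\ve^{n-2\gamma})$ once $\delta_0$ is small, and is absorbed into the remainder. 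The residual principal part is
\begin{equation*}
\int_{\bnpe}x_N^{m_1}(\Delta_{m_1}W_\ve)^2\,dx,
\end{equation*}
which, by two integrations by parts, \eqref{eq:W2RN}, and the signs $\partial_\nu W_\ve<0$, $\partial_\nu\Delta_{m_1}W_\ve>0$ on $\partial^+\bnpe$, is bounded by $\kappa_\gamma^{-1}\int_{\dze}w_\ve^{(n+2\gamma)/(n-2\gamma)}d\bar{x}$; the Sobolev trace inequality then gives the bound $S_{n,\gamma}^{-1}\bigl(\int_{\dze}w_\ve^{2n/(n-2\gamma)}d\bar{x}\bigr)^{(n-2\gamma)/n}$, exactly as in \eqref{eq:leading-ineqS_n}.

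Combining with the lower denominator bound $\oint_M U_{a,\ve,\delta}^{2n/(n-2\gamma)}d\sigma_h\geq\int_{\dze}w_\ve^{2n/(n-2\gamma)}d\bar{x}-C\ve^n\delta^{-n}$ from \eqref{eq:E-nm-p4}, and Taylor-expanding the ratio in \eqref{eq:Yam-Q-3}, I obtain
\begin{equation*}
\overline{\mathcal{E}}_h^\gamma[U_{a,\ve,\delta}]\leq\Ygs+\ve^{n-2\gamma}\mathcal{C}_3(n,\gamma,g,\delta)+o(\ve^{n-2\gamma}),
\end{equation*}
with $\mathcal{C}_3$ generated by the Green's function/mass contribution (negative when $G_a^\gamma>0$). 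The main obstacle I foresee is purely bookkeeping: one must verify that every LCF correction integrates to something \emph{strictly} smaller than $\ve^{n-2\gamma}$, so that no residual local term pollutes the coefficient $\mathcal{C}_3$. This rests on the sharp power $x_N^n$ in \eqref{eq:lcf-g-exp} together with the range $\gamma<n/2$ controlling the various exponent comparisons dictated by \eqref{eq:mnabw2}.
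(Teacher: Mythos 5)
Your argument is correct and is essentially the paper's proof: both reduce to the scheme of Proposition \ref{prop:low-dim-2}, replacing the Poincar\'e--Einstein metric expansion by \eqref{eq:lcf-g-exp} so that all metric-induced corrections are absorbed via \eqref{eq:mnabw2}, with the principal term handled exactly as in \eqref{eq:leading-ineqS_n} and the denominator by \eqref{eq:E-nm-p4}. One bookkeeping caveat: the curvature quantities $J$, $P$, $Q_\rho^{m_1}$ are pointwise $O(x_N^{n-2})$ or $O(x_N^{n-4})$ rather than $O(x_N^n)$ (they cost derivatives of the metric), and such terms are $O(\ve^{n-2\gamma}\delta^{2\gamma})$ rather than $o(\ve^{n-2\gamma})$ for fixed $\delta$; since the stated bound allows a $\delta$-dependent positive constant $\mathcal{C}_3$ at order $\ve^{n-2\gamma}$, this does not affect the conclusion (nor is your unsubstantiated parenthetical that the coefficient is negative needed).
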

\begin{proof}
The proof is similar to the one of Proposition \ref{prop:low-dim-2}, but the calculation is much more simpler because ${ g } _{ij}$ is almost Euclidean. We just highlight some differences. For the same reason we can obtain \eqref{eq:Q-decomp}.
%We shall calculate each term in \eqref{eq:Yam-Q-3}. Similarly we have
% \begin{align*}
%     \int_{X}x_N^{m_1}(\Delta_{{ \rho } }^{m_1}(W_\ve))^2d\mu_{{ g } }
%     \leq \int_{\bnpe}x_N^{m_1}(\Delta_{{ \rho } }^{m_1}W_\ve)^2dx+C\ve^{n-2\gamma}\delta^{2\gamma-n}.
% \end{align*}
%Noticing that 
% \begin{align*}
%     \Delta_{{ \rho } }^{m_1}W_\ve=\Delta_{{ g } }W_\ve+m_1x_N^{-1}\partial_N W_\ve=\Delta_{m_1}W_\ve+(\Delta_{{ g } }-\Delta_{\mathbb{R}^{n+1}})W_\ve
% \end{align*}
 However, \eqref{eq:diff-Lap} will be replaced by 
 \[(\Delta_{{ g } }-\Delta_{\mathbb{R}^{n+1}})W_\ve=O(|x|^{n})|\nabla^2 W|+O(|x|^{n-1})|\nabla W_\ve|,\]
 since \eqref{eq:lcf-g-exp}.
This implies,
 \begin{align*}
     \int_{\bnpe}x_N^{m_1}(\Delta_{{ \rho } }^{m_1}W_\ve)^2dx
     \leq &\int_{\bnpe}x_N^{m_1}[(\Delta_{m_1}W_\ve)^2+C|x|^{2n-2}|\nabla W_\ve|^2]dx\\
     \leq&\int_{\bnpe}x_N^{m_1}(\Delta_{m_1}W_\ve)^2dx+C \ve^{n-2\gamma}\delta^{-2\gamma-n}. 
 \end{align*}
 Here we have used \eqref{eq:mnabw2}. The rest of the proof will be the same.
\end{proof}
\subsection{Case (II-3): Non-locally conformally flat and $n>2\gamma+4$}\hfill
%\textcolor{blue}{Say what readers expect to see here}

We are going to use a local test function 
\begin{align}\label{U:II-3}
U_{a,\ve,\delta}(x)=\chi_{\delta}W_{\ve}(\Psi_a(x)).
\end{align}
where $\chi_\delta$ is defined in \eqref{eq:cut-off} and $\Psi_a$ is the Fermi coordinates.
\begin{theorem}\label{thm:II-3-main} 
Suppose that $\gamma\in (1,\min\{2,\frac{n}{2}\})$ and $n>4+2\gamma$. If the Weyl tensor $W[h]$ at $a$ does not vanish, then there exist $\mathcal{C}_4>0$ such that 
\begin{align*}
    {\mathcal{E}}_{{ h }}^\gamma[U_{a,\ve,\delta}]\leq \overline{\mathcal{E}}_{{ h }}^\gamma[U_{a,\ve,\delta}]\leq \mathcal{Y}_{\mathbb{S}^n}^\gamma-\ve^4\mathcal{C}_{4}(n,\gamma,g,\delta)||W[{ h }]||^2+o(\ve^4)
\end{align*}
provided $C_0\ve\leq \delta\leq\delta_0\leq 1$ for  $\delta_0$ small enough and $C_0$ large enough.
\end{theorem}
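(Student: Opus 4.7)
The first inequality $\mathcal{E}_h^\gamma[U_{a,\ve,\delta}]\le \overline{\mathcal{E}}_h^\gamma[U_{a,\ve,\delta}]$ is automatic from Proposition~\ref{prop:case-2}: the test function $U_{a,\ve,\delta}$ is smooth in $\bar X$, it agrees with $W_\ve\in \mathcal{C}^\gamma$ inside the small ball $\bnpe$, and $\lim_{\rho\to0}\rho^{m_1}\partial_\rho U_{a,\ve,\delta}=0$, so $U_{a,\ve,\delta}\in\mathcal{C}^\gamma$ with boundary trace equal to $w_\ve(\bar x)$ cut off by $\chi_\delta$. Hence all the work is in the sharp upper bound on $\overline{\mathcal{E}}_h^\gamma[U_{a,\ve,\delta}]$.

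My plan is to expand numerator and denominator of \eqref{eq:Yam-Q-3} separately in Fermi coordinates centered at $a$, using the special expansions of Lemma~\ref{lem:metric-expan1} that encode the Poincaré--Einstein hypothesis. For the numerator I split
\[
\mathcal{Q}_\gamma(U_{a,\ve,\delta})
=\int_{\bnpe} x_N^{m_1}\!\Bigl[(\Delta_\rho^{m_1}W_\ve)^2-4(P-(n-2\gamma+2)Jg)(\nabla W_\ve,\nabla W_\ve)+\tfrac{n-2\gamma}{2}Q^{m_1}_\rho W_\ve^2\Bigr]\sqrt{|g|}\,dx
\]
plus an error from the cutoff region $\bnped\setminus \bnpe$, which by \eqref{eq:mnabw2} is $O(\ve^{n-2\gamma}\delta^{2\gamma-n})=o(\ve^4)$ since $n>4+2\gamma$. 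I treat the first (leading) term by writing $\Delta_\rho^{m_1}W_\ve=\Delta_{m_1}W_\ve+(\Delta_g-\Delta_{\R^{n+1}})W_\ve$, where \eqref{eq:diff-Lap} gives the correction at order $|x|^2|\nabla^2 W_\ve|+|x||\nabla W_\ve|$ coming from the $R_{ikjl}[h]x_kx_l$ part of $g^{ij}$. Squaring and integrating, the leading piece $\int_{\bnpe}x_N^{m_1}(\Delta_{m_1}W_\ve)^2$ is bounded, as in \eqref{eq:leading-ineqS_n}, by $\kappa_\gamma S_{n,\gamma}^{-1}\bigl(\int_{\R^n}w_\ve^{2n/(n-2\gamma)}\bigr)^{(n-2\gamma)/n}$. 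The cross term $2\int x_N^{m_1}(\Delta_{m_1}W_\ve)(\Delta_g-\Delta)W_\ve$ and the curvature terms contribute at order $\ve^2\!\cdot\!(\text{curvature at }a)$ times a universal integral.

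The key is that Lemma~\ref{lem:metric-expan1} kills all the would-be $\ve^2$ and $\ve^3$ contributions: at $a$ one has $R[g]=0$, $Ric[g]_{ij}(a)=0$, $R[g]_{iNjN}(a)=0$, $Ric[g]_{NN;N}(a)=0$, and the first-derivative pieces integrate to zero by odd symmetry of $W_\ve$. Consequently the first surviving term is at order $\ve^4$, and after using the integral identities from Lemma~\ref{lem:Ric-expan} together with the P-E identities (4) and (5) of Lemma~\ref{lem:metric-expan1}, the coefficient can be grouped as a strictly negative constant (depending on $n,\gamma,\delta$ through the cutoff) times $\|W[h]\|^2$, modulo $o(\ve^4)$. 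A parallel expansion of $\sqrt{|g|}$ and $h$ in the denominator shows $\oint_M U_{a,\ve,\delta}^{2n/(n-2\gamma)}d\sigma_h=\int_{\R^n}w_\ve^{2n/(n-2\gamma)}+o(\ve^4)$, again because the first-order correction vanishes by symmetry and the identities of Lemma~\ref{lem:metric-expan1}. Combining, $\overline{\mathcal{E}}_h^\gamma[U_{a,\ve,\delta}]\le \mathcal{Y}^\gamma_{\mathbb{S}^n}-\mathcal{C}_4(n,\gamma,g,\delta)\|W[h]\|^2\ve^4+o(\ve^4)$ with $\mathcal{C}_4>0$.

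The main obstacle I anticipate is purely computational: correctly identifying the sign and expression of $\mathcal{C}_4$. Unlike the global cases of Propositions~\ref{prop:I-low}, \ref{prop:low-dim-2}, \ref{prop:II-2}, here I cannot discard the curvature corrections as pure error; I must track them up to order $\ve^4$ and show that, after using all the P-E simplifications (in particular $R[g]_{;ii}(a)$ and $R[g]_{iNjN;ij}(a)$ are proportional to $-\|W[h]\|^2$), the sum of the curvature contribution from $(4P-(n-2\gamma+2)Jg)(\nabla W_\ve,\nabla W_\ve)$, the $Q^{m_1}_\rho W_\ve^2$ contribution, the cross term from $(\Delta_g-\Delta)W_\ve$, and the denominator corrections, produces a strictly negative coefficient. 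The universal integrals involved are of the form $\int_{\RpN}x_N^{m_1}|x|^{2k}|\nabla^\alpha W_1|^2\,dx$ which converge thanks to $n>4+2\gamma$ (cf.\ \eqref{eq:mnabw2}), so each appears as a positive explicit constant, and the final sign verification proceeds exactly as in the classical boundary Yamabe computation of Marques \cite{Marques2005} adapted to the weighted setting as in \cite{Gonzalez2018,Kim2018}.
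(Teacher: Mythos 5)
Your plan follows essentially the same route as the paper's proof: split $\mathcal{Q}_\gamma(U_{a,\ve,\delta})$ into the $(\Delta_\rho^{m_1}W_\ve)^2$, Schouten/$J$, and $Q_\rho^{m_1}$ pieces plus a cutoff error, bound the flat leading term by the sharp trace constant as in \eqref{eq:leading-ineqS_n}, use the conformal Fermi expansions and the Poincar\'e--Einstein identities of Lemma \ref{lem:metric-expan1} (together with odd symmetry) to annihilate the $\ve^2$ and $\ve^3$ terms, and identify the $\ve^4$ coefficient as a universal constant times $Ric[g]_{NN;ii}=-\|W[h]\|^2/[12(n-1)]$. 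The only ingredient you defer --- the explicit evaluation of the universal integrals and the verification that the resulting constant $\mathcal{C}_4$ is strictly positive for all $n>4+2\gamma$, $\gamma\in(1,2)$ --- is exactly what the paper carries out in Lemma \ref{lem:appendix-case3} and Lemma \ref{lem:C4>0}, so your sketch is consistent with the actual argument.
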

\begin{proof} For the same reason as before, we just need to show the second inequality. Adopting the notation $\mathcal{Q}_\gamma({U:\Omega})$ in \eqref{def:Q1},
one has
%Let us denote $\mathcal{T}(U)$ be the numerator of $\eqref{eq:Yam-Q-3}$. 
% \begin{align}
%     &\mathcal{Q}(U)=\notag\\
%     &\int_{X}x_N^{m_1}\left(\left(\Delta_{{ \rho } }^{m_1} U\right)^{2}-\left(4 P-(n-2 \gamma+2) J { g } \right)(\nabla U, \nabla U)+\frac{n-2 \gamma}{2} Q_\rho^{m_1} U^{2}\right)d\mu_{{ g } }
% \end{align}
\begin{align*}
    \mathcal{Q}_{\gamma}(U_{a,\ve,\delta})=&\mathcal{Q}_{\gamma}(W_\ve:\bnpe)+\mathcal{Q}_{\gamma}(U_{a,\ve,\delta}:\bnped\backslash \bnpe).
\end{align*}
To make our proof more clear, we use the following notation  $\mathcal{Q}_{\gamma}(W_\ve:\bnpe)=\mathcal{T}_1-\mathcal{T}_2+\mathcal{T}_3$ where
\begin{align*}
    \mathcal{T}_1=&\int_{\bnpe}x_N^{m_1}(\Delta_{{ \rho } }^{m_1}W_\ve)^2d\mu_{{ g }},\\
    \mathcal{T}_2=&\int_{\bnpe}(4P-(n-2\gamma+2)J[g]g)(\nabla W_\ve,\nabla W_\ve)d\mu_g,\\
    \mathcal{T}_3=&\frac{n-2\gamma}{2}\int_{\bnpe}Q_\rho^{m_1}W_\ve^2d\mu_g.
\end{align*}
 %The proof will be divided to three steps.
Step 1: Consider $\mathcal{T}_1$.  %$\mathcal{Q}_\gamma(\chi_{a, \delta} W_\ve:\bnpe)$.
Noticing \eqref{eq:det-expan-1}, one gets
\begin{align*}
    &\int_{\bnpe}x_N^{m_1}(\Delta_{{ \rho } }^{m_1}W_\ve)^2d\mu_{{ g } }=\int_{\bnpe}x_N^{m_1}(\Delta_{{ \rho } }^{m_1}W_\ve)^2\sqrt{|{ g } |}d\mu_{{ g } }\notag\\
    =&\int_{\bnpe}x_N^{m_1}(\Delta_{{ \rho } }^{m_1}W_\ve)^2dx-\frac{1}{4n} Ric[{ g } ]_{NN;ii}\int_{\bnpe}x_N^{m_1+2}r^2(\Delta_{m_1}W_\ve)^2dx+o(\ve^4).%\label{eq:sp1-1}
\end{align*}
Since $4+2\gamma<n$
\begin{align*}
    \int_{\bnpe}x_N^{m_1+2}r^2(\Delta_{m_1}W_\ve)^2dx
    =&4\int_{\bnpe}x_N^{m_1}r^2(\p_NW_\ve)^2dx\notag\\
    =&4\ve^4\iRN x_N^{m_1}r^2(\p_N W_1)^2dx+o(\ve^4) %\label{eq:sp1-2} 
    %=&4\ve^4(\mathcal{F}_5-\mathcal{F}_6)+o(\ve^4).
\end{align*}
Introduce the notation (see  \citet[Lemma B.6]{Kim2018})
\begin{align}\label{eq:F5F6}
\mathcal{F}_5=\int_{\RpN}x_N^{m_1}r^2|\nabla W_\ve|^2dx,\quad \mathcal{F}_6=\int_{\RpN}x_N^{m_1}r^2(\p_rW_\ve)^2dx.
\end{align}
% Here we are using the notations at \cite[Lemma B.6]{Kim2018}. %Check the notation at Appendix. Plugging in the above identity to \eqref{eq:sp1-1} leads to 
 Thus
\begin{align}
    \mathcal{T}_1=\int_{\bnpe}x_N^{m_1}(\Delta_{{ \rho } }^{m_1}W_\ve)^2dx- \frac{\ve^4}{n}Ric[{ g } ]_{NN;ii}(\mathcal{F}_5-\mathcal{F}_6)+o(\ve^4).\label{eq:sp1-tf1}
\end{align}
To handle the first term on the RHS, straightforward computation shows
\begin{align*}
     &\int_{\bnpe}x_N^{m_1}(\Delta_{{ \rho } }^{m_1}W_\ve)^2dx=\int_{\bnpe}x_N^{m_1}(\Delta_{m_1}W_\ve+(\Delta_{{ g } }-\Delta_{\RpN}) W_\ve)^2dx\\
     \leq &\int_{\bnpe}x_N^{m_1}[(\Delta_{m_1}W_\ve)^2+2\Delta_{m_1}W_\ve(\Delta_{{ g } }-\Delta_{\RpN}) W_\ve+((\Delta_{{ g } }-\Delta_{\RpN}) W_\ve)^2]dx\\
     =&\int_{\bnpe}x_N^{m_1}(\Delta_{m_1}W_\ve)^2dx+I_1+I_2.
 \end{align*}
Applying \eqref{eq:det-expan-2}, one can notice
\begin{align*}
    (\Delta_{{ g } }-\Delta_{\RpN})W_\ve
    =&({ g } ^{ab}-\delta_{ab})\partial_{ab}^2W_\ve+\partial_a{ g } ^{ab}\partial_b W_\ve+{ g } ^{ab}\partial_a \log\sqrt{|{ g } |}\partial_b W_\ve\\
    =&\left[\frac{1}{3}R[{ h }]_{ikjl}x_kx_l\right]\partial_{ij}^2W_\ve+O(|x|^3)|\nabla^2_{\bar x} W_\ve|+O(|x|^2)|\nabla W_\ve|
\end{align*}
Notice the following fact 
\begin{align}\label{eq:D2W}
    \partial_{ij}^2W_1=\partial^2_{rr}W_1\frac{x_ix_j}{r^2}+\partial_r W_1 (\frac{\delta_{ij}}{r}-\frac{x_ix_j}{r^3}).
\end{align}
Using the symmetry of $\partial_{ij}^2W_\ve$ and the properties in Lemma \ref{lem:metric-expan1}, $R[{ h }]_{ikjl}x_kx_l\partial_{ij}^2W_\ve=0$.
%One can show that $(\Delta_{{ g } }-\Delta_{\RpN})W_\ve=O(|x|^3)|\nabla^2 W_\ve|+O(|x|^2)|\nabla W_\ve|$. 
Consequently 
\begin{align*}
    I_2=\int_{\bnpe}x_N^{m_1}[(\Delta_{{ g } }-\Delta_{\RpN}) W_\ve]^2dx=o(\ve^4).
\end{align*}
Now consider $I_2$. Let %$\sqrt{|{ g } |}^{(4)}$ and  
$\left({ g } ^{i j}\right)^{(4)}$ be the fourth-order terms in the expansion %\eqref{eq:det-expan-1} and 
\eqref{eq:det-expan-2} of  ${ g } ^{ij}$.
\begin{align*}
    I_1=&2\int_{\bnpe}x_N^{m_1}\Delta_{m_1}W_\ve({ g } ^{ij}-\delta^{ij})\partial_{ij}^2W_\ve dx\\
    =& 4\ve^4\int_{\RpN}x_N^{m_1-1}\p_{N}W_1({ g } ^{ij})^{(4)}\partial_{ij}^2W_1 dx+o(\ve^4)\\
    =&2\ve^4R[{ g } ]_{iNjN;kl}\int_{\RpN}x_N^{m_1+1}x_kx_l\partial_N W_1\partial_{ij}^2W_1 dx\\
    &+\frac{\ve^4}{3}R[{ g } ]_{iNiN;NN}\int_{\RpN}x_N^{m_1+3}\partial_N W_1\Delta W_1dx+o(\ve^4).
\end{align*}
It follows from Bianchi identity and $R[{ g } ]_{NN;NN}(a)=0$ that $R[{ g } ]_{iNiN;NN}(a)=0$. Therefore the second term in $I_1$ is equal to 0. 
Using \eqref{eq:D2W} and \cite[Corollary 29]{Brendle2008}, one could simplify $I_1$ as
\begin{align*}
    I_1=&\frac{2\ve^4}{n(n+2)}(R[{ g } ]_{iNiN;jj}+2R[{ g } ]_{iNjN;ij})(\mathcal{A}_3-\mathcal{A}_1)+\frac{2\ve^4}{n}Ric[{ g } ]_{iNiN;jj}\mathcal{A}_1+o(\ve^4),
\end{align*}
where we have used the notation of Lemma \ref{lem:appendix-case3} in Appendix. Lemma \ref{lem:metric-expan1} implies $R[{ g } ]_{iNiN;jj}=Ric[{ g } ]_{NN;jj}$. Therefore 
\begin{align*}
    I_1=&\frac{2\ve^4}{n}Ric_{NN;ii}[{ g } ]\mathcal{A}_1+\frac{6\ve^4}{n(n+2)}Ric_{NN;ii}[{ g } ](\mathcal{A}_3-\mathcal{A}_1)+o(\ve^4).
\end{align*}
Collecting the computation of $I_1$ and $I_2$ and inserting to \eqref{eq:sp1-tf1}
\begin{align}
    \mathcal{T}_1
    =&\int_{\bnpe}x_N^{m_1}(\Delta_{m_1}W_\ve)^2dx+\frac{\ve^4Ric[{ g } ]_{NN;ii}}{n}\left[-\mathcal{F}_5+\mathcal{F}_6+2\mathcal{A}_1+\frac{6(\mathcal{A}_3-\mathcal{A}_1)}{n+2}\right]+o(\ve^4).\label{eq:sp1-tf2}
\end{align}
Step 2: Let us deal with $\mathcal{T}_2$ and $\mathcal{T}_3$ in  $\mathcal{Q}_{\gamma}(W_\ve:\bnpe)$.
% \begin{align}
%     &\int_{\RpN}x_N^{3-2\gamma}\partial_i{ g } ^{ij}\partial_j W_\ve\Delta_{m_1}W_\ve dx=\frac{\ve^4}{n}R_{iNjN;ij}[{ g } ]\int_{\RpN}x_N^{4-2\gamma}r\partial_rW_1\partial_{N}W_1dx
% \end{align}
% \begin{align*}
%     &\int_{\RpN}x_N^{3-2\gamma}{ g } ^{ab}\partial_a\log \sqrt{|{ g } |}\partial_bW_\ve \Delta_{m_1}W_\ve dx\\
%     =&-\frac12\ve^4R_{NN;ii}[{ g } ]\int_{\RpN} x_N^{4-2\gamma} r\partial_r W_1\partial_N W_1-\frac12\ve^4R_{NN;ii}[{ g } ]\int_{\RpN} x_N^{3-2\gamma} r^2(\partial_N W_1)^2dx\\
%     &-\frac16[R_{NN;NN}[{ g } ]+2(R_{iNjN}[{ g } ])^2]\int_{\RpN}x_N^{5-2\gamma}(\partial_N W_1)^2dx
% \end{align*}
Using Lemma \ref{lem:Ric-expan}, we get
\begin{align*}
    &\int_{\bnpe}x_N^{m_1} Ric[{ g } ](\nabla W_1, \nabla W_1)dx\\
    =&\int_{\bnpe}x_N^{m_1} Ric[{ g } ]_{ij}(x)\partial_iW_1\partial_j W_1dx+\int_{\bnpe}x_N^{m_1} Ric[{ g } ]_{NN}(x)\partial_NW_1\partial_N W_1dx\\
    =&\frac{\ve^4}{2n(n+2)}\left[Ric[{ h }]_{kk;ii}+2Ric[{ h }]_{ik;ik}\right]\int_{\RpN}x_N^{m_1}r^2(\partial_rW_1)^2dx\\
    %&+\frac{\ve^4}{2n(n+2)}\left[Ric[{ g } ]_{NN;ii}+2R[{ g } ]_{iNjN;ij}\right]\int_{\RpN}x_N^{m_1}r^2(\partial_rW_1)^2dx
    &+\frac{1}{2n}Ric[{ g } ]_{NN;ii}\int_{\RpN}x_N^{m_1}r^2(\partial_NW_1)^2dx+o(\ve^4).
\end{align*}
Since Lemma \ref{lem:metric-expan1} implies $Ric[{ h }]_{kk;ii}=R[{ h }]_{;ii}=2(n-1)Ric[{ g } ]_{NN;ii}$ and contracted Bianchi identity $2Ric[{ h }]_{ik;ik}=R[h]_{;kk}$, one can simplify the above equation %and $Rm[{ g } ]_{iNjN;ij}=Ric[{ g } ]_{NN;ii}$
\begin{align*}%\label{eq:sp2-1.5}
     &\int_{\bnpe}x_N^{m_1} Ric[{ g } ](\nabla W_1,\nabla  W_1)dx
    =\left[\frac{2n-1}{2n(n+2)}\mathcal{F}_6+\frac{1}{2n}(\mathcal{F}_5-\mathcal{F}_6)\right]{\ve^4}Ric[{ g } ]_{NN;ii}+o(\ve^4).
\end{align*}
We also have 
\begin{align}
    &\int_{\bnpe} x_N^{m_1}J[g] |\nabla W_\ve|_{{ g } }^2d\mu_{{ g } }=\frac{1}{2n}\int_{\bnpe} x_N^{m_1}R[{ g } ] |\nabla W_\ve|^2dx+o(\ve^4)\notag\\
    =&\frac{1}{4n}\int_{\bnpe}x_N^{m_1}(R[{ g } ]_{;ij}x_ix_j+R[{ g } ]_{;NN}x_N^2)|\nabla W_\ve|^2dx+o(\ve^4)\notag\\
    =&\frac{R[{ g } ]_{;ii}\ve^4}{4n^2}\iRN  x_N^{m_1}r^2|\nabla W_1|^2dx+o(\ve^4)\notag\\
    =&\frac{R[{ g } ]_{;ii}\ve^4}{4n^2}\mathcal{F}_5+o(\ve^4)=\frac{Ric[{ g } ]_{NN;ii}\ve^4}{2n}\mathcal{F}_5+o(\ve^4).\notag
\end{align}
Since the Schouten tensor $P=\frac{1}{n-1}(Ric-Jg)$, 
% \begin{align}\label{eq:sp2-2}
%     \mathcal{T}_2=\left(\frac{4}{n-1}\left[\frac{2n-1}{2n(n+2)}\mathcal{F}_6+\frac{1}{n}(\mathcal{F}_5-\mathcal{F}_6)\right]-\frac{1}{2n}\left(\frac{4}{n-1}+n-2\gamma+2\right)\mathcal{F}_5\right)\ve^4 Ric[g]_{NN;ii}+o(\ve^4)
% \end{align}
\begin{align}
    \mathcal{T}_2=&\frac{4}{n-1}\int_{\bnpe}x_N^{m_1} Ric[{ g } ](\nabla W_1,\nabla  W_1)dx\notag\\
    &\quad-\left(\frac{4}{n-1}+n-2\gamma+2\right)\int_{\bnpe} x_N^{m_1}J[g] |\nabla W_\ve|_{{ g } }^2d\mu_{{ g } }\notag\\
    =&\left[\frac{2}{n-1}\frac{n-3}{n(n+2)}\mathcal{F}_6-\frac{n-2\gamma+2}{2n}\mathcal{F}_5\right]\ve^4 Ric[g]_{NN;ii}+o(\ve^4)\label{eq:sp2-2}
    %=&\frac{4}{n}\left[\frac{4}{n-1}\frac{n-3}{n(n+2)}\mathcal{F}_6+\left(\frac{2}{n(n-1)}-\frac{n-2\gamma+2}{2n}\right)\mathcal{F}_5\right]\ve^4 Ric[g]_{NN;ii}+o(\ve^4)
\end{align}
Also 
\begin{align}
    \mathcal{T}_3=&\frac{n-2\gamma}{2}\int_{\bnpe}x_N^{m_1} Q_{\rho_1}^{m_1}W_\ve^2d\mu_{{ g } }=\frac{n-2\gamma}{2}\int_{\bnpe}x_N^{m_1}(-\Delta_{m_1}J[{ g } ](0))W_\ve^2dx+o(\ve^4)\notag\\
    =&\frac{-(n-2\gamma)\ve^4}{4n}\iRN x_N^{m_1}(R[{ g } ]_{;ii}+R[{ g } ]_{;NN}+m_1x_N^{-1}\partial_NR[g])W_1^2dx+o(\ve^4)\notag\\
    =&-\frac{(n-2\gamma)\ve^4}{4n}R[{ g } ]_{;ii}\mathcal{F}_1+o(\ve^4)=-\frac{n-2\gamma}{2}\ve^4Ric[{ g } ]_{NN;ii}\mathcal{F}_1+o(\ve^4).\label{eq:sp2-3}
\end{align}
Here $\mathcal{F}_1=\int_{\RpN} x_N^{m_1}W_1^2dx$ (see the notation in \cite[Lemma B.6]{Kim2018}). Inserting \eqref{eq:sp1-tf2}, \eqref{eq:sp2-2}, and \eqref{eq:sp2-3} together back to \eqref{def:Q1}, 
\begin{align}
\mathcal{Q}_\gamma( W_\ve:\bnpe)=
    \int_{\bnpe}x_N^{m_1}(\Delta_{m_1}W_\ve)^2dx+{\ve^4} R[{ g } ]_{NN;ii}\mathcal{C}_4+o(\ve^4)
\end{align}
where 
\begin{align}
    \mathcal{C}_4=&\frac{1}{n}\left[-\mathcal{F}_5+\mathcal{F}_6+2\mathcal{A}_1+\frac{6(\mathcal{A}_3-\mathcal{A}_1)}{n+2}\right]\notag\\
    &-\left[\frac{2}{n-1}\frac{n-3}{n(n+2)}\mathcal{F}_6-\frac{n-2\gamma+2}{2n}\mathcal{F}_5\right]-\frac{n-2\gamma}{2}\mathcal{F}_1\label{def:C4}
\end{align}
It can be check that $\mathcal{C}>0$ when $\gamma\in (1,2)$ and $n>4+2\gamma$. See Lemma \ref{lem:C4>0} in the appendix.

Step 3: It is standard to get
\begin{align*}
    \mathcal{Q}_\gamma(\chi_{\delta} W_\ve: \bnped\backslash \bnpe)=o(\ve^4).
\end{align*}
Combining Step 1-3, and \eqref{eq:leading-ineqS_n}, we obtain
\begin{align*}
    \mathcal{Q}_\gamma(U_{a,\ve,\delta})\leq S_{n,\gamma}^{-1}\left(\int_{\dze} w_\ve^{\frac{2n}{n-2\gamma}}d\bar x\right)^{\frac{n-2\gamma}{n}}+\ve^4R[g]_{NN;ii}\mathcal{C}_4+o(\ve^4).
\end{align*}
Since we always have \eqref{eq:E-nm-p4}, and $R[g]_{NN;ii}=-||W[h]||^2/[12(n-1)]$ by Lemma \ref{lem:metric-expan1} 
\begin{align*}
    \overline{\mathcal{E}}_{{ h }}^\gamma[U_{a,\ve,\delta}]\leq \mathcal{Y}_{\mathbb{S}^n}^\gamma-\ve^4\mathcal{C}_{4}(n,\gamma,g,\delta)||W[{ h }]||^2+o(\ve^4).
\end{align*}
\end{proof}
\subsection{Case (II-4): Non-locally conformally flat and $n=2\gamma+4$}\hspace*{\fill} 
%\textcolor{blue}{Say what readers expect to see here}

In this case we will have $n=4+2\gamma$. Since $\gamma\in (1,2)$, then it means $\gamma=\frac{3}{2}$ and $n=7$. The bubble has the following explicit form \cite{Sun2016}
\begin{align}\label{eq:W-3/2}
    W_{\ve,\sigma}=\alpha_{7,\frac{3}{2}}\left[\left(\frac{\ve}{(\ve+x_N)^2+|\bar x-\sigma|^2}\right)^2+4x_N\left(\frac{\ve}{(\ve+x_N)^2+|\bar x-\sigma|^2}\right)^3\right]
\end{align}
where $\alpha_{7,\frac{3}{2}}$ is defined in \eqref{notation-2}. We also have $m_1=0$ in this case.

\begin{theorem}\label{thm:II-4-main}
Suppose that $\gamma=\frac32$ and $n=7$. If the Weyl tensor at $a$  does not vanish, define
\[U_{a,\ve,\delta}(x)=\chi_{\delta}W_{\ve}(\Psi_a(x))\]
for $0<C_0\ve\leq \delta\leq \delta_0\leq 1$. Then there exists $\mathcal{C}_5>0$ such that
\begin{align*}
    {\mathcal{E}}_{{ h }}^\gamma[U_{a,\ve,\delta}]\leq \overline{\mathcal{E}}_{{ h }}^\gamma[U_{a,\ve,\delta}]\leq \mathcal{Y}_{\mathbb{S}^n}^\gamma-\ve^4\log(\delta/\ve)\mathcal{C}_{5}(n,\gamma,g,\delta)||W[{ h }]||^2+O(\ve^4)
\end{align*}
provided $\delta_0$ small enough and $C_0$ large enough.
\end{theorem}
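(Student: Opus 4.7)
The plan is to imitate the proof of Theorem \ref{thm:II-3-main} step by step, the only substantive change being that now we are in the critical-exponent regime $n-2\gamma-k-4=0$ of the estimate \eqref{eq:mnabw2}. In that regime the weighted integrals against bubble derivatives truncated to $B_+^N(0,\delta)$ pick up an extra $\log(\delta/\varepsilon)$ factor rather than producing a convergent constant. This is precisely what promotes the $\varepsilon^4$ gain of the non-critical case to the $\varepsilon^4\log(\delta/\varepsilon)$ gain in the statement, and it is the reason a purely local test function suffices (no Green's function correction).

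Concretely, I would split $\mathcal{Q}_\gamma(U_{a,\varepsilon,\delta}) = \mathcal{Q}_\gamma(W_\varepsilon : \bnpe) + \mathcal{Q}_\gamma(U_{a,\varepsilon,\delta}: \bnped\setminus \bnpe)$ and further write $\mathcal{Q}_\gamma(W_\varepsilon : \bnpe) = \mathcal{T}_1 - \mathcal{T}_2 + \mathcal{T}_3$ exactly as in the proof of Theorem \ref{thm:II-3-main}. The annular piece is still $O(\varepsilon^4)$ by the same bubble decay and therefore gets absorbed into the remainder. For the three bulk terms I would reuse the Fermi-coordinate expansions from Lemma \ref{lem:metric-expan1} and Lemma \ref{lem:Ric-expan}, using that $\gamma=3/2$, $n=7$, $m_1=0$ simplify several formulas (in particular $\sqrt{|g|}$ contains no weight $x_N^{m_1}$). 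Term by term, the same algebra that produced \eqref{def:C4} in the non-critical case now produces coefficients built from
\begin{align*}
\widetilde{\mathcal{F}}_5 &= \int_{B_+^N(0,\delta/\varepsilon)} x_N^{m_1} r^2 |\nabla W_1|^2\,dx,\qquad
\widetilde{\mathcal{F}}_6 = \int_{B_+^N(0,\delta/\varepsilon)} x_N^{m_1} r^2 (\partial_r W_1)^2\,dx,
\end{align*}
and the analogous truncated versions of $\mathcal{A}_1,\mathcal{A}_3,\mathcal{F}_1$; by \eqref{eq:mnabw2} in the critical case these equal their universal leading constants times $\log(\delta/\varepsilon)$, plus $O(1)$. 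Substituting these leading logarithmic parts into the combination \eqref{def:C4} (with the same signs and the same prefactors) gives
\[
\mathcal{Q}_\gamma(W_\varepsilon : \bnpe) \le \int_{\bnpe} x_N^{m_1}(\Delta_{m_1}W_\varepsilon)^2\,dx + \varepsilon^4\log(\delta/\varepsilon)\, R[g]_{NN;ii}\,\widetilde{\mathcal{C}}_5 + O(\varepsilon^4),
\]
with $\widetilde{\mathcal{C}}_5$ the image of the expression in \eqref{def:C4} when each diverging integral is replaced by its leading logarithmic coefficient. Combining this with \eqref{eq:leading-ineqS_n} and \eqref{eq:E-nm-p4}, and invoking $R[g]_{NN;ii}=-\|W[h]\|^2/[12(n-1)]$ from Lemma \ref{lem:metric-expan1}, yields the claimed inequality with $\mathcal{C}_5 = \widetilde{\mathcal{C}}_5/[12(n-1)]$.

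The main obstacle is verifying that $\widetilde{\mathcal{C}}_5 > 0$, i.e.\ the analogue of Lemma \ref{lem:C4>0} at the critical dimension. Because the logarithmic coefficients of $\widetilde{\mathcal{F}}_5,\widetilde{\mathcal{F}}_6,\widetilde{\mathcal{A}}_1,\widetilde{\mathcal{A}}_3$ are explicit Euler-type integrals in the variables $(r,x_N)$ against powers of $W_1$ whose exact form is given by \eqref{eq:W-3/2}, one can in principle compute them in closed form, but the algebraic combination in \eqref{def:C4} could conspire to vanish. The cleanest way to sidestep a painful direct computation is to view $\mathcal{C}_4(n,\gamma)$ as a meromorphic function of $n$ near $n=2\gamma+4$: the non-critical formula has a simple pole there whose residue is exactly $\widetilde{\mathcal{C}}_5$, and the positivity of $\mathcal{C}_4$ for $n>2\gamma+4$ established in Lemma \ref{lem:C4>0} then forces the residue to be non-negative; a separate check rules out vanishing. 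A secondary minor technicality is that the $\log(\delta/\varepsilon)$ coefficient of the cross term $I_1 = 2\int x_N^{m_1}\Delta_{m_1}W_\varepsilon\,(g^{ij}-\delta^{ij})\partial_{ij}^2 W_\varepsilon\,dx$ must be extracted using the same Bianchi identities (including $R[g]_{iNiN;NN}(a)=0$) as in Theorem \ref{thm:II-3-main}, so that no bad term survives at the logarithmic order.
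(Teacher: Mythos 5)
Your expansion scheme is exactly the paper's: the same splitting $\mathcal{Q}_\gamma(W_\ve:\bnpe)=\mathcal{T}_1-\mathcal{T}_2+\mathcal{T}_3$, the same Fermi-coordinate input from Lemmas \ref{lem:metric-expan1} and \ref{lem:Ric-expan}, and the observation that at $n=2\gamma+4$ the truncated bubble integrals grow like $\log(\delta/\ve)$ so that a purely local test function suffices. Up to that point the proposal is sound. The genuine gap is in how you certify $\mathcal{C}_5>0$, which is the whole content of the theorem at the critical dimension. Your residue argument, as you concede, only yields that the limit of $(n-2\gamma-4)\,\mathcal{C}_4(n,\gamma)$ is \emph{non-negative}; ``a separate check rules out vanishing'' is precisely the statement that must be proved, and no method is offered for it. Moreover, the identification ``pole residue of the supercritical formula $=$ logarithmic coefficient at the critical dimension'' is asserted rather than justified: the profile $W_1$, the auxiliary quantities $\mathcal{A}_1,\mathcal{A}_3$ (whose expressions in Lemma \ref{lem:appendix-case3} come from integrations by parts valid only for $n>2\gamma+4$), and the constants $\mathcal{F}_i$ from \cite[Lem.\ B.6]{Kim2018} all depend on $(n,\gamma)$ jointly, so passing from the meromorphic continuation in $n$ to the truncated integrals at $(n,\gamma)=(7,\tfrac32)$ requires a uniform tail analysis that is essentially as much work as the direct computation you are trying to avoid.

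The paper settles this point differently: since $\gamma=\tfrac32$, $n=7$, $m_1=0$, the bubble has the closed form \eqref{eq:W-3/2}, and Lemma \ref{lem:appendix-3/2} evaluates the five model integrals (the critical-dimension analogues of your $\widetilde{\mathcal{F}}_1,\widetilde{\mathcal{F}}_5,\widetilde{\mathcal{F}}_6,\widetilde{\mathcal{A}}_1,\widetilde{\mathcal{A}}_3$) explicitly, each equal to an explicit multiple of $\alpha_{7,3/2}^2|\mathbb{S}^6|\log(\delta/\ve)+O(1)$. Assembling them gives $\mathcal{Q}_\gamma(W_\ve:\bnpe)=\int_{\bnpe}(\Delta_{m_1}W_\ve)^2dx+\tfrac{19\pi}{48}\alpha_{7,3/2}^2|\mathbb{S}^6|\,\ve^4\log(\delta/\ve)\,Ric[g]_{NN;ii}+O(\ve^4)$, with a manifestly positive numerical coefficient, and then $Ric[g]_{NN;ii}=-\|W[h]\|^2/72$ closes the argument. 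So the route you describe as ``in principle compute them in closed form'' is exactly what the paper does, and it is the step you would need to carry out (or replace by a genuinely quantitative residue computation) to make your proposal complete.
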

\begin{proof}
Using the explicit form of $W_{\ve,\sigma}$, one  can calculate as the previous section. Step 1: Consider the leading term in $\mathcal{Q}_{\gamma}(W_\ve:\bnpe)$.
\begin{align}
    &\int_{\bnpe}(\Delta_{{ \rho } }^{m_1}W_\ve)^2d\mu_{{ g } }=\int_{\bnpe}x_N^{m_1}(\Delta_{{ \rho } }^{m_1}W_\ve)^2\sqrt{|{ g } |}d\mu_{{ g } }\notag\\
    =&\int_{\bnpe}(\Delta_{{ \rho } }^{m_1}W_\ve)^2dx-\frac{\ve^4}{n} Ric[{ g } ]_{NN;ii}\int_{B_+^N(0,\delta/\ve)}x_N^{2}r^2(\p_{N}W_1)^2dx+o(\ve^4)\notag\\
    =&\int_{\bnpe}(\Delta_{{ \rho } }^{m_1}W_\ve)^2dx-\frac{\pi}{32}\alpha_{7,\frac{3}{2}}^2|\mathbb{S}^6|\ve^4\log\left(\frac{\delta}{\ve}\right)Ric[{ g } ]_{NN;ii}+O(\ve^4).
\end{align}
where we have used the formula of Lemma \ref{lem:appendix-3/2} in the Appendix. Similarly 
\begin{align*}
    \int_{\bnpe}(\Delta_{{ \rho } }^{m_1}W_\ve)^2dx=\int_{\bnpe}(\Delta_{{ m_1 } }W_\ve)^2dx+I_1+I_2.
\end{align*}
It is easy to see $I_2=o(\ve^4)$ and 
\begin{align}
    I_1=-\frac{\pi}{32}\alpha_{7,\frac{3}{2}}^2|\mathbb{S}^6|\ve^4\log\left(\frac{\delta}{\ve}\right)Ric[{ g } ]_{NN;ii}+O(\ve^4).%\textcolor{blue}{not right}
\end{align}
\begin{align}
    \mathcal{T}_1=\int_{\bnpe}(\Delta_{{ \rho } }^{m_1}W_\ve)^2dx-\frac{\pi}{16}\alpha_{7,\frac{3}{2}}^2|\mathbb{S}^6|\ve^4\log\left(\frac{\delta}{\ve}\right)Ric[{ g } ]_{NN;ii}+O(\ve^4).
\end{align}
For Step 2, we have
\begin{align*}
     &\int_{\bnpe}Ric[{ g } ](\nabla W_1,\nabla W_1)dx
    =\frac{7\pi}{32}\alpha_{7,\frac{3}{2}}^2|\mathbb{S}^6|\ve^4\log\left(\frac{\delta}{\ve}\right)Ric[{ g } ]_{NN;ii}+O(\ve^4).
\end{align*}
and 
\begin{align*}
    &\int_{\bnpe} J |\nabla W_\ve|_{{ g } }^2d\mu_{{ g } }=\frac{5\pi}{32}\alpha_{7,\frac{3}{2}}^2|\mathbb{S}^6|\ve^4\log\left(\frac{\delta}{\ve}\right)Ric[{ g } ]_{NN;ii}+O(\ve^4).
\end{align*}
Hence 
\begin{align}
    \mathcal{T}_2= -\frac{43}{48}\alpha_{7,\frac{3}{2}}^2|\mathbb{S}^6|\ve^4\log\left(\frac{\delta}{\ve}\right)Ric[{ g } ]_{NN;ii}+O(\ve^4).
\end{align}
It is not hard to see that 
\begin{align*}
    \mathcal{T}_3&=\frac{n-2\gamma}{2}\int_{\bnpe} Q_{\rho_1}^{m_1}W_\ve^2d\mu_{{ g } }=-\frac{10\pi}{32}\alpha_{7,\frac{3}{2}}^2|\mathbb{S}^6|\ve^4\log\left(\frac{\delta}{\ve}\right)Ric[{ g } ]_{NN;ii}+O(\ve^4).
\end{align*}
Putting $\mathcal{T}_i$ back to $\mathcal{Q}_\gamma( W_\ve:\bnpe)$, one gets
\begin{align*}
&\mathcal{Q}_\gamma( W_\ve:\bnpe)\\
=&\int_{\bnpe}(\Delta_{m_1}W_\ve)^2dx+\frac{19\pi}{48}\alpha_{7,\frac{3}{2}}^2|\mathbb{S}^6|\ve^4\log\left(\frac{\delta}{\ve}\right)Ric[{ g } ]_{NN;ii}+O(\ve^4).
\end{align*}
The rest of proof will be the same as the last part of the proof of Theorem \ref{thm:II-3-main}. We shall omit it here.
\end{proof}

% \begin{corollary}
% \begin{align}
%     \mathcal{E}_{{ h }}^\gamma\leq 
% \end{align}
% \end{corollary}
% The last line of \eqref{eq:W2RN} has the right expansion. So we just need to estimate \eqref{eq:Yam-Q-3}.
% Now let us calculate terms in \eqref{eq:Yam-Q-3} one by one
\section{Interaction estimates on Bubbles}
In this section, we will state the asymptotic analysis of Palais-Smale sequence of $\mathcal{E}_h^\gamma$. The Local cases then follows from the Ekeland Variational Principle.  Next we shall derive interaction estimates of bubbles which is crucial for the Algebraic Topological argument in the next section.
\subsection{Asymptotic analysis and Local cases}\hspace*{\fill}
%Given any point $a\in M$, there exist a smooth coordinate map $\Psi_{g_a}:\mathcal{O}(a)\to \bnped$ where $\mathcal{O}(a)$ is a open neighborhood of $a$ in $X$.

Suppose $(X^{n+1},M^n,g^+)$ is a P-E manifold with conformal infinity $(M,[h])$. Assume $\rho$ is the unique geodesic defining function for a representative metric $h$. Then $(\bar X^{n+1},g=\rho^2g_+,\rho,m_1)$ is a geodesic SMMS.
Given any point $a\in M$, there is a ``good" conformal Fermi coordinates by Lemma \ref{lem:metric-expan1}. More precisely, there exists a conformal metric ${ h }_a\in[{ h }]$ and $\rho_a$ the associated unique geodesic definition function such that
\[g_a=\rho_a^2g_+, \quad g_a|_M={ h }_a,\quad g_a=d\rho_a^2+h_{\rho_a} \text{ near }M\]
Since $h_a\in [{ h }]$, one may assume $h_a=\phi_a^\frac{4}{n-2\gamma}{ h }$. One can see that $g_a=(\rho_a/\rho)^2 \rho^2g_+$. Letting $\rho\to 0$, we get
\[{ h }_a=\lim_{\rho\to0}\left(\frac{\rho_a}{\rho}\right)^2{ h }\quad\text{ on }M.\]
So we may naturally extend $\phi_a=(\rho_a/\rho)^{\frac{n-2\gamma}{2}}$ onto $X$. It is known that the map $a\to \phi_a$ and $g_a$ is $C^0$. By the expansion of metric \eqref{eq:det-expan-2} near $a$, one knows $\phi_a(a)=1$. Therefore $|\rho_a/\rho-1|\leq C\delta$ near $a$.

Suppose $\Psi_{a}:\mathcal{O}(a)\to \bnped$ is the Fermi coordinates map, where $\mathcal{O}(a)$ is a open neighborhood of $a$ in $X$. Recall the definition of $U_{a,\ve,\delta}$ in \eqref{eq:U2-def}. 
% \begin{align}
% U_{a,\ve,\delta}=
% \end{align}
Define 
\begin{align}\label{def:V_i}
    u_{a,\ve,\delta}=U_{a,\ve,\delta}|_M,\quad V_{a,\ve,\delta}=\left(\frac{\rho_{a_i}}{\rho}\right)^{\frac{n-2\gamma}{2}}U_{a,\ve,\delta},\quad v_{a,\ve,\delta}=V_{a,\ve,\delta}|_M.
\end{align}
% ???
% We will abuse notation by suppressing $\Psi_{g_a}$. 
By the works of \citet{Palatucci2015} and \citet{Fang2015}, it is not hard to see the following profile decomposition
%\subsection{Asymptotic analysis}
\begin{lemma}\label{lem:profile-decomp}
Suppose $\{u_\nu\}\subset W^{\gamma,2}_+(M,{ h })$ be a Palais-Smale sequence for $\mathcal{E}_{{ h }}^\gamma$, that is $d\mathcal{E}_h^\gamma [u_\nu]\to 0$ and $\mathcal{E}_h^\gamma[u]\to c_*$ as $\nu\to \infty$. After some normalization, we may assume  
\[\oint_M u_\nu^{\frac{2n}{n-2\gamma}}d\sigma_h=c_*^{\frac{n}{2\gamma}}.\]  
Then after passing to subsequence if necessary, there exists a $u_\infty\in W^{\gamma,2}_+(M,h)$, an integer $m\geq 0$ and a sequence $(a_{j,\nu},\ve_{j,\nu})$ for $1\leq j\leq m$ with the following properties:
\begin{enumerate}[label=(\roman*)]
\item $u_\infty$ satisfies $P_h^\gamma u_\infty=u_\infty^{\frac{n-2\gamma}{n+2\gamma}}$.
\item As $\nu\to \infty$,
\[||u_\nu-u_\infty-\sum_{j=1}^m v_{a_{j,\nu},\ve_{j,\nu},\delta}||_{W^{\gamma,2}(M,h)}\to 0,\]
%\[(\mathcal{Y}_{\mathbb{S}^n}^\gamma)^{\frac{n}{2\gamma}}=\oint_M u_\infty^{\frac{2n}{n-2\gamma}}d\sigma_h+\sum_{j=1}^m\oint_M (v_{a_{j,\nu},\ve_{j,\nu},\delta})^{\frac{2n}{n-2\gamma}}d\sigma_h+o(1)\]
\[(\mathcal{E}_{{ h }}^\gamma[u_\nu])^{\frac{n}{2\gamma}}\to (\mathcal{E}_{{ h }}^\gamma[u_\infty])^{\frac{n}{2\gamma}}+m(\mathcal{Y}_{\mathbb{S}^n}^\gamma)^{\frac{n}{2\gamma}}.\]
\item For $i\neq j$
\begin{align}
    \frac{\ve_{i,\nu}}{\ve_{j,\nu}}+\frac{\ve_{j,\nu}}{\ve_{i,\nu}}+\frac{d^2_{{ h }}(a_{i,\nu},a_{j,\nu})}{\ve_{i,\nu}\ve_{j,\nu}}\to \infty,
\end{align}
where $d_h$ is the distance function on $(M,h)$.
\end{enumerate}
\end{lemma}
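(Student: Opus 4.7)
The plan is to follow the now-standard Struwe-type profile decomposition / concentration-compactness scheme, adapted to the fractional setting as in \citet{Palatucci2015} for $\gamma\in(0,1)$ and \citet{Fang2015} (together with Propositions \ref{prop:case-1}-\ref{prop:case-2}) for $\gamma\in(1,\min\{2,n/2\})$. The central idea is to lift everything to the extended problem on $\bar X$ so that the nonlocal functional $\mathcal{E}_h^\gamma$ becomes the trace of a local weighted energy $\overline{\mathcal{E}}_h^\gamma$, and then run concentration-compactness on this local picture.

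First, I would associate to each $u_\nu$ its extension $U_\nu\in W^{k,2}(\bar X,\rho^{m_{k-1}}d\mu_g)$ (with $k=1$ if $\gamma<1$, $k=2$ if $\gamma>1$) solving the linear problem $L_{2k,\rho}^{m_{k-1}}U_\nu=0$ with trace $u_\nu$. By Propositions~\ref{prop:case-1}-\ref{prop:case-2}, $\{U_\nu\}$ is bounded and minimizes $\mathcal{Q}_\gamma(U,U)$ in its trace class, so it is a Palais-Smale sequence for $\overline{\mathcal{E}}_h^\gamma$. Extract a weak limit $U_\infty$; its trace $u_\infty\in W^{\gamma,2}_+(M,h)$ solves $P_h^\gamma u_\infty = u_\infty^{(n+2\gamma)/(n-2\gamma)}$ by weak continuity of $d\mathcal{E}_h^\gamma$ and the fact that the nonlinearity is subcritical below the threshold (Brezis-Lieb on the critical term).

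Next, set $w_\nu^{(1)}=u_\nu-u_\infty$. A Brezis-Lieb splitting on the critical $L^{2n/(n-2\gamma)}$ term together with the quadratic nature of $\mathcal{Q}_\gamma$ shows that the corresponding extension sequence for $w_\nu^{(1)}$ is again Palais-Smale with energy $\mathcal{E}_h^\gamma[u_\nu]^{n/2\gamma}-\mathcal{E}_h^\gamma[u_\infty]^{n/2\gamma}+o(1)$. If $w_\nu^{(1)}\to 0$ strongly, we stop with $m=0$. Otherwise apply the fractional Lions concentration-compactness lemma to the measures $|w_\nu^{(1)}|^{2n/(n-2\gamma)}d\sigma_h$: nontriviality rules out vanishing, and the Palais-Smale + normalization hypothesis rules out the dichotomy case, forcing concentration at some point $a_{1,\nu}\in M$ at some scale $\ve_{1,\nu}\to 0$. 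Rescaling by $T_{a_{1,\nu},\ve_{1,\nu}}(w_\nu^{(1)})(x)=\ve_{1,\nu}^{(n-2\gamma)/2}w_\nu^{(1)}(\exp_{a_{1,\nu}}(\ve_{1,\nu}x))$ and lifting to the extension, the rescaled sequence converges locally in the appropriate weighted Sobolev space to a nontrivial solution $W$ on $\mathbb{R}^N_+$ of the limit system \eqref{eq:K1-15} (resp.\ \eqref{eq:W2RN}), whose trace on $\mathbb{R}^n$ is a standard bubble $w_{\ve,\sigma}$ by the Chen-Li-Ou / Jin-Li-Xiong classification; up to reparametrizing $(a_{1,\nu},\ve_{1,\nu})$ we may take $(\ve,\sigma)=(1,0)$.

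Now subtract $v_{a_{1,\nu},\ve_{1,\nu},\delta}$ (as in \eqref{def:V_i}) from $w_\nu^{(1)}$ to form $w_\nu^{(2)}$. A second Brezis-Lieb decomposition, combined with the quantization of bubble energy $\int_{\mathbb{R}^n}w_{\ve,\sigma}^{2n/(n-2\gamma)}=(\mathcal{Y}^\gamma_{\mathbb{S}^n})^{n/2\gamma}$, shows $\mathcal{E}_h^\gamma[w_\nu^{(2)}]^{n/2\gamma}=\mathcal{E}_h^\gamma[w_\nu^{(1)}]^{n/2\gamma}-(\mathcal{Y}^\gamma_{\mathbb{S}^n})^{n/2\gamma}+o(1)$, and that $w_\nu^{(2)}$ is still Palais-Smale. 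Iterate. Since each extraction strictly decreases the available energy by $(\mathcal{Y}^\gamma_{\mathbb{S}^n})^{n/2\gamma}$ and $c_*<\infty$, the process terminates after finitely many steps $m$; at that stage the residual converges strongly to zero, giving (i) and (ii). The asymptotic orthogonality (iii) is proved by contradiction: if along a subsequence $\ve_{i,\nu}/\ve_{j,\nu}$ and $d_h(a_{i,\nu},a_{j,\nu})^2/(\ve_{i,\nu}\ve_{j,\nu})$ stayed bounded, one could relate the two rescalings by a fixed limiting M\"obius transformation, and computing $\langle v_{a_{i,\nu},\ve_{i,\nu},\delta},v_{a_{j,\nu},\ve_{j,\nu},\delta}\rangle$ in $W^{\gamma,2}$ would contradict the $W^{\gamma,2}$-orthogonality inherent in the iterative subtraction.

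The main obstacle I would expect is step four — producing the bubble — in the $\gamma\in(1,2)$ regime: the extension now lives in a second-order weighted Sobolev space, the limiting problem involves $\Delta_{m_1}^2$ together with the Neumann-type conditions of \eqref{eq:W2RN}, and one must both justify local convergence of the rescaled extensions in that weighted $W^{2,2}$ setting and invoke a classification theorem for nonnegative finite-energy solutions on $\mathbb{R}^N_+$ with those boundary conditions. Once that classification and the associated weighted Rellich/compact embedding away from the boundary are available, the remainder of the argument is the standard Struwe iteration.
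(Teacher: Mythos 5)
Your proposal is essentially the approach the paper relies on: the paper gives no argument of its own for Lemma \ref{lem:profile-decomp}, simply invoking \citet{Palatucci2015} and \citet{Fang2015}, and your extension-plus-Struwe-iteration scheme is exactly the content of those references. The obstacle you flag for $\gamma\in(1,\min\{2,n/2\})$ is real but surmountable along the lines you indicate, since the classification of nonnegative finite-energy solutions of the limit equation is available for all $\gamma\in(0,n/2)$ via the integral-equation (Chen--Li--Ou / Jin--Li--Xiong) formulation, so your argument is consistent with the paper's intended proof.
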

It follows from Ekeland Variational Principle \citep{ekeland1974variational} that 
\begin{lemma}\label{lem:ekeland}
There exists a Palais-Smale sequence at level  $\mathcal{Y}^{\gamma}(M, [{ h }])$.
\end{lemma}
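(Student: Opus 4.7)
The plan is to apply Ekeland's Variational Principle to the functional $\mathcal{E}_h^\gamma$ suitably restricted, and to take advantage of the scale invariance of $\mathcal{E}_h^\gamma$ to convert the resulting almost-critical sequence into a genuine Palais--Smale sequence for the unconstrained functional.

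First I would work on the constraint manifold
\[
\mathcal{S}=\Big\{u\in W^{\gamma,2}_+(M,h)\setminus\{0\}\ :\ \oint_M u^{\frac{2n}{n-2\gamma}}\,d\sigma_h=\big(\mathcal{Y}^\gamma(M,[h])\big)^{\frac{n}{2\gamma}}\Big\},
\]
which is a complete $C^1$ Banach submanifold of $W^{\gamma,2}(M,h)$ (the constraint map has a nonvanishing derivative on the positive cone, since $u\geq 0$ and $u\not\equiv 0$ implies $\oint u^{(n+2\gamma)/(n-2\gamma)}\varphi\,d\sigma_h$ is nontrivial). On $\mathcal{S}$, the functional reduces to $\mathcal{F}(u):=\kappa_\gamma^{-1}\oint_M u P_h^\gamma u\,d\sigma_h$ up to a harmless normalization, and by the very definition of the $\gamma$-Yamabe constant together with the scale-invariance of $\mathcal{E}_h^\gamma$ we have $\inf_{\mathcal{S}}\mathcal{F}=\mathcal{Y}^\gamma(M,[h])$.

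Next I would apply Ekeland's Variational Principle \cite{ekeland1974variational} on $\mathcal{S}$: for each $\nu\in\N$, there exists $u_\nu\in\mathcal{S}$ with
\[
\mathcal{F}(u_\nu)\leq \mathcal{Y}^\gamma(M,[h])+\tfrac{1}{\nu},\qquad \big\|d\mathcal{F}|_{\mathcal{S}}(u_\nu)\big\|_*\leq \tfrac{1}{\nu}.
\]
Using a Lagrange multiplier, the second bound is equivalent to the existence of constants $c_\nu$ such that the unconstrained differential satisfies
\[
\big\| P_h^\gamma u_\nu - c_\nu\, u_\nu^{\frac{n+2\gamma}{n-2\gamma}}\big\|_{W^{-\gamma,2}(M,h)}\to 0,
\]
and since $\mathcal{F}(u_\nu)$ converges to $\mathcal{Y}^\gamma(M,[h])>0$, pairing the above with $u_\nu$ and using $\oint u_\nu^{2n/(n-2\gamma)}$ is constant on $\mathcal{S}$ gives $c_\nu\to \mathcal{Y}^\gamma(M,[h])\cdot\big(\mathcal{Y}^\gamma(M,[h])\big)^{-2\gamma/n}>0$. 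Finally, choosing a rescaling $\tilde u_\nu=\lambda_\nu u_\nu$ with $\lambda_\nu$ chosen so that the multiplier becomes $1$, and using that $\mathcal{E}_h^\gamma$ is invariant under positive scaling, converts $\{\tilde u_\nu\}$ into a Palais--Smale sequence for $\mathcal{E}_h^\gamma$ at level $\mathcal{Y}^\gamma(M,[h])$.

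The only nonroutine point is checking that the constraint manifold $\mathcal{S}$ is genuinely a $C^1$-manifold on the positive cone (so that Ekeland's principle applies in the usual Lagrange-multiplier form) and that the positivity constraint $u\geq 0$ does not obstruct the variations. This is handled as in the classical Yamabe setting: one replaces any $u$ by $|u|$, which does not increase $\mathcal{E}_h^\gamma$ when $P_h^\gamma$ admits a Green's function of fixed sign (which is ensured under the hypotheses of Theorems \ref{gamma01} and \ref{gamma12}), so minimizing sequences can be chosen in $W^{\gamma,2}_+(M,h)$ without losing the almost-critical property.
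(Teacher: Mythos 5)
Your core argument --- restricting to the constraint sphere, applying Ekeland's variational principle there, extracting a Lagrange multiplier, identifying it from the level, and rescaling so that the multiplier becomes $1$ --- is exactly the route the paper has in mind (the paper offers nothing beyond the citation of Ekeland's principle), and that part of your proposal is sound.

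The genuine weak point is your final paragraph, where positivity is enforced by replacing $u$ with $|u|$ and claiming that this does not increase $\mathcal{E}_h^\gamma$ whenever $P_h^\gamma$ has a Green's function of fixed sign. That is justified only for $\gamma\in(0,1)$, where by Proposition \ref{prop:case-1} the energy is controlled by a first-order Dirichlet-type form and $\mathcal{Q}_\gamma(|U|,|U|)\le\mathcal{Q}_\gamma(U,U)$ follows from $\bigl||U(x)|-|U(y)|\bigr|\le|U(x)-U(y)|$. In the range $1<\gamma<\min\{2,n/2\}$ of Theorem \ref{gamma12}, the map $u\mapsto|u|$ is in general not even bounded on $W^{\gamma,2}(M,h)$, and when $|u|$ does lie in the space the second-order energy \eqref{def:Q1} can increase across the nodal set; positivity of the Green's function does not rescue the truncation trick (this is the familiar obstruction in higher-order problems, e.g. for the Paneitz operator). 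The same issue makes your set $\mathcal{S}$ with the constraint $u\ge0$ a closed convex cone intersected with the sphere rather than a $C^1$ submanifold, so the Lagrange-multiplier form of Ekeland's conclusion only gives a one-sided variational inequality on the cone. Since Lemma \ref{lem:profile-decomp} is stated for Palais--Smale sequences in $W^{\gamma,2}_+(M,h)$, you either need a different mechanism to produce a nonnegative Palais--Smale sequence in the Type II case (for instance, Ekeland on the complete metric space $W^{\gamma,2}_+\cap\{\|u\|_{L^{2n/(n-2\gamma)}}=1\}$ together with an argument that the one-sided inequality still forces $d\mathcal{E}_h^\gamma[u_\nu]\to0$, or a substitution argument based on the positive Green's function as in the higher-order Yamabe literature), or you need to verify that the sign condition can be dispensed with downstream. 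As written, the positivity step fails precisely in the case the paper cares most about --- though, to be fair, the paper itself glosses over the same point.
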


After the existence of Palais-Smale sequence at level $\mathcal{Y}^{\gamma}(M, [{ h }])$, the next ingredient in this approach is the same one as in the subcritical approximations. Precisely it is the existence of a {\em variational barrier} at infinity due to the presence of local information and is the content of the following proposition.
\begin{proposition}\label{bubble}(Local information helps)\\
Under the assumption of case (II-3) and (II-4), we have there exists $a\in M$, $\ve$ and $\delta$ small enough such that
$$
\mathcal{E}^{\gamma}_{{ h }}\left(v_{a,\ve,\delta}\right)<\mathcal{Y}^{\gamma}_{\mathbb{S}^n}.
$$
\end{proposition}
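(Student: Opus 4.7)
The plan is to use the conformal covariance of the $\gamma$-Yamabe functional to reduce the estimate of $\mathcal{E}_h^\gamma(v_{a,\varepsilon,\delta})$ to an estimate of $\mathcal{E}_{h_a}^\gamma(u_{a,\varepsilon,\delta})$ for the \emph{good} representative $h_a$ furnished by Lemma~\ref{lem:metric-expan1}, then bound this from above by the extended energy $\overline{\mathcal{E}}_{h_a}^\gamma[U_{a,\varepsilon,\delta}]$, and finally feed the asymptotic expansion of Theorem~\ref{thm:II-3-main} (for case (II-3)) or Theorem~\ref{thm:II-4-main} (for case (II-4)) directly into the inequality. A point $a$ where the Weyl tensor is nonzero, provided by the non-locally-conformally-flat hypothesis, combined with $\varepsilon$ small enough, will close the argument.

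First I would verify the conformal identity $\mathcal{E}_h^\gamma(v_{a,\varepsilon,\delta}) = \mathcal{E}_{h_a}^\gamma(u_{a,\varepsilon,\delta})$. With $h_a = \phi_a^{4/(n-2\gamma)} h$ and $v_{a,\varepsilon,\delta} = \phi_a\, u_{a,\varepsilon,\delta}$, the conformal covariance $P_h^\gamma(\phi_a f) = \phi_a^{(n+2\gamma)/(n-2\gamma)} P_{h_a}^\gamma(f)$ together with the volume-form transformation $d\sigma_h = \phi_a^{-2n/(n-2\gamma)}\, d\sigma_{h_a}$ makes the exponents of $\phi_a$ in numerator and denominator of the Rayleigh quotient telescope to zero. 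Once this is in hand, Proposition~\ref{prop:case-2} applied to the geodesic SMMS $(\bar X, g_a, \rho_a, m_1)$ gives $\mathcal{E}_{h_a}^\gamma(u_{a,\varepsilon,\delta}) \leq \overline{\mathcal{E}}_{h_a}^\gamma[U_{a,\varepsilon,\delta}]$, since $U_{a,\varepsilon,\delta}$ satisfies the asymptotic expansion \eqref{eq:expan-U2} relative to $\rho_a$ with boundary trace $u_{a,\varepsilon,\delta}$.

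The second step is to invoke Theorem~\ref{thm:II-3-main} or Theorem~\ref{thm:II-4-main}, which is legitimate because their proofs use precisely the Fermi expansions of Lemma~\ref{lem:metric-expan1}, that is, the good representative $h_a$. This yields, in case (II-3),
\[
\overline{\mathcal{E}}_{h_a}^\gamma[U_{a,\varepsilon,\delta}] \leq \mathcal{Y}_{\mathbb{S}^n}^\gamma - \varepsilon^4\,\mathcal{C}_4(n,\gamma,g_a,\delta)\,\|W[h_a](a)\|^2 + o(\varepsilon^4),
\]
and the analogous inequality with $\varepsilon^4\log(\delta/\varepsilon)\,\mathcal{C}_5$ in place of $\varepsilon^4\,\mathcal{C}_4$ in case (II-4). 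Since $(M,[h])$ is non-locally conformally flat, I may pick $a\in M$ with $W[h](a)\neq 0$; conformal covariance of the Weyl tensor together with the normalization $\phi_a(a)=1$ recorded just before \eqref{def:V_i} then gives $\|W[h_a](a)\|^2 > 0$. In (II-3), any sufficiently small $\varepsilon$ therefore makes the right-hand side strictly below $\mathcal{Y}_{\mathbb{S}^n}^\gamma$; in (II-4), I first fix $\delta\in(0,\delta_0]$ and then take $\varepsilon \ll \delta$ so that the factor $\log(\delta/\varepsilon)$ is large enough to dominate the $O(\varepsilon^4)$ error.

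The only genuinely nontrivial ingredient, namely the positivity of $\mathcal{C}_4$ (and of $\mathcal{C}_5$), has already been discharged inside the proofs of Theorems~\ref{thm:II-3-main} and \ref{thm:II-4-main} via Lemma~\ref{lem:C4>0} and its $\gamma=3/2$ counterpart, so what remains for the present proposition is essentially bookkeeping. The only mild subtlety I anticipate is confirming that the dependence of $\mathcal{C}_4,\mathcal{C}_5$ on $a$ through $g_a$ is continuous (which is built into the $C^0$ dependence of $a\mapsto (h_a,\rho_a)$ recorded in the excerpt), so that the threshold on $\varepsilon$ can be taken uniformly in a neighborhood of the chosen non-Weyl-flat point.
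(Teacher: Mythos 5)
Your proposal is correct and follows essentially the same route as the paper: the paper's proof of this proposition is simply a direct citation of Theorems \ref{thm:II-3-main} and \ref{thm:II-4-main}, with the conformal covariance reduction from $\mathcal{E}_h^\gamma(v_{a,\ve,\delta})$ to $\mathcal{E}_{h_a}^\gamma(u_{a,\ve,\delta})$ (which you spell out explicitly) left implicit in the definition \eqref{def:V_i} of $v_{a,\ve,\delta}$. Your additional bookkeeping about the choice of $a$ with nonvanishing Weyl tensor, the positivity of $\mathcal{C}_4,\mathcal{C}_5$, and the smallness of $\ve$ matches what the cited theorems already provide.
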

\begin{proof}
It follows directly from Theorem \ref{thm:II-3-main} and \ref{thm:II-4-main}.
\end{proof}
% \begin{rem}{{\bf For you not for the text}}\\
% I use the terminology {\em variational barrier} to replace what is usually called test function. I use this terminology barrier for the similarity with barrier in application of the maximum principle in PDE, and variational because it is a comparison principle at the energy level. Now the reason behind the similarity with maximum principle in PDE is that there is a sort of maximum principle at the bubble level, precisely at the critical points levels defined by the bubbles, something like if at level of $p$ the corresponding critical points at infinity are true, then at the level p+1, the ones corresponding are also true, and vice versa for fake critical points at infinity.
% \end{rem}
% \vspace{15pt}

\noindent

\begin{proof}[Proof of Local case (II-3) and (II-4)]$ $\newline
By a contradiction argument, it follows directly from Lemma \ref{lem:ekeland}, and Proposition \ref{bubble}.
\end{proof}
\begin{remark}
As in the case of the subcritical approximation technique, here also the solution obtained is a minimizer.
\end{remark}

\subsection{Estimates for Global cases}\hspace*{\fill}

For the rest of this paper, we focus on the Global cases, which are (I-1), (II-1) and (II-2).
% Consider another conformal metric ${ h }_b\in{ h }$
% a positive function $u_a\in C^\infty(M)$
% Now one can fix a metric on $M$, say ${ h }$, for any point of $a$, there exists a conformal metric ${ h }_a=u_a^{\frac{4}{n-2\gamma}}{ h }$ 
% \[v_{a,\ve,\delta}=u_a\chi_{a, \delta} w_{\ve,a}\]
% We will abuse notation by suppressing $\Psi_{g_a}(x)$. It should be self-explanatory
For every $p\in \mathbb{N}^*$ and $A:=(a_1,\cdots,a_p)\in M^p=M\times\cdots\times M$, $\ve_i$, $\ve_j$, we define the following quantities
\begin{align}\label{def:veij}
    \ve_{i,j}=&\left(\frac{\ve_{i}}{\ve_{j}}+\frac{\ve_{j}}{\ve_{i}}+\frac{d^2_{{ h }}(a_{i},a_{j})}{\ve_{i}\ve_{j}}\right)^{\frac{2\gamma-n}{2}},\\
    e_{i,j}=&\kappa_\gamma\mathcal{Q}_{\gamma}(V_{a_i,\ve,\delta},V_{a_j,\ve_j,\delta}),\\
    \epsilon_{i,j}=&\oint_M (v_{a_i,\ve_i,\delta})^{\frac{n+2\gamma}{n-2\gamma}}v_{a_j,\ve_j,\delta}d\sigma_{{ h }},
\end{align}
for $i,j=1,\cdots,p$. Here and the following we always assume that $\delta$ and $\ve_0$ are fixed numbers which will be chosen later, and $\ve_i\leq \ve_0$ are small comparable to $\delta$.
% the natural extension of $v_{a_i,\ve_i,\delta}$ to $X$,
% \begin{align}
% V_{a_i,\ve_i,\delta}=\left(\frac{\rho_{a_i}}{\rho}\right)^{\frac{n-2\gamma}{2}}\left(\chi_{\delta}W_{\ve_i}(\Psi_{a_i}(x))+(1-\chi_{\delta}(\Psi_{a_i}(x)))\ve_i^{\frac{n-2\gamma}{2}}G_{a_i}^\gamma\right)
% \end{align}
%Under the condition of  (I-1), (II-1), (II-2), we have that there exist $\ve_0$ small enough and $\mathcal{C}$ such that if $0<\ve<\ve_0$, there exist $v_{a,\ve,\delta}$ for any $a\in M$ such that 
\begin{lemma}\label{lem:self-interaction}(Self-action)\\
Under the assumptions of Proposition \ref{prop:I-low}, or Proposition \ref{prop:low-dim-2} or Proposition \ref{prop:II-2}, there exist $\ve_0$ small enough and $\mathcal{C}>0$ such that for any $v_{a,\ve,\delta}$ with $\ve\leq \ve_0$
\begin{enumerate}[label=(\roman*)]
    %\item $\int_M v_{a,\ve,\delta}P_{{ h }}^\gamma v_{a,\ve,\delta}d\sigma_{{ h }}$, $\mathcal{E}_{{ h }}^\gamma(v_{a_i,\ve_i,\delta})\leq \mathcal{Y}_{\mathbb{S}^n}^\gamma+O_{a,\delta^{-1}}(\ve^{n-2\gamma})$
    \item $\mathcal{E}_{{ h }}^\gamma(v_{a_i,\ve,\delta})\leq \mathcal{Y}_{\mathbb{S}^n}^\gamma+\mathcal{C}\delta^{2\gamma-n}\ve^{n-2\gamma}$,
    \item $\oint_M v_{a,\ve,\delta}^{\frac{2n}{n-2\gamma}}d\sigma_{{ h }}=(\Ygs)^{\frac{n}{2\gamma}}+O(\ve^n\delta^{-n})$.
\end{enumerate}
\end{lemma}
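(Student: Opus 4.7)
The strategy is to reduce both estimates to the ``good'' conformal frame $h_a = \phi_a^{4/(n-2\gamma)} h$ at the base point $a$, where the energy estimates of Propositions \ref{prop:I-low}, \ref{prop:low-dim-2}, and \ref{prop:II-2} have already been carried out on $U_{a,\ve,\delta}$. The definition $V_{a,\ve,\delta} = (\rho_a/\rho)^{(n-2\gamma)/2} U_{a,\ve,\delta}$ is precisely the conformal rescaling dictated by the transformation law \eqref{eq:conformal-eqn4} for $L_{2,\rho}^{m_0}$ (Type I, where $(m_0+n-1)/2 = (n-2\gamma)/2$) and for $L_{4,\rho}^{m_1}$ (Type II, where $(m_1+n-3)/2 = (n-2\gamma)/2$). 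Combined with $\phi_a = (\rho_a/\rho)^{(n-2\gamma)/2}|_M$ and the fact that $d\sigma_{h_a} = \phi_a^{2n/(n-2\gamma)} d\sigma_h$, this yields the conformal invariances
\[
\overline{\mathcal{E}}_h^\gamma[V_{a,\ve,\delta}] = \overline{\mathcal{E}}_{h_a}^\gamma[U_{a,\ve,\delta}], \qquad \oint_M v_{a,\ve,\delta}^{\frac{2n}{n-2\gamma}} d\sigma_h = \oint_M u_{a,\ve,\delta}^{\frac{2n}{n-2\gamma}} d\sigma_{h_a}.
\]
Since $\mathcal{E}_h^\gamma(v_{a,\ve,\delta}) \leq \overline{\mathcal{E}}_h^\gamma[V_{a,\ve,\delta}]$ by the expansion \eqref{eq:expan-U} (resp.\ \eqref{eq:expan-U2}), part (i) reduces to quoting the relevant proposition in the frame $h_a$.

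For part (i), I apply Proposition \ref{prop:I-low} (Case (I-1)), Proposition \ref{prop:low-dim-2} (Case (II-1)), or Proposition \ref{prop:II-2} (Case (II-2)) to $\overline{\mathcal{E}}_{h_a}^\gamma[U_{a,\ve,\delta}]$, and then track the $\delta$-dependence of the constants $\mathcal{C}_i(n,\gamma,g,\delta)$ that appear. Inspecting the proofs of those propositions, the dominant $\delta$-dependent contributions come from two sources: the energy of the Green's function tail $\mathcal{Q}_\gamma(\ve^{(n-2\gamma)/2} G_{g_a}^\gamma : X \setminus \mathcal{O}_a)$, which by the estimates \eqref{est:G-0}/\eqref{est:G-1} is $O(\ve^{n-2\gamma} \delta^{2\gamma-n})$; and the transition annulus $B_+^N(0,2\delta) \setminus B_+^N(0,\delta)$, which contributes the same order. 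All remaining correction terms involving the curvature tensors at $a$ are of strictly lower order in $\delta$ under the low-dimension/locally conformally flat hypotheses. Absorbing the $o(\ve^{n-2\gamma})$ remainder into the constant by choosing $\ve_0$ small then gives the clean bound $\mathcal{Y}_{\mathbb{S}^n}^\gamma + \mathcal{C} \delta^{2\gamma-n} \ve^{n-2\gamma}$.

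For part (ii), work in Fermi coordinates for $h_a$ centered at $a$. Split $M$ into $D(0,\delta)$, the transition annulus $D(0,2\delta) \setminus D(0,\delta)$, and $M \setminus D(0,2\delta)$. On $D(0,\delta)$ one has $u_{a,\ve,\delta} = w_\ve$ and, by Lemma \ref{lem:metric-expan1} (or its Type I analog \eqref{eq:expan-metric-low}), $\sqrt{|h_a|}(\bar x) = 1 + O(|\bar x|^2)$; hence
\[
\int_{D(0,\delta)} u_{a,\ve,\delta}^{\frac{2n}{n-2\gamma}} d\sigma_{h_a} = \int_{D(0,\delta)} w_\ve^{\frac{2n}{n-2\gamma}} d\bar x + O(\ve^2).
\]
Using $\int_{\mathbb{R}^n} w_\ve^{2n/(n-2\gamma)} d\bar x = (\mathcal{Y}_{\mathbb{S}^n}^\gamma)^{n/(2\gamma)}$ from \eqref{eq:Yamabe-sphere} and the pointwise bound $w_\ve(\bar x) \lesssim \ve^{(n-2\gamma)/2} |\bar x|^{2\gamma-n}$ for $|\bar x| \geq \delta$, the exterior piece of the bubble integral is $O(\ve^n \delta^{-n})$. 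In the transition annulus and beyond, $u_{a,\ve,\delta}$ is bounded by a convex combination of $w_\ve$ and $\ve^{(n-2\gamma)/2} G_{g_a}^\gamma|_M$, and by \eqref{est:G-0}/\eqref{est:G-1} the $L^{2n/(n-2\gamma)}$ integral of either is $O(\ve^n \delta^{-n})$. Summing yields (ii).

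\textbf{Main obstacle.} The core conformal-invariance reduction is essentially automatic once one commits to the correct exponent $(n-2\gamma)/2$ in the definition of $V_{a,\ve,\delta}$; the real work is the bookkeeping of the $\delta$-dependence of the constants inside Propositions \ref{prop:I-low}, \ref{prop:low-dim-2}, and \ref{prop:II-2}, and in particular verifying that after passing to the frame $h_a$ the geodesic SMMS structure is preserved on a neighborhood of $M$ of size comparable to $\delta$ (so that the Green's function estimates \eqref{est:G-0}/\eqref{est:G-1} can be applied uniformly in $a \in M$ with the same implicit constants). Once this uniformity is established, parts (i) and (ii) follow by the decomposition above.
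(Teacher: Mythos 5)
Your proposal is correct and follows essentially the same route as the paper: the paper's own proof of this lemma is simply the observation that both items are immediate consequences of Propositions \ref{prop:I-low}, \ref{prop:low-dim-2}, \ref{prop:II-2} (with (ii) already contained in estimates like \eqref{eq:E-nm-p4} inside their proofs), after the conformal change relating $v_{a,\ve,\delta}$ to $U_{a,\ve,\delta}$ in the frame $h_a$. You merely make explicit the conformal-invariance reduction and the $\delta^{2\gamma-n}$ bookkeeping that the paper leaves implicit.
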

\begin{proof}
{These are just the results of the corresponding propositions.}
\end{proof}
\begin{lemma}\label{lem_higher_interaction_estimates}(Higher exponent interaction estimates)\\ There exists $\mu_0>0$ small enough such that the following estimates hold provided $\ve_{i,j}<\mu_0$ for $i\neq j$
\begin{enumerate}[label=(\roman*)]
 \item $ \oint_{M} v_{a_{i},\ve_i, \delta}^{\alpha}v_{a_{j}, \ve_j,\delta}^{\beta}d\sigma_{{ h }}=O(\ve_{i,j}^{\beta})$ 
for $\alpha +\beta=\frac{2n}{n-2\gamma}$ and $\alpha>\frac{n}{n-2\gamma}>\beta>0$,
\item $\oint_M v_{a_{i},\ve_i, \delta}^{\frac{n}{n-2\gamma}}v_{a_{j}, \ve_j,\delta}^{\frac{n}{n-2\gamma}} 
d\sigma_{{ h }}=
O(\ve^{\frac{n}{n-2\gamma}}_{i,j}\ln \ve_{i,j})
$. 
\end{enumerate}
\end{lemma}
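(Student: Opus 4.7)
The plan is to reduce both estimates to the classical Euclidean bubble interaction via Fermi coordinates and then carry out that estimate by a scale-invariant region decomposition. Since the conformal factors $\phi_a = (\rho_a/\rho)^{(n-2\gamma)/2}$ are uniformly bounded above and below on $M$, $\oint_M v_{a_i,\ve_i,\delta}^\alpha v_{a_j,\ve_j,\delta}^\beta\,d\sigma_h$ is comparable to $\oint_M u_{a_i,\ve_i,\delta}^\alpha u_{a_j,\ve_j,\delta}^\beta\,d\sigma_h$ up to multiplicative constants. In Fermi coordinates $\Psi_{a_i}$ around $a_i$, $u_{a_i,\ve_i,\delta}$ equals $w_{\ve_i,0}(\bar x)$ on $D(0,\delta)$ and, outside $D(0,2\delta)$, is dominated by the Green's-function tail $\ve_i^{(n-2\gamma)/2}|\bar x|^{2\gamma-n}$ via \eqref{est:G-0} or \eqref{est:G-1}; the volume form on $M$ in these coordinates is $(1+O(|\bar x|^2))\,d\bar x$ by the metric expansions. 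Setting $d := d_h(a_i,a_j)$: when $d < \delta_0$ we express $u_{a_j,\ve_j,\delta}$ in the chart around $a_i$, where it is bounded by a constant multiple of $w_{\ve_j,\sigma}$ with $\sigma := \Psi_{a_i}(a_j)$, $|\sigma|\sim d$; when $d \geq \delta_0$ the two cores are disjoint and on $\mathrm{supp}\,u_{a_i,\ve_i,\delta}$ we bound $u_{a_j,\ve_j,\delta}\lesssim \ve_j^{(n-2\gamma)/2} d^{2\gamma-n}$ via Green's-function decay, yielding the estimate at once. In all cases the essential task is to bound the Euclidean integral
$$I_{\alpha,\beta}(\ve_i,\ve_j,\sigma) := \int_{\mathbb{R}^n} w_{\ve_i,0}^{\alpha}(\bar x)\, w_{\ve_j,\sigma}^{\beta}(\bar x)\, d\bar x.$$

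Next I would carry out this Euclidean estimate. Assume without loss of generality $\ve_i \leq \ve_j$; rescaling $\bar x = \ve_i\bar y$ and using that criticality $\alpha+\beta = 2n/(n-2\gamma)$ makes $I_{\alpha,\beta}$ scale-invariant, one has $I_{\alpha,\beta} = \int_{\mathbb{R}^n} w_{1,0}^{\alpha}(\bar y)\, w_{\lambda,\tau}^{\beta}(\bar y)\, d\bar y$ with $\lambda := \ve_j/\ve_i \geq 1$ and $\tau := \sigma/\ve_i$, while $\ve_{i,j}^{2/(n-2\gamma)} \asymp \lambda/(\lambda^2+|\tau|^2)$. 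In the regime $|\tau|\lesssim \lambda$, the second bubble is essentially constant on $\mathrm{supp}\,w_{1,0}$ with peak value $\asymp \lambda^{-(n-2\gamma)/2}$, and factoring it out gives $I\asymp \lambda^{-\beta(n-2\gamma)/2}\int w_{1,0}^{\alpha}\,d\bar y \asymp \ve_{i,j}^{\beta}$; the $\alpha$-integral converges precisely because $\alpha > n/(n-2\gamma)$. In the regime $|\tau|>\lambda$, partition $\mathbb{R}^n$ into $\{|\bar y|\leq |\tau|/2\}$, $\{|\bar y-\tau|\leq |\tau|/2\}$, and the far complement; on the first, $w_{\lambda,\tau}(\bar y)\asymp \lambda^{(n-2\gamma)/2}|\tau|^{-(n-2\gamma)}$ is nearly constant, and factoring it out produces $\lambda^{\beta(n-2\gamma)/2}|\tau|^{-\beta(n-2\gamma)}\int w_{1,0}^{\alpha}\,d\bar y \asymp \ve_{i,j}^{\beta}$; the other two regions contribute at most this order by the symmetric argument and polynomial tail estimates.

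For case (ii) with $\alpha = \beta = n/(n-2\gamma)$, the same region decomposition applies, but now $\int_{|\bar y|\leq R} w_{1,0}^{n/(n-2\gamma)}\,d\bar y \sim \log R$ is only marginally convergent, contributing a factor $\log|\tau| \sim |\ln \ve_{i,j}|$ and hence the stated bound $I\asymp \ve_{i,j}^{n/(n-2\gamma)}|\ln\ve_{i,j}|$. The main technical obstacle is the logarithmic bookkeeping in the regime $|\tau|>\lambda$ for (ii): one must verify that the symmetric region $\{|\bar y-\tau|\leq |\tau|/2\}$ and the far complement each contribute at most of the same order, so that the logarithm appears with a single power and neither the exponent $n/(n-2\gamma)$ nor the $|\ln\ve_{i,j}|$ factor is spoiled. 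All Fermi-coordinate, conformal-factor, and Green's-function corrections from the reduction step are strictly subleading and absorb into the $O(\cdot)$ in both (i) and (ii).
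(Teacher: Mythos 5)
Your proposal is correct and follows essentially the route the paper intends: the paper's proof simply observes that these are purely local (no fractional derivative) estimates and borrows them from Mayer--Ndiaye, i.e.\ one dominates $v_{a,\ve,\delta}$ globally by the bubble profile $\bigl(\ve/(\ve^2+d_h(x,a)^2)\bigr)^{(n-2\gamma)/2}$ (Green's-function tail plus bounded conformal factor) and runs the standard scale-invariant region decomposition for the Euclidean interaction integrals, exactly as you do. Your computation of the two regimes, the marginal logarithm in case (ii), and the absorption of the chart/conformal corrections are all sound, so nothing further is needed.
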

\begin{proof}
These are just local estimates which does not involve any fractional derivative of $v$. One can borrow the proof in \cite[Lemma 5.4]{Mayer2017}. 
\end{proof}

% Suppose $\ve_i\leq \ve_j$, 
% \begin{align}
% \int_M w_{\ve_i,a_i}w_{\ve_j,a_j}^{\frac{n+2\gamma}{n-2\gamma}}d\sigma_{{ h }}=O(\ve_{i,j})
%\frac{\ve_i^{\frac{n-2\gamma}{2}}\ve_j^{\frac{n-2\gamma}{2}}}{\left(\ve_j^2+d_{{ h }}(a_i,a_j)^2\right)^{\frac{n-2\gamma}{2}}}
% \end{align}

%

\begin{lemma}\label{lem:L2gW}
Suppose that $\gamma\in(0,1)\cup (1,\min\{2,n/2\})$ and $U_{a,\ve,\delta}$ is defined in \eqref{eq:U2-def} for $C_0\ve\leq \delta\leq \delta_0$. If $\delta_0$ small enough and $C_0$ large enough, there exist $C>0$ such that the following hold
\begin{align*}
    &|L_{2,\rho_{a}}^{m_0}(U_{a,\ve,\delta})|\leq C d_{g_a}(x,a)\chi_\delta W_\ve(\Psi_a)+C\ve^{\frac{n-2\gamma}{2}}\delta^{2\gamma-n-1}\mathbf{1}_{\{\frac12\delta\leq d_{g_{a}}(x,a)\leq 4\delta\}},\\
    &\lim_{\rho_{a_i}\to 0}\rho_{a}^{m_0}\partial_{\rho_{a}}(U_{a,\ve,\delta}) =-\kappa^{-1}_\gamma\chi_\delta w_\ve
    ^{\frac{n+2\gamma}{n-2\gamma}},
\end{align*}
where $\mathbf{1}_{\Omega}$ is the characteristic function for a set $\Omega$.
\end{lemma}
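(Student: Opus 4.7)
The plan is to split $U_{a,\ve,\delta} = \chi_\delta W_\ve\circ\Psi_a + (1-\chi_\delta)\ve^{(n-2\gamma)/2}G_g^\gamma$ and analyze $L := L_{2,\rho_a}^{m_0}$ in three regions: the interior $\bnpe$ (where $U = W_\ve$), the transition annulus $\{\delta/2 \le d_{g_a}(\cdot, a) \le 4\delta\}$ supporting $\nabla\chi_\delta$, and the exterior, where $U = \ve^{(n-2\gamma)/2}G_g^\gamma$ and so $LU\equiv 0$ by \eqref{eq:G-0}.

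\textbf{Interior.} Since $\Delta_{m_0}W_\ve = 0$ in $\RpN$ by \eqref{eq:K1-15}, one writes in Fermi coordinates
\begin{equation*}
LW_\ve = (\Delta_g - \Delta_{\mathbb{R}^{n+1}})W_\ve + \tfrac{n-2\gamma}{2}J\,W_\ve.
\end{equation*}
The expansion \eqref{eq:expan-metric-low} (respectively Lemma~\ref{lem:metric-expan1} in the Type~II cases) gives $g^{ab}-\delta^{ab}=O(|x|^2)$ and $\sqrt{|g|}=1+O(|x|^3)$, whence $(\Delta_g-\Delta_{\mathbb{R}^{n+1}})W_\ve = O(|x|^2)|\nabla^2 W_\ve|+O(|x|)|\nabla W_\ve|$. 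The algebraic identity $R_{ikjl}[h]\,x_ix_jx_kx_l=0$ (from antisymmetry in $(i,k)$) kills the leading quartic pairing against $\partial_{ij}^2W_\ve$ written as in \eqref{eq:D2W}, producing the clean bound $|(\Delta_g-\Delta_{\mathbb{R}^{n+1}})W_\ve|\le C|x|\,W_\ve$. For the curvature piece, the conformal normalization of $h_a$ via Lemma~\ref{lem:metric-expan1} yields $R[g](a)=0$ (cf.\ \eqref{eq:exp-R[g]}), so $J(a)=0$ and $|J\,W_\ve|\le C|x|\,W_\ve$, giving the interior part of the estimate.

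\textbf{Transition annulus.} Rewrite $U = \ve^{(n-2\gamma)/2}G_g^\gamma + \chi_\delta D$ with $D := W_\ve - \ve^{(n-2\gamma)/2}G_g^\gamma$, so that $LU = \chi_\delta LD + [L,\chi_\delta]D$. The interior estimate applied to $D$ (together with $LG_g^\gamma=0$) controls $\chi_\delta LD$. For the commutator the leading terms must match: by \eqref{est:G-0}, $G_g^\gamma = d_g^{2\gamma-n}+O(d_g^{2\gamma-n+1})$, while the Poisson representation \eqref{def:bubble-exp} and the normalization of $\alpha_{n,\gamma}$ give $W_\ve(x) = \ve^{(n-2\gamma)/2}|x|^{2\gamma-n}\bigl(1+O(\ve^2/|x|^2)\bigr)$ for $|x|\gtrsim \ve$. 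Hence on the annulus $|D|\lesssim \ve^{(n-2\gamma)/2}\delta^{2\gamma-n+1}$ and $|\nabla D|\lesssim \ve^{(n-2\gamma)/2}\delta^{2\gamma-n}$; combined with $|\nabla\chi_\delta|,|x_N^{-1}\partial_N\chi_\delta|\lesssim \delta^{-1}$ and $|\Delta_g\chi_\delta|\lesssim \delta^{-2}$, this yields $|[L,\chi_\delta]D|\lesssim \ve^{(n-2\gamma)/2}\delta^{2\gamma-n-1}$, as claimed.

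\textbf{Trace formula.} On $M$ the cutoff $\chi_\delta=\chi(|\bar x|^2/\delta)$ depends only on tangential coordinates, and $\partial_N\chi_\delta=O(x_N)$, so $\lim_{x_N\to 0^+}x_N^{m_0}\partial_N\chi_\delta=0$. Consequently
\begin{equation*}
\lim_{\rho_a\to 0}\rho_a^{m_0}\partial_{\rho_a}U = \chi_\delta\lim_{x_N\to 0}x_N^{m_0}\partial_N W_\ve + (1-\chi_\delta)\,\ve^{(n-2\gamma)/2}\lim_{\rho_a\to 0}\rho_a^{m_0}\partial_{\rho_a}G_g^\gamma.
\end{equation*}
The first term equals $-\kappa_\gamma^{-1}\chi_\delta\,w_\ve^{(n+2\gamma)/(n-2\gamma)}$ by \eqref{eq:K1-15}, and the second is a Dirac mass at $a$ by \eqref{eq:G-0}, annihilated by $(1-\chi_\delta)(a)=0$ and vanishing pointwise on $M\setminus\{a\}$. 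The main obstacle in the whole proof is the gain of one power of $\delta$ in the annulus: a naive commutator bound on the raw sum would produce only $\delta^{2\gamma-n-2}$, and obtaining the stated sharpness $\delta^{2\gamma-n-1}$ genuinely requires the leading-order coefficient match between the far-field of $W_\ve$ and the singularity of $G_g^\gamma$.
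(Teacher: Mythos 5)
Your argument is essentially the paper's own proof: the same splitting of $L_{2,\rho_a}^{m_0}U_{a,\ve,\delta}$ into $\chi_\delta L_{2,\rho_a}^{m_0}W_\ve$ plus cutoff-commutator terms acting on $W_\ve-\ve^{\frac{n-2\gamma}{2}}G^\gamma_a$ (the exterior piece dropping since $L_{2,\rho_a}^{m_0}G^\gamma_a=0$), the same interior bound via the conformal Fermi expansion combined with $\Delta_{m_0}W_\ve=0$ and the curvature antisymmetries, the same annulus bound via matching the far field of $W_\ve$ with the singularity of $G^\gamma_a$, and the same trace computation. The only real difference is that where you sketch the matching estimate from a far-field expansion of the bubble (whose relative error is actually $O((\ve/|x|)^{2\gamma})$ rather than $O(\ve^2/|x|^2)$ when $\gamma\in(0,1)$, and whose unit leading coefficient is exactly the nontrivial normalization fact), the paper simply quotes \cite[Cor.~5.3]{Mayer2017}; this is harmless in the regime where $\delta$ is fixed and $\ve$ is taken small, which is how the lemma is used.
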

\begin{proof}
Using the map $\Psi_{a}$, we can consider the problem on $B_{+}^N(0,2\delta)$ with metric $g_a$ having expansions \eqref{eq:det-expan-1} and \eqref{eq:det-expan-2}. Under this coordinates we have $\rho_a=x_N$.
%\[g_{a}=\delta+O(|x|^2),\quad \det g_{a_i}=1+O(|x|^2)\]
It is easy to see 
\begin{align*}
    \lim_{\rho_{a}\to 0}\rho_{a}^{m_0}\partial_{\rho_{a}}(U_{a,\ve,\delta}) =-\kappa^{-1}_\gamma\chi_\delta w_\ve^{\frac{n+2\gamma}{n-2\gamma}}.
\end{align*}
For the one of $L_{2,\rho_{a}}^{m_0}(U_{a,\ve,\delta})$, similar type of estimates were derived in \cite[Prop.B.1]{brendle2005convergence}, \cite[Prop. 3.13]{almaraz2015convergence}, and \cite[Prop. 3.14]{almaraz2016convergence}. By the definition in \eqref{eq:U2-def} and \eqref{eq:G-0},  we have
\begin{align}\label{eq:L2U}
L_{2,\rho_{a}}^{m_0}(U_{a,\ve,\delta})=&\chi_\delta L_{2,\rho_{a}}^{m_0} W_\ve+2\langle\nabla \chi_\delta, \nabla(W_\ve-\ve^{\frac{n-2\gamma}{2}}G_a^\gamma)\rangle_{g_{a}} +(\Delta_{\rho_{a}}^{m_0}\chi_\delta) (W_\ve-\ve^{\frac{n-2\gamma}{2}}G_a^\gamma)\notag\\
=&I_1+I_2+I_3.
\end{align}
To handle the first term in the above equality, notice
\[ L_{2,\rho_{a}}^{m_0} W_\ve=\Delta_{\rho_{a}}^{m_0}(W_\ve)+\frac{m_0+n-1}{2}J[{g_{a}}]W_\ve. \]
% then 
% \begin{align}
%     L_{2,g_{a_i}}^{m_0}-\Delta_{m_0}= O(|x|^2)\nabla_{\bar x}^2+O(|x|)\nabla_{\bar x}+O(|x|^2)
% \end{align}
We only need to calculate the above in $\bnped$. Since $W_\ve=W_\ve(|\bar x|,x_N)=W_\ve(r,x_N)$, where $r^2=x_1^2+\cdots+x_n^2$, we have (write $g_a$ as $g$ for short temporarily)
\begin{align*}
\Delta_{\rho_a}^{m_0}(W_\ve)
=&\frac{1}{\sqrt{|g|}}\partial_i(\sqrt{|g|}g^{ij}\frac{x_j}{r}\partial_r W_\ve)+\frac{1}{\sqrt{|g|}}\partial_N(\sqrt{|g|}\partial_N W_\ve)+m_0x_N^{-1}\partial_{N} W_\ve\\
=&\frac{g^{ij}x_ix_j}{r^2}\partial_{rr}^2W_\ve+\left[g^{ij}\partial_i\ln\sqrt{|g|}\frac{x_j}{r}+\partial_i\left(\frac{g^{ij}x_j}{r}\right)\right]\partial_r W_\ve+\partial_N\ln\sqrt{|g|}\partial_N W_\ve\\
&+\partial_{NN}^2W_\ve+m_0x_N^{-1}\partial_{N} W_\ve.
\end{align*}
Using $\Delta_{m_0}W_\ve=0$, the above equality leads to
\begin{align*}
\Delta_{\rho_a}^{m_0}(W_\ve)
=&\left(\frac{g^{ij}x_ix_j}{r^2}-1\right)\partial_{rr}^2W_\ve+\left[g^{ij}\partial_i\ln\sqrt{|g|}\frac{x_j}{r}+\partial_i\left(\frac{g^{ij}x_j}{r}\right)-\frac{n-1}{r}\right]\partial_r W_\ve\\
&+\partial_N\ln\sqrt{|g|}\partial_N W_\ve.
\end{align*}
Using the expansion of $g_a$ in \eqref{eq:det-expan-1} and \eqref{eq:det-expan-2}, in $\bnped$, we have
\begin{align}
\begin{split}\label{eq:DrhoW}
\Delta_{\rho_a}^{m_0}(W_\ve)=&O(|x|^3)\partial_{rr}^2W_\ve+O(|x|^2)(\partial_rW_\ve+\partial_NW_\ve)\\
=&O(|x|\ve^{\frac{n-2\gamma}{2}}(\ve^2+|x|^2)^{-\frac{n-2\gamma}{2}})=O(|x|W_\ve),
\end{split}
\end{align}
where in the second and last equality,  Lemma \ref{lem:est-bubble} is used. 
%Notice use the estimates in Lemma \ref{lem:est-bubble} 
Consequently $|I_1|\leq C|x|\chi_\delta W_\ve$.
%\begin{align*}
%    L_{2,\rho_{a}}^{m_0} W_\ve&=\Delta_{\rho_{a}}^{m_0}(W_\ve)+\frac{m_0+n-1}{2}J[{g_{a}}]W_\ve=O(|x|W_\ve).
%    %&=O(|x|W_\ve)+O(|x|W_\ve)\\
%    %&=-\Delta_{\rho_{a}}W_\ve-m_0x_N^{-1}\partial_NW_\ve+O(W_i)=(-\Delta_{g_{a}}+\Delta)W_i+O(W_\ve)\\
%    %&=O(\ve^{\frac{n-2\gamma}{2}}(\ve^2+|x|^2)^{-\frac{n-2\gamma}{2}})+O(W_\ve)=O(W_\ve)
%\end{align*}

For $I_2$ and $I_3$ in \eqref{eq:L2U}, we only need to bound them in $\bnped\backslash \bnpe$. In this region, one can use \eqref{est:G-0}, \eqref{est:G-1} and \cite[Cor. 5.3]{Mayer2017}
%Thus  the facts of $G_a^\gamma$ in \eqref{eq:G-0} and \eqref{est:G-0}, it holds
\begin{align*}
|W_\ve-\ve^{\frac{n-2\gamma}{2}}G_a^\gamma|+|x\cdot\nabla(W_\ve-\ve^{\frac{n-2\gamma}{2}}G_a^\gamma)|\leq C\ve^{\frac{n-2\gamma}{2}}\delta^{2\gamma-n+1}.
\end{align*}
%\textcolor{blue}{Be careful of the estimate of $G_a^\gamma$. }
Therefore
\begin{align*}
|I_2|+|I_3|\leq C\ve^{\frac{n-2\gamma}{2}}\delta^{2\gamma-n-1}\mathbf{1}_{\{\delta\leq |x|\leq 2\delta\}},
\end{align*}
%\begin{align*}
%    L_{2,g_{a_i}}^{m_0}(U_i)=&\Delta_{g_{a_i}}^{m_0}(\chi_iW_i)+\frac{m_0+n-1}{2}J_{g_{a_i}}\chi_iW_i\\
%    =&\chi_i \Delta_{g_{a_i}}^{m_0}W_i+2\langle\nabla \chi_i,\nabla W_i\rangle_{g_{a_i}}+W_i\Delta_{g_{a_i}}^{m_0}\chi_i+O(\chi_iW_i)\\
%    =&O(\chi_iW_i)+O(\delta^{-2}W_i\mathbf{1}_{\{\delta\leq |x|\leq 2\delta\}})\notag
%\end{align*}
where $\mathbf{1}_{\Omega}$ is the characteristic function for a set $\Omega$. Taking $\delta<\delta_0$ small enough such that $|x|$ and $d_g(x,a)$ are comparable, one can get the conclusion.
\end{proof}

\begin{remark}\label{rem:L2gW}
Since $(\bar X, g_{a_i},\rho_{a_i},m_0)$ and $(\bar X, { g } ,\rho,m_0)$ are two geodesic SMMS which are conformal to each other, then by the conformal change property \eqref{eq:conformal-eqn4}
\begin{align*}
    L_{2,{ \rho } }^{m_0}(V_{a_i,\ve_i,\delta})=&\left({\rho_{a_i}}/{\rho}\right)^{\frac{n+4-2\gamma}{2}}L_{2,{ \rho } _{a_i}}^{m_0}(U_{a_i,\ve_i,\delta})\\
    =&O(d_g(x,a)\chi_\delta W_{\ve_i}(\Psi_{a_i}(x)))+O(\ve_i^{\frac{n-2\gamma}{2}}\delta^{2\gamma-n-1}\mathbf{1}_{\{\frac12\delta\leq d_g(x,a)\leq 4\delta\}})
    \end{align*}
    It follows from \cite[Thm 3.2]{Case2017} that $\lim_{\rho\to 0}\rho^{m_0}\partial_{\rho}$ is also conformally covariant. Then
\begin{align*}
&\lim_{\rho\to 0} \rho^{m_0}\partial_\rho V_{a_i,\ve_i,\delta}=\phi_{a_i}^{\frac{n+2\gamma}{n-2\gamma}}\lim_{\rho_{a_i}\to 0}\rho_{a_i}^{m_0}\partial_{\rho_{a_i}}(U_{a_i,\ve_i,\delta})=-\kappa^{-1}_\gamma\phi_{a_i}^{\frac{n+2\gamma}{n-2\gamma}}\chi_\delta w_{\ve_i}^{\frac{n+2\gamma}{n-2\gamma}}(\Psi_{a_i}).
\end{align*}
%Moreover
%\begin{align*}
%    L_{2,{ \rho } }^{m_1}(V_i)=&\left({\rho_{a_i}}/{\rho}\right)^{\frac{n+6-2\gamma}{2}}L_{2,{ \rho } _{a_i}}^{m_1}(\frac{\rho}{\rho_{a_i}}U_i)\\
%    =&\left({\rho_{a_i}}/{\rho}\right)^{\frac{n+4-2\gamma}{2}}L_{2,{ \rho } _{a_i}}^{m_1}(\chi_iW_i)+O(\chi_iW_i+|\nabla(\chi_iW_i)|_{{ g } _{a_i}})
%\end{align*}
%by taking $\delta_0$ small enough such that $\rho_a/\rho\approx 1$. \textcolor{blue}{not done yet}
\end{remark}
%\subsection{Interaction}
\begin{lemma}\label{lem:interaction}(Interaction)\\
For $\gamma\in(0,1)\cup (1,\min\{2,n/2\})$, and $C_0\max\{\ve_i,\ve_j\}\leq \delta\leq \delta_0$ for some sufficiently small $\delta_0$ and large $C_0$. Assume $\ve_{i,j}\leq \mu_0$ for some small $\mu_0$
\begin{enumerate}[label=(\roman*)]
    \item $e_{i,j}=(1+O(\delta))\epsilon_{i,j}+O(\max\{\ve_i,\ve_j\}^{2\gamma}\delta^{-2\gamma}))\ve_{i,j}$,
    \item $\epsilon_{i,j}=(\Ygs)^{\frac{n}{2\gamma}}(1+O(\delta)+O(\max\{\ve_j,\ve_i\}^{2\gamma}\delta^{-2\gamma}))\ve_{i,j}$.
\end{enumerate}
\end{lemma}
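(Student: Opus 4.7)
My plan is to reduce (i) to a boundary integral matching $\epsilon_{i,j}$ plus a bulk remainder via the integration-by-parts identities of Propositions~\ref{prop:case-1}--\ref{prop:case-2}, and to compute the boundary interaction of (ii) explicitly by reducing to a standard Euclidean bubble integral in Fermi coordinates.

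For (i), I apply the appropriate Proposition to decompose $\kappa_\gamma \mathcal{Q}_\gamma(V_{a_i,\ve_i,\delta}, V_{a_j,\ve_j,\delta})$ into a bulk integral against $L V_{a_i,\ve_i,\delta}$ (where $L = L_{2,\rho}^{m_0}$ or $L_{4,\rho}^{m_1}$) plus a boundary trace term. Using the conformal covariance identities in Remark~\ref{rem:L2gW}, the boundary trace equals
\[
\oint_M v_{a_j,\ve_j,\delta}\, \phi_{a_i}^{(n+2\gamma)/(n-2\gamma)}\, \chi_\delta\, w_{\ve_i}^{(n+2\gamma)/(n-2\gamma)}(\Psi_{a_i})\, d\sigma_h.
\]
Since $\phi_{a_i} = 1 + O(\delta)$ inside $B(a_i, 2\delta)$, and since $v_{a_i,\ve_i,\delta}$ agrees with $\phi_{a_i} w_{\ve_i}(\Psi_{a_i})$ there up to the Green's-function correction on its outer annulus (controlled by \eqref{est:G-0}/\eqref{est:G-1}), this reduces to $(1 + O(\delta))\epsilon_{i,j}$. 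For the bulk part, Remark~\ref{rem:L2gW} provides the pointwise bound
\[
|L V_{a_i,\ve_i,\delta}| \leq C d_g(\cdot, a_i) \chi_\delta W_{\ve_i}(\Psi_{a_i}) + C \ve_i^{(n-2\gamma)/2} \delta^{2\gamma - n - 1} \mathbf{1}_{\{\delta/2 \leq d_g \leq 4\delta\}}.
\]
Pairing each piece against $V_{a_j,\ve_j,\delta}$ in Fermi coordinates at $a_i$ and invoking the weighted bubble estimates \eqref{eq:mnabw}/\eqref{eq:mnabw2} together with a half-space analogue of Lemma~\ref{lem_higher_interaction_estimates}, the extra factor $d_g$ gives exactly the gain $(\max\{\ve_i,\ve_j\}/\delta)^{2\gamma}$ over the naive interaction scale $\ve_{i,j}$. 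Summing the two contributions yields (i).

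For (ii), I work in the Fermi chart at $a_i$, identifying $B_M(a_i, 2\delta)$ with $D(0, 2\delta) \subset \Rn$ and setting $\sigma := \Psi_{a_i}(a_j)$. Up to the conformal factors $\phi_{a_i}, \phi_{a_j}$ (which contribute the $O(\delta)$ error), the metric perturbation $d\sigma_h = (1 + O(|\bar x|^2))\,d\bar x$ (which contributes $O(\max\{\ve_i,\ve_j\}^{2\gamma}\delta^{-2\gamma})$ via pointwise bubble decay) and the outer Green's-function tail, $\epsilon_{i,j}$ equals
\[
\int_{\Rn} w_{\ve_i}^{(n+2\gamma)/(n-2\gamma)}(\bar x)\, w_{\ve_j,\sigma}(\bar x)\, d\bar x = (\Ygs)^{n/(2\gamma)}\, \ve_{i,j},
\]
where the closed form follows from $(-\Delta)^\gamma w_{\ve} = w_{\ve}^{(n+2\gamma)/(n-2\gamma)}$ together with the conformal equivalence between $(\Rn, |d\bar x|^2)$ and $(\mathbb{S}^n, g_c) \setminus \{\text{pt}\}$, which reduces the integral to the total $Q^\gamma$-mass of the round sphere (matching the normalization in \eqref{eq:Yamabe-sphere}).

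The main obstacle is the sharp bulk estimate in (i): since $V_{a_j,\ve_j,\delta}$ may concentrate far from $a_i$, the product $V_{a_j,\ve_j,\delta}\, |L V_{a_i,\ve_i,\delta}|$ must be bounded uniformly across the regimes $d_h(a_i,a_j) \leq C \max\{\ve_i,\ve_j\}$ and $d_h(a_i,a_j) \gg \max\{\ve_i,\ve_j\}$. Splitting into these regimes---which are precisely those distinguished by the definition of $\ve_{i,j}$---and using the scaling symmetry $W_{\ve}(\ve x) = \ve^{-(n-2\gamma)/2} W_1(x)$ to normalize the bubble, one verifies the claimed factorization $\delta^{-2\gamma} \max\{\ve_i,\ve_j\}^{2\gamma} \cdot \ve_{i,j}$ uniformly, which is what allows $\delta$ and the $\ve$'s to appear cleanly separated in the final statement.
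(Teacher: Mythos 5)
Your plan for (i) matches the paper's proof in the range $\gamma\in(0,1)$: decompose $e_{i,j}$ via \eqref{eq:Q-gamma-0} into a bulk term and a boundary trace, identify the trace with $(1+O(\delta))\epsilon_{i,j}$ up to the error $O(\ve_i^{2\gamma}\delta^{-2\gamma})\ve_{i,j}$ coming from $v_i^{\frac{n+2\gamma}{n-2\gamma}}-\chi_i w_i^{\frac{n+2\gamma}{n-2\gamma}}$ (Lemma \ref{lem:bdry-inter}), and control the bulk term by the pointwise bound of Lemma \ref{lem:L2gW}/Remark \ref{rem:L2gW} together with the weighted interaction integrals (Lemma \ref{lem:interation-low-term}). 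However, for $\gamma\in(1,\min\{2,n/2\})$ there is a genuine gap: the operator appearing in \eqref{eq:Q-gamma-1} is the fourth-order $L_{4,\rho}^{m_1}$, and Remark \ref{rem:L2gW} does \emph{not} give the pointwise bound $|L_{4,\rho}^{m_1}V_i|\leq C\,d_g\,\chi_\delta W_i+\dots$ that your argument invokes; it only covers $L_{2,\rho}^{m_0}V_i$. Applying $L_{4,\rho_{a_i}}^{m_1}$ to the bubble produces terms such as $O(|x|)|\nabla^3 W_{\ve_i}|$ and $O(1)|\nabla^2 W_{\ve_i}|$ (from differentiating the metric coefficients twice more), which do not carry the extra factor $d_g$ and hence cannot be fed into your ``gain of $(\max\ve/\delta)^{2\gamma}$'' scheme. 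The paper gets around exactly this by using the factorization $L_{4,\rho}^{m_1}=L_{2,\rho}^{m_1+2}\circ L_{2,\rho}^{m_0}$ from \citet{CaC}, estimating $L_{4,\rho_{a_i}}^{m_1}U_{a_i,\ve_i,\delta}$ through this composition, and then integrating by parts (with vanishing boundary contributions, since $\lim_{\rho\to0}\rho^{m_1}\partial_\rho$ of the relevant quantities vanish) so that one second-order factor lands on $\chi_\delta V_j$; only then do the bounds of Remark \ref{rem:L2gW} and Lemma \ref{lem:inter-1} yield $I_1=O(\delta)\ve_{i,j}$. Your proposal contains no substitute for this duality step, so the claim $I_1=O(\delta)\ve_{i,j}$ in the higher-order case is unjustified as written.

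Two smaller points. For (ii) the paper simply quotes \cite[Lemma 5.5]{Mayer2017}, since no fractional derivative is involved; your direct computation in the Fermi chart is a legitimate alternative in spirit, but the identity $\int_{\Rn} w_{\ve_i}^{\frac{n+2\gamma}{n-2\gamma}}w_{\ve_j,\sigma}\,d\bar x=(\Ygs)^{\frac{n}{2\gamma}}\ve_{i,j}$ is not exact: it holds only asymptotically as $\ve_{i,j}\to0$, with a lower-order correction of size $O(\ve_{i,j}^{\frac{n}{n-2\gamma}}\log\ve_{i,j}^{-1})$ that you must check is absorbed by the stated error factors. Also, in the boundary step you should state explicitly that the conformal covariance of $\lim_{\rho\to0}\rho^{m_1}\partial_\rho\Delta_\rho^{m_1}$ (respectively $\lim_{\rho\to0}\rho^{m_0}\partial_\rho$) is what converts the trace of $V_i$ into $\phi_{a_i}^{\frac{n+2\gamma}{n-2\gamma}}\chi_\delta w_{\ve_i}^{\frac{n+2\gamma}{n-2\gamma}}$, since this is where the factor $1+O(\delta)$ actually originates.
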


\begin{proof} 
For (ii), there is no fractional derivative involved. One can use the proof from \citep[Lemma 5.5]{Mayer2017}. Now consider (i).
Let us use abbreviation $V_i=V_{a_i,\ve_i,\delta}$, $\varphi_i=\varphi_{a_i,\ve_i,\delta}$ and $W_i=W_{\ve_i}(\Psi_{a_i}(x))$.

%Denote $\rho_a$ the geodesic defining function attached to ${ h }_{a}=u_{a}^{\frac{4}{n-2\gamma}}{ h }$ and choosing close to $a$ a corresponding $g_{a}$-Fermi coordinates system.

Suppose $\gamma\in (0,1)$. It follows from \eqref{eq:Q-gamma-0} that 
\begin{align*}
    e_{i,j}=&\kappa_\gamma\int_X L_{2,{ \rho } }^{m_0}(V_i) V_j\rho^{m_0}d\mu_{{ g } }-\kappa_\gamma\oint_M\lim_{\rho\to 0}\rho^{m_0}\partial_{\rho}(V_i) V_j d\sigma_{h}. 
\end{align*}
Here by symmetry, we can assume $\ve_j\leq \ve_i$.
%\begin{align*}
%    e_{i,j}=&\int_X L_{2,{ g } _{a_i}}^{m_0}(\chi_iW_i)\tilde V_j\rho_{a_i}^{m_0}d\mu_{{ g } _{a_i}}+\int_M\lim_{\rho_{a_i}\to 0}\rho_{a_i}^{m_0}\partial_{\rho_{a_i}}(\chi_i W_i) \tilde V_j d\sigma_{g_{a_i}} 
%\end{align*}
%here $\tilde V_j=\left(\frac{\rho_{a_j}}{\rho_{a_i}}\right)^{\frac{n-2\gamma}{2}}\chi_jW_j$.
% \[e_{i,j}=\int_X \rho_{a_i}^{m_0}L_{2,{ g } _{a_i}}^{m_0}(\chi_{i}W_i)(\frac{u_{a_j}}{u_{a_i}}\chi_jW_j)d\mu_{{ g } _{a_i}}\] Lemma \ref{lem:L2gW} and 
Since Remark \ref{rem:L2gW} and Lemma \ref{lem:interation-low-term},
\begin{align*}
\int_X L_{2,{ \rho } }^{m_0}(V_i) V_j\rho^{m_0}d\mu_{{ g } }=O(\delta)\ve_{i,j}.
\end{align*}
For the other term, one can apply Remark \ref{rem:L2gW} and Lemma \ref{lem:bdry-inter} to get
\begin{align*}
-\kappa_\gamma\oint_M\lim_{\rho\to 0}\rho^{m_0}\partial_{\rho}(V_i) V_j d\sigma_{h} =&(1+O(\delta))\oint_{M}\chi_iw_i^{\frac{n+2\gamma}{n-2\gamma}}v_jd\sigma_h\\
=&(1+O(\delta))\epsilon_{i,j}-(1+O(\delta))\oint_{M}(v_i^{\frac{n+2\gamma}{n-2\gamma}}-\chi_iw_i^{\frac{n+2\gamma}{n-2\gamma}})v_jd\sigma_h\\
=&(1+O(\delta))\epsilon_{i,j}+O(\ve_i^{2\gamma}\delta^{-2\gamma})\ve_{i,j}.
\end{align*}
Combing the above two estimates, one gets (i) when $\gamma\in(0,1)$.
%\begin{align}
%    e_{i,j}=O_{\frac{1}{\delta}}(\int_X \rho_{a_i}^{m_0}\chi_iW_i\tilde V_jd\mu_{{ g } _{a_i}})+O(\int_X \rho_{a_i}^{m_0}d_{{ g } _{a_i}}(x,a_i)^2\chi_iW_i^{\frac{n-2\gamma+2}{n-2\gamma}}\tilde V_jd\mu_{{ g } _{a_i}})
%\end{align}
% By the estimates in \cite[(59)]{Mayer2017}, we are able to get
% \[e_{i,j}=o_{\ve_i,\frac{1}{\delta}}(\ve_{i,j}).\]

Suppose $\gamma\in (1,\min\{2,n/2\})$. It follows from \eqref{eq:Q-gamma-1} that 
\begin{align}\label{eq:inter-I-II}
     e_{i,j}=&\kappa_\gamma\int_X L_{4,{ \rho } }^{m_1}(V_i) V_j\rho^{m_1}d\mu_{{ g } }+\kappa_\gamma\oint_M\lim_{\rho\to 0}\rho^{m_1}\partial_{\rho}\Delta_{\rho}^{m_1}(V_i) V_j d\sigma_{{ h }}=I_1+I_2.
 \end{align}
 \begin{claim}
 $I_1=O(\delta)\ve_{i,j}$. 
 \end{claim}
 \begin{proof}
 It follows from \citep[Thm 3.1]{CaC} that $L_{4,\rho}^{m_1}$ has the decomposition
\[L_{4,\rho}^{m_1}=L_{2,\rho}^{m_1+2}\circ L_{2,\rho}^{m_0}=L_{2,\rho}^{m_1+2}\circ L_{2,\rho}^{m_0},\]
where by definition one has
\[L_{2,\rho}^{m_1+2}=L_{2,\rho}^{m_1}-2\rho^{-1}\partial_\rho+J[g].\]
Since
\begin{align*}
L_{2,\rho_{a}}^{m_0}(U_{a,\ve,\delta})=&\chi_\delta L_{2,\rho_{a}}^{m_0} W_\ve+(1-\chi_\delta)L_{2,\rho_{a}}^{m_0}G^\gamma_{a}\\
&+2\langle\nabla \chi_\delta, \nabla(W-\ve^{\frac{n-2\gamma}{2}}G^\gamma_a)\rangle_{g_{a}} +\Delta_{\rho_{a}}^{m_0}\chi_\delta (W_\ve-\ve^{\frac{n-2\gamma}{2}}G^\gamma_a),
\end{align*}
%Now recall \eqref{eq:L2U}. 
using the estimates in \eqref{est:G-1} and Lemma \ref{lem:est-bubble}, we arrive at the following estimates in $\bnped$ which are  
% \begin{align*}
%     L_{4,\rho_{a}}^{m_1}(U_{a,\ve,\delta})=L_{2,\rho_{a}}^{m_1+2}(\chi_\delta L_{2,\rho_{a}}^{m_0} W_\ve+(1-\chi_\delta)L_{2,\rho_{a}}^{m_0}G^\gamma_{a})+O(\ve^{\frac{n-2\gamma}{2}}\delta^{2\gamma-n-3}\mathbf{1}_{\{\delta\leq |x|\leq 2\delta\}})
% \end{align*}
\begin{align*}
&L_{4,\rho_{a}}^{m_1}(U_{a,\ve,\delta})\\
=&L_{2,\rho_{a}}^{m_1+2}(\chi_\delta L_{2,\rho_{a}}^{m_0} W_\ve+(1-\chi_\delta)L_{2,\rho_{a}}^{m_0}G^\gamma_{a})+O(\ve^{\frac{n-2\gamma}{2}}\delta^{2\gamma-n-3}\mathbf{1}_{\{\delta\leq |x|\leq 2\delta\}})\\
=&\chi_\delta L_{2,\rho_{a}}^{m_1+2}(L_{2,\rho_{a}}^{m_0} W_\ve)+2\langle \nabla \chi_\delta ,\nabla (L_{2,\rho_{a}}^{m_0} (W_\ve-G^\gamma_a))\rangle_{g_{a}}+(\Delta_{\rho_{a}}^{m_1+2}\chi_i)L_{2,\rho_{a}}^{m_0} (W_\ve-G^\gamma_a)\\
&+O(\ve^{\frac{n-2\gamma}{2}}\delta^{2\gamma-n-3}\mathbf{1}_{\{\delta\leq |x|\leq 2\delta\}})\\
=&\chi_\delta L_{2,\rho_{a}}^{m_1+2}(L_{2,\rho_{a}}^{m_0} W_\ve)+O(\ve^{\frac{n-2\gamma}{2}}\delta^{2\gamma-n-3}\mathbf{1}_{\{\delta\leq |x|\leq 2\delta\}}).
\end{align*}
By \eqref{eq:conformal-eqn4}, we have 
\begin{align*}
    L_{4,\rho}^{m_1}(V_i)=&(\rho_{a_i}/\rho)^{\frac{n+8-2\gamma}{2}}L_{4,\rho_{a_i}}^{m_1}(U_i)\\
    =&(\rho_{a_i}/\rho)^{\frac{n+8-2\gamma}{2}}[\chi_\delta L_{2,\rho_{a_i}}^{m_1+2}(L_{2,\rho_{a_i}}^{m_0} W_i)+O(\ve^{\frac{n-2\gamma}{2}}\delta^{2\gamma-n-3}\mathbf{1}_{\{\delta\leq d_{g_{a_i}}(x,a_i)\leq 2\delta\}})]\\
    =&\chi_\delta L_{2,\rho}^{m_1+2}(L_{2,\rho}^{m_0} \tilde V_i)+O(\ve^{\frac{n-2\gamma}{2}}\delta^{2\gamma-n-3}\mathbf{1}_{\{\frac12\delta\leq d_{g}(x,a_i)\leq 4\delta\}}).
\end{align*}
Here $\tilde V_i=(\rho_{a_i}/\rho)^{\frac{n-2\gamma}{2}}W_i$. Then by Lemma \ref{lem:inter-1}
\begin{align*}
    I_1=&\kappa_\gamma\int_X \chi_\delta L_{2,\rho}^{m_1+2}(L_{2,\rho}^{m_0} \tilde V_{i})V_j \rho^{m_1}d\mu_g+O(\int_X\ve_i^{\frac{n-2\gamma}{2}}\delta^{2\gamma-n-3}\mathbf{1}_{\{\frac12\delta\leq d_{g}(x,a_i)|\leq 4\delta\}}V_j\rho^{m_1}d\mu_g)\\
    = &\kappa_\gamma\int_X \chi_\delta L_{2,\rho}^{m_1+2}(L_{2,\rho}^{m_0} \tilde V_i)V_j \rho^{m_1}d\mu_g+O(\delta)\ve_{i,j}\\
    =&\kappa_\gamma\int_X \chi_\delta L_{2,\rho}^{m_1}(L_{2,\rho}^{m_0} \tilde V_i)V_j \rho^{m_1}d\mu_g-2\kappa_\gamma \int_X \chi_\delta \rho^{-1}\partial_\rho(L_{2,\rho}^{m_0} \tilde V_i)V_j\rho^{m_1}d\mu_g+O(\delta)\ve_{i,j}\\
    =&\kappa_\gamma\int_X \chi_\delta L_{2,\rho}^{m_1}(L_{2,\rho}^{m_0} \tilde V_i)V_j \rho^{m_1}d\mu_g+O(\delta)\ve_{i,j}.
\end{align*}
It follows from integration by parts that
\begin{align*}
    &\int_X \chi_\delta L_{2,\rho}^{m_1}(L_{2,\rho}^{m_0} \tilde V_i)V_j \rho^{m_1}d\mu_g-\int_X L_{2,\rho}^{m_0} (\tilde V_i) L_{2,\rho}^{m_1}(\chi_\delta V_j)\rho^{m_1}d\mu_g\\
    =&-\oint_M \lim_{\rho\to 0} \rho^{m_1}\partial_\rho (L_{2,\rho}^{m_0} \tilde V_i)V_jd\sigma_h+\oint_M \lim_{\rho\to 0} \rho^{m_1}\partial_\rho (\chi_\delta V_j)L_{2,\rho}^{m_0} \tilde V_i d\sigma_h=0.
\end{align*}
Then
\begin{align*}
    I_1=&\kappa_\gamma \int_X (L_{2,\rho}^{m_0} \tilde V_i) L_{2,\rho}^{m_1}(\chi_\delta V_j)\rho^{m_1}d\mu_g+O(\delta)\ve_{i,j}=O(\delta)\ve_{i,j}.
\end{align*} 
\end{proof}
To deal with $I_2$ in \eqref{eq:inter-I-II}, we have 
\begin{align*}
    \lim_{\rho\to 0}\rho^{m_1}\partial_{\rho}\Delta_{\rho}^{m_1}(V_i)=\phi_{a_i}^{\frac{n+2\gamma}{n-2\gamma}}\lim_{\rho_{a_i}\to 0}\rho_{a_i}^{m_1}\partial_{\rho_{a_i}}\Delta_{\rho_{{a_i}}}^{m_1}(W_i)=\kappa_\gamma^{-1}\phi_{a_i}^{\frac{n+2\gamma}{n-2\gamma}}\chi_\delta w_i^{\frac{n+2\gamma}{n-2\gamma}}.
\end{align*}
Hence
\begin{align*}
I_2=&\oint_M \phi_{a_i}^{\frac{n+2\gamma}{n-2\gamma}}\chi_\delta w_i^{\frac{n+2\gamma}{n-2\gamma}} v_j d\sigma_h=(1+O(\delta))\oint_M \chi_\delta w_i^{\frac{n+2\gamma}{n-2\gamma}} v_jd\sigma_h\\
=&(1+O(\delta))\epsilon_{i,j}+O(\ve_i^{2\gamma}\delta^{-2\gamma}))\ve_{i,j}.
\end{align*}
Inserting the estimates of $I_1$ and $I_2$ into \eqref{eq:inter-I-II}, we get the desired result.
\end{proof}

\section{Algebraic topological argument}

In this section, we will outline the algebraic topological argument by \citet{Bahri1988}. We omit some standard proofs. Readers are encouraged to find them in \cite{mayer2017barycenter}.

To introduce the {\em neighborhood of potential critical points at infinity} of $\mathcal{E}^\gamma_{{ h }}$, we first choose some $\nu_0>1$ and $\nu_0 \approx1$, and some $\mu_0>0$ and $\mu_0 \approx0$. With the later quantities fixed, for $p\in \mathbb{N}^*$, and $0<\mu\leq \mu_0$, we define $V(p, \mu)$ the $(p, \mu)$-neighborhood of potential critical points at infinity of $\mathcal{E}^\gamma_{{ h }}$ by the following formula
\begin{equation*}%\label{eq:vqs}
\begin{split}
V(p, \mu):=\{u\in W_+^{
\gamma, 2}(M): \,&\exists\, a_1, \cdots, a_{p}\in M,\;\alpha_1, \cdots, \alpha_{p}>0,\; 0<\ve_1, \cdots,\ve_{p}\leq \mu, \\ & \Vert u-\sum_{i=1}^{p}\alpha_i v_{a_i, \ve_i,\delta}\Vert\leq \varepsilon,    \;  \frac{\alpha_i}{\alpha_j}\leq \nu_0  \; \text{ and }   \;\varepsilon_{i, j}\leq \mu,\\& i\neq j=1, \cdots, p\},
\end{split}
\end{equation*}
where $\Vert\cdot\Vert$ denotes the standard $W^{\gamma, 2}$-norm.
\vspace{8pt}

\noindent 
Next, we introduce the sublevels of our Euler-Lagrange functional corresponding to the quantized values due to the involved bubbling phenomena. They are the sets  $L_p$ ($p\in \mathbb{N}$)  defined as follows
\begin{equation*}%\label{eq:defenergylevel}
L_p:=\{u\in W^{\gamma, 2}_+(M):\mathcal{E}^{\gamma}_{h}[u]\leq (p+1)^{\frac{2\gamma}{n}}\mathcal{Y}^{\gamma}_{\mathbb{S}^n}\}\quad \text{ for }\quad p\geq 1,
\end{equation*}
 and 
\begin{equation*}%\label{eq:defenergylevel}
L_0:=\emptyset.
\end{equation*}
As in classical Calculus of Variations and classical Critical Points Theory where Ekeland Variational Principle and Deformation Lemma plays {\em dual role} in producing Palais-Smale sequences, here also for the Ekeland Variational Principle in the Calculus of Variations at Infinity underlying the Aubin-Schoen's Minimizing Technique, we have the following Deformation Lemma which plays the corresponding role in the Critical Point Theory at Infinity behind the Barycenter Technique that we are going to use. It follows from the profile decomposition (Lemma \ref{lem:profile-decomp}) and same arguments as in others applications of the algebraic topological argument of Bahri-Coron \citep{Bahri1988}.

\begin{lemma}\label{deformationlemma} (Deformation Lemma)\\
Assuming that $\mathcal{E}^\gamma_{{ h }}$ has no critical points, then for every $p\in \mathbb{N}^*$, there exists $0<\mu_p<\mu_0$  such that, for every $0<\mu\leq \mu_p$, there holds $(L_p, L_{p-1})$ retracts by deformation onto $(L_{p-1}\cup A_p, L_{p-1})$ with $V(p, \tilde \mu)\subset A_p\subset V(p, \mu)$ where $0<\tilde \mu<\frac{\mu}{4}$ is a very small positive real number and depends on $\mu$.
\end{lemma}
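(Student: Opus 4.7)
The plan is to run a standard Bahri--Coron deformation argument adapted to the fractional setting, using the profile decomposition (Lemma \ref{lem:profile-decomp}) in place of the usual Palais--Smale compactness. The basic intuition is that the only obstruction to pushing $L_p$ down into $L_{p-1}$ is the concentration of flow lines toward the set of ``critical points at infinity'' consisting of $p$ almost non-interacting bubbles, i.e.\ $V(p,\mu)$; everywhere else in $L_p\setminus L_{p-1}$ the functional behaves as if it satisfies Palais--Smale.

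The first step is to establish a quantitative Palais--Smale property away from $V(p,\mu)$. Combining Lemma \ref{lem:profile-decomp} with the hypothesis that $\mathcal{E}_h^\gamma$ has no critical points, the weak limit $u_\infty$ of any Palais--Smale sequence must vanish, so the limiting level must be of the quantized form $m^{2\gamma/n}\mathcal{Y}_{\mathbb{S}^n}^\gamma$ with $m\in\{0,1,\ldots,p\}$, and the decomposition forces the sequence into $V(m,\mu)$ for some such $m\leq p$. A standard contradiction argument then yields, for every $0<\mu\leq\mu_0$, a constant $\tau(\mu)>0$ with
\[
\|d\mathcal{E}_h^\gamma[u]\|_{W^{-\gamma,2}}\geq \tau(\mu)\qquad \text{for all } u\in L_p\setminus\bigl(L_{p-1}\cup V(p,\mu)\bigr).
\]

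Next, I would construct a locally Lipschitz pseudo-gradient vector field $Z$ on $L_p\setminus (L_{p-1}\cup V(p,\tilde\mu))$ satisfying $\|Z\|\leq 1$ and $\langle d\mathcal{E}_h^\gamma[u],Z(u)\rangle\leq -\tau(\tilde\mu)/2$. Using a cutoff $\chi_\mu$ built from the ``bubble-distance'' $\mathrm{dist}(u,V(p,\tilde\mu))$, I would produce a modified field $\tilde Z$ that equals $Z$ outside $V(p,\mu/2)$ and vanishes inside $V(p,\tilde\mu)$. Let $\eta_t$ denote the associated flow and define
\[
T(u)=\inf\{t\geq 0 : \eta_t(u)\in L_{p-1}\cup V(p,\tilde\mu)\}.
\]
The uniform decrease of $\mathcal{E}_h^\gamma$ along $\eta_t$ outside $V(p,\tilde\mu)$, together with the energy lower bound on $L_p\setminus L_{p-1}$, yields $T(u)<\infty$ and a continuous dependence on $u$. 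Setting $A_p:=\{\eta_{T(u)}(u):u\in L_p\}\cap V(p,\mu)$, the map $r(u):=\eta_{T(u)}(u)$ and the homotopy $H(s,u):=\eta_{sT(u)}(u)$ furnish the deformation retract of $(L_p,L_{p-1})$ onto $(L_{p-1}\cup A_p,L_{p-1})$. The inclusion $V(p,\tilde\mu)\subset A_p$ is automatic from $T\equiv 0$ there.

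The main obstacle is the upper inclusion $A_p\subset V(p,\mu)$, i.e.\ showing that a trajectory entering the transition shell $V(p,\mu/2)\setminus V(p,\tilde\mu)$ cannot escape past $V(p,\mu)$ before being stopped. This is where the quantitative interaction estimates of Lemma \ref{lem:interaction} are essential: they give sharp control on the near-orthogonality of the bubble directions $V_{a_i,\varepsilon_i,\delta}$ and on the size of $\varepsilon_{i,j}$, which in turn bounds the transverse velocity of $\tilde Z$ in the concentration parameters. Choosing $\mu_p$ and $\tilde\mu$ small enough in terms of the constants appearing in Lemma \ref{lem:interaction} and Lemma \ref{lem:self-interaction} makes the shell invariant under the flow up to time $T$, which closes the argument. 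The remaining details (continuity of $T$, measurability of $A_p$, and the fact that $\tilde Z$ preserves $W^{\gamma,2}_+(M,h)$) are routine once the profile decomposition and interaction estimates are in place.
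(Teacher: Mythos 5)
Your overall route is the one the paper itself intends: it offers no detailed proof, only the remark that the lemma ``follows from the profile decomposition (Lemma \ref{lem:profile-decomp}) and the same arguments as in other applications of the Bahri--Coron argument,'' i.e.\ a pseudo-gradient deformation in which the profile decomposition replaces Palais--Smale compactness. However, your first and pivotal step is stated incorrectly, and the error is not cosmetic. You claim a uniform bound $\|d\mathcal{E}^\gamma_h[u]\|\geq\tau(\mu)$ on all of $L_p\setminus\bigl(L_{p-1}\cup V(p,\mu)\bigr)$. This set contains, near its top level $(p+1)^{\frac{2\gamma}{n}}\mathcal{Y}^\gamma_{\mathbb{S}^n}$, configurations of $p+1$ (or more) weakly interacting bubbles: by Lemma \ref{lem:self-interaction} and Lemma \ref{lem:interaction} (see also Proposition \ref{eq:baryest}(2)), a sum $\sum_{i=1}^{p+1}\alpha_i v_{a_i,\varepsilon_i,\delta}$ with small $\varepsilon_i$ and interactions dominating the self-energy corrections has energy strictly below $(p+1)^{\frac{2\gamma}{n}}\mathcal{Y}^\gamma_{\mathbb{S}^n}$ (hence lies in $L_p\setminus L_{p-1}$), lies far in $W^{\gamma,2}$-norm from any $p$-bubble sum (hence outside $V(p,\mu)$), and yet has $\|d\mathcal{E}^\gamma_h\|$ as small as one wishes. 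So the claimed quantitative Palais--Smale property is false as stated, and with it the finiteness and continuity of your stopping time $T(u)$, which rest on a uniform rate of energy decrease outside $V(p,\tilde\mu)$.

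The missing idea is precisely the treatment of the higher neighborhoods $V(q,\cdot)$, $q\geq p+1$, inside the slab. The profile decomposition (with no critical points, so $u_\infty=0$) only gives: PS fails in $L_p\setminus L_{p-1}$ either at the bottom quantized level, where sequences enter $V(p,\mu)$, or at the top level $(p+1)^{\frac{2\gamma}{n}}\mathcal{Y}^\gamma_{\mathbb{S}^n}$, where they enter $V(p+1,\mu)$. The standard repair is to prove the gradient bound only on $\{p^{\frac{2\gamma}{n}}\mathcal{Y}^\gamma_{\mathbb{S}^n}<\mathcal{E}^\gamma_h\leq(p+1)^{\frac{2\gamma}{n}}\mathcal{Y}^\gamma_{\mathbb{S}^n}-\theta\}\setminus V(p,\mu)$ for fixed small $\theta$, and to handle the top slice separately: a flow line starting there either already has definite gradient or sits in $V(p+1,\mu)$, from which it must exit once its energy drops below $\inf_{V(p+1,\mu)}\mathcal{E}^\gamma_h\approx(p+1)^{\frac{2\gamma}{n}}\mathcal{Y}^\gamma_{\mathbb{S}^n}-o_\mu(1)$, after which it can never re-enter and the uniform-decrease argument applies. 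Without this step (or an equivalent one, as in Bahri--Coron and in \cite{mayer2017barycenter}), your construction of the retraction onto $(L_{p-1}\cup A_p,L_{p-1})$, and in particular the inclusion $A_p\subset V(p,\mu)$, does not go through.
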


On the other hand, since we are in the Global case, and no variant of the Positive Mass Theorem is known to hold, then clearly there is no variational barrier available.  However, as the Mass there is an other global invariant of the variational problem which is the Interaction. Using the later information we will establish a {\bf multiple} variational  barrier estimate (see proposition \ref{eq:baryest}) which will play the {\em dual} role in the application of the Algebraic Topological argument for existence.

Now we present some topological properties of the space of formal barycenter of $M$, that we need for our barycenter technique for existence. To do that we recall that for $p\in \N^*$  the set of formal barycenters of  $M$  of order  $p$  is defined as 
 \begin{equation*}%\label{eq:barytop}
B_{p}(M)=\left\{\sum_{i=1}^{p}\alpha_i\delta_{a_i} : a_i\in M,  \alpha_i\geq 0,   i=1,\cdots, p, \,\sum_{i=1}^{p}\alpha_i=1\right\},\quad
B_0(M)=\emptyset,
\end{equation*}
where $\delta_{a}$ for $a\in M$ is the Dirac measure at $a$.
Moreover  we have the existence of   $\mathbb{Z}_2$  orientation classes 
\begin{equation}\label{orientation_classes}
w_p\in H_{(n+1)p-1}(B_{p}(M), B_{p-1}(M))
\end{equation}
and that the cap product acts as follows 
\begin{equation}\label{eq:actioncap}
\begin{CD}
 H^l(M^p/\sigma_p))\times H_k(B_{p}(M), B_{p-1}(M))@>\frown>> H_{k-l}(B_{p}(M), B_{p-1}(M)).
 \end{CD}
\end{equation}
On the other hand, since  $M$  is a closed  $n$-dimensional manifold, we have 
\begin{equation*}%\label{eq:defom}
\text{an orientation class}\, 0\neq O^{*}_{M}\in H^{n}(M),
\end{equation*}
and there is a natural way to see  $O^{*}_{M}\in H^{n}(M)$  as a nontrivial element of $H^n(M^p/\sigma_p)$, see \citet[pp. 532-533]{mayer2017barycenter}, namely 
\begin{equation}\label{eq:defom1}
O^*_{M}\simeq O^*_p \quad   \text{ with } \quad 0\neq O^{*}_p\in H^{n}((M^p)/\sigma_p).
\end{equation}
Recalling \eqref{eq:actioncap}, and identifying  $O^*_{M}$  and  $O^*_p$  via  \eqref{eq:defom1}, we have the following well-known formula.
\begin{lemma}%\label{eq:transfert}
There holds 
$$
\begin{CD}
 H^{n}((M^p)/\sigma_p)\times H_{(n+1)p-1}(B_{p}(M), B_{p-1}(M))@>\frown>> H_{(n+1)p-(n+1)}(B_{p}(M), B_{p-1}(M))\\@>\partial>>H_{(n+1)p-n-2}(B_{p-1}(M), B_{p-2}(M)),
 \end{CD}
$$
and 
\begin{equation*}%\label{eq:classt}
\omega_{p-1}=\partial(O^*_{M}\frown w_p).
\end{equation*}
\end{lemma}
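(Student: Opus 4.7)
The plan is to verify the identity at the chain level using an explicit geometric representative of the orientation class $w_p$. I would first recall that the top stratum $B_p(M)\setminus B_{p-1}(M)$ is homeomorphic to $\mathrm{UConf}_p(M)\times \mathring{\Delta}^{p-1}$, where $\mathrm{UConf}_p(M)=(M^p\setminus \Delta_{\mathrm{fat}})/\sigma_p$ is the unordered configuration space of $p$ distinct points of $M$ and $\mathring{\Delta}^{p-1}$ is the open standard simplex of strictly positive weights summing to $1$. This gives a natural representative of $w_p$ (modulo $B_{p-1}(M)$) by the product chain $[M^p/\sigma_p]\times[\Delta^{p-1}]$, whose dimension $np+(p-1)=(n+1)p-1$ matches that of $w_p$.

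Next I would carry out the cap product. Under the identification $O^*_M\simeq O^*_p\in H^n(M^p/\sigma_p)$ recalled in \eqref{eq:defom1}, evaluation of $O^*_p$ against the symmetrized fundamental chain of $M^p$ integrates out a single factor of $M$ and produces the symmetrized fundamental chain of $M^{p-1}$. Hence at the chain level
\[
O^*_M\frown w_p=[M^{p-1}/\sigma_{p-1}]\times[\Delta^{p-1}],
\]
represented by a class in $H_{(n+1)p-n-1}(B_p(M),B_{p-1}(M))$. Because the excerpt works throughout with $\mathbb{Z}_2$-coefficients, no signs need to be tracked.

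Finally I would apply the connecting homomorphism $\partial$ of the triple $(B_p(M),B_{p-1}(M),B_{p-2}(M))$, which at the chain level sends the above product to its geometric boundary projected into $B_{p-1}(M)/B_{p-2}(M)$. The only contributions come from the codimension-one faces $\{\alpha_i=0\}$ of $\Delta^{p-1}$: on each such face exactly one barycenter weight vanishes, so the formal barycenter is supported on $p-1$ points and hence lies in $B_{p-1}(M)$. Invoking the $\sigma_p$-symmetry to identify the $p$ such faces with a single one, the resulting boundary chain is $[M^{p-1}/\sigma_{p-1}]\times[\Delta^{p-2}]$, of dimension $(n+1)(p-1)-1$, which is precisely the representative of $w_{p-1}$ produced by the same recipe in rank $p-1$.

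The main obstacle will not be conceptual but rather the careful justification that the natural CW/symmetric-product structure on the filtration $B_0(M)\subset B_1(M)\subset\cdots\subset B_p(M)$ is compatible with the chain-level formulas used for both the cap product with $O^*_p$ and the connecting homomorphism $\partial$. This verification is carried out in detail in \citet{mayer2017barycenter}, and over $\mathbb{Z}_2$ it reduces to the combinatorics of symmetric products and faces of simplices, yielding the claimed identity $\omega_{p-1}=\partial(O^*_M\frown w_p)$.
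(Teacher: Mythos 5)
First, a point of comparison: the paper does not prove this lemma at all --- it is invoked as a well-known formula of the barycenter-technique literature, with the identification $O^*_M\simeq O^*_p$ and the relevant properties of the classes $w_p$ referred to \citet[pp.~532--533]{mayer2017barycenter}. So your attempt goes beyond what the paper does, but in the end your final paragraph defers exactly the crucial verifications (compatibility of the cap product and of $\partial$ with the filtration $B_0(M)\subset\cdots\subset B_p(M)$ and with the $\sigma_p$-quotient) to that same reference; as written, the proposal is an outline of the standard argument rather than a proof.

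The concrete gaps are these. (i) The top stratum is not the product $\mathrm{UConf}_p(M)\times\Delta^{p-1}$ you assert, but the twisted quotient $(\mathrm{Conf}_p(M)\times\Delta^{p-1})/\sigma_p$, with $\sigma_p$ acting simultaneously on the points and on the barycentric coordinates; consequently ``the product chain $[M^p/\sigma_p]\times[\Delta^{p-1}]$'' is not well defined as a product of fundamental chains. What is needed is the $\mathbb{Z}_2$-fundamental class of $(M^p\times\Delta^{p-1})/\sigma_p$ together with a proof that its image in $H_{(n+1)p-1}(B_p(M),B_{p-1}(M))$ is the class $w_p$ of \eqref{orientation_classes}; this is essentially the nontrivial existence statement for the orientation classes, which you assume. (ii) The assertion that capping with $O^*_p$ ``integrates out a single factor of $M$'' at the chain level is precisely the delicate point: on the symmetric quotient the cap product does not act factorwise in any naive sense, and justifying this chain-level formula is the content of the cited pages. (iii) In the boundary step you claim the only contributions come from the faces $\{\alpha_i=0\}$. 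The collisions of configuration points also land in $B_{p-1}(M)$; they occur in the interior of the parameter space, so they do not show up in $\partial$ of the pushed-forward chain, but they are the reason one must still check that the resulting relative class in $H_{(n+1)(p-1)-1}(B_{p-1}(M),B_{p-2}(M))$ is nonzero and equals $w_{p-1}$, rather than a class killed by the identifications in $B_{p-1}(M)$. Identifying the $p$ faces of the simplex ``by symmetry'' does not by itself deliver that conclusion, and over $\mathbb{Z}_2$ one must also make sure the $p$ face contributions are identified, not summed to zero, in the quotient. Filling (i)--(iii) is exactly what the reference does, so the proposal as it stands does not constitute an independent proof.
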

Next  we  define for  $p\in \mathbb{N}^*$  and  $\ve >0$
\begin{equation*}%\label{eq:deffp}
f_p(\ve) :  B_p(M)\longrightarrow W^{\gamma, 2}_+(M)
:
\sigma=\sum_{i=1}^p\alpha_i\delta_{a_i}\in B_p(M)
\longrightarrow
f_p(\ve)(\sigma)
=
\sum_{i=1}^p\alpha_i v_{a_i, \ve,\delta}.
\end{equation*}
Using the  $f_p(\ve)$, we express the multiple variational barrier in the following proposition
%we have the following result:
%\textcolor{blue}{change $\lambda$ to something}\\
%\textcolor{red}{I changed $\lambda$ to  $\varepsilon$  and  $\varepsilon$  to   $\mu$}
\begin{proposition}\label{eq:baryest}
There exists   $\nu_0>1$  such that for every  $p\in \N^*$, $p\geq 2$  and every  $0<\mu\leq \mu_0$, there exists  $\ve_p:=\ve_p(\mu)$  such that for every  $0<\ve\leq \ve_p$  and for every  $\sigma=\sum_{i=1}^p\alpha_i\delta_{a_i}\in B_p(M)$, we have
\begin{enumerate}
%[label=(\roman*)]
 \item
If  $\;\sum_{i\neq j}\varepsilon_{i, j}> \mu$  or there exist  $i_0\neq j_0$  such that  $\frac{\alpha_{i_0}}{\alpha_{j_0}}>\nu_0$, then
 $$
\mathcal{E}^\gamma_{h}\left[f_p(\ve)(\sigma)\right]\leq p^{\frac{2\gamma}{n}}\mathcal{Y}^{\gamma}_{\mathbb{S}^n}.
$$ 
 \item
If  $\;\sum_{i\neq j}\varepsilon_{i, j}\leq \mu$  and for every  $i\neq j$  we have  $\frac{\alpha_{i}}{\alpha_j}\leq\nu_0$, then
$$
\mathcal{E}^\gamma_{h}\left[f_p(\ve)(\sigma)\right]\leq p^{\frac{2\gamma}{n}}\mathcal{Y}^{\gamma}_{\mathbb{S}^n}\left(1+\mathcal{C}_6\ve^{\frac{n-2\gamma}{2}}-{\mathcal{C}_7(p-1)}\ve^{\frac{n-2\gamma}{2}}\right),
$$
where  $\mathcal{C}_6,\mathcal{C}_7>0$ depend on $n,\gamma,g,\delta$.
\end{enumerate}

\end{proposition}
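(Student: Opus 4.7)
The plan is to bound $\mathcal{E}^\gamma_h[u]$ from above, where $u = f_p(\ve)(\sigma) = \sum_{i=1}^p\alpha_i v_{a_i,\ve,\delta}$ with $\sum\alpha_i = 1$, by lifting to the extension $U = \sum_i\alpha_i V_{a_i,\ve,\delta}$ and using $\mathcal{E}^\gamma_h[u] \leq \overline{\mathcal{E}}^\gamma_h[U]$. The numerator expands bilinearly into $\kappa_\gamma\mathcal{Q}_\gamma(U,U) = \sum_i\alpha_i^2 e_{i,i} + \sum_{i\neq j}\alpha_i\alpha_j e_{i,j}$, and the denominator admits the standard Taylor expansion
\[
\oint_M u^{2^*}d\sigma_h = \sum_i\alpha_i^{2^*}\oint_M v_i^{2^*}d\sigma_h + 2^*\sum_{i\neq j}\alpha_i^{2^*-1}\alpha_j\,\epsilon_{i,j} + \mathcal{R},
\]
with $2^* = 2n/(n-2\gamma)$ and the remainder $\mathcal{R}$ controlled by the higher-exponent interaction estimates in Lemma \ref{lem_higher_interaction_estimates}. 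I would then insert Lemma \ref{lem:self-interaction} for the diagonal terms and Lemma \ref{lem:interaction} for the off-diagonal terms, so that both numerator and denominator are expressed in terms of $\{\alpha_i\}$, $\{\ve_{i,j}\}$, and error terms of size $O(\delta^{2\gamma-n}\ve^{n-2\gamma})$ or smaller.

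After Taylor-expanding the ratio $A/B^{(n-2\gamma)/n}$ and exploiting the identity $\tfrac{n-2\gamma}{n}\cdot 2^* = 2$, the interaction contributions of numerator and denominator partially cancel, producing the structural bound
\[
\mathcal{E}^\gamma_h[u] \leq \mathcal{Y}^\gamma_{\mathbb{S}^n}\cdot\frac{\sum_i\alpha_i^2}{(\sum_i\alpha_i^{2^*})^{(n-2\gamma)/n}}\Bigl(1 + O(\delta^{2\gamma-n}\ve^{n-2\gamma}) - \mathcal{I}(\alpha,\sigma)\Bigr),
\]
where $\mathcal{I}(\alpha,\sigma)$ is a positive linear combination of the $\ve_{i,j}$'s whose coefficients are bounded below by a constant depending on $\nu_0$ as soon as $\alpha_i/\alpha_j \leq \nu_0$. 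The algebraic core is the H\"older inequality (with $\sum\alpha_i = 1$),
\[
\sum_i\alpha_i^2 \leq p^{2\gamma/n}\Bigl(\sum_i\alpha_i^{2^*}\Bigr)^{(n-2\gamma)/n},
\]
with equality iff all $\alpha_i$ coincide; a quantitative variant produces $\eta(\nu_0) > 0$ such that if $\alpha_{i_0}/\alpha_{j_0} > \nu_0$ for some $i_0 \neq j_0$, then the deficit is at least $1-\eta$. With this in hand: in case (1)(a) the H\"older deficit dominates all perturbations for $\ve \leq \ve_p(\mu)$ small; in case (1)(b) the hypothesis $\sum_{i\neq j}\ve_{i,j}>\mu$ combined with the balanced-weights assumption implies $\mathcal{I}(\alpha,\sigma) \geq c(\nu_0)\mu$, outweighing the self-error once $\ve$ is small; either way, $\mathcal{E}^\gamma_h[u] \leq p^{2\gamma/n}\mathcal{Y}^\gamma_{\mathbb{S}^n}$ follows. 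For case (2), bundling the self-error into $\mathcal{C}_6\ve^{(n-2\gamma)/2}$ and the net interaction decrease into the lower bound $\mathcal{C}_7(p-1)\ve^{(n-2\gamma)/2}$ yields the stated expansion.

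The main obstacle is the careful bookkeeping of signs in the Taylor expansion together with the uniform handling of the $o(1)$ errors in Lemma \ref{lem:interaction} (which depend on $\delta$ and $\max_i\ve_i/\delta$). The parameters $\nu_0, \mu_0, \delta_0, \ve_p$ must be nested in a tight hierarchy so that, for each $\mu \in (0,\mu_0]$ and each $\ve \leq \ve_p(\mu)$, either the H\"older deficit (case 1a) or the positive interaction defect $\mathcal{I}(\alpha,\sigma)$ (cases 1b and 2) strictly dominates the accumulated error terms, uniformly over $\sigma\in B_p(M)$. In particular, the quantitative H\"older deficit must be extracted with a dependence on $\nu_0$ that is independent of $p$, and the positivity of the interaction coefficients in the Taylor-expanded ratio must be established for the entire balanced-weights cone, not merely for equal weights.
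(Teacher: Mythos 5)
Your skeleton — bilinear expansion of $\kappa_\gamma\mathcal{Q}_\gamma$, Taylor expansion of the $\frac{2n}{n-2\gamma}$-integral with remainder controlled by Lemma \ref{lem_higher_interaction_estimates}, insertion of Lemmas \ref{lem:self-interaction} and \ref{lem:interaction}, and the discrete H\"older inequality $\sum_i\alpha_i^2\le p^{2\gamma/n}(\sum_i\alpha_i^{\frac{2n}{n-2\gamma}})^{\frac{n-2\gamma}{n}}$ with a quantitative deficit on the unbalanced cone — is exactly the scheme of the proof the paper invokes (Proposition 3.1 of \cite{mayer2017barycenter}). However, two steps are genuinely missing. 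First, the expansion machinery you rely on is only valid when every $\ve_{i,j}$ is small: Lemmas \ref{lem_higher_interaction_estimates} and \ref{lem:interaction} are stated under the hypothesis $\ve_{i,j}\le\mu_0$. Case (1) of Proposition \ref{eq:baryest} must hold for \emph{every} $\sigma\in B_p(M)$ with $\sum_{i\neq j}\ve_{i,j}>\mu$, and this includes configurations where some $a_i$, $a_j$ are at distance $O(\ve)$ or coincide (then $\ve_{i,j}$ is of order one) and configurations with some $\alpha_i=0$; your claim $\mathcal{I}(\alpha,\sigma)\ge c(\nu_0)\mu$ is extracted from an expansion used outside its range of validity. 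Handling this degenerate regime (bubbles collapsing onto one another, so that the configuration is effectively one of $B_{p-1}(M)$ and the level drops below $p^{2\gamma/n}\Ygs$) is a substantial part of the argument being cited, and it is absent from your proposal.

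Second, in case (2) the decisive quantitative input is a \emph{lower} bound on the interactions, and this is where the one remark the paper makes in its proof matters: in $f_p(\ve)$ all concentration parameters are equal to $\ve$, so $\ve_{i,j}=(2+d_h^2(a_i,a_j)/\ve^2)^{-\frac{n-2\gamma}{2}}\ge c(\mathrm{diam}(M,h))\,\ve^{n-2\gamma}$ for every pair, whence the net interaction decrease is at least of order $(p-1)\ve^{n-2\gamma}$, i.e.\ of the \emph{same} order as the self-error $O(\delta^{2\gamma-n}\ve^{n-2\gamma})$ from Lemma \ref{lem:self-interaction}. You never use this; instead you ``bundle'' the gain into the lower bound $\mathcal{C}_7(p-1)\ve^{\frac{n-2\gamma}{2}}$. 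Bundling is legitimate for the error term (it only weakens an upper bound) but not for the gain: a quantity that your estimates only show to be of order $\ve^{n-2\gamma}$ cannot be bounded below by $\mathcal{C}_7\ve^{\frac{n-2\gamma}{2}}$ for small $\ve$, and indeed for well-separated points the actual decrease is only $O(\ve^{n-2\gamma})$. What your argument (and the cited proof) delivers is the expansion with the common exponent $n-2\gamma$ in both terms, which is all that the application in Lemma \ref{eq:largep} requires; as written, the final step of your case (2) does not follow, and you should either establish the estimate with that exponent or address the discrepancy with the stated one explicitly.
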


\begin{proof}{Notice that in the definition of $f_p(\ve)$ we are taking all $\ve_i$ the same. }
The proof is the same as the one of Proposition 3.1 in \citep{mayer2017barycenter} using Propositions \ref{prop:case-1}, \ref{prop:case-2}  and Lemmas \ref{lem:self-interaction}, \ref{lem_higher_interaction_estimates}, \ref{lem:interaction} and Propositions \ref{prop:I-low}, \ref{prop:low-dim-2}, \ref{prop:II-2}.
\end{proof}
Now we start transporting the topology of the manifold  $M$  into the sublevels of the Euler-Lagrange functional  $\mathcal{E}_h^\gamma$  by bubbling via  $v_{a, \ve,\delta}$. But before that, we first recall the definition of the selection map defined inside the neighborhood of potential critical points at infinity. For every  $p\in \N^*$, there exists  $0<\mu_p\leq\mu_0$  such that for every  $0<\mu\leq \mu_p$ there holds
\begin{equation}\label{eq:mini}
\begin{cases}
\forall \;u\in V(p, \mu)   \;\text{the minimization problem} \quad \min_{B_{\mu}^{p}}\Vert u-\sum_{i=1}^{p}\alpha_i v_{a_i, \ve_i,\delta}\Vert \\
\text{has a solution, which is unique up to permutations,}
\end{cases}
\end{equation}
where  $B^{p}_{\mu}$  is defined as 
\begin{equation*}
\begin{split}
B_{\mu}^{p}:=\{(\bar\alpha, A, \bar \l)  \;: \; &\ve_i\leq \mu, \;i=1, \cdots, p,\\ &\frac{\alpha_i}{\alpha_j}\leq \nu_0  \; \text{ and } \; \varepsilon_{i, j}\leq \mu, \;i\neq j=1, \cdots, p\}
\end{split}
\end{equation*}
where $(\bar\alpha, A, \bar \l)\in \mathbb{R}^{p}_+\times M^p\times (0, +\infty)^{p} $ and $\nu_0$  is as in proposition \ref{eq:baryest}. Furthermore  we define the selection map  via
\begin{equation*}%\label{eq:select}
s_{p}: V(p, \mu)\longrightarrow ( M)^p/\sigma_p
:
u\longrightarrow s_{p}(u)=A
 \,\text{ and } \, A  \text{ is given by } \,\eqref{eq:mini}.
\end{equation*}

Recalling \eqref{orientation_classes} we have:
\begin{lemma}\label{eq:nontrivialf1}
Assuming that  $\mathcal{E}_{ h}^{\gamma}$  has no critical points and \:$0<\mu\leq  \mu_1$, then up to taking  $\mu_1$  smaller and  $\ve_1$  smaller too, we have that for every  $0<\ve\leq \ve_1$, there holds
$$
f_1(\ve)  :  (B_1(M),  B_0(M))\longrightarrow (L_1,  L_0)
$$
is well defined and satisfies
$$
(f_1(\ve))_*(w_1)\neq 0  \quad \text{in}  \quad H_{n}(L_1,  L_0).
$$
\end{lemma}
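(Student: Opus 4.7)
The plan is to establish the two assertions of the lemma separately. For the first, that $f_1(\ve)$ maps $B_1(M)=M$ into $L_1$, the key input is the single-bubble energy estimate of Lemma \ref{lem:self-interaction}(i), which reads $\mathcal{E}^\gamma_h[v_{a,\ve,\delta}]\leq \mathcal{Y}^\gamma_{\mathbb{S}^n}+\mathcal{C}\delta^{2\gamma-n}\ve^{n-2\gamma}$. Since $L_1=\{u:\mathcal{E}^\gamma_h[u]\leq 2^{2\gamma/n}\mathcal{Y}^\gamma_{\mathbb{S}^n}\}$ and $2^{2\gamma/n}>1$, it suffices to take $\ve_1$ small enough (depending on $\delta$) to guarantee $f_1(\ve)(\delta_a)=v_{a,\ve,\delta}\in L_1$ for every $a\in M$ and every $0<\ve\leq \ve_1$. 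The relative target $L_0$ is vacuous since $L_0=B_0(M)=\emptyset$, so the map of pairs is automatic.

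For the nontriviality $(f_1(\ve))_*(w_1)\neq 0$ in $H_n(L_1,L_0)=H_n(L_1)$, the plan is to construct a one-sided inverse of $(f_1(\ve))_*$ on $\mathbb{Z}_2$-homology by combining the Deformation Lemma with the selection map. Under the assumption that $\mathcal{E}^\gamma_h$ has no critical points, Lemma \ref{deformationlemma} with $p=1$ yields a deformation retraction $r:L_1\to A_1$ with $V(1,\tilde\mu)\subset A_1\subset V(1,\mu)$, so $r_*:H_n(L_1)\to H_n(A_1)$ is an isomorphism. The selection map $s_1:V(1,\mu)\to M$ defined through \eqref{eq:mini} is continuous, and by the uniqueness clause of the minimization in \eqref{eq:mini}, applied to $u=v_{a,\ve,\delta}$ itself (the distance zero is attained at $\alpha=1$, $a'=a$, $\ve'=\ve$), one has $s_1(v_{a,\ve,\delta})=a$ for every $a\in M$.

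Shrinking $\ve_1$ once more so that $f_1(\ve)(M)\subset V(1,\tilde\mu)$ for all $0<\ve\leq \ve_1$, the retraction $r$ acts as the identity on $f_1(\ve)(M)$, and the composition
\[
M \xrightarrow{\;f_1(\ve)\;} L_1 \xrightarrow{\;r\;} A_1 \xrightarrow{\;s_1\;} M
\]
equals $\mathrm{id}_M$. Passing to $\mathbb{Z}_2$-homology gives $(s_1)_*\circ r_*\circ (f_1(\ve))_* = \mathrm{id}_{H_n(M,\mathbb{Z}_2)}$, so $(f_1(\ve))_*$ is injective on $H_n(M,\mathbb{Z}_2)$. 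Since $w_1$ is precisely the nonzero $\mathbb{Z}_2$-fundamental class of the closed $n$-manifold $M$ (cf.\ \eqref{orientation_classes} with $p=1$), this immediately gives $(f_1(\ve))_*(w_1)\neq 0$ in $H_n(L_1)$.

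The main obstacle is purely organizational: one has to choose the thresholds $\mu_1$ and $\ve_1$ in the correct order so that $f_1(\ve)(M)$ actually lands in the smaller neighborhood $V(1,\tilde\mu)$ on which $r$ is the identity and $s_1$ is unambiguously defined. This uses only the continuity of $s_1$, the uniqueness in \eqref{eq:mini}, and the standard hierarchy $\tilde\mu<\mu/4$ from the Deformation Lemma; no further analytic input is required beyond what is already in place.
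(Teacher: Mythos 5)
Your proposal is correct and follows essentially the same route as the paper, whose proof simply invokes the argument of Lemma 4.2 in \cite{mayer2017barycenter} built from exactly the ingredients you use: the single-bubble energy estimate (Propositions \ref{prop:I-low}, \ref{prop:low-dim-2}, \ref{prop:II-2}, i.e.\ Lemma \ref{lem:self-interaction}(i)) to land in $L_1$, the Deformation Lemma \ref{deformationlemma}, and the selection map $s_1$ giving a left inverse of $(f_1(\ve))_*$ on $\mathbb{Z}_2$-homology. The only cosmetic remark is that one may weaken your exact identity $s_1\circ r\circ f_1(\ve)=\mathrm{id}_M$ to ``homotopic to the identity,'' which is all the homological conclusion requires.
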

\begin{proof}
The proof follows from the same arguments as the ones used in the proof of  Lemma 4.2  in \cite{mayer2017barycenter} by using the selection map  $s_1$, Lemma \ref{deformationlemma} and Proposition \ref{prop:I-low}, \ref{prop:low-dim-2}, \ref{prop:II-2}.
\end{proof}
Next  we use the previous lemma and {\em pile up masses}  by bubbling  via  $v_{a, \ve,\delta}$\, in a recursive way. Still recalling \eqref{orientation_classes} we have: 
\begin{lemma}\label{lem:nontrivialrecursive}
Assuming that  $\mathcal{E}^{\gamma}_{ h}$  has no critical points and  $0<\mu\leq  \mu_{p+1}$, then up to taking  $\mu_{p+1}$  smaller, and  $\ve_p$  and  $\ve_{p+1}$  smaller too, we have that for every  $0<\ve\leq \min\{\ve_p, \ve_{p+1}\}$, there holds
$$
f_{p+1}(\ve): (B_{p+1}(M),  B_{p}(M))\longrightarrow (L_{p+1},  L_{p})
$$
and 
$$
f_p(\ve): (B_p(M),  B_{p-1}(M))\longrightarrow (L_p,   L_{p-1})
$$
are well defined and satisfy
$$(f_p(\ve))_*(w_p)\neq 0  \quad \text{in} \quad    H_{np-1}(L_p,  L_{p-1})$$ implies
$$(f_{p+1}(\ve))_*(w_{p+1})\neq 0  \quad \text{in} \quad  H_{n(p+1)-1}(L_{p+1},  L_{p}).$$
\end{lemma}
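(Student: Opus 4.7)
The plan is a standard Bahri--Coron recursive diagram chase, paralleling Proposition 4.3 of \cite{mayer2017barycenter}, and I will explain how the ingredients already assembled in the excerpt fit together. First I would check that, for $\ve\le \ve_{p+1}$ and $\mu\le \mu_{p+1}$ small enough,
\[
f_{p+1}(\ve):(B_{p+1}(M), B_p(M))\longrightarrow (L_{p+1}, L_p)
\]
is a well defined map of pairs. Indeed, for $\sigma=\sum_{i=1}^{p+1}\alpha_i\delta_{a_i}\in B_{p+1}(M)$, Proposition \ref{eq:baryest}(2) gives $\mathcal{E}^\gamma_h[f_{p+1}(\ve)(\sigma)]\le (p+1)^{2\gamma/n}\mathcal{Y}^\gamma_{\mathbb{S}^n}(1+O(\ve^{(n-2\gamma)/2}))$, which stays below $(p+2)^{2\gamma/n}\mathcal{Y}^\gamma_{\mathbb{S}^n}$ after choosing $\ve$ small; while in the degenerate regime of Proposition \ref{eq:baryest}(1), and also when $\sigma\in B_p(M)$ (one mass degenerates), the energy is at most $(p+1)^{2\gamma/n}\mathcal{Y}^\gamma_{\mathbb{S}^n}$, so the image lies in $L_p$. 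The restriction of $f_{p+1}(\ve)$ to $B_p(M)$ is $f_p(\ve)$.

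Next I would argue by contradiction: assume $(f_{p+1}(\ve))_*(w_{p+1})=0$ in $H_{(n+1)(p+1)-1}(L_{p+1}, L_p)$. Three inputs drive the chase: (i) the boundary formula $\omega_p=\partial(O_M^*\frown w_{p+1})$ recorded just before the statement, (ii) naturality of the cap product along $f_{p+1}(\ve)$, and (iii) naturality of the connecting homomorphism for the triples $(L_{p+1}, L_p, L_{p-1})$ and $(B_{p+1}(M), B_p(M), B_{p-1}(M))$. The class $O_M^*\in H^n(M^{p+1}/\sigma_{p+1})$ must first be pulled back to a class on $(L_{p+1}, L_p)$. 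For that I invoke Lemma \ref{deformationlemma}, which retracts $(L_{p+1}, L_p)$ onto $(L_p\cup A_{p+1}, L_p)$ with $A_{p+1}\subset V(p+1, \mu)$, and use the selection map $s_{p+1}:V(p+1, \mu)\to M^{p+1}/\sigma_{p+1}$ of \eqref{eq:mini} to set $\widetilde O^{*}:=s_{p+1}^{*}O_M^*$. Thanks to the uniqueness clause in \eqref{eq:mini} together with Lemma \ref{lem:interaction}, for $\ve$ small enough $s_{p+1}\circ f_{p+1}(\ve)$ is homotopic to the natural inclusion $B_{p+1}(M)\hookrightarrow M^{p+1}/\sigma_{p+1}$, so that $(f_{p+1}(\ve))^{*}\widetilde O^{*}$ is identified with $O_M^*$ under this homotopy.

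With these identifications the commutative diagram
\[
\begin{CD}
H_{(n+1)(p+1)-1}(B_{p+1}, B_p) @>\frown O_M^{*}>> H_{(n+1)p}(B_{p+1}, B_p)@>\partial>> H_{(n+1)p-1}(B_p, B_{p-1})\\
@V(f_{p+1}(\ve))_{*}VV @V(f_{p+1}(\ve))_{*}VV @V(f_p(\ve))_{*}VV\\
H_{(n+1)(p+1)-1}(L_{p+1}, L_p) @>\frown \widetilde O^{*}>> H_{(n+1)p}(L_{p+1}, L_p)@>\partial>> H_{(n+1)p-1}(L_p, L_{p-1})
\end{CD}
\]
combined with $\omega_p=\partial(O_M^{*}\frown w_{p+1})$ gives
\[
(f_p(\ve))_{*}\omega_p=\partial\bigl((f_{p+1}(\ve))_{*}(O_M^{*}\frown w_{p+1})\bigr)=\partial\bigl(\widetilde O^{*}\frown (f_{p+1}(\ve))_{*}w_{p+1}\bigr)=0,
\]
contradicting the inductive hypothesis $(f_p(\ve))_{*}w_p\neq 0$.

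The main obstacle is the middle paragraph: establishing strict commutativity of the cap-product square, which amounts to tuning $\nu_0$, $\mu_{p+1}$ and $\ve_{p+1}$ so that $f_{p+1}(\ve)(B_{p+1}(M))$ lands in a neighborhood of potential critical points at infinity where the minimization \eqref{eq:mini} is uniquely solvable up to permutations and $s_{p+1}\circ f_{p+1}(\ve)$ is a small perturbation of the inclusion. The profile decomposition Lemma \ref{lem:profile-decomp} and the interaction estimates of Lemma \ref{lem:interaction} are precisely what make this quantitative; once set up, the algebra is formal and mirrors the treatment in \cite{mayer2017barycenter}.
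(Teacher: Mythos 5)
Your proposal is correct and follows essentially the same route as the paper, which simply defers to Lemma 4.3 of \citet{mayer2017barycenter}: well-definedness of the maps of pairs from Proposition \ref{eq:baryest}, and then the contradiction via the cap product with the orientation class pulled back through the selection map $s_{p+1}$ on the retract provided by Lemma \ref{deformationlemma}, using $\omega_p=\partial(O^*_M\frown w_{p+1})$ and naturality. Your homology degrees $(n+1)p-1$ are the ones consistent with \eqref{orientation_classes} (the $np-1$ in the lemma statement is an internal inconsistency of the paper, not of your argument).
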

\begin{proof}
The proof follows from the same arguments as the ones used in the proof of Lemma 4.3  in \cite{mayer2017barycenter}, by using the selection map  $s_p$, Lemma \ref{deformationlemma} and Proposition \ref{eq:baryest}.
\end{proof}
\vspace{6pt}

Finally we use the strength of Proposition \ref{eq:baryest} - namely point (ii) - to give a criterion ensuring that the recursive process of {{\em piling up}} masses via Lemma \ref{lem:nontrivialrecursive} will lead to a topological contradiction after a very large number of steps. 
\begin{lemma}\label{eq:largep}
Setting  
$$p^*:=[1+\frac{\mathcal{C}_6}{\mathcal{C}_7}]+1,$$ we have that  $\forall \, 0<\ve\leq\ve_{p^*}$ there holds 
$$
f_{p^*}(\ve)[B_{p^*}(M)]\subset L_{p^*-1}.
$$
\end{lemma}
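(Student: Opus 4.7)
The plan is to reduce the claim to a direct computation based on Proposition \ref{eq:baryest} together with the arithmetic choice of $p^*$. Fix $\mu\leq \mu_{p^*}$ (which dictates the constraint $\ve\leq\ve_{p^*}$ from that proposition), take any $\sigma=\sum_{i=1}^{p^*}\alpha_i\delta_{a_i}\in B_{p^*}(M)$, and set $u=f_{p^*}(\ve)(\sigma)$. Since by construction $L_{p^*-1}$ is cut out by the threshold $(p^*)^{2\gamma/n}\mathcal{Y}^\gamma_{\mathbb{S}^n}$, the goal is to bound $\mathcal{E}^\gamma_h[u]\leq (p^*)^{2\gamma/n}\mathcal{Y}^\gamma_{\mathbb{S}^n}$ for every admissible $\sigma$. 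The proof will split into exactly the two regimes dictated by Proposition \ref{eq:baryest}.

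First, if the ``degenerate'' regime of item (i) holds, that is if $\sum_{i\neq j}\ve_{i,j}>\mu$ or if some ratio $\alpha_{i_0}/\alpha_{j_0}$ exceeds $\nu_0$ (which in particular covers the case that some $\alpha_j$ vanishes, so the formal statement handles boundary points of $B_{p^*}(M)$ coming from lower barycenters as well), then item (i) gives $\mathcal{E}^\gamma_h[u]\leq (p^*)^{2\gamma/n}\mathcal{Y}^\gamma_{\mathbb{S}^n}$ directly, so $u\in L_{p^*-1}$. Second, in the ``non-degenerate'' regime of item (ii), Proposition \ref{eq:baryest} yields
\begin{equation*}
\mathcal{E}^\gamma_h[u]\leq (p^*)^{\frac{2\gamma}{n}}\mathcal{Y}^\gamma_{\mathbb{S}^n}\Bigl(1+\bigl[\mathcal{C}_6-\mathcal{C}_7(p^*-1)\bigr]\ve^{\frac{n-2\gamma}{2}}\Bigr),
\end{equation*}
and the arithmetic definition $p^*=[1+\mathcal{C}_6/\mathcal{C}_7]+1$ guarantees $p^*-1\geq \mathcal{C}_6/\mathcal{C}_7$, so the bracket is nonpositive and again $u\in L_{p^*-1}$.

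There is no genuine obstacle here beyond careful bookkeeping: the whole strength of the argument has been absorbed into Proposition \ref{eq:baryest}, which is where the interaction estimates of Lemma \ref{lem:interaction} and the self-interaction bound of Lemma \ref{lem:self-interaction} combine to produce the crucial sign change. The only mild points to verify are that the quantifier on $\ve$ in the proposition is compatible with our $\ve\leq \ve_{p^*}$, and that every $\sigma\in B_{p^*}(M)$ (including those with some $\alpha_i=0$) is covered---but such $\sigma$ automatically satisfy $\alpha_{i_0}/\alpha_{j_0}>\nu_0$ for some pair, so they fall into case (i) and need no separate treatment. Once $p^*$ is chosen as stated, $\ve\leq\ve_{p^*}$ is thus enough to conclude $f_{p^*}(\ve)[B_{p^*}(M)]\subset L_{p^*-1}$, which is the desired topological contradiction when combined with the non-triviality in Lemma \ref{lem:nontrivialrecursive}.
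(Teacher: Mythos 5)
Your proof is correct and follows essentially the same route as the paper, which disposes of this lemma in one line as ``a direct application of Proposition \ref{eq:baryest}''. Your explicit case split (regime (i) giving the threshold bound directly, regime (ii) combined with the arithmetic $p^*-1\geq \mathcal{C}_6/\mathcal{C}_7$ making the correction term nonpositive) is exactly the bookkeeping the paper leaves implicit, including the observation that barycenters with some $\alpha_i=0$ fall under case (i).
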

\begin{proof}
The proof is a direct application of Proposition \ref{eq:baryest} .
\end{proof}
\vspace{6pt}

\noindent

\begin{proof}[Proof of Theorem \ref{gamma01} and Theorem \ref{gamma12}]$ $\newline
It follows by a contradiction argument from Lemma \ref{eq:nontrivialf1} - Lemma \ref{eq:largep}.
\end{proof}

%Using lemma \ref{lem_higher_interaction_estimates}, lemma \ref{lem:self-interaction}, lemma \ref{lem:interaction}, and arguing as \cite{Mayer2017JDG} and \cite{Mayer2017}. we have the multiple bubble estimate we talked about above.
%\begin{proposition}\label{multiplebubble}(Interaction helps)\\
%There exists $p_0\gg1$  and $l_{0}\gg1$ such that for every $l\geq l_{0}$, there holds 
%$$
%\mathcal{E}^{\gamma}_{h}\left(\sum_{i=1}^{p_0}\alpha_iU_{a_i,\ve_i,\delta}\right)\leq p_0^{\frac{2\gamma}{n}}\mathcal{Y}^{\gamma}_{\mathbb{S}^n}, a_i\in M, \alpha_i\geq0,i=1,\cdots, p_0,\text{ and }\sum_{i+1}^{p_0}\alpha_i=1.
%$$
%\end{proposition}

\section{Case (I-2): low dimension in AH}
%\subsection{Case 2: Nonumbilic conformal infinity}
In this section, we want to show that our method could also apply to some asymptotically hyperbolic case.  Suppose $(X^{n+1},g_+)$ is an asymptotically hyperbolic manifold with conformal infinity $(M^n,[h])$. Assume also $\rho$ is the geodesic defining function of a representative metric $h$. Furthermore we require 
\begin{align}\label{eq:K1-18}
    R[g_+]+n(n+1)=o(\rho) \text{ as }\rho\to 0\text{ uniformly on }M .
\end{align}
{Then it follows from \cite[Lem. 2.3]{Kim2018} that the mean curvature $H=0$.} According to \cite[Lemma 2.2, 2.4]{Kim2018}, for any point $a\in M$, there exist $h_a\in[h]$ (write $h_a$ as $h$ for short) and the geodesic defining function $\rho_a$ near $M$ such that the metric $g=\rho_a^2g_+$ has the following expansion
\begin{align}
    { g } ^{ij}(x)=&\delta_{ij}+2\pi_{ij}x_N+\frac13R_{ikjl}[{ h }]x_kx_l+{ g } ^{ij}_{,Nk}x_Nx_k\notag\\
    &\quad+(3\pi_{ik}\pi_{kj}+R_{iNjN}[{ g } ])x_N^2+O(|x|^3)\label{eq:exp-7}\\
    \sqrt { | { g }  | } \left( x\right) =& 1  -\frac16 Ric[h]_{ij}x_ix_j-(\frac12 \| \pi \| ^ { 2 }+Ric[g]_{NN})   x _ { N } ^ { 2 }+ O (\left| x \right| ^ { 3 } )  \text { in } \bnpe.\notag
\end{align}
in terms of Fermi coordinates around $a$. Here $\pi$ is the second fundamental form of $(M,h)\subset (\bar X,g)$. Every tensor in the expansion is computed at $a=0$.

As in \eqref{U:I-1}, we define
\[U_{a,\ve,\delta}(x)=\chi_\delta W_\ve(\Psi_a(x))+\left(1-\chi_\delta(\Psi_a(x))\right)\ve^\frac{n-2\gamma}{2}G_a^\gamma(x)\]
for $C_0\ve<\delta\leq\delta_0\leq 1$.
%\[H=0,\quad \text{on }M,\quad R[h]_{ij}(a)=0,\quad Ric[g]\]
%Consider nonumbilic conformal infinity. 
We shall consider the case $n< 2+2\gamma$ and $\gamma\in (0,1)$, which is a Global case, notice this implies $n=3$ and $\gamma\in (\frac12,1)$.

%
%\begin{remark}
%In \cite{Case2017}, this inequality is proved for Poincar\'{e}-Einstein manifold, while in \cite{Kim2018} they claimed that it is true for asymptotically hyperbolic manifold.
%\end{remark}

%Choose any nonumbilic point $a=0\in M$. Suppose $\bnped\subset \RpN$ is a small neighborhood of $0$ and $\chi_{\delta}\in C_c^\infty(\bnped)$ is a cut-off function.
%
%According to the result in \cite[Corollary 2.7]{Kim2018}
\begin{proposition}\label{pro:E_nonum}
Suppose that $n< 2+2\gamma$ and $\gamma\in(0,1)$. If \eqref{eq:K1-18} holds and $\delta_0$ small enough and $C_0$ large enough,  then there exists a constant $\mathcal{C}_8>0$ such that 
\begin{align}
     { \mathcal{E} } _ { { h } } ^ { \gamma } \left[ U_{a,\ve,\delta}\right]\leq \overline { \mathcal{E} } _ { { h } } ^ { \gamma } \left[U_{a,\ve,\delta} \right] \leq \Ygs + \epsilon ^ { n-2\gamma } \mathcal { C }_8  ( n , \gamma,{ g } ,\delta ) + o \left( \epsilon ^ { n-2\gamma } \right).
\end{align}
\end{proposition}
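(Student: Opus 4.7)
The plan is to follow closely the template of Proposition \ref{prop:I-low}, tracking carefully the new contributions coming from the (possibly nonzero) second fundamental form $\pi$ of $(M,h)\subset (\bar X, g)$. As before, the first inequality $\mathcal{E}_h^\gamma[U_{a,\ve,\delta}]\leq \overline{\mathcal{E}}_h^\gamma[U_{a,\ve,\delta}]$ follows from Proposition \ref{prop:case-1} once one checks that $U_{a,\ve,\delta}$ has the correct asymptotic expansion \eqref{eq:expan-U}; so I focus on the second inequality.

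First, I would split
\[
\mathcal{Q}_{\gamma}(U_{a,\ve,\delta})=\mathcal{Q}_{\gamma}(W_\ve:\bnpe)+\mathcal{Q}_{\gamma}(U_{a,\ve,\delta}:\bnped\backslash\bnpe)+\mathcal{Q}_{\gamma}(\ve^{(n-2\gamma)/2}G_a^\gamma:X\backslash\mathcal{O}_a),
\]
and control the two outer pieces exactly as in Proposition \ref{prop:I-low}, using the Green's function estimates \eqref{est:G-0} together with Lemma \ref{lem:est-bubble}. Both are $O(\ve^{n-2\gamma}\delta^{2\gamma-n})$. The $J[g]W_\ve^2$ term inside $\bnpe$ is estimated by \eqref{eq:mnabw} with $k=0$ and is harmless in the range $n<2+2\gamma$.

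The heart of the argument is the interior piece $\int_{\bnpe}x_N^{1-2\gamma}|\nabla W_\ve|_g^2\,d\mu_g$. Using the expansion \eqref{eq:exp-7}, the leading correction beyond the Euclidean contribution is
\[
\int_{\bnpe}x_N^{1-2\gamma}\cdot 2\pi_{ij}x_N\,\partial_i W_\ve\,\partial_j W_\ve\,dx=2\pi_{ij}\int_{\bnpe}x_N^{2-2\gamma}\partial_i W_\ve\,\partial_j W_\ve\,dx.
\]
Here I would exploit the radial symmetry of $W_\ve$ in $\bar x$: writing $\partial_i W_\ve=(\partial_r W_\ve)x_i/r$, the integrand is $(\partial_r W_\ve)^2x_ix_j/r^2$, which after the angular integration yields the trace $\pi_{ii}\cdot\tfrac{1}{n}\int x_N^{2-2\gamma}(\partial_r W_\ve)^2\,dx$. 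The assumption \eqref{eq:K1-18} forces the mean curvature $H=\mathrm{tr}(\pi)=0$ (this is exactly what the quoted Lemma~2.3 in Kim-Musso uses), so this leading term drops out. This is the decisive use of the AH hypothesis \eqref{eq:K1-18}, and it is the step that I expect to be the main conceptual obstacle; everything downstream is routine estimation.

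The remaining corrections in the metric expansion come from terms of size $|x|^2$, namely $3\pi_{ik}\pi_{kj}x_N^2$, $R_{iNjN}[g]x_N^2$ and $\tfrac13 R_{ikjl}[h]x_kx_l$, together with lower-order contributions from $\sqrt{|g|}$. Substituting, changing variables $x\mapsto \ve x$, and applying \eqref{eq:mnabw} with $k=2$, each of these integrals is bounded by $C\,\ve^{n-2\gamma}(\delta/\ve)^{2\gamma+4-n}$; since $n<2+2\gamma<4+2\gamma$, the worst contribution is $O(\ve^{n-2\gamma}\delta^{2\gamma+2-n})$, analogously to \eqref{eq:E-nm-p2}. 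Putting it all together,
\[
\mathcal{Q}_{\gamma}(W_\ve:\bnpe)\leq \int_{\bnpe}x_N^{1-2\gamma}|\nabla W_\ve|^2\,dx+C\ve^{n-2\gamma}\delta^{2\gamma+2-n}.
\]

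Finally, I would use \eqref{eq:K1-15} together with the fact that $x\cdot\nabla W_1\leq 0$ on $\RpN$ to obtain
\[
\int_{\bnpe}x_N^{1-2\gamma}|\nabla W_\ve|^2\,dx\leq \kappa_\gamma^{-1}\int_{\dze}w_\ve^{\frac{2n}{n-2\gamma}}d\bar x\leq S_{n,\gamma}^{-1}\Bigl(\int_{\dze}w_\ve^{\frac{2n}{n-2\gamma}}d\bar x\Bigr)^{\frac{n-2\gamma}{n}},
\]
and then combine with \eqref{eq:E-nm-p4} for the denominator of $\overline{\mathcal{E}}_h^\gamma$. Gathering all the estimates and sending $\ve\to 0$ with $\delta$ fixed sufficiently small yields the advertised bound with $\mathcal{C}_8(n,\gamma,g,\delta)>0$ absorbing the positive $\delta^{2\gamma-n}$ remainder from the Green's function piece (which dominates all other errors, as in Proposition \ref{prop:I-low}).
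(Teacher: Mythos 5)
Your proposal is correct and follows essentially the same route as the paper, which simply refers back to Proposition \ref{prop:I-low} with the AH metric expansion \eqref{eq:exp-7}; you supply the one genuinely new detail correctly, namely that the order-$x_N$ term $2\pi_{ij}x_N\partial_iW_\ve\partial_jW_\ve$ reduces after angular integration to its trace and vanishes because \eqref{eq:K1-18} forces $H=0$, which is exactly what makes the error stay at $O(\ve^{n-2\gamma})$ when $n<2+2\gamma$. Only a bookkeeping slip: the $|x|^2$-size corrections are handled by \eqref{eq:mnabw} with $k=0$ (giving $C\ve^{n-2\gamma}\delta^{2\gamma+2-n}$), not $k=2$, but your stated final bound is the right one.
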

\begin{proof}
%The first inequality follows from the fact that $U_{a,\ve,\delta}$ has the right expansion \eqref{eq:expan-U2} and \eqref{eq:gam01-case-ieq}. Therefore we just need to justify the second inequality.
The proof is similar to the one of Proposition \ref{prop:I-low}.  {The energy inequality of \eqref{eq:gam01-case-ieq} in \citep{Case2017} goes through verbatim in AH setting for $\gamma\in(0,1)$.} One just needs to use the expansion of the metric in \eqref{eq:exp-7} instead of \eqref{eq:expan-metric-low}.
%Secondly, we will have the same estimate for \eqref{eq:E-nm-p1}. The error term in \eqref{eq:E-nm-p2} will be replaced by $C\delta^{2\gamma+2-n}\ve^{n-2\gamma}$.  Estimates in \eqref{eq:E-nm-p4} remains.
%\eqref{eq:E-nm-p3} will be substituted by $C\ve^{n-2\gamma}\delta^{2\gamma+4-n}$.
%Putting all estimates back to the expression of \eqref{eq:Yam-Q-2}, one could get the conclusion.
\end{proof}

Once the above proposition is established, then we have the corresponding self-action estimates in Lemma \ref{lem:self-interaction}. Although \eqref{eq:DrhoW} will be changed to $O(W_\ve)$, the interaction estimates Lemma \ref{lem:interaction} still holds in this case. Therefore, one can also run the critical points at infinity approach. 
% Consequently, we have
% \[\overline { \mathcal{E} } _ { { h } } ^ { \gamma } \left[ \chi_{a, \delta} W _ { \epsilon } \right]\leq \Ygs+o(\varepsilon)\]
%\section*{Appendix}
\appendix
\renewcommand{\theequation}{A-\arabic{equation}}
\setcounter{equation}{0}
\renewcommand{\thetheorem}{A-\arabic{theorem}}
\setcounter{theorem}{0}
\section{Some estimates}
% \begin{align}
%     & R_{iNjN;kl}[{ g } ]\int_{\RpN}x_N^{5-2\gamma}x_kx_lx_ix_j f(r)dx\\
%     =&\frac{1}{n(n+2)}(R_{NN;iii}+2R_{iNjN;ij})[{ g } ]\int_{\RpN} x_N^{5-2\gamma} r^4f(r)dx
% \end{align}
In this appendix, we will provide some details for the estimates used in the previous sections. 

\begin{lemma}\label{lem:est-bubble}
Suppose $n>2\gamma$. $W_\ve=W_{\ve,0}$ is defined in \eqref{def:bubble-exp}. Denote $|x|=|\bar x|^2+x_N^2$ on $\RpN$, then 
\begin{enumerate}
    \item $W_{\ve}(\bar x,x_N)=O(\ve^{\frac{n-2\gamma}{2}}(\ve^2+|x|^2)^{-\frac{n-2\gamma}{2}})$,
    \item $\partial_N W_{\ve}(\bar x,x_N)=O(\ve^{\frac{n-2\gamma}{2}}x_N^{2\gamma-1}(\ve^2+|x|^2)^{-\frac{n}{2}})$,
    \item $\nabla_{\bar x}W_{\ve}(\bar x,x_N)=O(\ve^{\frac{n-2\gamma}{2}}(\ve^2+|x|^2)^{-\frac{n-2\gamma+1}{2}})$,
    \item $\nabla_{\bar x}^2W_{\ve}(\bar x,x_N)=O(\ve^{\frac{n-2\gamma}{2}}(\ve^2+|x|^2)^{-\frac{n-2\gamma+2}{2}})$,
    \item $\partial_N\nabla_{\bar x}^2W_{\ve}(\bar x,x_N)=O(\ve^{\frac{n-2\gamma}{2}}x_N^{2\gamma-1}(\ve^2+|x|^2)^{-\frac{n+2}{2}}) $, for $\gamma>1$.
\end{enumerate}
\end{lemma}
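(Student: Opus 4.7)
The plan is to reduce everything to the case $\ve=1$ via the scaling identity $W_\ve(\bar x, x_N) = \ve^{-\frac{n-2\gamma}{2}} W_1(\bar x/\ve, x_N/\ve)$ stated in the paper. Differentiation in $\bar x$ commutes with this scaling and $\partial_N$ carries an extra factor of $\ve^{-1}$, so after verifying the bounds at $\ve=1$:
\begin{align*}
W_1 &\lesssim (1+|x|^2)^{-\frac{n-2\gamma}{2}}, \qquad |\nabla_{\bar x}^k W_1|\lesssim (1+|x|^2)^{-\frac{n-2\gamma+k}{2}},\\
|\partial_N W_1| &\lesssim x_N^{2\gamma-1}(1+|x|^2)^{-\frac{n}{2}},\qquad |\partial_N\nabla_{\bar x}^2 W_1| \lesssim x_N^{2\gamma-1}(1+|x|^2)^{-\frac{n+2}{2}},
\end{align*}
the five claimed inequalities follow from arithmetic with the scaling factors.

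To prove the $\ve=1$ bounds, I would work from the explicit Poisson-type representation
\[
W_1(\bar x,x_N)=p_{n,\gamma}\int_{\mathbb{R}^n}\frac{x_N^{2\gamma}}{(|\bar x-\bar y|^2+x_N^2)^{\frac{n+2\gamma}{2}}}\,w_1(\bar y)\,d\bar y,
\]
using the sharp pointwise bound $w_1(\bar y)\leq C(1+|\bar y|^2)^{-\frac{n-2\gamma}{2}}$. Setting $r=|x|$, the case $r\leq 1$ is immediate since the integrand is locally integrable and $w_1$ is bounded. For $r\geq 1$ I would split into the regimes $x_N\geq r/2$ and $x_N<r/2$ (in which case $|\bar x|\geq r/\sqrt{2}$). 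In each regime the domain of integration is further split according to whether $|\bar y|$ is small, comparable, or large relative to $\max(x_N,|\bar x|)$; in each subregion one of the two factors ($w_1$ or the Poisson kernel) is essentially constant on the subregion, allowing the other to be integrated explicitly. Each piece produces a contribution of order $r^{-(n-2\gamma)}$, proving (1).

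For the derivative estimates, I would differentiate under the integral sign. For $\partial_N$, the identity
\[
\partial_N W_1 = p_{n,\gamma}\,x_N^{2\gamma-1}\!\!\int_{\mathbb{R}^n}\frac{2\gamma(|\bar x-\bar y|^2+x_N^2)-(n+2\gamma)x_N^2}{(|\bar x-\bar y|^2+x_N^2)^{\frac{n+2\gamma}{2}+1}}\,w_1(\bar y)\,d\bar y
\]
cleanly extracts the boundary-singular factor $x_N^{2\gamma-1}$; the remaining kernel is bounded by $C(|\bar x-\bar y|^2+x_N^2)^{-\frac{n+2\gamma}{2}}$, and the same case analysis as in (1) — but with one more power of $(|\bar x-\bar y|^2+x_N^2)^{-1/2}$ in effect because the rescaled integral no longer has the compensating $x_N^{2\gamma}$ — yields decay $(1+|x|^2)^{-n/2}$, giving (2). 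For (3) and (4), each $\nabla_{\bar x}$ differentiates only the Poisson kernel, producing a factor $\frac{|\bar x-\bar y|}{|\bar x-\bar y|^2+x_N^2}\leq (|\bar x-\bar y|^2+x_N^2)^{-1/2}$; iterating and repeating the same case analysis yields the improved decay $(1+|x|^2)^{-(n-2\gamma+k)/2}$ for $k$-th $\bar x$-derivatives. Estimate (5) combines both effects: factor out $x_N^{2\gamma-1}$ as in (2), then apply two $\nabla_{\bar x}$-derivatives to the resulting representation, each gaining one power of $(|\bar x-\bar y|^2+x_N^2)^{-1/2}$; when $\gamma>1$ the $x_N^{2\gamma-1}$ singularity is integrable against the remaining kernel, so no additional issue arises at the boundary.

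The main obstacle is purely bookkeeping: the multi-region splitting of each integral must be carried out carefully so that the worst piece dominates to give the stated power. There is no conceptual difficulty — these are the standard decay estimates for the $\gamma$-Poisson extension of the Aubin–Talenti function, and the computations are of the same flavor as those already invoked in \eqref{eq:mnabw} and \eqref{eq:mnabw2}. The only point that requires genuine attention is verifying that in (5), when $\gamma\in(1,2)$, the exponent $2\gamma-1\in(1,3)$ is large enough to produce a locally integrable $x_N^{2\gamma-1}$ factor against the rest of the kernel near the boundary; this is built into the hypothesis $\gamma>1$ in the statement.
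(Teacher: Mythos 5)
Your route is not the paper's: the paper disposes of this lemma by citing \cite[Cor.~5.2]{Mayer2017}, whose proof rests on rewriting $W_{\ve,\sigma}$ in \eqref{def:bubble-exp} as a superposition (``interaction'') of standard bubbles on $\mathbb{R}^n$, whereas you propose a self-contained estimate of the Poisson representation. That is a legitimate alternative, and your argument for item (1) does work: with no derivatives present, $w_1(\bar y)\lesssim(1+|\bar y|^2)^{-\frac{n-2\gamma}{2}}$ together with your region splitting gives the claim, and the scaling reduction to $\ve=1$ is fine.

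The derivative estimates, however, contain a genuine gap: you differentiate the kernel and then take absolute values, which destroys exactly the structure that produces the stated boundary behavior. Writing $\bar y=\bar x+x_N\bar z$ shows
\begin{equation*}
\int_{\mathbb{R}^n}\frac{w_1(\bar y)\,d\bar y}{(|\bar x-\bar y|^2+x_N^2)^{\frac{n+2\gamma}{2}}}\;\geq\;c\,x_N^{-2\gamma}\,w_1(\bar x)\qquad\text{as }x_N\to0,
\end{equation*}
so after multiplying by $x_N^{2\gamma-1}$ the best bound your scheme can give for (2) is of size $x_N^{-1}w_1(\bar x)$ near $M$, which blows up and is strictly weaker than the claimed $x_N^{2\gamma-1}(1+|x|^2)^{-n/2}$. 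The same happens for (3)--(4): each tangential derivative placed on the kernel makes the near-diagonal contribution degenerate like $x_N^{-1}$ (resp.\ $x_N^{-2}$), so ``the same case analysis as in (1)'' does not yield the improved decay. The missing ingredients are (a) the normalization $p_{n,\gamma}\int K(\bar x-\bar y,x_N)\,d\bar y\equiv1$, hence $\int\partial_NK\,d\bar y=0$, which lets you replace $w_1(\bar y)$ by $w_1(\bar y)-w_1(\bar x)$ and Taylor-expand (the linear term vanishing by symmetry of the kernel) before estimating; and (b) translation invariance, which lets you move $\nabla_{\bar x}$ onto $w_1$ and use $|\nabla^k_{\bar x}w_1(\bar y)|\lesssim(1+|\bar y|)^{2\gamma-n-k}$, after which the splitting of (1) does apply. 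Without (a) and (b) the proposed proof of (2)--(4) fails.

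For (2) and (5) with $\gamma\in(1,2)$ the situation is worse: the step ``factor out $x_N^{2\gamma-1}$ \ldots no additional issue arises'' cannot be carried out by any estimate of this kind, because the extension has a nontrivial $x_N^{2}$-coefficient and $2\gamma-1>1$. Concretely, for the explicit $\gamma=\tfrac32$, $n=7$ bubble \eqref{eq:W-3/2} one computes, with $A=(\ve+x_N)^2+|\bar x-\sigma|^2$,
\begin{equation*}
\partial_NW_{\ve,\sigma}=-4\,\alpha_{7,\frac{3}{2}}\,\ve^{2}x_N\left[A^{-3}+6\ve(\ve+x_N)A^{-4}\right],
\end{equation*}
which near $x_N=0$ is of size $\ve^{2}x_N(\ve^{2}+|x|^{2})^{-3}$, not $O(x_N^{2\gamma-1})=O(x_N^{2})$. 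So in the range $\gamma>1$ the $x_N^{2\gamma-1}$ factor cannot be extracted the way you describe (and even the cancellation argument in (a) only produces it for $\gamma\in(0,1)$, where $x_N^{2\gamma}$ is the leading boundary correction); a correct direct proof must keep track of the $x_N^{2}$-term in the expansion of $W_\ve$, or the exponent on $x_N$ must be suitably capped, rather than following the paper's one-line reduction to \cite[Cor.~5.2]{Mayer2017}.
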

\begin{proof}
These estimates follow from \cite[Cor. 5.2]{Mayer2017}. One of crucial observation in \cite[(47)]{Mayer2017} is that $W_{\ve,\sigma}$ in \eqref{def:bubble-exp} can be interpreted as the interaction of standard bubbles on $\mathbb{R}^n$. 
\end{proof}
Let us use the notation $W=W_1(|\bar x|,x_N)$ and $r=|\bar x|$. 
We have the following list of formulae. Here we borrow the notations $\mathcal{F}_i$ from \cite[Lem. B.6]{Kim2018}.
\begin{lemma}\label{lem:appendix-case3}
If $n>2\gamma+4$, then
\begin{align*}
    \mathcal{A}_1=&\iRN x_N^{4-2\gamma}r\p_N W\p_rW dx=\frac{1}{4}[\frac{n}{2}\mathcal{F}_2+(\frac{n}{2}-1)\mathcal{F}_3+\mathcal{F}_7],\\
    \mathcal{A}_2=&\iRN x_N^{5-2\gamma} r\p_{NN}^2 W \p_rWdx=-(5-2\gamma)\mathcal{A}_1+\frac{n}{2}(\mathcal{F}_2-\mathcal{F}_3),\\
    \mathcal{A}_3=&\iRN x_N^{4-2\gamma}r^2\p_NW\p_{rr}^2 W dx=-(n+1)\mathcal{A}_1-\mathcal{F}_9.
\end{align*}
\end{lemma}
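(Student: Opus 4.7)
The plan is to establish these three identities by integration by parts, using the decay of $W$ from Lemma~\ref{lem:est-bubble} and the positive powers $x_N^{4-2\gamma}$, $x_N^{5-2\gamma}$ in the weights to discard all boundary contributions (at $r\to\infty$ and $x_N\to\infty$ via decay, at $x_N=0^+$ via the weights since $\gamma<2$; the assumption $n>2\gamma+4$ guarantees integrability through \eqref{eq:mnabw2}). Throughout, polar coordinates $\bar x=r\omega$ on $\mathbb{R}^n$, with Jacobian $r^{n-1}$, are the right tool since $W=W(r,x_N)$ is radial in $\bar x$.

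For the $\mathcal{A}_2$-identity, I would expand the total $x_N$-derivative
\[
\p_N\bigl(x_N^{5-2\gamma}r\,\p_N W\,\p_r W\bigr)=(5-2\gamma)x_N^{4-2\gamma}r\,\p_N W\,\p_r W+x_N^{5-2\gamma}r\,\p_{NN}^2 W\,\p_r W+x_N^{5-2\gamma}r\,\p_N W\,\p_{rN}^2 W,
\]
integrate over $\RpN$ (the integral of the total derivative vanishes), and read off $\mathcal{A}_2=-(5-2\gamma)\mathcal{A}_1-\iRN x_N^{5-2\gamma}r\,\p_N W\,\p_{rN}^2 W\,dx$. The remaining integrand equals $\tfrac12 x_N^{5-2\gamma}r\,\p_r((\p_N W)^2)$, and one IBP in $r$ (after passing to polar coordinates) converts it to $-\tfrac{n}{2}\iRN x_N^{5-2\gamma}(\p_N W)^2\,dx$, which under the definitions of $\mathcal{F}_2,\mathcal{F}_3$ from~\cite{Kim2018} matches $-\tfrac{n}{2}(\mathcal{F}_2-\mathcal{F}_3)$.

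For the $\mathcal{A}_3$-identity, writing the $\bar x$-integral in polar form gives
\[
\mathcal{A}_3=|S^{n-1}|\int_0^\infty dx_N\int_0^\infty r^{n+1}x_N^{4-2\gamma}\,\p_N W\,\p_r(\p_r W)\,dr,
\]
and integrating by parts once in $r$ produces $-(n+1)\mathcal{A}_1$ from differentiating $r^{n+1}$ and $-\mathcal{F}_9$ from differentiating $\p_N W$, where $\mathcal{F}_9=\iRN x_N^{4-2\gamma}r^2\p_r W\,\p_{rN}^2 W\,dx$. The boundary terms vanish because $r^{n+1}|_{r=0}=0$ and the product $\p_r W\,\p_N W$ decays fast enough at $r=\infty$.

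The $\mathcal{A}_1$-identity is the more delicate one. My plan is to combine two successive integrations by parts, one in $x_N$ to move $\p_N W$ off of $\p_r W$ and one in $r$ on the resulting $\p_r W$ factor, systematically applying the product-rule tricks $\p_N W\,\p_{rN}^2 W=\tfrac12\p_r((\p_N W)^2)$ and $\p_r W\,\p_{rN}^2 W=\tfrac12\p_N((\p_r W)^2)$, and using the decomposition $|\nabla W|^2=(\p_r W)^2+(\p_N W)^2$ to regroup the leftover weighted integrals into $\mathcal{F}_2$, $\mathcal{F}_3$, and the independent $\mathcal{F}_7$. The main obstacle here is bookkeeping: each IBP generates several mixed second-derivative terms, and the ordering has to be chosen so as not to loop back to $\mathcal{A}_1$ itself (which would produce a tautology) before reducing everything to these three basic integrals.
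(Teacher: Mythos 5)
Your handling of $\mathcal{A}_2$ and $\mathcal{A}_3$ is exactly the paper's argument (integrate the total $x_N$-derivative, use $\p_NW\,\p^2_{rN}W=\tfrac12\p_r((\p_NW)^2)$ and one IBP in $r$ for $\mathcal{A}_2$; one IBP in $r$ against $r^{n+1}$ for $\mathcal{A}_3$), and your identification of $\mathcal{F}_9$ and the boundary-term discussion are fine. The genuine gap is in your plan for $\mathcal{A}_1$. The identity $\mathcal{A}_1=\frac14[\frac n2\mathcal{F}_2+(\frac n2-1)\mathcal{F}_3+\mathcal{F}_7]$ is \emph{not} a universal integration-by-parts identity: pure IBP plus product-rule regrouping produces relations valid for every sufficiently decaying function of $(r,x_N)$, whereas this formula fails for generic such functions. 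Indeed, the left side is a cross term $\p_NW\,\p_rW$ with weight $x_N^{4-2\gamma}$, while $\mathcal{F}_2,\mathcal{F}_3,\mathcal{F}_7$ carry the weight $x_N^{5-2\gamma}$ and contain no $N$--$r$ cross structure; testing with $W=f(r)g(x_N)$ and replacing $g(x_N)$ by $g(\lambda x_N)$, the left side scales like $\lambda^{-(4-2\gamma)}$ while the right side acquires a nontrivial $\lambda^{-(6-2\gamma)}$ component, so no amount of bookkeeping with IBP alone can close this step. In other words, the missing ingredient is the equation satisfied by the bubble.

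What the paper does instead is to use $\Delta_{m_0}W_{\ve}=0$, recorded as \eqref{eq:DW-pNW2}, which in cylindrical coordinates reads $\p^2_{rr}W+\frac{n-1}{r}\p_rW+\p^2_{NN}W=-(1-2\gamma)\,x_N^{-1}\p_NW$. Multiplying by $x_N^{5-2\gamma}\,r\,\p_rW$ and integrating over $\RpN$ gives $-(1-2\gamma)\mathcal{A}_1=\mathcal{F}_7+(n-1)\mathcal{F}_3+\mathcal{A}_2$, a second, independent linear relation between $\mathcal{A}_1$ and $\mathcal{A}_2$. Combining it with your (correct) relation $\mathcal{A}_2=-(5-2\gamma)\mathcal{A}_1+\frac n2(\mathcal{F}_2-\mathcal{F}_3)$ and solving the resulting $2\times2$ system yields $4\mathcal{A}_1=\frac n2\mathcal{F}_2+(\frac n2-1)\mathcal{F}_3+\mathcal{F}_7$, i.e.\ the stated formula, and then $\mathcal{A}_2$ as well. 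So rather than trying to avoid ``looping back to $\mathcal{A}_1$,'' the correct strategy is precisely to generate a second equation involving $\mathcal{A}_1$ and $\mathcal{A}_2$ from the PDE and solve; without invoking the equation for $W$ (or its explicit form), the first identity cannot be reached.
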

\begin{proof}
Integration by parts gives
\begin{align*}
    \mathcal{A}_2&=\iRN x_N^{5-2\gamma} r\p_{NN}^2 W \p_rWdx\\
    &=-(5-2\gamma)\iRN x_N^{4-2\gamma} r\p_{N} W \p_rWdx-\iRN x_N^{5-2\gamma} r\p_{N} W \p^2_{rN}Wdx\\
    &=-(5-2\gamma)\mathcal{A}_1-\frac{1}{2}\iRN x_N^{5-2\gamma}r\p_r|\p_N W|^2 dx\\
    &=-(5-2\gamma)\mathcal{A}_1+\frac{n}{2}\iRN x_N^{5-2\gamma}|\p_N W|^2 dx\\
    &=-(5-2\gamma)\mathcal{A}_1+\frac{n}{2}(\mathcal{F}_2-\mathcal{F}_3).
\end{align*}
Using \eqref{eq:DW-pNW2}, one obtains
\begin{align*}
    -(1-2\gamma)\mathcal{A}_1=&\iRN x_N^{5-2\gamma} r\Delta W\partial_r Wdx\\=&\mathcal{F}_7+(n-1)\mathcal{F}_3+\iRN x_N^{5-2\gamma} r\p_{NN}^2W\p_rWdx\\
    =&\mathcal{F}_7+(n-1)\mathcal{F}_3-(5-2\gamma)\mathcal{A}_1+\frac{n}{2}(\mathcal{F}_2-\mathcal{F}_3).
\end{align*}
One can combine the above two equalities to get $\mathcal{A}_1$ and $\mathcal{A}_2$. Similarly 
\begin{align*}
    \mathcal{A}_3=&\iRN x_N^{4-2\gamma}r^2\p_NW\p_{rr}^2 W dx\\
    =&-(n+1)\iRN  x_N^{4-2\gamma}r \p_N W\p_rWdx-\iRN x_N^{4-2\gamma} r^2\p_rW\p_{rN}^2Wdx\\
    =&-(n+1)\mathcal{A}_1-\mathcal{F}_9.
\end{align*}
\end{proof}
\begin{lemma}\label{lem:C4>0}
Suppose $n>2\gamma+4$ and $\gamma\in (1,\min\{2,n/2\})$, then $\mathcal{C}_4$ defined in \eqref{def:C4} is positive.
\end{lemma}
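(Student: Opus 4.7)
The plan is to reduce $\mathcal{C}_4$ to an explicit rational function of $n$ and $\gamma$ (times a common positive factor) and then verify positivity directly on the region $\gamma\in(1,2)$, $n>2\gamma+4$.

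The first step is to eliminate the auxiliary quantities $\mathcal{A}_1$ and $\mathcal{A}_3$ from the definition \eqref{def:C4} using Lemma \ref{lem:appendix-case3}. The identity $\mathcal{A}_3-\mathcal{A}_1=-(n+2)\mathcal{A}_1-\mathcal{F}_9$ collapses the bracket to
\[
-\mathcal{F}_5+\mathcal{F}_6+2\mathcal{A}_1+\frac{6(\mathcal{A}_3-\mathcal{A}_1)}{n+2}=-\mathcal{F}_5+\mathcal{F}_6-4\mathcal{A}_1-\frac{6\mathcal{F}_9}{n+2},
\]
and a further substitution $\mathcal{A}_1=\tfrac{1}{4}\bigl[\tfrac{n}{2}\mathcal{F}_2+(\tfrac{n}{2}-1)\mathcal{F}_3+\mathcal{F}_7\bigr]$ rewrites $\mathcal{C}_4$ as a linear combination of $\mathcal{F}_1,\mathcal{F}_2,\mathcal{F}_3,\mathcal{F}_5,\mathcal{F}_6,\mathcal{F}_7,\mathcal{F}_9$ with coefficients that are explicit rational functions of $n$ and $\gamma$.

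The second step is to insert the closed-form evaluations of the $\mathcal{F}_i$ recorded in \cite[Lem.~B.6]{Kim2018}. Using the explicit form \eqref{def:bubble-exp} of the bubble, polar coordinates on $\mathbb{R}^n$, and beta-function identities for the $x_N$-integrals, each $\mathcal{F}_i$ factors as a common positive normalizing constant $\beta_{n,\gamma}>0$ times an elementary rational factor in $n,\gamma$. Pulling $\beta_{n,\gamma}$ out of the linear combination yields $\mathcal{C}_4=\beta_{n,\gamma}\,R(n,\gamma)$ for a single explicit rational function $R$ on the region of interest.

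The final step is to verify $R(n,\gamma)>0$. The plan is to put $R$ over a positive common denominator and bound the resulting numerator from below, exploiting the structural inequalities $\mathcal{F}_5\ge\mathcal{F}_6$ (the difference being the $\partial_N W$ contribution) together with the comparability between $\mathcal{F}_9$ and $\mathcal{F}_6$, which both arise from $\int x_N^{m_1} r^2(\partial_r W)^2$-type integrals up to normalizations. One expects the dominant contributions as $n\to\infty$ (with $\gamma$ fixed) to come from the $\mathcal{F}_5$ and $\mathcal{F}_1$ terms with matching positive signs, which will set the overall sign; the lower-order corrections can then be absorbed. The main obstacle is purely computational bookkeeping: seven rational coefficients must be combined without error, and the resulting polynomial inequality in $(n,\gamma)$ on the unbounded region $\{n>2\gamma+4,\ 1<\gamma<2\}$ must be checked uniformly, either by grouping the terms into manifestly non-negative blocks or by monotonicity in $n$ combined with evaluation on the boundary curve $n=2\gamma+4^+$.
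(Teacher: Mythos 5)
Your plan is essentially the paper's own proof: you eliminate $\mathcal{A}_1$ and $\mathcal{A}_3$ via Lemma \ref{lem:appendix-case3} exactly as the paper does, insert the closed-form values of the $\mathcal{F}_i$ from \cite[Lemma B.6]{Kim2018}, and reduce the claim to the positivity of an explicit rational function of $(n,\gamma)$ on $\{1<\gamma<2,\ n>2\gamma+4\}$, which is precisely how the paper argues (it groups $n\mathcal{C}_4$ into $I_1+I_2$ plus a manifestly positive multiple of $\mathcal{F}_6$ and checks $I_1+I_2>0$). One correction to your heuristic for the final check: $\mathcal{F}_1$ enters with the large negative coefficient $-\tfrac{n(n-2\gamma)}{2}$ (not a positive one), so positivity is not a soft dominance or $\mathcal{F}_5\geq\mathcal{F}_6$-type statement but hinges on the exact ratio of $\mathcal{F}_5$ to $\mathcal{F}_1$ coming from the closed forms you insert in your second step, after which the bookkeeping indeed closes the argument.
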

\begin{proof}
Inserting the expression of $\mathcal{A}_1$ and $\mathcal{A}_3$ in the previous lemma into \eqref{def:C4} gets
\begin{align*}
&n\mathcal{C}_4\\
=&-\frac{n(n-2\gamma)}{2}\mathcal{F}_1-\frac{n}{2}\mathcal{F}_2-(\frac{n}{2}-1)\mathcal{F}_3+\frac{n-2\gamma}{2}\mathcal{F}_5+\frac{n^2-n+4}{(n-1)(n+2)}\mathcal{F}_6-\mathcal{F}_7-\frac{6}{n+2}\mathcal{F}_9\\
=&I_1+I_2+\frac{n^2-n+4}{(n-1)(n+2)}\mathcal{F}_6
\end{align*}
where 
\begin{align*}
I_1=& -\frac{n}{2}\mathcal{F}_2-(\frac{n}{2}-1)\mathcal{F}_3-\mathcal{F}_7-\frac{6}{n+2}\mathcal{F}_9\\
=&-\frac{2(2-\gamma)\left(12 \gamma  (\gamma +2)+5 n^2-8 (\gamma +2) n\right)}{5(n-4)(n-4-2\gamma)(n-4+2\gamma)}A_3B_2,\\
I_2=&-\frac{n(n-2\gamma)}{2}\mathcal{F}_1+\frac{n-2\gamma}{2}\mathcal{F}_5=\frac{n(n-2 \gamma ) \left(-4 \gamma ^2+3 n^2-18 n+28\right)}{2 (\gamma +1)(n-4)(n-4-2\gamma)(n-4+2\gamma)}A_3B_2.
\end{align*}
Here we were using the expression of $\mathcal{F}_i$ in \cite[Lem. B.6]{Kim2018}. Now it is not hard to show $I_1+I_2>0$ for $n>4+2\gamma$ and $\gamma\in (1,\min\{2,n/2\})$. Consequently, $\mathcal{C}_4>0$.
\end{proof}
\begin{lemma}\label{lem:appendix-3/2}
Suppose that $0<2\ve\leq \delta\leq 1 $ and $n=2\gamma+4=7$. $W$ is defined in \eqref{eq:W-3/2}, then
\begin{align*}
\int_{ B _ { + } ^ { N } \left( 0 , \delta/\ve  \right)}W^2dx=&\frac{5\pi}{32}\alpha_{7,\frac{3}{2}}^2|\mathbb{S}^6|\log\left(\frac{\delta}{\ve}\right)+O(1),\\
\int_{ B _ { + } ^ { N } \left( 0 , \delta/\ve  \right)}r^2(\p_N W)^2dx=&\frac{7\pi}{32}\alpha_{7,\frac{3}{2}}^2|\mathbb{S}^6|\log\left(\frac{\delta}{\ve}\right)+O(1),\\
\int_{ B _ { + } ^ { N } \left( 0 , \delta/\ve  \right)}r^2(\p_r W)^2dx=&\frac{63\pi}{32}\alpha_{7,\frac{3}{2}}^2|\mathbb{S}^6|\log\left(\frac{\delta}{\ve}\right)+O(1),\\
\int_{ B _ { + } ^ { N } \left( 0 , \delta/\ve  \right)}x_Nr\p_N W\p_r Wdx=&\frac{7\pi}{32}\alpha_{7,\frac{3}{2}}^2|\mathbb{S}^6|\log\left(\frac{\delta}{\ve}\right)+O(1),\\
\int_{ B _ { + } ^ { N } \left( 0 , \delta/\ve  \right)}x_Nr^2\p_NW(\p_{rr}^2W-r^{-1}\p_r W)dx
=&-\frac{63\pi}{64}\alpha_{7,\frac{3}{2}}^2|\mathbb{S}^6|\log\left(\frac{\delta}{\ve}\right)+O(1),
\end{align*}
where $\alpha_{7,\frac{3}{2}}$ is defined in \eqref{notation-2} and $|\mathbb{S}^6|$ is the volume of 6 dimensional sphere. 
\end{lemma}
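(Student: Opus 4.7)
The plan is a direct asymptotic computation exploiting the explicit form of $W = W_1$ in \eqref{eq:W-3/2}. First I will compute the needed derivatives in closed form: writing $W = \alpha(A + 4x_N)/A^3$ with $A = (1+x_N)^2 + r^2$ and $\alpha = \alpha_{7, 3/2}$, routine differentiation yields
\begin{align*}
\partial_N W &= -\frac{4\alpha\, x_N(7 + 8x_N + x_N^2 + r^2)}{A^4}, \\
\partial_r W &= -\frac{4\alpha\, r(A + 6x_N)}{A^4}, \\
\partial_{rr}^2 W - r^{-1}\partial_r W &= \frac{24\alpha\, r^2(A + 8x_N)}{A^5}.
\end{align*}

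Next I will introduce hyperspherical coordinates on the ambient half-space $\mathbb{R}^{N}_+ = \mathbb{R}^{8}_+$ (recall $N = n+1 = 8$) by setting $x_N = \rho\cos\theta$ and $\bar x = \rho\sin\theta\,\omega$, with $\rho \geq 0$, $\theta \in [0, \pi/2]$, and $\omega \in \mathbb{S}^{6}$, so that $dx = \rho^{7}\sin^{6}\theta\, d\rho\, d\theta\, d\omega$. The crucial point is that $W = \alpha/\rho^{4} + O(1/\rho^{6})$ as $\rho \to \infty$ (a short expansion in $s = 1/\rho$ shows the $1/\rho^{5}$ term cancels identically), and the derivative expressions above inherit matching decay; consequently, each of the five integrands, once multiplied by the Jacobian $\rho^{7}\sin^{6}\theta$, admits an asymptotic expansion of the form $F(\theta)/\rho + O(1/\rho^{3})$ with $F(\theta)$ an explicit trigonometric polynomial. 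For example, $W^{2}\,dx \sim \alpha^{2}\sin^{6}\theta\, d\rho\, d\theta\, d\omega/\rho$, and $r^{2}(\partial_N W)^{2}\,dx \sim 16\alpha^{2}\sin^{8}\theta\cos^{2}\theta\, d\rho\, d\theta\, d\omega/\rho$. The radial integration $\int_{1}^{\delta/\varepsilon}d\rho/\rho = \log(\delta/\varepsilon)$ then produces the claimed logarithm, while the contribution from $\rho \leq 1$ together with the $O(1/\rho^{3})$ remainder yields $O(1)$.

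Finally, the stated coefficients follow from standard Beta-function evaluations $\int_0^{\pi/2}\sin^{a}\theta\cos^{b}\theta\, d\theta = \tfrac{1}{2}B(\tfrac{a+1}{2}, \tfrac{b+1}{2})$ combined with the $|\mathbb{S}^{6}|$ factor from integration over $\omega$: e.g.\ $\int_0^{\pi/2}\sin^{6}\theta \,d\theta = 5\pi/32$ for (1), $\int_0^{\pi/2}\sin^{8}\theta\cos^{2}\theta\, d\theta = 7\pi/512$ multiplied by $16\alpha^{2}|\mathbb{S}^{6}|$ for (2) and (4), and $\int_0^{\pi/2}\sin^{10}\theta\, d\theta = 63\pi/512$ for (3). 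The main technical care is required for the fifth identity, which involves the cubic combination $x_N r^{2}\partial_N W(\partial_{rr}^{2}W - r^{-1}\partial_r W)$: here the leading $1/\rho^{5}$ behavior of $\partial_N W$ and the leading $1/\rho^{6}$ behavior of $\partial_{rr}^{2}W - r^{-1}\partial_r W$ must be extracted simultaneously, giving $-96\alpha^{2}\cos^{2}\theta\sin^{10}\theta/\rho$ as the $1/\rho$-coefficient in the integrand, which then produces $-63\pi/64\cdot \alpha^2|\mathbb{S}^6|$ via $\int_0^{\pi/2}\cos^{2}\theta\sin^{10}\theta\, d\theta = 21\pi/2048$.
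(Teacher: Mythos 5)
Your proposal is correct: the closed-form expressions you derive for $\partial_N W$, $\partial_r W$ and $\partial_{rr}^2W-r^{-1}\partial_rW$ coincide with the ones in the paper's proof, and all five leading coefficients (including the signs in the fourth and fifth integrals) check out against the stated values. The only difference is organizational rather than substantive: you extract the logarithm in polar coordinates $x=\rho(\cos\theta,\sin\theta\,\omega)$ via the $1/\rho$-homogeneity at infinity and angular Beta integrals, whereas the paper slices in $x_N$ and rescales $r=(1+x_N)s$; under the substitution $s=\tan\theta$ the two one-dimensional reductions are the same computation.
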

\begin{proof}
We just show how to get the second estimate, the others follow from this similarly.
\begin{align*}
    \p_r W=-4\alpha_{7,\frac{3}{2}}\frac{|\bar x|(x_N^2+8x_N+1+|\bar x|^2)}{[(1+x_N)^2+|\bar x|^2]^4},\\
    \p_N W=-4\alpha_{7,\frac{3}{2}}\frac{x_N(x_N^2+8x_N+7+|\bar x|^2)}{[(1+x_N)^2+|\bar x|^2]^4},\\
    \p_{rr}^2 W-r^{-1}\p_r W=24\alpha_{7,\frac{3}{2}}\frac{|\bar x|^2(x_N^2+10x_N+1+|\bar x|^2)}{[(1+x_N)^2+|\bar x|^2]^5}.
\end{align*}
Then
\begin{align*}
    &\int_{ B _ { + } ^ { N } \left( 0 , \delta/\ve  \right)}r^2(\p_N W)^2dx\\
    =&16\alpha_{7,\frac{3}{2}}^2\int_{\RpN\cap\{x_N\leq \delta/\ve\}}\frac{r^2x_N^2(x_N^2+8x_N+7+|\bar x|^2)^2}{[(1+x_N)^2+|\bar x|^2]^8}d\bar x dx_N+O(1)\\
    =&16\alpha_{7,\frac{3}{2}}^2\int_{0}^{\delta/\ve}\int_{\Rn}\frac{x_N^2}{(1+x_N)^3}\frac{s^2(\frac{x_N+7}{x_N+1}+s^2)^2}{(1+s^2)^{8}}s^6dsdx_N+O(1)\\
    =&16\alpha_{7,\frac{3}{2}}^2\int_{0}^{\delta/\ve}\int_{\Rn}\frac{x_N^2}{(1+x_N)^3}\frac{s^8}{(1+s^2)^{6}}dsdx_N+O(1)\\=&\frac{7\pi}{32}\alpha_{7,\frac{3}{2}}^2\log\left(\frac{\delta}{\ve}\right)+O(1).
\end{align*}
\end{proof}
% \subsection{Dimension Three case}
% \begin{align}
%     \int_{\RpN} x_N^{2-2\gamma}(\p_r W_1)^2dx=\int_{\RpN} x_N^{2-2\gamma}|\xi|^2\hat w_1(\xi)\phi(|\xi|x_N)d\xi dx_N=A_2B_1
% \end{align}
% if $\gamma\in(0,\frac12)$
% \begin{align}
%     &\int_{\RpN} x_N^{1-2\gamma} r^2\p_r W\p_{Nr}^2Wdx=\frac{2\gamma-1}{2}\int_{\RpN}x_N^{-2\gamma}r^2(\p_rW)^2dx\\
%     =&\frac{2\gamma-1}{2}\int_{\RpN} x_N^{-2\gamma}|\nabla_{\xi_j}(\xi_i \hat w_1(|\xi|)\phi(|\xi|x_N)|^2d\xi dx_N\\
%     =&\frac{2\gamma-1}{2}\int_{\RpN}x_N^{-2\gamma}|\xi|^2(\p_\rho(\hat w_1(\rho)\phi(\rho x_N)))^2d\xi dx_N\\
%     =&\frac{2\gamma-1}{2}\left[A_0B'_1+2A_1'B'_0+A_2''B_{-1}\right]
% \end{align}
%\subsection{Estimates}
Suppose $\chi_\delta$ is defined in \eqref{eq:cut-off} and $W_{\ve,\sigma}$ is defined in \eqref{def:bubble-exp}. $\Psi_a: \mathcal{O}(a)\to \bnped$ is the Fermi coordinates map. Let us use the short notation $V_i=V_{a_i,\ve_i,\delta}$ in \eqref{def:V_i}, $\chi_i=\chi_{\delta}(\Psi_{a_i})$, $W_i=W_{\ve_i}(\Psi_{a_i})$.
%$\chi_{\delta}(\Psi_{a_i}(x))=\chi_i(x)$ and $W_{a_i,\ve_i}(\Psi_{a_i}(x))=W_i(x)$.
\begin{lemma}\label{lem:interation-low-term}
Suppose $\gamma\in(0,1)$ and $C_0\ve_j\leq C_0\ve_i\leq\delta<\delta_0$ small enough, then 
\begin{enumerate}
\item $\int_X \rho^{m_0}\chi_iW_iV_j d\mu_{{ g }}\leq C\delta^2\ve_{i,j}$,
\item $\int_X \rho^{m_0}\ve_i^{\frac{n-2\gamma}{2}}\delta^{2\gamma-n-1}\mathbf{1}_{\{\frac12\delta\leq d_{g_{a_i}}(x,a_i)\leq 4\delta\}}V_jd\mu_{{ g } }\leq C\delta\ve_{i,j}$,
\end{enumerate}
%\begin{align*}
%   \int_X \rho^{m_0}V_iV_j d\mu_{{ g }}\leq &C\delta^2\ve_{i,j}\\
%    \int_X \rho^{m_0}V_i\ve_j^{\frac{n-2\gamma}{2}}\delta^{2\gamma-n-1}\mathbf{1}_{\{\frac12\delta\leq d_{g_{a_j}}(x,a_j)\leq 4\delta\}}d\mu_{{ g } }\leq &C\delta^4\ve_{i,j}\\
%    \int_X \rho^{m_1}\langle\nabla(\chi_iW_i),\nabla({\chi_jW_j})\rangle_{{ g } }d\mu_{{ g } }\leq &C\delta^2\ve_{i,j}\\
%    \int_X \rho^{m_1} V_iL_{2,{ g } }^{m_1}V_jd\mu_{{ g } }\leq &C\delta^2\ve_{i,j}
%\end{align*}
where $\ve_{i,j}$ is defined in \eqref{def:veij}.
\end{lemma}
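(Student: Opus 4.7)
The plan is to work in Fermi coordinates $\Psi_{a_i}\colon \mathcal{O}(a_i)\to B_+^N(0,2\delta)$ around $a_i$, so that $\rho^{m_0}d\mu_g$ becomes $x_N^{1-2\gamma}\sqrt{|g|}\,dx$ with $\sqrt{|g|}=1+O(|x|^3)$ by \eqref{eq:expan-metric-low}. The key pointwise bound I would use is
\[V_j(\Psi_{a_i}^{-1}(x))\,\le\,C\,\ve_j^{\frac{n-2\gamma}{2}}\bigl(\ve_j+|x-a_j'|\bigr)^{-(n-2\gamma)},\qquad a_j':=\Psi_{a_i}(a_j)\in\{x_N=0\},\ |a_j'|\asymp\ell,\]
where $\ell:=d_g(a_i,a_j)$. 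This follows by combining Lemma \ref{lem:est-bubble} on the bubble piece of $V_j$ with the Green's function estimate \eqref{est:G-0} on its tail, using also that the conformal factor $\phi_{a_j}=(\rho_{a_j}/\rho)^{(n-2\gamma)/2}$ is bounded on $X$. I will exploit the identity $\ve_{i,j}\asymp(\ve_i\ve_j)^{(n-2\gamma)/2}(\ve_i^2+\ve_j^2+\ell^2)^{-(n-2\gamma)/2}$ throughout.

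For (i), I first handle the easy case $\ell\ge 4\delta$: then $V_j(\Psi_{a_i}^{-1}(x))\le C\ve_j^{(n-2\gamma)/2}\ell^{2\gamma-n}$ uniformly on $B_+^N(0,2\delta)$, and the scaling $x=\ve_i y$ combined with $\gamma<1$ yields $\int_{B_+^N(0,2\delta)}x_N^{1-2\gamma}W_i\,dx\lesssim \delta^2\ve_i^{(n-2\gamma)/2}$, so the full integral is $\lesssim\delta^2(\ve_i\ve_j)^{(n-2\gamma)/2}\ell^{2\gamma-n}\lesssim\delta^2\ve_{i,j}$. When $\ell\le 4\delta$, I set $s=\ve_j/\ve_i\le 1$, $b=a_j'/\ve_i$, $R=2\delta/\ve_i$, which reduces the claim to the dimensionless bound
\[K(b,s,R):=\int_{B_+^N(0,R)}y_N^{1-2\gamma}(1+|y|)^{-(n-2\gamma)}(s+|y-b|)^{-(n-2\gamma)}\,dy\,\lesssim\,R^2(1+|b|^2)^{-(n-2\gamma)/2}.\]
I would establish this by splitting $B_+^N(0,R)$ into $\{|y|\le|b|/2\}$, $\{|b|/2\le|y|\le 2|b|\}$, and $\{|y|\ge 2|b|\}$ and applying in each region the elementary estimate $\int_{B_+^N(0,r)}y_N^{1-2\gamma}(1+|y|)^{-(n-2\gamma)}\,dy\lesssim 1+r^2$.

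For (ii), the integrand is supported on the shell $S:=\{\tfrac12\delta\le d_{g_{a_i}}(\cdot,a_i)\le 4\delta\}$ whose weighted volume satisfies $\int_S x_N^{1-2\gamma}\,dx\lesssim\delta^{n+2-2\gamma}$ by the rescaling $x=\delta y$. I then split by the size of $\ell$: if $\ell\notin[\delta/4,5\delta]$, the triangle inequality yields $d_g(\cdot,a_j)\gtrsim\max(\delta,\ell)$ on $S$, so $V_j\le C\ve_j^{(n-2\gamma)/2}\max(\delta,\ell)^{2\gamma-n}$ uniformly, and the product gives $\lesssim\delta(\ve_i\ve_j)^{(n-2\gamma)/2}\max(\delta,\ell)^{2\gamma-n}\le C\delta\ve_{i,j}$, using $\ve_i^2+\ell^2\lesssim\max(\delta,\ell)^2$. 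If $\ell\in[\delta/4,5\delta]$, I translate $x\mapsto x-a_j'$ and use $\int_{B_+^N(0,C\delta)}y_N^{1-2\gamma}(\ve_j+|y|)^{-(n-2\gamma)}\,dy\lesssim\delta^2$ (by rescaling, using $\gamma<1$) to obtain $\int_S x_N^{1-2\gamma}V_j\,dx\lesssim \ve_j^{(n-2\gamma)/2}\delta^2$; the full integral is then $\lesssim(\ve_i\ve_j)^{(n-2\gamma)/2}\delta^{2\gamma-n+1}\asymp\delta\ve_{i,j}$ since $\ell\asymp\delta$.

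The main obstacle is the dimensionless estimate for $K(b,s,R)$ in part (i) when $\ell\lesssim\delta$: the three-region decomposition of $B_+^N(0,R)$ is standard in bubble-interaction analysis, but the anisotropic weight $y_N^{1-2\gamma}$ requires one to track whether the radial tail $\int r^{1-n+2\gamma}\,dr$ converges (when $n>2\gamma+2$) or grows (when $n\le 2\gamma+2$). In every regime the target bound $R^2(1+|b|^2)^{-(n-2\gamma)/2}$ holds thanks to $|b|\le R$ and $n>2\gamma$, which yield both $|b|^{2-n+2\gamma}\le R^2|b|^{-(n-2\gamma)}$ and $R^{2-n+2\gamma}\le R^2|b|^{-(n-2\gamma)}$. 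The hypothesis $\gamma<1$ secures integrability of $y_N^{1-2\gamma}$ near $y_N=0$ and thus completes the argument.
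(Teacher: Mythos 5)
Your argument is correct, and it rests on the same two ingredients as the paper's proof: the pointwise bounds on the bubble and on the Green's function tail (Lemma \ref{lem:est-bubble} and \eqref{est:G-0}), and the weighted-volume growth $\int_{B_+^N(0,r)}x_N^{1-2\gamma}(\ve^2+|x|^2)^{-\frac{n-2\gamma}{2}}dx\lesssim \ve^{\frac{n-2\gamma}{2}}r^2$, which is exactly where $\gamma<1$ enters and where the factors $\delta^2$ (resp.\ $\delta$) come from. The organization, however, differs. The paper follows Brendle's Lemma B.4: a single two-region split of the integration domain, $\mathcal{A}=\{2d_g(x,a_j)\leq \ve_i+d_g(a_i,a_j)\}$ versus its complement, which works uniformly in $d_g(a_i,a_j)$ and is phrased entirely in terms of intrinsic distances, so no rescaling, no case analysis on $\ell$ versus $\delta$, and no need to locate $a_j$ in the chart of $a_i$; on each region one bubble is frozen at the interaction scale $(\ve_i^2+d_g(a_i,a_j)^2)^{-\frac{n-2\gamma}{2}}$ and the other is integrated, and $\ve_j\leq\ve_i$ converts the result into $\ve_{i,j}$. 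You instead split by the size of $\ell=d_g(a_i,a_j)$ and, in the main regime, pass to rescaled coordinates and prove the dimensionless bound on $K(b,s,R)$ via a three-annulus decomposition; this is the other standard way to run the same convolution estimate, and your bookkeeping of the borderline exponents ($n\gtrless 2\gamma+2$) and of part (ii) is consistent. The paper's version is a bit leaner, and it also sidesteps one small technicality in your write-up: defining $a_j'=\Psi_{a_i}(a_j)$ presupposes $a_j\in\mathcal{O}(a_i)$, which can fail when $2\delta<\ell\leq4\delta$; this is harmless (phrase the pointwise bound for $V_j$ via $d_g(x,a_j)$, as the paper does, or enlarge the chart), but it should be said.
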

\begin{proof}
We are using the techniques in \cite[Lem. B.4]{brendle2005convergence}. 

(1). Assume $\delta_0$ is small enough such that the support of $\chi_i$ is contained in $\{x\in X:d_g(x,a_i)\leq 4\delta\}$.
Denote
\[\mathcal{A}=\{x\in X:2d_{{ g } }(a_j,x)\leq \ve_i+d_{{ g } }(a_i,a_j)\}\cap\{d_g(x,a_i)\leq 4\delta\},\]
\[\mathcal{A}^c=\{x\in X:2d_{{ g } }(a_j,x)> \ve_i+d_{{ g } }(a_i,a_j)\}\cap\{d_g(x,a_i)\leq 4\delta\}.\]
Then it follows from Lemma \ref{lem:est-bubble} that 
\begin{align*}
&\int_X\rho^{m_0}\chi_iW_iV_jd\mu_{{ g } }=\int_{\mathcal{A}\cup\mathcal{A}^c}\rho^{m_0}\chi_iW_iV_jd\mu_{{ g } }\\
\leq&C\left(\int_{\mathcal{A}}+\int_{\mathcal{A}^c}\right)\rho^{m_0}\left(\frac{\ve_i}{\ve_i^2+d_{{ g } }(x,a_i)^2}\right)^{\frac{n-2\gamma}{2}}\left(\frac{\ve_j}{\ve_j^2+d_{{ g } }(x,a_j)^2}\right)^{\frac{n-2\gamma}{2}}d\mu_{{ g } }\\=&I_1+I_2.
\end{align*}
%\begin{align*}
%I_1=\int_{\mathcal{A}}\rho^{m_0}\chi_iW_iV_jd\mu_{{ g } }\leq \int_{\mathcal{A}\cap \{d_g(x,a_i)\leq 4\delta\}}\rho^{m_0} \left(\frac{\ve_i}{\ve_i^2+d_{{ g } }(x,a_i)^2}\right)^{\frac{n-2\gamma}{2}}\left(\frac{\ve_j}{\ve_j^2+d_{{ g } }(x,a_j)^2}\right)^{\frac{n-2\gamma}{2}}d\mu_{{ g } }
%\end{align*}
For $I_2$, we have
\begin{align*}
I_2\leq& C\int_{\{d_{{ g } }(x,a_i)\leq 4\delta\}}\rho^{m_0}\left(\frac{\ve_i}{\ve_i^2+d_{{ g } }(x,a_i)^2}\right)^{\frac{n-2\gamma}{2}}\left(\frac{\ve_j}{\ve_i^2+d_{{ g } }(a_i,a_j)^2}\right)^{\frac{n-2\gamma}{2}}d\mu_{{ g } }\\
\leq &C\delta^2\frac{\ve_i^{\frac{n-2\gamma}{2}}\ve_j^{\frac{n-2\gamma}{2}}}{(\ve_i^2+d_{{ g } }(a_i,a_j)^2)^{\frac{n-2\gamma}{2}}}\leq C\delta^2\ve_{i,j},
\end{align*}
where in the last inequality we used $\ve_j\leq \ve_i$. To deal with $I_1$, 
%\[I_1=\int_{\mathcal{A}}\rho^{m_0}\chi_iW_iV_jd\mu_{{ g } },\quad I_2=\int_{\mathcal{A}^c}\rho^{m_0}\chi_iW_iV_jd\mu_{{ g } }\]
notice that on $\mathcal{A}$, one has
\[\ve_i+d_{{ g } }(x,a_i)\geq \ve_i+d_{{ g } }(a_i,a_j)-d_{{ g } }(a_j,x)\geq \frac12(\ve_i+d_{{ g } }(a_i,a_j)).\]
Consequently $d_g(a_i,a_j)\leq \delta +2d_g(x,a_i)\leq 9\delta$ and  $\mathcal{A}\subset \{d_g(x,a_j)\leq 5\delta\}$. Then  \begin{align*}
I_1\leq& C\int_{\{d_g(x,a_j)\leq 5\delta\}}\left(\frac{\ve_i}{\ve_i^2+d_{{ g } }(a_i,a_j)^2}\right)^{\frac{n-2\gamma}{2}}\left(\frac{\ve_j}{\ve_j^2+d_{{ g } }(x,a_j)^2}\right)^{\frac{n-2\gamma}{2}}d\mu_{{ g } }\\
\leq &C\delta^2\frac{\ve_i^{\frac{n-2\gamma}{2}}\ve_j^{\frac{n-2\gamma}{2}}}{(\ve_i^2+d_{{ g } }(a_i,a_j)^2)^{\frac{n-2\gamma}{2}}}\leq C\delta^2\ve_{i,j}.
\end{align*}
%If $d_{g}(a_i,a_j)>8\delta$, then $d_g(x,a_j)\geq d_g(a_i,a_j)-d_g({x,a_i})\geq \frac12 d_g(a_i,a_j)$ for $x\in \mathcal{A}$. 
%\[\ve_j+d_g(x,a_j)\geq \frac{1}{4}(\ve_i+d_g(a_i,a_j)). \]
%In this case,
%\begin{align*}
%I_1\leq C\int_{\{d_{g}(x,a_i)\leq 4\delta\}}\rho^{m_0}\left(\frac{\ve_i}{\ve_i^2+d_{{ g } }(x,a_i)^2}\right)^{\frac{n-2\gamma}{2}}\left(\frac{\ve_j}{\ve_i^2+d_g(a_i,a_j)^2}\right)^{\frac{n-2\gamma}{2}}d\mu_{{ g } }
%\end{align*}
%which can be bounded similar to $I_2$. 
Combining the estimates of $I_1$ and $I_2$, we can prove (1).

(2). Taking $\delta_0$ small enough such that
\begin{align*}
&\int_X \rho^{m_0}\ve_i^{\frac{n-2\gamma}{2}}\delta^{2\gamma-n-1}\mathbf{1}_{\{\frac12\delta\leq d_{g_{a_i}}(x,a_i)\leq 4\delta\}}V_jd\mu_{{ g } }\\
\leq&C\frac{1}{\delta}\int_{ \{\frac12\delta\leq d_g(x,a_i)\leq 8\delta\}}\rho^{m_0}\left(\frac{\ve_i}{\ve_i^2+d_g(x,a_i)^2}\right)^{\frac{n-2\gamma}{2}}\left(\frac{\ve_j}{\ve_j^2+d_g(x,a_j)^2}\right)^{\frac{n-2\gamma}{2}}d\mu_g.
\end{align*}
One can use the proof of (1) without significant change to conclude (2).
\end{proof}
Similarly we have 
\begin{lemma}\label{lem:inter-1}
Suppose that $\gamma\in (1,\min\{2,n/2\})$, and $C_0\ve_j\leq C_0\ve_i\leq\delta<\delta_0$ small enough, then 
\begin{enumerate}
\item $\int_X\rho^{m_1}\chi_iW_iV_jd\mu_{g}\leq C\delta^4\ve_{i,j}$,
\item $\int_X \rho^{m_1}\ve_i^{\frac{n-2\gamma}{2}}\delta^{2\gamma-n-3}\mathbf{1}_{\{\frac12\delta\leq d_{g_{a_i}}(x,a_i)\leq 4\delta\}}V_jd\mu_{{ g } }\leq C\delta\ve_{i,j}$.
\end{enumerate}
\end{lemma}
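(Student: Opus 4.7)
The plan is to reduce both inequalities directly to the corresponding estimates of Lemma \ref{lem:interation-low-term} by exploiting the identity $m_1=m_0+2$, together with the geometric observation that on each relevant domain of integration the geodesic defining function $\rho$ is bounded by a constant multiple of $\delta$. The weight ratio is then $\rho^{m_1-m_0}=\rho^2\leq C\delta^2$, which is precisely the factor needed to pass from the $\gamma\in(0,1)$ version of the estimate to the $\gamma\in(1,\min\{2,n/2\})$ version.

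First I would record the geometric comparison. In Fermi coordinates $\Psi_{a_i}:\mathcal{O}(a_i)\to B_+^N(0,2\delta)$ adapted to $h_{a_i}$, one has $\rho_{a_i}=x_N\leq |x|$ with $|x|$ comparable to $d_{g_{a_i}}(x,a_i)$. Since $\phi_{a_i}(a_i)=1$ and $|\rho_{a_i}/\rho-1|\leq C\delta$ near $a_i$, the two defining functions $\rho$ and $\rho_{a_i}$ are comparable on the support of $\chi_i$. Consequently, on every set of the form $\{d_{g_{a_i}}(x,a_i)\leq C\delta\}$ one has $\rho(x)\leq C'\delta$.

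Next, for part~(1), since the support of $\chi_i$ lies inside $\{d_{g_{a_i}}(\cdot,a_i)\leq 2\delta\}$, I would write $\rho^{m_1}=\rho^{m_0}\cdot\rho^2\leq C\delta^2\rho^{m_0}$, pull the $\delta^2$ factor outside, and invoke Lemma \ref{lem:interation-low-term}(1) on the remaining integral to obtain $C\delta^2\cdot\delta^2\ve_{i,j}=C\delta^4\ve_{i,j}$. For part~(2), on the annulus $\{\tfrac12\delta\leq d_{g_{a_i}}(x,a_i)\leq 4\delta\}$ the same bound $\rho\leq C\delta$ holds; rewriting $\delta^{2\gamma-n-3}=\delta^{-2}\cdot\delta^{2\gamma-n-1}$, the extra $\delta^{-2}$ in the prefactor precisely cancels the $\delta^2$ coming from $\rho^{m_1}\leq C\delta^2\rho^{m_0}$, and what is left is exactly the integrand of Lemma \ref{lem:interation-low-term}(2), which gives $C\delta\,\ve_{i,j}$.

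No genuinely new analytic input is required; the main obstacle is only the bookkeeping verification that the pointwise comparison $\rho(x)\leq C\delta$ holds uniformly on the relevant supports, which follows from the structure of Fermi coordinates around $a_i$ and the continuity of the assignment $a\mapsto(\rho_a,g_a)$ noted in the paragraph preceding Lemma \ref{lem:profile-decomp}.
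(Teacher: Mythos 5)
Your reduction to Lemma \ref{lem:interation-low-term} has a genuine gap: the ``remaining integral'' you propose to control is not covered by that lemma and is in fact infinite. Writing $\rho^{m_1}=\rho^{2}\,\rho^{m_1-2}$ with the \emph{same} $\gamma\in(1,\min\{2,n/2\})$ leaves the weight exponent $m_1-2=1-2\gamma<-1$. Since $\chi_iW_iV_j$ is positive and bounded below on a half-ball touching $M$ (the bubbles and $V_j$ do not vanish on the boundary), the integral $\int_X\rho^{1-2\gamma}\chi_iW_iV_j\,d\mu_g$ diverges, because $\int_0 x_N^{1-2\gamma}\,dx_N=\infty$. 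Lemma \ref{lem:interation-low-term} is stated, and is only true, for $\gamma\in(0,1)$, where $m_0=1-2\gamma>-1$ guarantees integrability of the weight near $M$; moreover its bubbles $W_i,V_j$ and the quantity $\ve_{i,j}$ are built with that $\gamma$, so it cannot be ``invoked'' for the objects of the present lemma. Thus the pointwise bound $\rho^{m_1}\le C\delta^{2}\rho^{m_1-2}$, while true on the support, discards exactly the boundary integrability on which the estimate rests, and the chain $\int\rho^{m_1}(\cdots)\le C\delta^{2}\int\rho^{m_1-2}(\cdots)\le C\delta^{4}\ve_{i,j}$ breaks at the second inequality. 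The identical objection applies to your treatment of part (2), where the ``cancellation'' $\delta^{2\gamma-n-3}=\delta^{-2}\delta^{2\gamma-n-1}$ again leaves a divergent integral with weight $\rho^{1-2\gamma}$.

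The intended argument (the paper gives none beyond ``similarly'') is to repeat the proof of Lemma \ref{lem:interation-low-term} keeping the weight $\rho^{m_1}$ intact: after the same splitting into $\mathcal{A}$ and $\mathcal{A}^c$, the factor that produced $\delta^{2}$ there is replaced by
\begin{align*}
\int_{B_+^N(0,C\delta)}x_N^{m_1}\Big(\frac{\ve_i}{\ve_i^2+|x|^2}\Big)^{\frac{n-2\gamma}{2}}dx
\;\le\; C\,\ve_i^{\frac{n-2\gamma}{2}}\int_0^{C\delta}r^{\,m_1+2\gamma}\,dr
\;=\;C\,\ve_i^{\frac{n-2\gamma}{2}}\,\delta^{4},
\end{align*}
since $m_1+2\gamma=3$ and the angular integral $\int_{S^n_+}\omega_N^{m_1}\,d\omega$ converges precisely because $m_1>-1$ (for $\gamma\in(0,1)$ one has $m_0+2\gamma=1$, whence the $\delta^{2}$ there). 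So the two extra powers of $\delta$ come from the radial integration of the heavier weight, not from a crude bound $\rho\le C\delta$. Part (2) then follows as in the $\gamma\in(0,1)$ case: on $\{\tfrac12\delta\le d_{g_{a_i}}(x,a_i)\le 4\delta\}$ one has $\ve_i^{\frac{n-2\gamma}{2}}\delta^{2\gamma-n-3}\le C\delta^{-3}\big(\tfrac{\ve_i}{\ve_i^2+d_g(x,a_i)^2}\big)^{\frac{n-2\gamma}{2}}$, and $\delta^{-3}\cdot\delta^{4}=\delta$.
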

Now let us prove some interaction estimates on the boundary.
\begin{lemma}\label{lem:bdry-inter}
Suppose that $\gamma\in(0,1)\cup(1,\min\{2,n/2\})$, and $C_0\ve_j \leq C_0\ve_i\leq\delta\leq \delta_0$, $v_i=v_{a_i,\ve_i,\delta}$ is defined in \eqref{def:V_i}. Then
%let $\tilde{\ve}_{i,j}=\frac{\ve_i^{\frac{n-2\gamma}{2}}\ve_j^{\frac{n-2\gamma}{2}}}{(\ve_j^2+d_{{ g } }(a_i,a_j)^2)^{\frac{n-2\gamma}{2}}}$
\begin{align*}
% \int_X \rho^{m_0}\chi_i W_i(1-\chi_j) \left(W_j\right)^{\frac{n+2\gamma}{n-2\gamma}} d\mu_{{ g }}\leq &C{\ve_j^{2\gamma}}{\delta^{2-4\gamma}}\ve_{i,j}\\
    \oint_M |v_i^{\frac{n+2\gamma}{n-2\gamma}}-\chi_iw_i^{\frac{n+2\gamma}{n-2\gamma}}|v_jd\sigma_h\leq C\frac{\ve_i^{2\gamma}}{\delta^{2\gamma}}\ve_{i,j}.
\end{align*}
here $w_i$ and $v_i$ are defined in \eqref{def:bubble-exp} and  \eqref{def:V_i}.
\end{lemma}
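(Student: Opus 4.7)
The plan is to split $M$ into three regions along the Fermi distance to $a_i$, namely the inner ball $A_1 = D_h(a_i,\delta)$, the transition annulus $A_2 = D_h(a_i,2\delta) \setminus D_h(a_i,\delta)$, and the exterior $A_3 = M \setminus D_h(a_i,2\delta)$, and to bound the integrand on each using the piecewise definition of $U_{a_i,\ve_i,\delta}$ from \eqref{U:I-1}/\eqref{eq:U2-def} together with the factor $\phi_{a_i} = (\rho_{a_i}/\rho)^{(n-2\gamma)/2}|_M$ that relates $v_i$ to $U_i|_M$. On $A_1$ the cutoff $\chi_i \equiv 1$ and $v_i = \phi_{a_i}\, w_i$, so the integrand equals $|\phi_{a_i}^{(n+2\gamma)/(n-2\gamma)} - 1|\, w_i^{(n+2\gamma)/(n-2\gamma)}\, v_j$; on $A_3$, $\chi_i \equiv 0$ and $v_i = \phi_{a_i}\, \ve_i^{(n-2\gamma)/2}\, G_{a_i}^\gamma|_M$, so it equals $v_i^{(n+2\gamma)/(n-2\gamma)}\, v_j$; on $A_2$ the two formulas interpolate through $\chi_\delta$.

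For $A_2$ and $A_3$, I invoke the Green's-function tail estimates \eqref{est:G-0} (or \eqref{est:G-1} when $\gamma \in (1,\min\{2,n/2\})$) together with the uniform bound on $\phi_{a_i}$ to obtain $v_i^{(n+2\gamma)/(n-2\gamma)}(x) \lesssim \ve_i^{(n+2\gamma)/2}\, d_h(x,a_i)^{-(n+2\gamma)}$ pointwise on those regions. The factor $v_j$ is controlled by its own bubble envelope (comparable to $w_{a_j,\ve_j}$ in Fermi coordinates around $a_j$, with Green's-function tail otherwise). A direct radial integration in the spirit of Lemma \ref{lem:interation-low-term}/Lemma \ref{lem:inter-1}, splitting further according to whether $d_h(a_i,a_j)$ is comparable to or dominated by $\delta$, then yields a contribution of order $(\ve_i/\delta)^{n}\, \ve_{i,j}$, which is absorbed in $(\ve_i/\delta)^{2\gamma}\, \ve_{i,j}$ since $n > 2\gamma$.

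For the remaining integral over $A_1$, I write $|\phi_{a_i}^p - 1| \lesssim |\phi_{a_i} - 1|$ with $p = (n+2\gamma)/(n-2\gamma)$ and use the normalization $\phi_{a_i}(a_i) = 1$ coming from the conformal Fermi construction (see Lemma \ref{lem:metric-expan1} and the discussion surrounding Remark \ref{rem:L2gW}) to obtain $|\phi_{a_i}(x) - 1| \lesssim \min\{d_h(x,a_i),\,\delta\}$ on $D_h(a_i,\delta)$. A standard bubble-bubble interaction computation in normal coordinates, of the same flavor as in Lemma \ref{lem_higher_interaction_estimates}, gives $\int_{D(0,\delta)} d_h^k\, w_i^p\, v_j\, d\sigma_h \lesssim \ve_i^k\, \ve_{i,j}$ for the relevant exponents $k$, and the hypothesis $C_0\ve_i \leq \delta \leq \delta_0 \leq 1$ lets the contribution be absorbed into the target bound. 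The main obstacle is matching the precise power $2\gamma$: the naive pointwise bound $|\phi_{a_i} - 1| \lesssim d_h(x,a_i)$ yields only an error of order $\ve_i\,\ve_{i,j}$, so to hit the exponent $2\gamma$ one must either exploit the higher-order vanishing of $\phi_{a_i} - 1$ at $a_i$ afforded by the conformal Fermi normalization, or carefully interpolate between the interior $d$-bound and the global $\delta$-bound so that $(\ve_i/\delta)^{2\gamma}$ emerges as the tail-mass fraction $\int_{|\bar x| \geq \delta} w_i^p\, d\bar x \big/ \int_{\Rn} w_i^p\, d\bar x$. Summing the three regional estimates then yields the claimed bound.
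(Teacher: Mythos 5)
There is a genuine gap, and it is exactly the one you flag yourself: the inner-region term is never estimated. In the paper's argument there is no inner-region term at all. On the set where $\chi_i=1$ the trace of $U_{a_i,\ve_i,\delta}$ is exactly $w_i$, so the genuine discrepancy between $v_i^{p}$ and $\chi_i w_i^{p}$ (with $p=\frac{n+2\gamma}{n-2\gamma}$) is supported only where $1-\chi_i\neq 0$, i.e.\ where $d_h(x,a_i)\gtrsim\delta$; the conformal factor $\phi_{a_i}^{p}$ is not meant to be estimated inside this lemma, but is pulled out as a multiplicative $1+O(\delta)$ at the point where the lemma is applied (proof of Lemma \ref{lem:interaction}). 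Concretely, the paper's proof consists of the single pointwise bound $|v_i^{p}-\chi_iw_i^{p}|\leq C(1-\chi_i)\left(\frac{\ve_i}{\ve_i^2+d_h(x,a_i)^2}\right)^{\frac{n+2\gamma}{2}}$ (via Lemma \ref{lem:est-bubble} and the Green's function estimates \eqref{est:G-0}, \eqref{est:G-1}), followed by splitting $\{d_h(x,a_i)\geq\delta/2\}$ into $\mathcal{A}=\{2d_h(x,a_j)\leq\ve_i+d_h(a_i,a_j)\}$ and its complement; on each piece the factor $\ve_i^{2\gamma}\delta^{-2\gamma}$ is harvested from $d_h(x,a_i)\geq\delta/2$, and the remaining integral is bounded by $\ve_{i,j}$ using $\ve_j\leq\ve_i$. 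Your term $|\phi_{a_i}^{p}-1|\,w_i^{p}v_j$ on $A_1$ cannot be closed the way you hope: the Fermi normalization only gives $\phi_{a_i}(a_i)=1$, not vanishing of $\phi_{a_i}-1$ to order $2\gamma$, and even granting $\phi_{a_i}-1=O(d^2)$ the resulting contribution for $\gamma\in(1,2)$ is of size $\ve_i^{2}\ve_{i,j}$, which is \emph{not} bounded by $\ve_i^{2\gamma}\delta^{-2\gamma}\ve_{i,j}$ when $\ve_i\ll\delta$. So neither of your two suggested fixes works, and the correct resolution is structural: the conformal factor does not belong in the difference being estimated here.

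A secondary inaccuracy: your claim that the exterior region contributes $O((\ve_i/\delta)^{n}\ve_{i,j})$ is false in general. When $d_h(a_i,a_j)\gg\delta$ the dominant exterior contribution comes from the neighborhood of $a_j$ and has size $\ve_i^{2\gamma}\ve_{i,j}$ — this is precisely why the lemma carries the exponent $2\gamma$ rather than $n$. That contribution is still below the target $\ve_i^{2\gamma}\delta^{-2\gamma}\ve_{i,j}$, so this does not break your outer estimate, but it shows the radial integration was asserted rather than performed; the decomposition actually needed there is the paper's $\mathcal{A}/\mathcal{A}^c$ split (according to whether $x$ is within distance $\tfrac12(\ve_i+d_h(a_i,a_j))$ of $a_j$), not a comparison of $d_h(a_i,a_j)$ with $\delta$.
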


\begin{proof} Since $M$ and $\bar X$ are smooth compact manifolds, the metric $d_g(x,a)$ and $d_h(x,a)$ for $x,a\in M$ are comparable. Notice that by Lemma \ref{lem:est-bubble},  \eqref{est:G-0} and \eqref{est:G-1}
\begin{align*}
|v_i^{\frac{n+2\gamma}{n-2\gamma}}-\chi_iw_i^{\frac{n+2\gamma}{n-2\gamma}}|\leq C(1-\chi_i)\left(\frac{\ve_i}{\ve_i^2+d_h(x,a_i)^2}\right)^{\frac{n+2\gamma}{2}}.
\end{align*}
Define
\[\mathcal{A}=\{x\in M: 2d_{{ h } }(a_j,x)\leq \ve_i+d_{{ h } }(a_i,a_j)\}\cap\{d_{{ h } }(x,a_i)\geq \delta/2\},\]
\[\mathcal{A}^c=\{x\in M: 2d_{{ h } }(a_j,x)> \ve_i+d_{{ h } }(a_i,a_j)\}\cap\{d_{{ h } }(x,a_i)\geq \delta/2\}.\]
Then
\begin{align*}
&\oint_M  (v_i^{\frac{n+2\gamma}{n-2\gamma}}-\chi_iw_i^{\frac{n+2\gamma}{n-2\gamma}})v_jd\sigma_h\\
\leq& C\int_{\mathcal{A}\cup\mathcal{A}^c}\left(\frac{\ve_i}{\ve_i^2+d_{{ h } }(x,a_i)^2}\right)^{\frac{n+2\gamma}{2}}\left(\frac{\ve_j}{\ve_j^2+d_{{ h } }(x,a_j)^2}\right)^{\frac{n-2\gamma}{2}}d\sigma_{{ h } }\\
\leq &C\int_{\mathcal{A}}\frac{\ve_i^{\frac{n+2\gamma}{2}}}{\delta^{2\gamma}(\ve_i^2+d_{{ h } }(a_i,a_j)^2)^{\frac{n}{2}}}\left(\frac{\ve_j}{\ve_j^2+d_{{ h } }(x,a_j)^2}\right)^{\frac{n-2\gamma}{2}}d\sigma_{{ h } }\\
&+C\int_{\{d_{{ h }}(x,a_i)>\delta/2\}}\left(\frac{\ve_i}{\ve_i^2+d_{{ h } }(x,a_i)^2}\right)^{\frac{n+2\gamma}{2}}\left(\frac{\ve_j}{\ve_i^2+d_{{ h } }(a_i,a_j)^2}\right)^{\frac{n-2\gamma}{2}}d\sigma_{{ h } }\\
\leq&C\frac{\ve_i^{2\gamma}}{\delta^{2\gamma}}\frac{\ve_i^{\frac{n-2\gamma}{2}}\ve_j^{\frac{n-2\gamma}{2}}}{(\ve_i^2+d_{{ h } }(a_i,a_j)^2)^{\frac{n-2\gamma}{2}}}\leq C\frac{\ve_i^{2\gamma}}{\delta^{2\gamma}}\ve_{i,j}.
\end{align*}
In the last inequality we used $\ve_j\leq \ve_i$.
%\begin{align*}
%&\int_X \rho^{m_0}\chi_i W_i(1-\chi_j) W_j^{\frac{n+2\gamma}{n-2\gamma}}  d\mu_{{ g }}=\int_{\mathcal{A}\cup\mathcal{A}^c}\rho^{m_0}\chi_i W_i(1-\chi_j) W_j^{\frac{n+2\gamma}{n-2\gamma}}d\mu_{{ g } }\\
%\leq& C\left(\int_{\mathcal{A}\cap M}+\int_{\mathcal{A}^c\cap M}\right)\rho^{m_0}\chi_i(1-\chi_j)\left(\frac{\ve_i}{\ve_i^2+d_{{ g } }(x,a_i)^2}\right)^{\frac{n-2\gamma}{2}}\left(\frac{\ve_j}{\ve_j^2+d_{{ g } }(x,a_j)^2}\right)^{\frac{n+2\gamma}{2}}d\mu_{{ g } }
%\end{align*}
%\begin{align*}
%\leq &C\int_{\mathcal{A}\cap \{d_{{ g }}(x,a_j)>\delta\}}\rho^{m_0}\left(\frac{\ve_i}{\ve_i^2+d_{{ g } }(x,a_i)^2}\right)^{\frac{n-2\gamma}{2}}\frac{\ve_j^{\frac{n+2\gamma}{2}}}{\delta^{4\gamma-2}(\ve_j^2+d_{{ g } }(a_i,a_j)^2)^{\frac{n-2\gamma+2}{2}}}d\mu_{{ g } }\\
%&+C\int_{\{d_{{ g }}(x,a_j)>\delta\}}\left(\frac{\ve_i}{\ve_j^2+d_{{ g } }(a_i,a_j)^2}\right)^{\frac{n-2\gamma}{2}}\left(\frac{\ve_j}{\ve_j^2+d_{{ g } }(x,a_j)^2}\right)^{\frac{n+2\gamma}{2}}d\mu_{{ g } }\\
%\leq&C\frac{\ve_j^{2\gamma}}{\delta^{4\gamma-2}}\frac{\ve_i^{\frac{n-2\gamma}{2}}\ve_j^{\frac{n-2\gamma}{2}}}{(\ve_j^2+d_{{ g } }(a_i,a_j)^2)^{\frac{n-2\gamma}{2}}}\leq C\frac{\ve_j^{2\gamma}}{\delta^{4\gamma-2}}\ve_{i,j}
%\end{align*}

\end{proof}

%\begin{lemma}Assume 
%\begin{align}
%\int_X\rho_{a_i}^{m_1}|\Delta_{g_{a_i}} \chi_i|W_i|\Delta_{g_{a_j}}\chi_j|W_jd\mu_{{ g } _{a_i}}=???
%\end{align}
%\end{lemma}
%\begin{proof}
%It is easy to see that $|\Delta_{g_{a_i}} \chi_i||\Delta_{g_{a_j}}\chi_j|$ has support contained in $\Omega=\{\delta\leq d_{{ g } _{a_i}}(x,a_i)\leq 2\delta\}\cap \{\delta\leq d_{{ g } _{a_j}}(x,a_j)\leq 2\delta\}$. So if $d_{g}(a_i,a_j)\gg\delta$, then $\Omega=\emptyset$. Let us assume $d_g(a_i,a_j)\leq C \delta$,
%\begin{align*}
%&\int_X\rho_{a_i}^{m_1}|\Delta_{g_{a_i}} \chi_i|W_i|\Delta_{g_{a_j}}\chi_j|W_jd\mu_{{ g } _{a_i}}\\
%\leq& C\delta^{-4}(\ve_i\ve_j)^{\frac{n-2\gamma}{2}}\int_{\Omega}\rho^{m_1}_{a_i}\left(\ve_i^2+\delta^2\right)^{\frac{n-2\gamma}{2}}\left(\ve_j^2+\delta^2\right)^{\frac{n-2\gamma}{2}}d\mu_{{ g } _{a_i}}\\
%\leq &C\delta^{n-2\gamma}\left(\frac{\ve_i\ve_j}{(\ve_i^2+\delta^2)(\ve_j^2+\delta^2)}\right)^{\frac{n-2\gamma}{2}}\leq C(\frac{\ve_i\ve_j}{\delta^2}+\frac{\ve_i}{\ve_j}+\frac{\ve_j}{\ve_i}+\frac{\delta^2}{\ve_i\ve_j})^{\frac{2\gamma-n}{2}}
%\\=&\begin{cases}o(\ve_{i,j}),&\text{if }d_g(a_i,a_j)\ll\delta\\O(\ve_{i,j}),&\text{if}d_g(a_i,a_j)\sim \delta\end{cases}
%\end{align*}
%Remember we want to it is $o(\ve_{i,j})$, because it is not the leading term.
%\end{proof}
%\bibliographystyle{alpha}

\small 
\bibliographystyle{plainnat}
\bibliography{references}

\end{document}